\newtheorem{thm}{Theorem}
\newtheorem{lemma}[thm]{Lemma}
\newtheorem{prop}[thm]{Proposition}
\newtheorem{cor}[thm]{Corollary}
\theoremstyle{definition}
\newtheorem{remark}[thm]{Remark}
\newtheorem{definition}[thm]{Definition}
\newtheorem{example}[thm]{Example}
\newcommand{\ausruf}{!}
\newcommand{\ti}{\breve{\i}}
\newcommand{\vds}{\breve{p}}
\newcommand{\N}{{\mathbb N}}
\newcommand{\R}{{\mathbb R}}
\newcommand{\Z}{{\mathbb Z}}
\newcommand{\HH}{\mathcal{H}}
\newcommand{\ZZ}{\mathcal{Z}}
\newcommand{\PB}{\mathrm{PB}}
\newcommand{\id}{\operatorname{id}}
\newcommand{\im}{{\operatorname{im}}}
\newcommand{\ev}{{\operatorname{ev}}}
\newcommand{\cov}{{\operatorname{cov}}}
\newcommand{\curv}{{\operatorname{curv}}}
\newcommand{\Ad}{\operatorname{Ad}}
\newcommand{\Hom}{\operatorname{Hom}}
\newcommand{\Tor}{\operatorname{Tor}}
\newcommand{\pr}{\operatorname{pr}}
\newcommand{\Ul}{{\mathrm{U}(1)}}       
\newcommand{\g}{\mathfrak{g}}
\begin{document}

\title{Relative differential cohomology}
\author{Christian Becker}
\address{Universit\"at Potsdam, Institut f\"ur Mathematik, Am Neuen Palais 10, 14469 Potsdam, Germany}
\email[C.~Becker]{becker@math.uni-potsdam.de}
\keywords{Cheeger-Simons differential character, relative differential character, geometric chain, differential cohomology, multiplication of differential characters, module structure on relative differential cohomology, fiber integration, up-down formula, transgression}
\subjclass[2010]{53C08, 55N20}

\maketitle

\begin{abstract}
We study two notions of relative differential cohomology, using the model of differential characters.
The two notions arise from the two options to construct relative homology, either by cycles of a quotient complex or of a mapping cone complex.  
We discuss the relation of the two notions of relative differential cohomology to each other.
We discuss long exact sequences for both notions, thereby clarifying their relation to absolute differential cohomology.
We construct the external and internal product of relative and absolute characters and show that relative differential cohomology is a right module over the absolute differential cohomology ring.
Finally we construct fiber integration and transgression for relative differential characters.
\end{abstract}

\tableofcontents

\section{Introduction}\label{sec:intro}

Differential cohomology is a refinement of integral cohomology by differential forms.
The first model for differential cohomology is the graded group $\widehat H^*(X;\Z)$ of differential characters, defined by J.~Cheeger and J.~Simons in \cite{CS83}.
Differential characters of degree $k$ are certain homomorphisms $h:Z_{k-1}(X;\Z) \to \Ul$ on the abelian group of smooth singular $(k-1)$-cycles on $X$.
By now there exist lots of different models for differential cohomology, formulated in terms of smooth Deligne (hyper-)cohomology \cite{B93}, \cite{B12}, \cite{CJM04}, de Rham-Feder currents \cite{HL01}, \cite{HLZ03}, \cite{HL06}, \cite{HL08}, differential cocycles \cite{HS05}, \cite{BKS10} or simplicial forms \cite{DL05}.
In low degrees there exist more special models like gerbes, Hitchin gerbes \cite{H01}, and bundle gerbes \cite{M96} for $k=3$ and bundle $2$-gerbes \cite{S04} for $k=4$.
Axiomatic definitions have been provided in \cite{SS08} and \cite{BB13} for differential refinements of ordinary cohomology and in \cite{BS09}, \cite{BS10} for differential refinements of generalized cohomology theories.
Constructions of generalized differential cohomology theories have appeared in \cite{HS05} and \cite{B12}, \cite{BG13}.
As a particular case of generalized cohomology, there are several models of differential $K$-theory \cite{L94}, \cite{FL10}, \cite{BS09}, \cite{SS10}.
Most of these treatments do not cover relative differential cohomology.

In analogy to the case for absolute cohomology, we may define relative differential cohomology as a refinement of relative integral cohomology by differential forms.
Relative differential cohomology groups have been considered in several contexts like differential characters \cite{HL01}, \cite{BT06}, \cite{Z03}, differential cocyles \cite{Z03}, \cite{BT06}, \v{C}ech cocycles \cite{Z03}.
They have also appeared in more special models like relative gerbes \cite{S06a}, and for differential extensions of generalized cohomology theories \cite{U11}.
Relative algebraic differential characters have been studied in \cite{BE98}. 
Relative differential cohomology groups are closely related to trivializations of differential cohomology as considered in \cite{W13}, \cite{R12}.
Physical applications of relative differential cohomology groups have been sketched in \cite{K05} and \cite{S06b}.
Applications to Chern-Simons theory are discussed in \cite{W13} and \cite{B13}.

It seems that a systematic discussion of notions and models for relative differential cohomology including e.g.~uniqueness, module structures, long exact sequences etc.~is still missing.
In the present paper we treat the case of relative differential (ordinary) cohomology.
As in the preceding paper \cite{BB13} we work with the group $\widehat H^*(X;\Z)$ of differential characters as a model for differential cohomology.
The definition and elementary properties of differential characters are easily transferred from absolute to relative cohomology.

We consider the following situation: let $X$ be a smooth manifold and $i_A:A \to X$ the embedding of a smooth submanifold. 
There are two ways to define the relative singular homology: either as the homology of the (smooth singular) mapping cone complex $C_*(i_A;\Z)$ or as the homology of the quotient complex $C_*(X,A;\Z):= C_*(X;\Z) / \im({i_A}_*)$.
There arise two different notions of relative cycles and hence two notions of relative differential characters.

The first option was treated in \cite{BT06} and will be reviewed in Section~\ref{subsec:rel_diff} below. 
The characters on $Z_{k-1}(i_A;\Z)$ thus obtained are called \emph{relative differential characters}.
We adopt this notion, although it would also be appropriate to call them \emph{mapping cone characters}.
We denote the corresponding group of relative differential characters by $\widehat H^k(i_A;\Z)$.
It is a differential refinement of the relative cohomology $H^k(i_A;\Z) \cong H^k(X,A;\Z)$. 
In fact, the notion of relative differential characters is established in \cite{BT06} not just for embeddings but for any smooth maps $\varphi:A \to X$.
This way one obtains a differential refinement $\widehat H^k(\varphi;\Z)$ of the mapping cone cohomology $H^k(\varphi;\Z)$.   
These characters are treated also in \cite[Ch.~8]{BB13}, where we derive a long exact sequences that relates the groups of relative and absolute differential characters.

The second option has appeared in \cite{HL01} for the special case of the inclusion of the boundary $i_{\partial M}:\partial M \to M$ of a smooth manifold with boundary.
We treat this version of relative differential cohomology as groups of characters on $Z_{k-1}(X,A;\Z)$ in detail in Section~\ref{subsec:rel_diff_par}.
Here $A \subset X$ is an arbitrary smooth submanifold.
We denote the corresponding group of differential characters by $\widehat H^k(X,A;\Z)$.
It yields another differential refinement of the relative cohomology $H^k(X,A;\Z)$. 
We show that the group $\widehat H^k(X,A;\Z)$ corresponds to the subgroup of \emph{parallel} characters in $\widehat H^k(i_A;\Z)$.
In this sense, $\widehat H^k(i_A;\Z)$ is finer as a refinement of $H^k(X,A;\Z)$ than $\widehat H^k(X,A;\Z)$.
We derive a long exact sequence that relates the group $\widehat H^k(X,A;\Z)$ to absolute differential cohomology groups on $X$ and $A$.

We clarifiy the relation of the two notions of relative differential cohomology above to another notion that has appeared in the literature, namely the relative Hopkins-Singer groups $\check{H}^k(\varphi;\Z)$ for a smooth map $\varphi:A \to X$ and $\check{H}^k(i_A;\Z)$ for the embedding $i_A:A\to X$ of a smooth submanifold.
These groups have been constructed in \cite{BT06}.
It is shown there that $\check{H}^k(\varphi;\Z)$ is a subquotient of $\widehat H^k(\varphi;\Z)$.
In Section~\ref{subsec:H_check} we show that $\widehat H^k(X,A;\Z)$ is a subgroup of $\check H^k(i_A;\Z)$. 

In Chapter~\ref{sec:products} we discuss internal and external products in differential cohomology.
The internal product and ring structure on $\widehat H^*(X;\Z)$ was constructed first in \cite{CS83}.
Uniqueness of the ring structure was proved in \cite{SS08} and \cite{BB13}.
The proof in \cite[Ch.~6]{BB13} starts from an axiomatic definition of internal and external products and ends up with a new formula for the latter.
In that sense it is constructive.
In the present paper we give a new proof of the key lemma in the uniqueness proof from \cite[Ch.~6]{BB13}.
This new proof starts from the original definition in \cite{CS83} and ends up with the formula in \cite[Ch.~6]{BB13}.
Further, we use the methods from \cite[Ch.~6]{BB13} to construct a product of absolute and relative differential characters.
This provides the graded group $\widehat H^*(\varphi;\Z)$ of relative differential characters with the structure of a module over the ring $\widehat H^*(X;\Z)$ of absolute differential characters.
The module structure is natural with respect to pull-back and the structure maps (curvature, covariant derivative, characteristic class and topological trivializations) are multiplicative.

Last but not least, in Chapter~\ref{sec:fiber_int_trans} we construct fiber integration of relative differential characters and transgression maps as we did for absolute differential characters in \cite[Ch.~7--9]{BB13}.
To some extent thus, the present work is a ``relativization`` of the results obtained in \cite{BB13} on the absolute differential cohomology ring $\widehat H^*(X;\Z)$ of a smooth manifold $X$.
In fact, the ''relativization'' is a generalization of those results from absolute to relative differential cohomology.
The results for absolute differential cohomology are reproduced as a special case.
We show that fiber integration in fiber products is compatible with cross products of characters, and we derive the up-down formula from this.  
We generalize in two ways a result from \cite{BB13} on integration over fibers that bound:
For integration of relative differential characters over fibers that bound, we find topological trivializations of the integrated characters as in \cite{BB13}.
For integration of absolute characters in fiber bundles that bound along a smooth map into the base -- a notion we introduce in Section~\ref{subsec:fiber_boundary} -- we show that the integrated characters admit sections along that map with covariant derivatives prescribed by fiber integration.

The methods in \cite{BB13} use representations of smooth homology classes by certain geometric cycles, namely Kreck's stratifolds \cite{K10}.
In the present paper we need to adapt these representations to mapping cone cohomology classes. 
This is done in Chapter~\ref{sec:stratifolds} below.  
It provides the necessary prerequisites from relative (or mapping cone) stratifold homology that are needed in the rest of the paper.

\bigskip
\emph{Acknowledgments.}
It is a great pleasure to thank Christian B\"ar and Matthias Kreck for very helpful discussions.
Moreover, the author thanks \emph{Sonderforschungsbereich 647} funded by \emph{Deutsche Forschungsgemeinschaft} for financial support.

\section{Stratifold homology}\label{sec:stratifolds}

In this chapter we construct relative stratifold homology as a geometric homology isomorphic to mapping cone homology of a smooth map $\varphi:A \to X$.
We first discuss the concept of thin chains from \cite{BB13}.
This yields the notion of refined fundamental classes of closed oriented smooth manifolds or stratifold.
We introduce the notion of geometric relative cycle and their refined fundamental classes.
We prove that the bordism theory of relative stratifolds in $(X,A)$ is isomorphic to the smooth singular mapping cone homology of a smooth map $\varphi:A \to X$.
Finally, we adapt the construction of the pull-back operation of geometric cycles and of the transfer map for smooth singular cycles in the base of a fiber bundle $\pi:E \to X$ with closed oriented fibers from \cite[Ch.~3]{BB13} to mapping cone relative homology.
 
\subsection{Thin chains}\label{sec:thinchains}
We briefly recall the concept of thin chains defined in \cite[Ch.~3]{BB13} and certain equivalence relations on singular chains and cycles, respectively.
Let $C_k(C;\Z)$ denote the abelian group of smooth singular $k$-chains in a smooth manifold $X$.
\index{+CXZ@$C_*(X;\Z)$, group of singular chains}
Let $Z_k(X;\Z)$ and $B_k(X;\Z)$ denote the subgroups of smooth singular $k$-cycles and $k$-boundaries, respectively.
\index{+ZXZ@$Z_*(X;\Z)$, group of singular cycles}
\index{+BXZ@$B_*(X;\Z)$, group of singular boundaries}
A \emph{thin $k$-chain} is a chain $s \in C_k(X;\Z)$ such that for every $k$-form $\omega \in \Omega^k(X)$, we have $\int_s \omega=0$.
\index{thin chain}
This happens for instance if $s$ is supported in a $(k-1)$-dimensional submanifold.
Thin chains are preserved by the boundary operator and thus form a subcomplex $S_*(X;\Z) \subset C_*(X;\Z)$.
\index{+SXZ@$S_*(X;\Z)$, complex of thin singular chains}

\subsubsection{Degenerate chains.}
Let $\Delta^k:= \{ \sum_{i=0}^k t_i e_i \,|\, \sum_{i=0}^k t_i=1 \} \subset R^{k+1}$ be the standard $k$-simplex.
Let $l_j:\Delta^{k+1} \to \Delta^k$ be the $j$-th degeneracy map.
A smooth singular $(k+1)$-simplex $\sigma:\Delta^{k+1} \to X$ is called \emph{degenerate}, it if is of the form $\sigma =\sigma'\circ l_j$ for some $k$-simplex $\sigma'$ and $j \in \{0,\ldots,k\}$.
Let $D_k(X;\Z) \subset C_k(X;\Z)$ be the submodule generated by degenerate simplexes.
Elements of $D_*(X;\Z)$ are called \emph{degenerate chains}.
\index{degenerate chain}
It is easy to see that degenerate chains are preserved by the boundary operator $\partial$ of the singular chain complex.
Thus $(D_*(X;\Z),\partial)$ is a subcomplex which is well-known to have vanishing homology \cite{T38}.
Hence the homology of the quotient complex $C_*(X;\Z) / D_*(X;\Z)$ is canonically isomorphic to the smooth singular homology $H_*(X;\Z)$. 
\index{+HXZ@$H_*(X;\Z)$, smooth singular homology}
\index{homology!singular $\sim$}

Degenerate chains are special examples of thin chains, i.e.~$D_*(X;\Z) \subset S_*(X;\Z)$, since differential forms vanish upon pull-back by degeneracy maps.
In particular, any degenerate cycle $z \in Z_*(X;\Z) \cap D_*(X;\Z)$ is the boundary of a thin chain:
since $[z]=0 \in H_*(D_*(X;\Z))$, we find a chain $c \in D_*(X;\Z) \subset S_*(X;\Z)$ such that $\partial c = z$.
This might not be the case for arbitrary thin cycles.

\subsubsection{The mapping cone complex.}
Let $\varphi:A \to X$ be a smooth map.
We denote by $C_k(\varphi;\Z) := C_k(X;\Z) \times C_{k-1}(A;\Z)$ the group of $k$-chains in the \emph{mapping cone complex} of $\varphi$.
\index{CvarphiZ@$C_*(\varphi;\Z)$, mapping cone complex}
\index{mapping cone!complex}
The differential $\partial_\varphi:C_k(\varphi;\Z) \to C_{k-1}(\varphi;\Z)$ of the mapping cone complex is defined as $\partial_\varphi(s,t):=(\partial s + \varphi_*t,-\partial t)$.
\index{+Delvarphi@$\partial_\varphi$, mapping cone differential}
\index{mapping cone!differential}
We denote by $Z_k(\varphi;\Z)$ and $B_k(\varphi;\Z)$ the $k$-cycles and $k$-boundaries of this complex.
Moreover, set $S_k(\varphi;\Z):= S_k(X;\Z) \times S_{k-1}(A;\Z)$ for the space of thin chains in the mapping cone complex.
\index{+ZvarphiZ@$Z_*(\varphi;\Z)$, group of mapping cone cycles}
\index{mapping cone!cycles}
\index{+BvarphiZ@$B_*(\varphi;\Z)$, group of mapping cone boundaries}
\index{+SvarphiZ@$S_*(\varphi;\Z)$, group of thin mapping cone chains}

The homology of the mapping cone complex is denoted by $H_*(\varphi;\Z)$.
\index{+HvarphiZ@$H_*(\varphi;\Z)$, mapping cone homology}
\index{mapping cone!homology}
\index{homology!mapping cone $\sim$}
The short exact sequence of chain complexes 
\begin{equation*}
\xymatrix{
0 \ar[r] & C_*(X;\Z) \ar[r] & C_*(\varphi;\Z) \ar[r] & C_{*-1}(A;\Z) \ar[r] & 0 
}
\end{equation*}
induces a long exact sequence of homology groups:
\begin{equation*}
\xymatrix{
\ldots \ar[r] & H_*(X;\Z) \ar[r] & H_*(\varphi;\Z) \ar[r] & H_{*-1}(A;\Z) \ar^{\varphi_*}[r] & H_{*-1}(X;\Z) \ar[r] & \ldots \,.
}
\end{equation*}
The connecting homomorphism coincides with the map on homology induced by $\varphi$.

The mapping cone cochain complex $(C^*(\varphi;\Z),\delta_\varphi)$ associated with the cochain map $\varphi^*:C^*(X;\Z) \to C^*(A;\Z)$ coincides with the dual complex to $(C_*(\varphi;\Z),\partial_\varphi)$.
\index{+Deltavarphi@$\delta_\varphi$, mapping cone codifferential}
\index{mapping cone!codifferential}
The cohomology of this complex is denoted by $H^*(\varphi;\Z)$ and will be referred to as the \emph{mapping cone cohomology}.
\index{+HuvarphiZ@$H^*(\varphi;\Z)$, mapping cone cohomology}
\index{mapping cone!cohomology}
\index{cohomology!mapping cone $\sim$}
We obtain the corresponding long exact sequence:
\begin{equation*}
\xymatrix{
\ldots \ar[r] & H^{*-1}(A;\Z) \ar[r] & H^*(\varphi;\Z) \ar[r] & H^*(X;\Z) \ar^{\varphi^*}[r] & H^*(A;\Z) \ar[r] & \ldots \,. 
}
\end{equation*}

In case $\varphi = i_A:A \hookrightarrow X$ is the inclusion of a subset, we have a natural chain map $q:C_*(i_A;\Z) \to C_*(X,A;\Z)$, $(v,w) \mapsto v + \im({i_A}_*)$.
Here $C_k(X,A;\Z) := C_k(X;\Z) / {i_A}_*(C_k(A;\Z))$ is the \emph{relative chain complex}.
The long exact sequences together with the five lemma provide identifications $H_*(\varphi;\Z) \cong H_*(X,A;\Z)$ and $H^*(\varphi;\Z) \cong H^*(X,A;\Z)$.

Let $(\Omega^*(\varphi),d_\varphi)$ be the relative or mapping cone de Rham complex as defined in \cite[p.~78]{BT82}.
\index{+Omegavarphi@$\Omega^*(\varphi)$, mapping cone de Rham complex}
\index{mapping cone!de Rham complex}
Thus $\Omega^k(\varphi) := \Omega^k(X) \times \Omega^{k-1}(A)$ with the differential $d_\varphi(\omega,\vartheta) := (d\omega,\varphi^*\omega - d\vartheta)$.
\index{+Dvarphi@$d_ \varphi$, mapping cone de Rham differential}
\index{mapping cone!de Rham differential}
We denote the cohomology of this complex by $H^*_\mathrm{dR}(\varphi)$ and call it the \emph{mapping cone de Rham cohomology}. 
\index{+Hdrvarphi@$H^*_\mathrm{dR}(\varphi)$, mapping cone de Rham cohomology}
\index{mapping cone!de Rham cohomology}

Integration of a pair $(\omega,\vartheta) \in \Omega^k(\varphi)$ over a chain $(a,b) \in C_k(\varphi;\Z)$ is defined in the obvious manner:
\index{integration!of mapping cone forms}
\index{mapping cone!integration of forms}
\begin{equation*}
\int_{(a,b)} (\omega,\vartheta) 
:=
\int_a \omega + \int_b \vartheta \,.  
\end{equation*}
Thus pairs of differential forms $(\omega,\varphi) \in \Omega^k(\varphi)$ can be considered as differential cochains in $C^k(\varphi;\R)$. 
Moreover, by the mapping cone Stokes theorem
\begin{equation}\label{eq:stokes}
\int_{\partial_\varphi(a,b)} (\omega,\vartheta)
=
\int_{(\partial a + \varphi_*b,-\partial b)} (\omega,\vartheta)
=
\int_{(a,b)} (d\omega,\varphi^*\omega - d\vartheta) 
=
\int_{(a,b)} d_\varphi(\omega,\vartheta)
\end{equation}
the inclusion $(\Omega^*(\varphi);d_\varphi) \hookrightarrow (C^*(\varphi;\R),\delta_\varphi)$ is a chain map.

The short exact sequence of de Rham complexes 
\begin{equation*}
\xymatrix{
0 \ar[r] & \Omega^{*-1}(A) \ar[r] & \Omega^*(\varphi) \ar[r] & \Omega^*(X) \ar[r] & 0
}
\end{equation*}
gives rise to the long exact sequence 
\begin{equation*}
\xymatrix{
\ldots \ar[r] & H^{k-1}_\mathrm{dR}(A) \ar[r] & H^k_\mathrm{dR}(\varphi) \ar[r] & H^k_\mathrm{dR}(X) \ar[r] & H^k_\mathrm{dR}(A) \ar[r] & \ldots
}
\end{equation*}
in de Rham cohomology.
The de Rham theorem together with the five Lemma yields the identification $H^*_\mathrm{dR}(\varphi) \cong H^*(\varphi;\R)$.

\subsubsection{Equivalence classes}
Let $c \in C_k(X;\Z)$ be a smooth singular $k$-chain in $X$.
We consider its equivalence class modulo thin chains, i.e.~its image in the quotient $C_k(X;\Z)/S_k(X;\Z)$.
We denote this class by $[c]_{S_k}$.
\index{+CSk@$[c]_{S_k}$, equivalence class mod.~thin chains}
Similarly, for a smooth singular $k$-cycle $z \in Z_k(X;\Z)$ we consider its equivalence class modulo boundaries of thin chains, i.e.~the image in the quotient $Z_k(X;\Z) / \partial S_{k+1}(X;\Z)$.
We denote this class by $[z]_{\partial S_{k+1}}$.
\index{+ZSk@$[z]_{\partial S_k}$, equivalence class mod.~boundaries of thin chains}
Finally, for a cycle $(s,t) \in Z_k(\varphi;\Z)$ of the mapping cone complex we consider its equivalence class $[s,t]_{\partial_\varphi S_{k+1}} \in Z_k(\varphi;\Z) / \partial S_{k+1}(\varphi;\Z)$ modulo boundaries of thin chains. 
\index{+StSvarphik@$[s,t]_{\partial_\varphi S_k}$, equivalence class mod.~boundaries of thin chains}

These equivalence classes show up rather naturally when considering fundamental classes of oriented closed manifolds or $p$-stratifolds, as shall be explained in the next section.

Moreover, by definition of thin chains, integration of differential forms over smooth singular cycles descends to the equivalence classes modulo boundaries of thin chains.
Thus we have well-defined integration maps
\begin{align*}
Z_k(\varphi;\Z)/\partial_\varphi S_{k+1}(\varphi;\Z) \times \Omega^k(\varphi) &\to \R\,, \\
([s,t]_{\partial_\varphi S_{k+1}},(\omega,\vartheta)) &\mapsto \int_{[s,t]_{\partial_\varphi S_{k+1}}} (\omega,\vartheta) := \int_{(s,t)} (\omega,\vartheta) \,,
\end{align*}
and similary for absolute cycles and differential forms, see \cite[Ch.~3]{BB13}. 

\subsection{Refined fundamental classes}
Let $M$ be a closed oriented $k$-dimensional smooth manifold.
Triangulation yields a fundamental cycle $z \in Z_k(M;\Z)$. 
Any two such cycles differ by a boundary $\partial a \in B_k(M;\Z)$.
For dimensional reasons, we have $C_{k+1}(M;\Z) = S_{k+1}(M;\Z)$.
Thus the fundamental class of $M$ may be regarded as equivalence class in $Z_k(M;\Z) / \partial S_{k+1}(M;\Z)$.
We denote this class by $[M]_{\partial S_{k+1}}$.
A smooth map $f:M \to X$ yields an induced class $f_*[M]_{\partial S_{k+1}} \in Z_k(X;\Z) / \partial S_{k+1}(X;\Z)$.
We refer to $[M]_{\partial S_{k+1}}$ (resp.~$f_*[M]_{\partial S_{k+1}}$) as the \emph{refined fundamental class} of $M$ (in $X$).
\index{+MSk@$[M]_{\partial S_{k+1}}$, refined fundamental class}
\index{refined fundamental class}
\index{fundamental class!refined $\sim$}

Now let $M$ be a compact oriented smooth $k$-dimensional manifold with boundary and $i_{\partial M}: \partial M \to M$ the inclusion of the boundary.
By triangulation we obtain a smooth singular chain $x \in C_k(M;\Z)$ together with a smooth singular cycle $y \in Z_{k-1}(\partial M;\Z)$ such that $\partial x = y$.
Thus the pair $(x,y) \in C_k(M;\Z) \times C_{k-1}(\partial M;\Z)$ is a cycle in $Z_k(i_{\partial M};\Z)$.
Moreover, $y$ is a fundamental cycle of $\partial M$.

For any two such chains $x,x' \in C_k(M;\Z)$, obtained from triangulations of $M$, we find a chain $a \in C_{k+1}(M;\Z) = S_{k+1}(M;\Z)$ such that $x-x'-\partial a$ is supported in $\partial M$.
Since $M$ is supposed to be $k$-dimensional, we have \mbox{$x-x'-\partial a=:b \in C_k(\partial M;\Z)= S_k(\partial M;\Z)$}.
Thus $M$ comes together with a well-defined equivalence class in $C_k(M;\Z)/S_k(M;\Z)$.
We denote this class by $[M]_{S_k}$.

We may also collect the data on $\partial M$ into the equivalence class:
any two pairs $(x,y)$ and $(x',y')$, obtained as above from traingulations, differ by the relative boundary $\partial (a,b) = (\partial a + b,-\partial b)$ of a pair $(a,b) \in C_{k+1}(M;\Z) \times C_k(\partial M;\Z) = C_{k+1}(i_{\partial M};\Z)$. 
For dimensional reasons, we then have $C_{k+1}(\varphi;\Z) = S_{k+1}(\varphi;\Z)$.  
Thus the pair $(M,\partial M)$ comes together with a well-defined equivalence class in $Z_k(i_{\partial M};\Z)/\partial S_{k+1}(i_{\partial M};\Z)$.
We denote this class by $[M,\partial M]_{\partial S_{k+1}}$.

A commutative diagram of smooth maps
\begin{equation*}
\xymatrix{
\partial M  \ar[rr] \ar_(0.4)g[dr] && M \ar^(0.58)f[dr] & \\
& A \ar_\varphi[rr] && X
} 
\end{equation*}
yields an induced class $(f,g)_*[M,\partial M]_{\partial S_{k+1}} \in Z_k(\varphi;\Z)/\partial_\varphi S_{k+1}(\varphi;\Z)$.
We refer to $[M,\partial M]_{\partial S_{k+1}}$ (resp.~$(f,g)_*[M,\partial M]_{\partial S_{k+1}}$) as the \emph{refined fundamental class} of $(M,\partial M)$ (in $(X,A)$).
\index{+MdelMSk@$[M,\partial M]_{\partial S_{k+1}}$, refined fundamental class}

Restriction to the boundary maps the refined fundamental class 
\mbox{$[M,\partial M]_{\partial S_{k+1}} \in Z_k(i_{\partial M};\Z)/\partial S_{k+1}(i_{\partial M};\Z)$} of $(M,\partial M)$ to the refined fundamental class \mbox{$[\partial M]_{\partial S_k} \in Z_{k-1}(\partial M;\Z)/\partial S_k(\partial M;\Z)$} of the boundary (and similarly for the classes in $(X,A)$).

\begin{remark}
In the same way as explained for a closed oriented smooth manifold, we can associate to a closed oriented $p$-stratifold $M$ a refined fundamental class $[M]_{\partial S_{k+1}} \in Z_k(M;\Z)/\partial S_{k+1}(M;\Z)$ obtained from a triangulation of the top-dimensional stratum.  
Similarly, to a compact oriented $p$-stratifold $M$ with closed boundary $N = \partial M$ we associate an equivalence class $[M,N]_{\partial S_{k+1}} \in Z_k(i_{\partial M};\Z)/\partial S_{k+1}(i_{\partial M};\Z)$.
Restriction to the boundary maps the refined fundamental class $[M,N]_{\partial S_{k+1}}$ to the refined fundamental class $[\partial M]_{\partial S_k}$ of the boundary.  
\end{remark}

\subsection{Geometric cycles}
We use a notion of geometric cycles similar to the one in \cite[Chap.~4]{BB13}.
In contrast to the cycles and boundaries approach formulated there, in the present work we use the original construction from \cite{K10} of geometric or stratifold homology $\HH_*(X)$ as the bordism theory of stratifolds in $X$.

The concept of geometric cycles is motivated by the aim to represent singular homology classes in a smooth manifold $X$ by smooth submanifolds.
By work of Thom \cite{T54} this is not allways possible: in general, there are homology classes not representable as fundamental classes of submanifolds, see e.g.~\cite{BHK02}.
Replacing smooth manifolds by certain types of singular manifolds, it is possible to represent all homology classes by certain geometric spaces.
This is achieved either by pseudomanifolds in the sense of Baas and Sullivan, or by stratifolds in the sense of Kreck \cite{K10}.
Here we use stratifolds, as we did in \cite{BB13}.

Let $n\in \N_0$.
The abelian semigroup $\ZZ_k(X)$ of \emph{geometric $k$-cycles} is the set of smooth maps $f:M \to X$, where $M$ is a $k$-dimensional oriented compact $p$-stratifold without boundary (see \cite[pp.~35 and 43]{K10} for the definition of stratifolds).
The semigroup structure is defined by disjoint union.  
For $n<0$ put $\ZZ_n(X):=\{0\}$.
For a smooth map $f:X \to Y$ we define $f_* : \ZZ_n(X) \to \ZZ_n(Y)$ by concatenation, i.e.~$f_*(M\xrightarrow{g}X) := M\xrightarrow{f\circ g}Y$.
\index{+ZX@$\ZZ_*(X)$, group of geometric cycles}
\index{geometric cycle}

For an oriented stratifold $S$ we denote by $\overline{S}$ the same stratifold $S$ with reversed orientation.
A bordism between geometric $k$-cycles $f:S \to X$ and $f':S' \to X$ in $X$ is a smooth map $F:W \to X$ from a $(k+1)$-dimensional compact oriented $p$-stratifold with boundary $\partial W = \overline{S} \sqcup S'$ such that $F|_{S} = f$ and $F|_{S'} = f'$.
Geometric $k$-cycles $S \xrightarrow{f} X$ and $S' \xrightarrow{f'} X$ are called \emph{bordant} if there exists a bordism between them.
This defines an equivalence relation on $\ZZ_k(X)$.
For transience note that stratifolds with boundary can be glued along their boundary.
 
The bordism class of a geometric $k$-cycle $S \xrightarrow{f} X$ is denoted by $[S \xrightarrow{f} X]$.
The \emph{$k$-th stratifold homology of $X$} is the set of bordism classes $\HH_k(X):= \{[S \xrightarrow{f} X] \,|\, f \in \ZZ_k(X) \}$ of geometric $k$-cycles in $X$.
\index{+HX@$\HH_*(X)$, stratifold homology}
\index{stratifold homology}
\index{homology!stratifold $\sim$}
Orientation reversal defines an involution $\overline{\phantom{X}}$ on $\ZZ_k(X)$ which maps $S \xrightarrow{f} X$ to $\overline{S} \xrightarrow{f} X$.
The involution is compatible with the bordism relation and thus yields a well-defined involution on $\HH_k(X)$.
The stratifold homology $\HH_k(X)$ is an abelian group where the inverse of $[S \xrightarrow{f} X]$ is given by $[\overline{S} \xrightarrow{f} X]$.
A null-bordism of $S \sqcup \overline{S} \xrightarrow{f \sqcup f} X$ is given by $S \times [0,1] \xrightarrow{F} X$, $F(s,t):=f(s)$. 

The stratifold homology group $\HH_k(X)$ can equivalently be defined as the quotient of the semigroup $\ZZ_k(X)$ of geometric cycles by a sub semigroup of geometric boundaries, as in \cite[Ch.~3]{BB13}.
In the present work we use the bordism theory formulation, since this seems more suitable for generalization to relative homology. 

A $k$-dimensional closed $p$-stratifold $M$ has a fundamental class $[M] \in H_k(M;\Z)$, see \cite[p.~186]{K10}.
More precisely, triangulation of the top dimensional stratum yields a smooth singular cycle in $Z_k(M;\Z)$.
Any two such cycles differ by the boundary of a smooth singular chain in $C_{k+1}(M;\Z) = S_{k+1}(M;\Z)$.
Thus we have a well-defined equivalence class $[M]_{\partial S_{k+1}} \in Z_k(M;\Z) / \partial S_{k+1}(M;\Z)$.
As in the previous section, we call it the \emph{refined fundamental class} of $M$.  
Composition with smooth maps yields a well-defined semigroup homomorphism $\ZZ_k(X) \to Z_k(X;\Z) / \partial S_{k+1}(X;\Z)$ mapping the geometric cycle $M\xrightarrow{f}X$ to the equivalence class $f_*[M]_{\partial S_{k+1}}$.
It descends to a group isomorphism $\HH_k(X) \to H_k(X;\Z)$, $[M\xrightarrow{f}X] \mapsto \big[f_*[M]_{\partial S_{k+1}}\big]$, see \cite[p.~186]{K10}.

Differential forms in $\Omega^*(X)$ can be pulled back to a stratifold $S$ along a smooth map $f:S \to X$.
Integration of differential forms over (refined fundamental classes of) compact oriented stratifolds is well-defined and the Stokes theorem holds \cite{E05}.
For a geometric cycle $\zeta \in \ZZ_k(X)$, represented by $S\xrightarrow{f}X$, and a differential form $\omega \in \Omega^k(X)$, we write:
$$
\int_{[\zeta]_{\partial S_{k+1}}} \omega 
=
\int_{[S]_{\partial S_{k+1}}} f^*\omega
=
\int_S f^*\omega \,.
$$

\subsection{Relative stratifold homology}
In this section we introduce relative stratifold homology by adapting the well-known definition of relative bordism groups to stratifolds.
More precisely we modify the classical notion in order to represent the mapping cone homology $H_*(\varphi;\Z)$ of a smooth map as a bordism theory of stratifolds.
The standard construction yields a long exact sequence that relates the absolute and relative stratifold homology groups.

\subsubsection{Geometric relative cycles}
Let $k \geq 0$.
Let $S$ be a $k$-dimensional compact oriented regular $p$-stratifold with boundary $\partial S = T$, and let $\partial T=0$.
By a smooth map $(S,T) \xrightarrow{(f,g)} (X,A)$ we understand a pair of smooth maps $f:S \to X$ and $g:T \to A$ such that the diagram  
\begin{equation*}
\xymatrix{
T \ar[rr] \ar_(0.4)g[dr] && S \ar^(0.58)f[dr] & \\
& A \ar_\varphi[rr] && X
} 
\end{equation*}
commutes.
We define the abelian semigroup 
$$
\ZZ_k(\varphi) := \{ (S,T) \xrightarrow{(f,g)} (X,A) \,|\, \partial S = T, \partial T= \emptyset\}
$$ 
of \emph{geometric relative cycles}, where the semigroup structure is given by disjoint union.
For $k=0$, we have $\ZZ_0(\varphi) = \ZZ_0(X)$.
\index{+Zvarphi@$\ZZ_*(\varphi)$, group of geometric relative cycles}
\index{relative geometric cycle}
\index{geometric cycle!relative $\sim$}

A \emph{bordism} from $(S_0,T_0) \xrightarrow{(f_0,g_0)} (X,A)$ to $(S_1,T_1) \xrightarrow{(f_1,g_1)} (X,A)$ is a smooth map $(W,M) \xrightarrow{(F,G)} (X,A)$ with the following properties:
\index{bordism}
\begin{itemize}
\item[$\bullet$] $W$ is a $(k+1)$-dimensional compact oriented regular $p$-stratifold with boundary $\partial W$.
\item[$\bullet$] The boundary $\partial W$ is the union of compact oriented stratifolds (diffeomorphic to) $\overline{S_0}$, $S_1$ and $M$ such that $\partial M = \partial S_1 \sqcup \partial \overline{S_0}$ and $\overline{S_0} \cap M = \partial \overline{S_0}$ and $S_1 \cap M = \partial S_1$.  
\item[$\bullet$] On the boundary components $S_i$, $i=1,2$, we have $F|_{S_i} = f_i$.
\end{itemize}
Geometric relative $k$-cycles in $(X,A)$ are called \emph{bordant} if there exists a bordism between them.
\index{bordant}
This defines an equivalence relation on $\ZZ_k(\varphi)$. 
For transience of the relation, we note that bordisms can be glued along parts of the boundary, as explained in \cite[Sec.~A.2]{K10}.
The bordism class of a geometric relative $k$-cycle $(f,g) \in \ZZ_k(\varphi)$ is denoted by $[f,g]$.
\index{+Fg@$[f,g]$, bordism class of a smooth map}

For a pair of compact oriented $p$-stratifolds $(S,T)$ as above, we denote by $(\overline{S},\overline{T})$ the same stratifolds with reversed orientation.
Orientation reversal defines an involution on geometric relative cycles which maps $(S,T) \xrightarrow{(f,g)} (X,A)$ to $(\overline{S},\overline{T})\xrightarrow{(f,g)}(X,A)$.
The involution is compatible with the bordism relation in the sense that orientation reversal on bordisms mapping $(W,M) \xrightarrow{(F,G)} (X,A)$ to $(\overline{W},\overline{M}) \xrightarrow{(F,G)} (X,A)$ induces orientation reversal on the cycles related by the bordisms.
In other words, we have a well-defined involution on bordism classes $\overline{\phantom{X}}:[f,g] \mapsto \overline{[f,g]} := [\overline{(f,g)}]$. 

We define the \emph{$k$-th relative stratifold homology group} $\HH_k(\varphi)$ as the set of all bordism classes $[f,g]$ of geometric relative $k$-cycles $(f,g) \in \ZZ_k(\varphi)$.
\index{+Hkvarphi@$\HH_*(\varphi)$ relative stratifold homology}
\index{stratifold homology!relative $\sim$}
\index{homology! relative stratifold $\sim$} 
Given a geometric relative $k$-cycle $(S,T) \xrightarrow{(f,g)} (X,A)$, we find a null-bordism $(W,M) \xrightarrow{(F,G)} (X,A)$ of $(f,g) \sqcup \overline{(f,g)}$ by setting $W:= S \times [0,1]$, $M:= \partial S \times [0,1]$ and $F(s,t):= f(s)$, $G(s,t) := g(s)$.
The semigroup structure on $\ZZ_k(\varphi)$ thus yields an abelian group structure on $\HH_k(\varphi)$, where the inverse of a bordism class $[f,g]$ is given by $\overline{[f,g]}$.
In the following we will write $[f,g] + [f',g']$ instead of $[(f,g) \sqcup (f',g')]$ and correspondingly $-[f,g]$ instead of $\overline{[f,g]}$. 

\subsubsection{Long exact sequence.}
The smooth map $\varphi:A \to X$ induces a semigroup homomorphism $\varphi_*:\ZZ_k(A) \to \ZZ_k(X)$, $f \mapsto \varphi \circ f$.
Moreover, we have the canonical semigroup homomorphisms $i:\ZZ_k(X) \to \ZZ_k(\varphi)$, $f \mapsto (f,\emptyset)$,
and $p:\ZZ_k(\varphi) \to \ZZ_{k-1}(A)$, 
$(f,g) \mapsto g$.
To simplify notation, we write geometric cycles in $X$ as $\zeta \in \ZZ_k(X)$ and cycles relative to $\varphi$ as pairs $(\zeta,\tau) \in \ZZ_k(\varphi)$.
The bordism class of a geometric relative $k$-cycle $(\zeta,\tau) \in \ZZ_k(\varphi)$ is denoted by $[\zeta,\tau]$.
Instead of the empty map $\emptyset$ we write $0$ for the neutral element in the semigroup $\ZZ_{k-1}(A)$ (and similary for the other semigroups of geometric cycles).  
The maps just defined now read $\varphi_*: \zeta \mapsto \varphi_*\zeta$ and $i:\zeta \mapsto (\zeta,0)$ and $p:(\zeta,\tau) \mapsto \tau$. 

These semigroup homomorphisms are bordism invariant and hence descend to group homomorphisms on stratifold homology.
They fit into the following long exact sequence:

\begin{prop}[Long exact sequence]
Let $\varphi:A \to X$ be a smooth map and $k \geq 0$.
Then we have the following exact sequence relating absolute and relative stratifold homology groups:
\begin{equation*}
\xymatrix{
\ldots \ar[r] &
\HH_k(X) \ar^{j_*}[r] & \HH_k(\varphi) \ar^{p_*}[r] & \HH_{k-1}(A) \ar^{\quad\varphi_*}[r] & \ldots \ar[r] & \HH_0(\varphi) \ar[r] & 0 \,.
} 
\end{equation*}
\end{prop}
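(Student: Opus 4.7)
\emph{Plan.} The proof follows the standard blueprint for a long exact homology sequence, but with all constructions carried out geometrically via stratifold bordisms rather than by the zigzag lemma. First I would verify that $\varphi_*$, $j_*$ and $p_*$ are bordism invariant. This is immediate: push-forward along $\varphi$ of a bordism is a bordism in $X$; the map $j:\zeta \mapsto (\zeta,\emptyset)$ applied to an absolute bordism $F:W \to X$ produces the relative bordism $(F,\emptyset)$; and restricting a relative bordism $(W,M) \xrightarrow{(F,G)} (X,A)$ via $p$ yields the absolute bordism $G:M \to A$ between the boundary pieces, since $\partial M = \partial S_1 \sqcup \partial \overline{S_0}$.

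Next I would check that each composition vanishes by exhibiting explicit null-bordisms. For $j_* \circ \varphi_* = 0$: given $\tau:T \to A$ closed, take $W = T \times [0,1]$ with $F(t,s) := \varphi(\tau(t))$ and $M = T \times \{1\}$ with $G(t,1) := \tau(t)$; this trivializes $(\varphi\circ\tau,\emptyset)$ as a relative cycle. The composition $p_* \circ j_* = 0$ is trivial since $p_*(\zeta,\emptyset) = \emptyset$. For $\varphi_* \circ p_* = 0$: a relative cycle $(f,g):(S,T) \to (X,A)$ is itself a null-bordism of $\varphi\circ g:T \to X$ in $X$, because $\partial S = T$ and $f|_T = \varphi\circ g$ by commutativity.

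The main work lies in the three reverse inclusions, all of which are gluing constructions. For $\ker j_* \subseteq \im\varphi_*$: a null-bordism $(W,M) \xrightarrow{(F,G)} (X,A)$ of $(\zeta,\emptyset)$ has $\partial W = \overline{S} \sqcup M$ as closed stratifolds (since $\partial S = \emptyset$), so $F:W \to X$ is an absolute bordism between $\zeta$ and $\varphi \circ G$, giving $[\zeta] = \varphi_*[G]$. For $\ker p_* \subseteq \im j_*$: if $g:T \to A$ admits a null-bordism $G':V \to A$ with $\partial V = T$, glue to form the closed $k$-stratifold $S' := S \cup_T V$ and the map $F' := f \cup_T (\varphi \circ G'):S' \to X$, which is well-defined since $f|_T = \varphi\circ g = \varphi\circ G'|_T$. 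The relative bordism $W := (S \times [0,1]) \cup_{T \times \{1\}} V$ with boundary decomposition $\overline{S} \cup S' \cup (T \times [0,1])$ and the evident maps then shows $[f,g] = j_*[F':S' \to X]$. For $\ker \varphi_* \subseteq \im p_*$: if $\varphi\circ g:T \to X$ bounds via $f:S \to X$ with $\partial S = T$ and $f|_T = \varphi\circ g$, then $(f,g)$ is a relative cycle with $p_*[f,g] = [g]$.

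Finally, surjectivity of $j_*:\HH_0(X) \to \HH_0(\varphi)$ at the right end is immediate from the equality $\ZZ_0(\varphi) = \ZZ_0(X)$ noted earlier. The principal obstacle throughout is ensuring that each gluing produces a genuine compact oriented regular $p$-stratifold of the correct dimension with a properly decomposed boundary and coherent orientations; this rests on the stratifold collar and gluing lemmas from \cite[Sec.~A.2]{K10}, which were already invoked to establish transitivity of the bordism relation on $\ZZ_k(\varphi)$.
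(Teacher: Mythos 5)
Most of your argument coincides step for step with the paper's proof: the bordism invariance of $\varphi_*$, $j_*$, $p_*$, the explicit null-bordisms showing that the three compositions vanish, the treatment of $\ker j_*\subseteq\im\varphi_*$ and $\ker\varphi_*\subseteq\im p_*$, and the surjectivity at $\HH_0(\varphi)$ are exactly the constructions used there. The one step that does not work as written is $\ker p_*\subseteq\im j_*$. Your proposed bordism $W:=(S\times[0,1])\cup_{T\times\{1\}}V$ is not a $(k+1)$-dimensional stratifold with boundary: $V$ is $k$-dimensional and meets $S\times[0,1]$ only along the $(k-1)$-dimensional set $T\times\{1\}$, so points in the interior of $V$ have no $(k+1)$-dimensional neighbourhood in $W$. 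Relatedly, the claimed boundary decomposition $\overline S\cup S'\cup(T\times[0,1])$ violates the axioms of a relative bordism: taking $M=T\times[0,1]$ gives $\partial M=T\sqcup T$ instead of the required $\partial S'\sqcup\partial\overline S=\emptyset\sqcup T$, and $M\cap S'\supseteq T\times\{1\}\neq\partial S'=\emptyset$.

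The repair is the construction the paper uses. First form the closed oriented stratifold $N:=S\cup_T\overline V$ — note the orientation reversal on $V$, which you omit: since $\partial S=T=\partial V$ with the \emph{same} induced orientation, gluing $S\cup_T V$ does not yield a coherently oriented closed stratifold. Extend $f$ and $\varphi\circ G'$ to a smooth map $r:N\to X$, and then take the genuinely $(k+1)$-dimensional cylinder $W:=N\times[0,1]$ with $F(n,t):=r(n)$ and $M:=V\times\{0\}$, $G(v,0):=G'(v)$. Its boundary is $\overline N\times\{0\}\sqcup N\times\{1\}=\overline S\times\{0\}\cup M\cup N\times\{1\}$ with $\partial M=T\times\{0\}$, $\overline S\times\{0\}\cap M=T\times\{0\}$ and $N\times\{1\}\cap M=\emptyset$, so it is a bordism from $(S,T)\xrightarrow{(f,g)}(X,A)$ to $(N,\emptyset)\xrightarrow{(r,\emptyset)}(X,A)$, giving $[f,g]=j_*[r]$. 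Everything else in your write-up is sound.
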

\index{stratifold homology!long exact sequence}

\begin{proof}
Conceptually, the proof of exactness of the sequence is the same as for oriented smooth bordism homology, see e.g.~\cite[Ch.~21]{tD08}: 

\emph{Exactness at $\HH_k(X)$:}
Let $g:T \to A$ be a geometric cycle in $\ZZ_k(A)$.
Then we have $j(\varphi_*g)=(\varphi \circ g,\emptyset):(T,\emptyset) \to (X,A)$.
We set $W:=T \times [0,1]$ and $F:W \to X$, $F(x,t):=\varphi(g(x))$.
Then we have $\partial W = \overline{T} \times \{0\} \sqcup T \times \{1\}$.
Moreover, we set $M := T \times \{1\}$ and $G:M \to A$, $G(x,1):=g(x)$.
This defines a null bordism $(W,M) \xrightarrow{(F,G)} (X,A)$ of $j(g)=(\varphi \circ g,\emptyset)$.
Thus the composition $\HH_k(A) \xrightarrow{\varphi_*} \HH_k(X) \xrightarrow{j_*} \HH_k(\varphi)$ is the trivial map.

Now let $f:S \to X$ be a geometric cycle in $X$ such that $j_*([S\xrightarrow{f}X])=0 \in \HH_k(\varphi)$. 
Choose a null bordism $(W,M) \xrightarrow{(F,G)} (X,A)$ of $j(f) = (f,\emptyset):(S,\emptyset) \to (X,A)$.
Then we have $\partial M = \partial \overline{S} \sqcup \emptyset = \emptyset$ and $S \cap M = \partial S = \emptyset$, thus $\partial W = \overline{S} \sqcup M$.
Moreover, $F|_S = f$ and $F|_M = \varphi \circ G$.
Thus we have a geometric cycle $G:M \to A$, and a bordism $F:W \to X$ from $f$ to $\varphi \circ g$.
This shows $[S \xrightarrow{f} X] = \varphi_*[M \xrightarrow{g} A]$.  

\emph{Exactness at $\HH_k(\varphi)$:}
By definition, the image of the composition $\ZZ_k(X) \xrightarrow{j} \ZZ_k(\varphi) \xrightarrow{p} \ZZ_{k-1}(A)$ is the empty map, which represents the trivial bordism class. 

Now let $[(S,T) \xrightarrow{(f,g)} (X,A)] \in \HH_k(\varphi)$ be a relative bordism class with $p_*[f,g]=[g]=0 \in \HH_{k-1}(A)$.
Then we find a $k$-dimensional compact oriented stratifold $Q$ with boundary $\partial Q = T$ and a smooth map $G:Q \to A$ such that $G|_{\partial Q} = g$.
Glueing $S$ and $\overline{Q}$ along $T = \partial Q = \partial S$, we obtain a $k$-dimensional oriented closed stratifold $N:=S \cup_T \overline{Q}$.
We extend the maps $f$ and $\varphi \circ G$ to a smooth map $r: N \to X$.

It remains to construct a bordism $(W,M) \xrightarrow{(F,G)} (X,A)$ from $(S,T) \xrightarrow{(f,g)} (X,A)$ to $(N,\emptyset) \xrightarrow{(r,\emptyset)} (X,A)$.
To this end, set $W:= N \times [0,1]$ and $F:W \to X$, $F(n,t):= r(n)$.
Thus $\partial W = \overline{N} \times \{0\} \sqcup N \times \{1\}$. 
We set $M:= Q \times \{0\}$.
This yields a smooth map $(W,M) \xrightarrow{(F,G)} (X,A)$.

We have $\partial W = \overline{S} \times \{0\} \cup M \cup N \times \{1\}$.
Moreover $\partial M = \partial Q \times \{0\} = \partial S \times \{0\} = T$ and $S \times \{0\} \cap M = \partial S = T$ and $N \times \{1\} \cap N = \emptyset$. 
By construction, we have $F|_{S \times \{0\}} = f$ and $F|_{N \times \{1\}} = r$.
Thus we have constructed a bordism $(W,M) \xrightarrow{(F,G)} (X,A)$ from $(S,T) \xrightarrow{(f,g)} (X,A)$ to $(N,\emptyset) \xrightarrow{(r,\emptyset)} (X,A)$.
In other words, we have shown that $[f,g] = j_*[r,\emptyset]$.

\emph{Exactness at $\HH_{k-1}(A)$:}
Let $(S,T) \xrightarrow{(f,g)} (X,A) \in \ZZ_k(\varphi)$ be a geometric relative cycle.
Then we have $\partial S=T$ and $f|_{\partial S} = \varphi \circ g$.
Thus $S \xrightarrow{f} X$ is a null bordism of $\varphi_*(p(f,g)) = \varphi \circ g: T \to X$.   
In other words, the composition $\HH_k(\varphi) \xrightarrow{p_*} \HH_{k-1}(A) \xrightarrow{\varphi_*} \HH_{k-1}(X)$ is trivial.

Now let $T \xrightarrow{g} A$ be a geometric cycle in $\ZZ_{k-1}(A)$ such that $\varphi_*[g] = [T \xrightarrow{\varphi \circ g} X] =0 \in \HH_{k-1}(X)$.
Choose a null bordism $S \xrightarrow{F} X$ of $T \xrightarrow{\varphi \circ g} X$.
Then $(S,T) \xrightarrow{(F,g)} (X,A)$ is a relative geometric cycle in $\ZZ_k(\varphi)$ and $p(F,g) = g$.

\emph{Exactness at $\HH_0(\varphi)$:}
The map $j_*:\HH_0(X) \to \HH_0(\varphi)$ is induced by the isomorphism $j:\ZZ_0(X) \xrightarrow{\cong} \ZZ_0(\varphi)$.
Hence it is surjective.
\end{proof}

\subsubsection{Relative stratifold homology and mapping cone homology}
In \cite{BB13} we used geometric cycles in $\ZZ_k(X)$ to represent singular homology classes in $X$.
A geometric cycle $\zeta \in \ZZ_k(X)$, given by a smooth map $M\xrightarrow{f}X$, yields an equivalence class $f_*[M]_{\partial S_{k+1}} \in Z_k(X;\Z) / \partial S_{k+1}(X;\Z)$.
By a slight abuse of notation, we denote this class as $[\zeta]_{\partial S_{k+1}}$ and refer to it as the \emph{refined fundamental class} of $\zeta$.
\index{refined fundamental class}
\index{fundamental class!refined $\sim$}
The map $\ZZ_k(X) \to Z_k(X;\Z)/\partial S_{k+1}(X;\Z)$, $\zeta \mapsto [\zeta]_{\partial S_{k+1}}$, is a semigroup homomorphism and commutes with the boundary operators.
By \cite[p.~186]{K10}, the induced map $\HH_k(X) \to H_k(X;\Z)$, $[\zeta] \mapsto f_*[M]$, is a group isomorphism. 
Similarly, a geometric relative cycle $(\zeta,\tau) \in \ZZ_k(\varphi)$, given by a smooth map $(S,T)\xrightarrow{(f,g)}(X,A)$, has a \emph{refined fundamental class} $[\zeta,\tau]_{\partial_\varphi S_{k+1}}:=(f,g)_*[S,T]_{\partial S_{k+1}} \in Z_k(\varphi;\Z) / \partial_\varphi S_{k+1}(\varphi;\Z)$.

Taking refined fundamental classes commutes with maps $i:\ZZ_k(X) \to \ZZ_k(\varphi)$ and $p:\ZZ_k(\varphi) \to \ZZ_{k-1}(A)$ defined above:
Restriction to the boundary maps the refined fundamental class $[\zeta,\tau]_{\partial_\varphi S_{k+1}} \in Z_k(\varphi;\Z) / \partial_\varphi S_{k+1}(\varphi;\Z)$ to the refined fundamental class $[\tau]_{\partial S_k} \in Z_{k-1}(A;\Z) / \partial S_k(A;\Z)$ of the boundary.
Similarly, under the map $i:\ZZ_k(X) \to \ZZ_k(\varphi)$ the refined fundamental class $[\zeta]_{\partial S_{k+1}} \in \Z_k(X;\Z)/\partial S_{k+1}(X;\Z)$ of a geometric cycle is mapped to the refined fundamental class $[\zeta,\emptyset]_{\partial_\varphi S_{k+1}} \in \Z_k(\varphi;\Z) / \partial_\varphi S_{k+1}(\varphi;\Z)$ of the corresponding relative cycle.

Let $(\zeta,\tau),(\zeta',\tau') \in \ZZ_k(\varphi)$ be represented by smooth maps $(S,T)\xrightarrow{(f,g)}(X,A)$ and $(S',T')\xrightarrow{(f',g')}(X,A)$.
Let $(W,M)\xrightarrow{(F,G)}(X,A)$ be a bordism from $(S,T)\xrightarrow{(f,g)}(X,A)$ to $(S',T')\xrightarrow{(f',g')}(X,A)$.
Choose a triangulation of $W$ and the induced triangulations of $S, S', M \subset \partial W$.
We thus obtain a chain $(w,m) \in C_{k+1}(W;\Z) \times C_k(M;\Z)$.
Denote the corresponding fundamental cycles of $(S,T)$ and $(S',T')$ by $(s,t)$ and $(s',t')$, respectively.
By definition of the bordism relation, we find 
$$
(f',g')_*(s',t') - (f,g)_*(s,t) = \partial_\varphi\big((F,G)_*(w,-m)\big) \,.
$$
This yields for the refined fundamental classes: 
\begin{equation}\label{eq:ffggwm}
[\zeta',\tau']_{\partial_\varphi S_{k+1}} - [\zeta,\tau]_{\partial_\varphi S_{k+1}}
=
\partial_\varphi \big((F,G)_*[W,\overline{M}]_{S_{k+1}}\big) \,.
\end{equation}
In particular, the fundamental classes coincide: $(f',g')_*[S',T'] = (f,g)_*[S,T] \in H_k(\varphi;\Z)$.

Hence the refined fundamental class homomorphism 
$\ZZ_k(\varphi) \to Z_k(\varphi;\Z)/\partial_\varphi S_{k+1}(\varphi;\Z)$, $(\zeta,\tau) \mapsto [\zeta,\tau]_{\partial_\varphi S_{k+1}}:=(f,g)_*[S,T]_{\partial S_{k+1}}$, descends to a group homomorphism $\HH_k(\varphi) \to H_k(\varphi;\Z)$, $[\zeta,\tau] \mapsto \big[[\zeta,\tau]_{\partial_\varphi S_{k+1}}\big] = (f,g)_*[S,T]$.
Here $[S,T] \in H_k(S,T;\Z)$ denotes the usual fundamental class of the stratifold $S$ with boundary $T$ and $\big[[\zeta,\tau]_{\partial_\varphi S_{k+1}}\big]$ denotes the image of the refined fundamental class $[\zeta,\tau]_{\partial_\varphi S_{k+1}} \in Z_k(\varphi;\Z) / \partial_\varphi S_{k+1}(\varphi;\Z)$ in the mapping cone homology $H_k(\varphi;\Z) = Z_k(\varphi;\Z) / \partial_\varphi C_{k+1}(\varphi;\Z)$.

In fact, this map is a group isomorphism.
Thus geometric relative cycles represent homology classes of the mapping cone:
 
\begin{thm}[Relative stratifold homology]\label{prop:rel_homology}
Let $\varphi:A \to X$ be a smooth map and $k \geq 0$.
Then the map $\HH_k(\varphi) \to H_k(\varphi;\Z)$, $[\zeta,\tau] \mapsto \big[[\zeta,\tau]_{\partial_\varphi S_{k+1}}\big]$, is a group isomorphism. 
\end{thm}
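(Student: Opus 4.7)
My plan is to apply the Five Lemma, comparing the long exact sequence of relative stratifold homology established in the previous proposition with the long exact sequence of mapping cone singular homology induced by $0 \to C_*(X;\Z) \to C_*(\varphi;\Z) \to C_{*-1}(A;\Z) \to 0$. The refined fundamental class assignment $\zeta \mapsto [\zeta]_{\partial S_{k+1}}$ in the absolute case is known to descend to an isomorphism $\HH_k(X) \xrightarrow{\cong} H_k(X;\Z)$ by \cite[p.~186]{K10}, and the same holds for $A$. If I can show that the refined fundamental class map gives a commuting ladder between the two long exact sequences, the Five Lemma immediately yields the desired isomorphism $\HH_k(\varphi) \xrightarrow{\cong} H_k(\varphi;\Z)$.

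The well-definedness of $[\zeta,\tau] \mapsto \big[[\zeta,\tau]_{\partial_\varphi S_{k+1}}\big]$ is precisely the content of equation \eqref{eq:ffggwm}: a bordism $(W,M) \xrightarrow{(F,G)}(X,A)$ produces the relative chain $(F,G)_*(w,-m)$ whose mapping cone boundary realizes the difference of the refined fundamental cycles of the two bordant geometric relative cycles. For additivity, disjoint union of geometric relative cycles corresponds to summation of their refined fundamental cycles.

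Next I have to verify that the ladder commutes. There are three squares to check. Commutativity with $j_* \colon \HH_k(X) \to \HH_k(\varphi)$ is immediate: on the geometric side $\zeta \mapsto (\zeta,0)$, and on the chain side $s \mapsto (s,0)$, which agrees with the inclusion $C_*(X;\Z) \hookrightarrow C_*(\varphi;\Z)$. Commutativity with $p_* \colon \HH_k(\varphi) \to \HH_{k-1}(A)$ needs the identification of the connecting homomorphism of the short exact sequence above: a mapping cone cycle $(s,t)$ with $\partial s = -\varphi_*t$ and $\partial t = 0$ is sent to $[t] \in H_{k-1}(A;\Z)$, which is compatible with the geometric operation $(\zeta,\tau)\mapsto \tau$ (since for a geometric relative cycle $(S,T)\xrightarrow{(f,g)}(X,A)$ one has $g_*[T]_{\partial S_k}$ equal to the $A$-part of $(f,g)_*[S,T]_{\partial S_{k+1}}$). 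Commutativity with $\varphi_*$ is direct from functoriality of $f_*[M]_{\partial S_{k+1}}$ under composition.

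The Five Lemma then forces $\HH_k(\varphi) \to H_k(\varphi;\Z)$ to be an isomorphism for every $k \geq 0$, provided the ladder is anchored correctly in low degrees; in degree $0$ the map $j \colon \ZZ_0(X) \to \ZZ_0(\varphi)$ is an isomorphism by definition, and $H_0(\varphi;\Z)$ is the cokernel of $\varphi_* \colon H_0(A;\Z) \to H_0(X;\Z)$, so surjectivity in this degree follows from the absolute isomorphism together with the long exact sequence. The main technical obstacle I anticipate is the careful identification of the connecting homomorphism of the singular mapping cone sequence with the geometric operation $p$: one must check the sign conventions in $\partial_\varphi(s,t)=(\partial s + \varphi_*t, -\partial t)$ against the choice of orientation on $T = \partial S$, but this is resolved precisely by the sign on $-m$ appearing in \eqref{eq:ffggwm}.
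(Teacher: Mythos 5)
Your proposal is correct and follows essentially the same route as the paper: the author also compares the long exact sequence of relative stratifold homology from the preceding proposition with the mapping cone singular homology sequence via the refined fundamental class ladder (diagram \eqref{eq:diag_ref_fund}), invokes the absolute isomorphisms of \cite[p.~186]{K10} for the outer terms, and concludes by the five lemma. Your additional remarks on well-definedness via \eqref{eq:ffggwm} and on the identification of the connecting homomorphism with the geometric boundary restriction are exactly the points the paper relies on implicitly.
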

\index{Theorem!Relative stratifold homology}

\begin{proof}
The refined fundamental classes of absolute and relative geometric cycles yield a commutative diagram
\begin{equation}
\xymatrix{
\ZZ_k(A) \ar^{\varphi_*}[r] \ar[d] 
& 
\ZZ_k(X) \ar^i[r] \ar[d] 
& 
\ZZ_k(\varphi) \ar^p[r] \ar[d] 
& 
\ZZ_{k-1}(A) \ar[d] \ar^{\varphi_*}[r] \ar[d]
&
\ZZ_{k-1}(X) \ar[d] \\
\frac{Z_k(A;\Z)}{\partial S_{k+1}(A;\Z)} \ar[r]
&
\frac{Z_k(X;\Z)}{\partial S_{k+1}(X;\Z)} \ar[r] 
& 
\frac{Z_k(\varphi;\Z)}{\partial_\varphi S_{k+1}(\varphi;\Z)} \ar[r]  
& \frac{Z_{k-1}(A;\Z)}{\partial S_k(A;\Z)}   \ar[r]
&
\frac{Z_{k-1}(X;\Z)}{\partial S_k(X;\Z)} \,.
}
\label{eq:diag_ref_fund}
\end{equation}
In the induced diagram on homology, the two left as well as the two right vertical maps are group isomorphisms by \cite[p.~186]{K10}.
By the five lemma, so is the middle vertical map.
\end{proof}

\subsubsection{Integration of differential forms.}
As above let $\Omega^*(\varphi)$ be the mapping cone de Rham complex of a smooth map $\varphi:A \to X$.
Integration of differential forms in $\Omega^*(\varphi)$ over refined fundamental classes of geometric relative cycles is well-defined.
For $(\omega,\vartheta) \in \Omega^k(\varphi)$ and $(\zeta,\tau) \in \ZZ_k(\varphi)$, represented by $(S,T)\xrightarrow{(f,g)}(X,A)$, we write:
$$
\int_{[\zeta,\tau]_{\partial_\varphi S_{k+1}}} (\omega,\vartheta)
=
\int_{[S,T]_{\partial S_{k+1}}} (f,g)^*(\omega,\vartheta) 
=
\int_{(S,T)} (f,g)^*(\omega,\vartheta) \,.
$$  

\subsection{The cross product}\label{subsec:cross}
For geometric cycles $\zeta \in \ZZ_k(X)$ and $\zeta' \in \ZZ_{k'}(X')$ the cartesian product of the corresponding stratifolds defines a cross product on stratifold homology, see \cite[Ch.~10]{K10} and \cite[Ch.~6]{BB13}:
if $\zeta$ is represented by $M\xrightarrow{f}X$ and $\zeta'$ is represented by $M'\xrightarrow{f'}X'$ then the cross product $\zeta \times \zeta'$ is the stratifold represented by $M\times M'\xrightarrow{f \times f'}X \times X'$.
This cartesian product of stratifolds is compatible with bordism: if $W\xrightarrow{F}X$ is a bordism from $\zeta_0 \to \zeta_1$, then $W \times S'\xrightarrow{F \times f'}X \times X'$ is a bordism from $\zeta_0 \times \zeta'$ to $\zeta_1 \times \zeta'$, and similarly for bordisms of the second factor.
Thus the cartesian product descends to a product of stratifold bordism groups.
This coincides with the homology cross product under the isomorphism $\HH_*(X) \xrightarrow{\cong} H_*(X;\Z)$.

\subsubsection{Cross products of geometric cycles.}
Analogously, we define the cross product of a geometric relative cycle $(\zeta,\tau) \in \ZZ_k(\varphi)$, represented by $(S,T)\xrightarrow{(f,g)}(X,A)$, with a geometric cycle $\zeta' \in \ZZ_{k'}(X')$, represented by $S'\rightarrow{f'}X'$:
the stratifold
$$
(S,T) \times S'\xrightarrow{(f,g) \times f'}(X,A) \times X'
$$
represents a geometric relative cycle $(\zeta,\tau) \times \zeta' \in \ZZ_{k+k'}(\varphi\times \id_{X'})$.
\index{cross product!of geometric cycles}

The cartesian product is compatible with the bordism relation:
If $(W,M)\xrightarrow{(F,G)}(X,A)$ is a bordism from $(\zeta_0,\tau_0)$ to $(\zeta_1,\tau_1)$, then $(W,M) \times S'\xrightarrow{(F,G) \times f'}(X,A) \times X'$ is a bordism from $(\zeta_0,\tau_0) \times \zeta'$ to $(\zeta_1,\tau_1) \times \zeta'$.
Likewise, if $W'\xrightarrow{F'}X'$ is a bordism from $\zeta'_0$ to $\zeta'_1$, then $(S,T) \times W'\xrightarrow{(f,g) \times F'}(X,A) \times X'$ is a bordism from $(\zeta,\tau) \times \zeta'_0$ to $(\zeta,\tau) \times \zeta'_1$.
Thus the cartesian product descends to a cross product 
$$
\times: \HH_*(\varphi) \otimes \HH_*(X') \to \HH_*(\varphi \times \id_{X'})
$$
on stratifold homology.

Choosing triangulations of the stratifolds involved and refining them to triangulations of the various cartesian products, it is easy to see that the cross product on stratifold bordism groups coincides with the ordinary homology cross product.
In the same way, the cross product 
$\times: \HH_*(\varphi) \otimes \HH_*(X') \to \HH_*(\varphi \times \id_{X'})$
on stratifold bordism groups is identified with the homology cross product
$\times: H_*(\varphi;\Z) \otimes H_*(X';\Z) \to H_*(\varphi \times \id_{X'}\;Z)$.
\index{cross product!on stratifold homology}
\index{+X@$\times$, cross product}

\subsection{The pull-back operation}\label{sec:PB}
Let $\pi:E \to X$ be a fiber bundle with closed oriented fibers.  
Let $\varphi:A \to X$ be a smooth map and $\Phi:\varphi^*E \to E$ the induced fiber bundle map in the pull-back diagram
\begin{equation*}
\xymatrix{
\varphi^*E \ar^\Phi[rr] \ar_\pi[d] && E \ar^\pi[d] \\
A \ar_\varphi[rr] && X \,.
} 
\end{equation*}
We adapt the pull-back operation $\PB_\bullet$ on geometric cycles in the base of a fiber bundle from \cite[Ch.~4]{BB13}.
In the notation of the present paper, we define $\PB_E:\ZZ_k(X) \to \ZZ_{k+\dim F}(E)$ by mapping the geometric cycle $M\xrightarrow{f}X$ to $f^*E\xrightarrow{F}E$.
\index{+PBE@$\PB_E$, pull-back operation}
\index{pull-back operation}
Here $F:f^*E \to E$ denotes the induced bundle map on the total space of the pull-back bundle $\pi:f^*E \to M$.
Similarly, we may define a pull-back operation $\PB_{E,\varphi^*E}:\ZZ_k(\varphi) \to \ZZ_{k+\dim F}(\Phi)$ by mapping the relative cycle $(S,T)\xrightarrow{(f,g)}(X,A)$ to $(f^*E,g^*(\varphi^*E))\xrightarrow{(F,G)}(E,\varphi^*E)$.
Here $G:g^*(\varphi^*E) \to \varphi^*E$ is the bundle map in the pull-back diagram induced by $g:T \to A$ and the bundle $\varphi^*E \to A$.
\index{+PBEE@$\PB_{E,\varphi^* E}$, pull-back operation}
  
These maps fit into the following commutative diagram of pull-back bundles:  
\begin{equation}
\xymatrix{
g^*(\varphi^*E) \ar[rrr] \ar[dd] \ar^G[dr] &&& f^*E \ar'[d][dd] \ar^F[dr] \\
& \varphi^*E \ar^(0.4)\Phi[rrr] \ar[dd] &&& E \ar[dd] \\ 
T \ar'[r][rrr] \ar_g[dr] &&& S \ar^f[dr] \\
& A \ar_(0.4)\varphi[rrr] &&& X \\ 
} \label{eq:pull-back_diag}
\end{equation}
Since $T = \partial S$ and $\varphi \circ g = f|_{\partial S}$, we have in particular $g^*(\varphi^*E) = (f|_{\partial S})^*E = \partial(f^*E) $ and $\Phi \circ G = F$.
Thus $(f^*E,g^*(\varphi^*E))\xrightarrow{(F,G)}(E,\varphi^*E)$ indeed defines a geometric cycle in $\ZZ_{k+\dim F}(\Phi)$. 
\index{diagram!pull-back bundles}

The pull-back operation $\PB_\bullet$ on geometric relative cycles is compatible with the maps $i:\ZZ_k(X) \to \ZZ_k(\varphi)$ and $p:\ZZ_k(\varphi) \to \ZZ_{k-1}(A)$ and the pull-back operations on geometric cycles in $X$ and $A$, respectively.  
Thus we have a commutative diagram of pull-back operations: 
\begin{equation}
\xymatrix{
\ZZ_{k+\dim F}(E) \ar^i[rr] && \ZZ_{k+\dim F}(\Phi) \ar^p[rr] && \ZZ_{k-1+\dim F}(\varphi^*E) \\
\ZZ_k(X) \ar_i[rr] \ar_{\PB_E}[u] && \ZZ_k(\varphi) \ar_p[rr] \ar_{\PB_{E,\varphi^*E}}[u] && \ZZ_{k-1}(A) \ar_{\PB_{\varphi^*E}}[u] \,.
}
\label{eq:diag_PB}
\end{equation}

\subsubsection{Compatibility with fiber integration of differential forms.}
As above let $\Omega^*(\Phi)$ be the mapping cone de Rham complex for the induced bundle map $\Phi:\varphi^*E \to E$. 
Fiber integration on the relative de Rham complex is defined componentwise:
for $(\omega,\vartheta) \in \Omega^k(\Phi)$ put 
\begin{equation*}
\fint_F (\omega,\vartheta) 
:=
\left( \fint_F \omega \,,\, \fint_F \vartheta \right) \,.
\end{equation*}
This obviously defines a map $\fint_F:\Omega^k(\Phi) \to \Omega^{k-\dim F}(\varphi)$.
\index{fiber integration!of mapping cone forms}

Fiber integration of differential forms is natural with respect to pull-back along the induced bundle maps in the pull-back diagram~\eqref{eq:pull-back_diag}.
In other words, for a smooth map $(Y,B)\xrightarrow{(f,g)}(X,A)$ and the corresponding map $(f^*E,g^*(\varphi^*E))\xrightarrow{(F,G)}(E,\varphi^*E)$, we have:
\begin{equation}
\fint_F (F,G)^*(\omega,\vartheta) 
=
(f,g)^* \fint_F (\omega,\vartheta) \,. \label{eq:fint_nat}
\end{equation}
Moreover, fiber integration is compatible with the mapping cone de Rham differentials:  
\index{fiber integration!compatibility with mapping cone de Rham differential}
\begin{align}
d_\varphi \fint_F (\omega,\vartheta)
&= 
\left( d\fint_F \omega \,,\, \varphi^* \fint_F \omega + d \fint_F \vartheta \right) \notag \\
&=
\left( \fint_F d\omega \,,\, \fint_F \Phi^*\omega + d\vartheta \right) \notag \\
&=
\fint_F d_\Phi (\omega,\vartheta) \,. \label{eq:fiber_d_commute}
\end{align}
Thus fiber integration of differential forms descends to a well-defined homomorphism $H^k_\mathrm{dR}(\Phi) \to H^{k-\dim F}_\mathrm{dR}(\varphi)$ of the mapping cone de Rham cohomologies.

The pull-back operation $\PB_\bullet$ is compatible with fiber integration of differential forms in the following sense: 
Let $(\zeta,\tau) \in \ZZ_k(\varphi)$ be a geometric relative cycle, represented by $(S,T)\xrightarrow{(f,g)}(X,A)$, and $(\omega,\vartheta) \in \Omega^{k+\dim F}(\Phi)$.
Then we have:
\begin{align}
\int_{[\PB_{E,\varphi^*E}(\zeta,\tau)]_{\partial_\Phi S_{k+\dim F+1}}} (\omega,\vartheta)
&=
\int_{(f^*E,g^*(\varphi^*E))} (F,G)^*(\omega,\vartheta) \notag \\
&\stackrel{\eqref{eq:fint_nat}}{=}
\int_{(S,T)} (f,g)^* \fint_F (\omega,\vartheta) \notag \\ 
&=
\int_{[\zeta,\tau]_{\partial_\varphi S_{k+1}}} \fint_F(\omega,\vartheta) \,. \label{eq:PB_int}
\end{align}
\index{pull-back operation!compatibility with fiber integration}

\subsubsection{Compatibility with bordism and refined fundamental classes.}
The pull-back operation $\PB_\bullet$ is compatible with the bordism relation in the following sense:
Let $(\zeta,\tau), (\zeta',\tau') \in \ZZ_k(\varphi)$ be geometric relative cycles, represented by $(S,T)\xrightarrow{(f,g)}(X,A)$ and $(S',T')\xrightarrow{(f',g')}(X,A)$, respectively. 
Let $(W,M)\xrightarrow{(F,G)}(X,A)$ be a bordism from $(\zeta,\tau)$ to $(\zeta',\tau')$.
Then the induced bundle map $(F^*E,G^*(\varphi^*E))\xrightarrow{(\bf{F},\bf{G})}(E,\varphi^*E)$ defines a bordism from $\PB_{E,\varphi^*E}(\zeta,\tau)$ to $\PB_{E,\varphi^*E}(\zeta',\tau')$.
Consequently the pull-back operation yields a homomorphism on relative stratifold homology $\HH_k(\varphi) \to \HH_{k+\dim F}(\Phi)$, $[(\zeta,\tau)] \mapsto [\PB_{E,\varphi^*E}(\zeta',\tau')]$.
This may be considered as a transfer map on relative stratifold homology. 
\index{pull-back operation!compatibility with bordism}

As above let $(W,M)\xrightarrow{(F,G)}(X,A)$ be a bordism from $(\zeta,\tau)$ to $(\zeta',\tau')$.
Choose a triangulation of the stratifold $F^*E$ and the induced triangulations of $G^*(\varphi^*E)$.
This yields a smooth singular chain $(a,b) \in C_k(F^*E;\Z) \times C_{k-1}(G^*(\varphi^*E);\Z)$.
Restricting the bundle $G^*(\varphi^*E) \to M$ to the subspaces $S$ and $S'$ of $\partial W$, we obtain induced triangulations of $f^*E$ and ${f'}^*E$.
Let $(x,y), (x',y') \in Z_{k+\dim F}(\Phi;\Z)$ be the induced fundamental cycles.
Since $(F^*E,G^*(\varphi^*E))\xrightarrow{(\bf{F},\bf{G})}(E,\varphi^*E)$ is a bordism from $\PB_{E,\varphi^*E}(\zeta,\tau)$ to $\PB_{E,\varphi^*E}(\zeta',\tau')$, we obtain from the bordism relation:
$$
(x',y') - (x,y) = \partial_\Phi \big(({\bf F},{\bf G})_*(a,-b)\big) \,.
$$  
In particular, we obtain for the refined fundamental classes:
\begin{align}
[\PB_{E,\varphi^*E}(\zeta',\tau')]_{\partial_\Phi S_{k+\dim F+1}} - [\PB_{E,\varphi^*E}(\zeta,\tau)&]_{\partial_\Phi S_{k+\dim F+1}} \notag \\
&= 
\partial_\Phi \big(({\bf F},{\bf G})_*[(F^*E,\overline{G^*(\varphi^*E)})]_{S_{k+\dim F+1}}\big) \,. \label{eq:bord_fund_cycle}
\end{align}
We use this relation in the following section to construct transfer maps on the level of cycles.
\index{pull-back operation!compatibility with refined fundamental class}

\subsubsection{Compatibility with fiber products.}
The pull-back operation for fiber bundles \mbox{$\pi:E \to X$} and $\pi':E' \to X'$ with closed oriented fibers is compatible with the cross product of geometric cycles and the pull-back operation for the fiber product $\pi\times\pi':E \times E' \to X \times X'$.
This means that the following diagram is graded commutative:
\index{pull-back operation!compatibility fiber products}
\begin{equation}
\xymatrix{
\ZZ_{k+\dim F}(\Phi) \otimes \ZZ_{k'+\dim F'}(E') \ar^\times[rr] && \ZZ_{k+k'+\dim(F \times F')}(\Phi\times \id_{E'}) \\
\ZZ_k(\varphi)  \otimes  \ZZ_{k'}(X') \ar_\times[rr] \ar@<-25pt>[u]^{\PB_{E'}} \ar@<+16pt>[u]^{\PB_{E,\varphi^*E}} && \ZZ_{k+k'}(\varphi\times \id_{X'}) \ar@<-5pt>[u]_{\PB_{(E,\varphi^*E)\times E'}} \,.
}
\end{equation}
The graded commutativity is caused by orientation conventions: cartesian products carry the ordinary product orientation while fiber bundles are oriented like products of first the base and then the fiber.
The fiber product $\pi \times \pi':E \times E' \to X \times X'$ carries the orientation of a fiber bundle over $X \times X'$ with fiber $F \times F'$.
This orientation might differ from the product orientation of the total spaces (in case the total spaces carry an orientation).
Explicitly, for cycles $(\zeta,\tau) \in \ZZ_k(\varphi)$ and $\zeta' \in \ZZ_{k'}(X')$, we have:
\begin{equation}\label{eq:prod_PB}
\PB_{(E,\varphi^*E) \times E'}((\zeta,\tau) \times \zeta')
=
(-1)^{k' \cdot \dim F} \cdot \PB_{E,\varphi^*E}(\zeta,\tau) \times \PB_{E'}(\zeta') \,.
\end{equation}
We use this relation in the following section to construct transfer maps compatible with the cross product.

\subsection{Transfer maps}\label{sec:transfer}
As above let $\pi:E \to X$ be a fiber bundle with closed oriented fibers and $\varphi:A \to X$ a smooth map.  
In \cite[Ch.~4]{BB13} we used geometric cycles to construct a transfer map $\lambda:C_k(X;\Z) \to C_{k+\dim F}(E;\Z)$ that commutes with the boundary operator.
Moreover, it is compatible with fiber integration of differential forms in the sense that for any smooth singular cycle $z \in Z_k(X;\Z)$ and any closed differential form $\omega \in \Omega^{k+\dim F}(E)$, we have $\int_z \fint_F \omega = \int_{\lambda(z)} \omega$.
Here we construct a transfer map $\lambda_{\varphi}:Z_k(\varphi;\Z) \to Z_{k+\dim F}(\Phi;\Z)$ that commutes with the maps $i:Z_k(X;\Z) \to Z_k(\varphi;\Z)$ and $p:Z_k(\varphi;\Z) \to Z_{k-1}(A;\Z)$.
These transfer maps are used in Section~\ref{subsec_fiber_int} to construct fiber integration on relative differential cohomology.

\subsubsection{Representation by geometric cycles.}
The construction of the transfer map in \cite[Ch.~4]{BB13} is based on the pull-back operation $\PB_\bullet$ on geometric cycles and a homomorphism that chooses geometric cycles to represent homomology classes.
More precisely, for any singular cycle $z \in Z_k(X;\Z)$ choose a geometric cycle $\zeta(z)$ and a singular chain $a(z)$ such that $[z-\partial a(z)]_{\partial S_{k+1}} = [\zeta(z)]_{\partial S_{k+1}}$.
In other words, the refined fundamental class of $\zeta(z)$ represents the homology class of $z$.
These choices can be made into homomorphisms $\zeta:Z_k(X;\Z) \to \ZZ_k(X)$ and $a:Z_k(X;\Z) \to C_{k+1}(X;\Z)$ be first defining them on a basis and then extending linearly.

Now we do the same for cycles of the mapping cone complex of a smooth map $\varphi:A \to X$.
By Theorem~\ref{prop:rel_homology}, the relative stratifold homology $\HH_k(\varphi)$ is isomorphic to the mapping cone homology $H_k(\varphi;\Z)$. 
Thus for any relative cycle $(s,t) \in Z_k(\varphi;\Z)$ we may choose a geometric relative cycle $(\zeta,\tau) \in \ZZ_k(\varphi)$ such that its bordims class $[\zeta,\tau] \in \HH_k(\varphi)$ maps the mapping cone cohomology class $[s,t] \in H_k(\varphi;\Z)$ under the isomorphism $\HH_k(\varphi) \to H_k(\varphi;\Z)$ from Theorem~\ref{prop:rel_homology}. 
In particular, we find a singular chain $(a,b) \in C_{k+1}(\varphi;\Z)$  such that $[\zeta,\tau]_{\partial_\varphi S_{k+1}} = [(s,t) - \partial_\varphi(a,b)]_{\partial_\varphi S_{k+1}}$. 
We say that the geometric relative cycle $(\zeta,\tau) \in \ZZ_k(\varphi)$ \emph{represents} the homology class $[s,t] \in H_k(\varphi;\Z)$.
 
We may organize the choice of geometric relative cycles $(\zeta,\tau)$ and singular chains $(a,b)$ 
satisfying $[\zeta,\tau]_{\partial S_{k+1}} = [(s,t) - \partial_\varphi(a,b)]_{\partial S_{k+1}}$ into group homomorphisms 
\index{+Zetatauvarphi@$(\zeta,\tau)_\varphi$, group homomorphism}
\index{+Abvarphi@$(a,b)_\varphi$, group homomorphism}
\begin{align*}
(\zeta,\tau)_\varphi: Z_k(\varphi;\Z) &\to \ZZ_k(\varphi) \,, \quad (s,t) \mapsto (\zeta,\tau)_\varphi(s,t) \,, \\
(a,b)_\varphi: Z_k(\varphi;\Z) &\to C_{k+1}(\varphi;\Z) \,, \quad  (s,t) \mapsto (a,b)_\varphi(s,t) =(a(s,t),b(s,t)) \,, 
\end{align*}
by first defining them on a basis of $\Z_k(\varphi;\Z)$ and then extending linearly.
These homomorphisms can be made compatible with the maps $i$ and $p$ as follows:  
The group $Z_k(\varphi;\Z)$ of relative cycles sits in the split exact sequence
\begin{equation}\label{eq:ex_seq_Z}
\xymatrix{
0
\ar[r] &
Z_k(X;\Z)
\ar[r]_(0.45)i &
Z_k(\varphi;\Z)
\ar[r]_p  &
Z_{k-1}(A,\Z) \ar@<-4pt>@{.>}[l]_\sigma
\ar[r] &
0 
}
\end{equation}
where $i:s \mapsto (s,0)$ and $p:(s,t) \mapsto t$. 
Choose a splitting $\sigma:Z_{k-1}(A;\Z) \to Z_k(\varphi;\Z)$. 
From bases of $Z_k(X;\Z)$ and $Z_{k-1}(A;\Z)$ and the splitting $\sigma$ we obtain a basis of $Z_k(\varphi;\Z)$.
We may thus choose the homomorphism $(\zeta,\tau)_\varphi:Z_k(\varphi;\Z) \to \ZZ_k(\varphi)$ compatible with the maps $p$ and $i$ and the homomorphism $\zeta$ defined on absolute cycles as follows:
For basis elements $(s,t)$ in the image of $i:Z_k(X;\Z) \to Z_k(\varphi;\Z))$, put $(\zeta,\tau)_\varphi(s,t) := (\zeta(s),0)$.
For complementary basis elements $(s,t)=\sigma(t)$, obtained from a basis of $Z_{k-1}(A;\Z)$, choose $(\zeta,\tau)_\varphi(s,t) \in \ZZ_k(\varphi)$ such that $p((\zeta,\tau)_\varphi(s,t)) = \tau(s,t) = \zeta(t) \in \ZZ_{k-1}(A)$.
Then extend linearly.
This yields a commutative diagram
\begin{equation}\label{eq:zetazetatauzeta}
\xymatrix{
Z_k(X;\Z) \ar^i[rr] \ar^\zeta[d] && Z_k(\varphi;\Z) \ar^p[rr] \ar^{(\zeta,\tau)_\varphi}[d] && Z_{k-1}(A;\Z) \ar^\zeta[d] \\
\ZZ_k(X) \ar_i[rr] && \ZZ_k(\varphi) \ar_p[rr] && \ZZ_{k-1}(A) \,. 
}
\end{equation}
Similarly, we may choose the homomorphism $(a,b)_\varphi:Z_k(\varphi;\Z) \to C_{k+1}(\varphi;\Z)$ compatible with the maps $i$ and $p$ and the homomorphism $a$ defined on absolute cycles.
Using the splitting $\sigma$, we write a cycle $(s,t) \in Z_k(\varphi;\Z)$ as $(s,t) = (z,0) + \sigma(p(s,t)) = i(z) + \sigma(t)$.
For compatibility with the map $i$ we may simply put $(a,b)(i(z)) := (a(z),0)$.
However, compatibility with the map $p$ involves a sign:
Since 
$$
(s,t) - \partial_\varphi (a,b)_\varphi(s,t) = (s - \partial a(s,t) - \varphi_*a(s,t),t+\partial b(s,t))
$$ represents the fundamental class of $(\zeta,\tau)_\varphi(s,t)$ and $t-\partial a(t)$ represents the fundamental class of $\zeta(t) = \zeta(p(s,t)) = p((\zeta,\tau)(s,t))$, we are forced to put $b(\sigma(t)) := -a(t) \in C_k(A;\Z)$.

\subsubsection{Compatibility of transfer maps.}
We define the mapping cone transfer map $\lambda_\varphi:Z_k(\varphi;\Z) \to Z_{k+\dim F}(\Phi;\Z)$ as follows: for any cycle $(s,t)$ in a basis of $Z_k(\varphi;\Z)$ choose a cycle $\lambda_\varphi(s,t) \in Z_{k+\dim F}(\Phi;\Z)$ such that the equivalence classes modulo boundaries of thin chains satisfy the equation $[\lambda_\varphi(s,t)]_{\partial_\Phi S_{k+\dim F+1}} = [\PB_{E,\varphi^*E}(\zeta,\tau)_\varphi(s,t)]_{\partial_\Phi S_{k+\dim F+1}}$. 
Then extend $\lambda_\varphi$ as a homomorphism.
\index{transfer map!mapping cone $\sim$}
\index{mapping cone!transfer map}
\index{+Lambdavarphi@$\lambda_\varphi$, transfer map}

By \eqref{eq:diag_ref_fund} and \eqref{eq:diag_PB}, the refined fundamental classes and the pull-back operations are compatible with the maps $i$ and $p$ that relate absolute and relative cycles.
By the choice of homomorphisms $(\zeta,\tau)_\varphi$ and $(a,b)_\varphi$ above, we may also choose the transfer map $\lambda_\varphi$ compatible with $i$ and $p$.
We thus obtain a commutative diagram of transfer maps:
\begin{equation}\label{eq:diag_lambda_commute}
\xymatrix{
Z_{k+\dim F}(E;\Z) \ar^i[rr] && Z_{k+\dim F}(\Phi;\Z) \ar^p[rr] && Z_{k+\dim F-1}(\varphi^*E;\Z) \\
Z_k(X;\Z) \ar_i[rr] \ar_\lambda[u] && Z_k(\varphi;\Z) \ar_p[rr] \ar_{\lambda_\varphi}[u] && Z_{k-1}(A;\Z) \ar_\lambda[u] \,.
}
\end{equation}
\index{diagram!of transfer maps}

Like for absolute cycles, the transfer map is compatible with fiber integration of forms in the mapping cone de Rham complex of the induced bundle map:
\index{transfer map!compatibility with fiber integration}
Let $(\omega,\vartheta) \in \Omega^{k+\dim F}(\Phi)$ and $(s,t) \in Z_k(\varphi;\Z)$.
Then we have:
\allowdisplaybreaks{
\begin{align}
\int_{\lambda_\varphi(s,t)} (\omega,\vartheta)
&=
\int_{[\PB_{E,\varphi^*E}\big((\zeta,\tau)_\varphi(s,t)\big)]_{\partial_\Phi S_{k+\dim F+1}}} (\omega,\vartheta) \notag \\
&\stackrel{\eqref{eq:PB_int}}{=}
\int_{[(\zeta,\tau)_\varphi(s,t)]_{\partial_\varphi S_{k+1}}} \fint_F (\omega,\vartheta) \notag \\
&= 
\int_{(s,t) - \partial_\varphi((a,b)_\varphi(s,t))} \fint_F (\omega,\vartheta) \notag \\
&=
\int_{(s,t)} \fint_F (\omega,\vartheta) - \int_{(a,b)_\varphi(s,t)} d_\varphi \fint_F (\omega,\vartheta) \,. \label{eq:lambda_fint_1}
\end{align}
}
In particular, for a $d_\Phi$-closed pair $(\omega,\vartheta)$ we have:  
\begin{equation*}
\int_{\lambda_\varphi(s,t)} (\omega,\vartheta)
=
\int_{(s,t)} \fint_F (\omega,\vartheta) \,. 
\end{equation*}

As in \cite[Ch.~4]{BB13}, we extend the transfer map $\lambda_\varphi:Z_k(\varphi;\Z) \to Z_{k+\dim F}(\Phi;\Z)$ to a homomorphism $\lambda_\varphi:C_k(\varphi;\Z) \to C_{k+\dim F}(\Phi;\Z)$ such that 
\begin{equation}\label{eq:del_lambda}
\partial_\Phi \circ \lambda_\varphi 
= 
\lambda_\varphi \circ \partial_\varphi \,.
\end{equation}
This is done by appropriate choices on a basis of $C_k(\varphi;\Z)$. 
For basis elements in $Z_k(\varphi;\Z)$ we choose $\lambda_\varphi$ as before.
For complementary basis elements $(x,y) \in C_k(\varphi;\Z)$ we choose $\lambda_\varphi(x,y) \in C_{k+\dim F}(\Phi;\Z)$ such that \eqref{eq:del_lambda} holds.
By changing $\lambda_\varphi$ on the complementary basis elements if necessary\footnote{This is explained in detail in \cite[Ch.~4]{BB13} for the case of absolute chains.}, we may as well assume that for any $d_\Phi$-closed $(\omega,\vartheta) \in \Omega^{k+\dim F}(\Phi)$, we have: 
\begin{equation}\label{eq:lambda_fint}
\int_{\lambda_{\varphi}(x,y)} (\omega,\vartheta)
=
\int_{(x,y) - (a,b)(\partial_\varphi(x,y))} \fint_F (\omega,\vartheta) \,.
\end{equation}
The transfer map $\lambda_\varphi$ will be used to define fiber integration for relative differential characters.
\index{transfer map!extension to chains}

\subsubsection{Multiplicativity of transfer maps.}
Let $\pi:E \to X$ and $\pi':E' \to X'$ be fiber bundles with compact oriented fibers $F$ and $F$' and let $\pi \times \pi':E \times E' \to X \times X'$ be the fiber product.
It carries the orientation of a fiber bundle over $X \times X'$ with fiber $F \times F'$.
This orientation might differ from the product orientation of the total spaces.

Using multiplicativity of the pull-back operation \eqref{eq:prod_PB} and a splitting of the K\"unneth sequence as constructed in Section~\ref{subsec:module} below, we may choose the transfer map for the product bundle in such a way that we obtain the following graded commutative diagram: 
\begin{equation}\label{eq:diag_lambda_mult}
\xymatrix{
Z_{k+\dim F}(\Phi;\Z) \otimes Z_{*+\dim F'}(E') \ar^\times[rr] && Z_{k+k'+\dim(F\times F')}(\Phi\times\id_{E'};\Z) \\
Z_k(\varphi;\Z) \otimes Z_{k'}(X';\Z) \ar^\times[rr] \ar@<-21pt>[u]^{\lambda'} \ar@<+22pt>[u]^{\lambda_\varphi} && Z_{k+k'}(\varphi\times\id_{X'};\Z) \ar@<-7pt>[u]_{\lambda_{\varphi\times\id_{X'}}} \,. \\
} 
\end{equation}
More precisely, for cycles $(s,t) \in Z_k(\varphi;\Z)$ and $z' \in Z_{k'}(X';\Z)$, we have: 
\begin{equation}\label{eq:lambda_cross}
\lambda_\varphi(s,t) \times \lambda'(z')
=
(-1)^{k' \cdot \dim F} \cdot \lambda_{\varphi \times \id_{X'}}((s,t) \times z') \,. 
\end{equation}
This relation is used in the proof of the up-down formula in Section~\ref{subsec:up-down} below. 
\index{transfer map!multplicativity}

\section{Differential characters}\label{sec:diff_charact}
In this chapter we discuss (absolute and relative) differential characters as models for (absolute and relative) differential cohomology classes.
Differential characters had been introduced in \cite{CS83} as certain homomorphisms $h:Z_{k-1}(X;\Z) \to \Ul$ on the group of smooth singular cycles in a smooth manifold $X$.
The graded abelian group $\widehat H^*(X;\Z)$ of differential characters was the first model for what is now called differential cohomology.\footnote{It is convenient to shift the degree of the differential characters by $+1$ as compared to the original definition from \cite{CS83}. 
Thus a degree $k$ differential character has curvature and characteristic class of degree $k$.}

The definition from \cite{CS83} can be easily adapted to homomorphisms on relative cycles.
As explained in the introduction, there are two ways to define relative singular homology $H_*(X,A;\Z)$, either as the homology of the mapping cone complex of the inclusion $i_A:A \to X$ or as the homology of the quotient complex $C_*(X,A;\Z) := C_*(X,\Z) / \im({i_A}_*)$. 
Hence there arise two notions of relative cycles and thus two ways two adapt the notion of differential characters.
The corresponding groups of differential characters are both refinements of the relative cohomology $H^*(X,A;\Z)$ by differential forms.
Both notions appear rather naturally. 

In Section~\ref{subsec:rel_diff} we review the notion and elementary properties of \emph{relative differential characters} as introduced in \cite{BT06}.
These are characters on the group of cycles $Z_*(\varphi;\Z)$ in the mapping cone complex of a smooth map $\varphi:A \to X$.
The graded abelian group of those characters is denoted by $\widehat H^*(\varphi;\Z)$.
We review the results from \cite[Ch.~8]{BB13}, including a long exact secquence for $\widehat H^*(\varphi;\Z)$ and the groups of absolute differential characters on $X$ and $A$.

In Section~\ref{subsec:rel_diff_par} we discuss differential characters on the group $Z_*(X,A;\Z)$ of relative cycles.
Here $A \subset X$ is an embedded smooth submanifold.
The graded abelian group of differential characters on $Z_*(X,A;\Z)$ is denoted by $\widehat H^*(X;A;\Z)$.  
We prove a long exact sequence that relates the group $\widehat H^*(X;A;\Z)$ to the groups of absolute differential characters on $X$ and $A$.
Further, we show that $\widehat H^*(X,A;\Z)$ coincides with the subgroup of \emph{parallel} characters in $\widehat H^*(i_A;\Z)$ . 
In Section~\ref{subsec:H_check} we clarify the relation o the groups $\widehat H^k(\varphi;\Z)$ and $\widehat H^k(X,A;\Z)$ to another notion of relative differential cohomology that has appeared in the literature: the relative Hopkins-Singer groups $\check{H}^k(\varphi;\Z)$ for a smooth map $\varphi:A \to X$ and $\check{H}^k(i_A;\Z)$ for the embedding $i_A:A\to X$ of a smooth manifold.
These groups have been constructed in \cite{BT06}.

\subsection{Relative differential characters}\label{subsec:rel_diff}
Differential characters on a smooth manifold $X$ were introduced by Cheeger and Simons in \cite{CS83}.
Differential characters relative to a smooth map $\varphi:A \to X$ were introduced in \cite{BT06}.
We briefly review the definition and elementary properties of relative differential characters, thereby treating the absolute differential characters of \cite{CS83} as a special case.

\subsubsection{Characters on mapping cone cycles.}
Let $(C_*(X;\Z),\partial)$ be the complex of smooth singular chains in $X$. 
The {\em mapping cone complex} of a smooth map $\varphi: A \to X$ is the complex $C_k(\varphi;\Z) := C_k(X;\Z) \times C_{k-1}(A;\Z)$ of pairs of smooth singular chains with the differential $\partial_\phi (s,t) := (\partial s + \varphi_* t,-\partial t)$.
The homology $H_k(\varphi;\Z)$ of this complex coincides with the homology of the mapping cone of $\varphi$ in the topological sense.
For the special case of an embedding $i_A:A \hookrightarrow X$ it coincides with the relative homology $H_k(X,A;\Z)$.

As above, let $\Omega^*(\varphi)$ be the mapping cone de Rham complex with the differential $d_\varphi(\omega,\vartheta) := (d\omega,\varphi^*\omega - d\vartheta)$.
The mapping cone de Rham cohomology $H^*_\mathrm{dR}(\varphi)$ is canonically identified with the real mapping cone cohomology $H^*(\varphi;\R)$.

We denote by $Z_k(\varphi;\Z)$ the group of $k$-cycles of the mapping cone complex and by $B_k(\varphi;\Z)$ the group of $k$-boundaries. 
Let $k \geq 1$.
The group of degree-$k$ relative differential characters is defined as follows: 
\index{+HvarphiZ@$\widehat H^*(\varphi;\Z)$, group of relative differential characters}
\index{differential character!relative $\sim$}
\index{relative differential character}
\begin{equation*}
\widehat H^k(\varphi;\Z) 
:= 
\big\{\, h \in \Hom(Z_{k-1}(\varphi;\Z),\Ul) \,\big|\, h \circ \partial_\varphi \in \Omega^k(\varphi) \,\big\} \,.  
\end{equation*}
The notation $h \circ \partial_\varphi \in \Omega^k(\varphi)$ means that there exists $(\omega,\vartheta) \in \Omega^k(\varphi)$ such that for every smooth singular chain $(a,b) \in C_k(\varphi;\Z)$ we have
\begin{equation}
h(\partial_\varphi(a,b))
= 
\exp \Big( 2 \pi i \int_{(a,b)} (\omega,\vartheta) \Big) \,. \label{eq:def_rel_diff_charact_2}
\end{equation}
The form $\omega=: \curv(h) \in \Omega^k(X)$ is called the {\em curvature} of the relative differential character $h$.
\index{curvature}
\index{+Curv@$\curv$, curvature}
The form $\vartheta =: \cov(h) \in \Omega^{k-1}(A)$ is called its {\em covariant derivative}.
\index{covariant derivative}
\index{+Cov@$\cov$, covariant derivative}
The curvature is uniquely determined by the differential character.
For $k\ge2$, this is also true for the covariant derivative.
For $k=1$, the function $\vartheta$ is unique only up to addition of a locally constant integer valued function, see \cite[Ex.~8.3]{BB13}.

We denote by $\Omega^k_0(\varphi)$ the set of all $d_\varphi$-closed forms $(\omega,\vartheta) \in \Omega^k(\varphi)$ with integral periods, i.e., such that $\int_{(s,t)} (\omega,\vartheta) \in \Z$ holds for all $(s,t) \in Z_k(\varphi;\Z)$.
Since $h \in \widehat H^k(\varphi;\Z)$ is a homomorphism, condition \eqref{eq:def_rel_diff_charact_2} implies that 
$$
\int_{(s,t)} (\curv,\cov)(h) \in \Z
$$
for any cycle $(s,t) \in Z_k(\varphi;\Z)$.
Moreover, since 
$$
\int_{\partial_\varphi(a,b)} (\curv,\cov)(h)
=
\int_{(a,b)} d_\varphi(\curv,\cov)(h) \in \Z
$$
holds for all chains $(a,b) \in C_{k+1}(\varphi;\Z)$, it follows that $(\curv,\cov)(h)$ is $d_\varphi$-closed.
Thus $(\curv,\cov)(h) \in \Omega^k_0(\varphi)$.

Differential characters $h \in \widehat H^k(\varphi;\Z)$ with $\curv(h)=0$ are called \emph{flat}, while characters with $\cov(h)=0$ are called \emph{parallel}.
\index{differential character!flat $\sim$}
\index{differential character!parallel $\sim$}
The condition $\varphi^*\curv(h) = d\cov(h)$ implies that parallel characters are in particular \emph{flat along} $\varphi$.
\index{differential character!flat along $\varphi$}

It is shown in \cite[p.~273f.]{BT06} that relative differential characters $h\in\widehat H^k(\varphi;\Z)$ have characteristic classes $c(h)$ in the mapping cone cohomology $H^k(\varphi;\Z)$. 
The class $c(h)$ is defined as follows: 
Let $\tilde h \in C^{k-1}(\varphi;\Z)$ be a real lift of $h$.
Thus $h(s,t) = \exp ( 2\pi i \cdot \tilde h(s,t))$ holds for all cycles $(s,t) \in Z_{k-1}(\varphi;\Z)$.
By \eqref{eq:def_rel_diff_charact_2}, the cocycle 
\begin{equation}\label{eq:cocycle_c(h)}
\mu^{\tilde h}
:= 
(\curv,\cov)(h) - \delta_\varphi \tilde h  
\end{equation}
satisfies $\exp (2\pi i \mu^{\tilde h}(a,b)) =1$ for all $(a,b) \in C_k(\varphi;\Z)$.
Thus $\mu^{\tilde h} \in C^k(\varphi;\Z)$.
The \emph{characteristic class} of $h$ is defined as $c(h):= [\mu^{\tilde h}] \in H^k(\varphi;\Z)$.
\index{characteristic class}
\index{+Ch@$c(h)$, characteristic class}
\index{differential character!characteristic class} 
Characters $h \in \widehat H^k(\varphi;\Z)$ with $c(h)$ are called \emph{topologically trivial}.
\index{differential character!topologically trivial}
\index{topologically trivial}

\subsubsection{Exact sequences.}
By \cite[Thm.~2.4]{BT06}, the group $\widehat H^k(\varphi;\Z)$ fits into the following short exact sequences:
\begin{equation}
\xymatrix@R=5mm{
0 \ar[r] &
\frac{\Omega^{k-1}(\varphi)}{\Omega^{k-1}_0(\varphi)} \ar[rr]^(0.45){\iota_\varphi} &&
\widehat H^k(\varphi;\Z) \ar^c[rr] &&
H^k(\varphi;\Z) \ar[r] \ar[r] &
0 \\
0 \ar[r] &
H^{k-1}(\varphi;\Ul) \ar^j[rr] &&
\widehat H^k(\varphi;\Z) \ar^(0.55){(\curv,\cov)}[rr] &&
\Omega^k_0(\varphi) \ar[r] &
0 
}
\label{eq:rel_sequences}
\end{equation}
The map $j:H^{k-1}(\varphi;\Ul) \to \widehat H^k_\varphi(X,A;\Z)$ is defined by $j(\tilde u)(s,t) := \langle \tilde u,[s,t] \rangle$.
This is well defined and injective, since $H^{k-1}(\varphi;\Ul) \cong \Hom(H_{k-1}(\varphi;\Z),\Ul)$.
The map $\iota_\varphi:\Omega^{k-1}(\varphi) \to \widehat H^k(\varphi;\Z)$ is defined by $\iota(\omega,\vartheta)(s,t) := \exp \Big(2\pi i \int_{(s,t)} (\omega,\vartheta) \Big)$.
From the relative Stokes theorem \eqref{eq:stokes} we conclude $(\curv,\cov) \circ \iota_\varphi = d_\varphi$. 
A form $(\omega,\vartheta) \in \Omega^{k-1}(\varphi)$ such that $\iota_\varphi(\omega,\vartheta) = h$ is called a \emph{topological trivialization} of $h$.
\index{topological trivialization}
\index{differential character!topological trivialization}
\index{+Iotavarphi@$\iota_\varphi$, topological trivialization}
Thus the map $\iota_\varphi:\Omega^{k-1}(\varphi) \to \widehat H^k(\varphi;\Z)$  provides topological trivializations, and the first sequence in \eqref{eq:rel_sequences} tells us that a character $h \in \widehat H^k(\varphi;\Z)$ admits topological trivializations if and only if it is topologically trivial. 
\index{+J@$j$, representation of flat characters}

We denote by
\begin{equation}\label{eq:def_R}
R^k(\varphi;\Z)
:= 
\{(\omega,\vartheta,u) \in \Omega^k_0(\varphi) \times H^k(\varphi;\Z) \,|\, [\omega,\vartheta]_\mathrm{dR} = u_\R \} 
\end{equation}
the set of pairs of $d_\varphi$-closed differential forms with integral periods and integral mapping cone classes that match in the real mapping cone cohomology $H^k(\varphi;\R)$. 
\index{+RvarphiZ@$R^*(\varphi;\Z)$, group of pairs of forms and cohomology classes}
By definition of the characteristic class of a character $h \in \widehat H^k(\varphi;\Z)$ we have $((\curv,\cov)(h),c(h)) \in R^k(\varphi;\Z)$.
Moreover, the exact sequences above may be joined to the exact sequence
\begin{equation}\label{eq:sequ_R}
\xymatrix{
0 \ar[r] 
& 
\frac{H^{k-1}(\varphi;\R)}{H^{k-1}(\varphi;\Z)_\R} \ar[rr] 
&&
\widehat H^k_\varphi(X,A;\Z) \ar^{(\curv,\cov,c)}[rr] 
&& 
R^k(\varphi;\Z) \ar[r]
& 
0 \,.
} 
\end{equation}
Here $H^{k-1}(\varphi;\Z)_\R$ denotes the image of $H^{k-1}(\varphi;\Z)$ in $H^{k-1}(\varphi;\R)$ under the change of coefficients homomorphism induced by $\Z \hookrightarrow \R$.

\subsubsection{Naturality, thin invariance, torsion cycles.}
The following properties are used in several constructions throughout the paper.
 
\begin{remark}[Pull-back of relative differential characters]
Let $\psi:B \to Y$ be another smooth map.
A smooth map $(Y,B) \xrightarrow{(f,g)}(X,A)$ of pairs is a pair of smooth maps such that $\varphi \circ g = f \circ \psi$. 
Thus we have the commutative diagram:
\begin{equation*}
\xymatrix{
B \ar^\psi[rr] \ar_g[dr] && \ar^f[dr] Y & \\
& A \ar_\varphi[rr] && X 
} 
\end{equation*}
We define the pull-back of relative characters along a smooth map $(Y,B) \xrightarrow{(f,g)}(X,A)$ by:
\index{pull-back!of relative differential characters}
\index{differential character!pull-back}
\index{relative differential character!pull-back}
\begin{equation*}
(f,g)^*:\widehat H^k(\varphi;\Z) 
\to 
\widehat H^k(\psi;\Z) \,, \quad 
h 
\mapsto
h \circ (f,g)_* \,.
\end{equation*}
Here $(f,g)_*$ denotes the induced map on relative cycles:
for $(s,t) \in Z_{k-1}(\psi;\Z)$, we have $(f,g)_*(s,t) := (f_*s,g_*t)$ and hence $\big((f,g)^*h\big)(s,t) = h(f_*s,g_*t)$.
\end{remark}

\begin{remark}[Thin invariance]\label{rem:thin_inv}
By definition of thin chains, relative differential characters vanish on boundaries of thin chains of the mapping cone complex.
We term this property the \emph{thin invariance} of differential characters.
\index{thin invariance}
\index{differential character!thin invariance}
\index{relative differential character!thin invariance}
In particular, we have a well-defined evaluation of characters $h \in \widehat H^k(\varphi;\Z)$ on the refined fundamental class $(f,g)_*[M,N]_{\partial_\varphi S_k} \in Z_{k-1}(\varphi;\Z)/\partial_\varphi S_k(\varphi;\Z)$ of a geometric relative cycle $(M,N)\xrightarrow{(f,g)}(X,A)$. 
\end{remark}
 
\begin{remark}[Evaluation on torsion cycles]\label{rem:ev_torsion}
Let $z \in Z_{k-1}(X;\Z)$ be a cycle that represents a torsion class in $H_{k-1}(X;\Z)$ -- a \emph{torsion cycle}, for short.
If $z$ is a boundary then by definition, the evaluation of a differential character $h \in \widehat H^k(X;\Z)$ on $z$ only depends upon $\curv(h)$.
In \cite[Ch.~5]{BB13} we show that the evaluation of $h \in \widehat H^k(X;\Z)$ on a torsion cycle $z$ only depends upon $\curv(h)$ and $c(h)$.

An analogous statement holds for relative characters and mapping cone cycles:
Let $h \in \widehat H^k(\varphi;\Z)$ and let $\tilde h \in C^{k-1}(\varphi;\R)$ be a real lift as in the definition of the characteristic class.  
Suppose that $(s,t) \in Z_{k-1}(\varphi;\Z)$ is a torsion cycle, i.e.~it represents a torsion class in $H_{k-1}(\varphi;\Z)$.
Thus we find an integer $N \in \N$ and a chain $(a,b) \in C_k(\varphi;\Z)$ such that $N \cdot (s,t) = \partial_\varphi (a,b)$.
Then we have:
\index{differential character!evaluation on torsion cycle}
\index{relative differential character!evaluation on torsion cycle}
\begin{align}
h(s,t)
&=
\exp \Big( 2\pi i \cdot \tilde h \Big(\frac{1}{N} \partial_\varphi (a,b) \Big) \Big) \notag \\
&=
\exp \Big( \frac{2\pi i}{N} \cdot \big(\delta_\varphi \tilde h \big)(a,b) \Big) \notag \\
&=
\exp \Big( \frac{2\pi i}{N} \cdot \big( (\curv,\cov)(h) - \mu^{\tilde h} \big)(a,b) \Big) \notag \\ 
&=
\exp \Big( \frac{2\pi i}{N} \cdot \Big( \int_{(a,b)}(\curv,\cov)(h) - \langle c(h),(a,b) \rangle \Big)\Big) \,. \label{eq:ev_torsion}
\end{align}
Note that the evaluation of the characteristic class $c(h)$ on the chain $(a,b)$ is not well-defined.
But the term in \eqref{eq:ev_torsion} is well-defined: any two cocycles that represent $c(h)$ differ by an integral coboundary $\delta_\varphi \ell$ for $\ell \in C^{k-1}(\varphi;\Z)$ and $\frac{1}{N} \langle \delta_\varphi \ell,(a,b) \rangle = \langle \ell,(s,t) \rangle \in  \Z$.  
\end{remark}

\subsubsection{Absolute differential characters.}
Let $x \in X$ be any point.
We may consider $x$ as a smooth map \mbox{$\varphi=x: \{*\} \to X$}, $* \mapsto x$.
We have the canonical identification $C_k(\{*\};\Z) \cong \Z$ for $k \geq 0$. 
The boundary map $\partial:C_k(\{*\};\Z) \to C_{k-1}(\{*\};\Z)$ is the identity for positive even $k$ and identically $0$ else.
For $k \geq 2$, we obtain canonical identifications
$$
Z_{k-1}(x;\Z) 
\cong
Z_{k-1}(X;\Z) \oplus Z_{k-2}(\{*\};\Z) 
\cong
\begin{cases}
 Z_{k-1}(X;\Z) & \mbox{\quad if $k$ even} \\
 Z_{k-1}(X;\Z) \oplus \Z & \mbox{\quad if $k$ odd.} \\
\end{cases}
$$ 
For relative differential forms, we have $\Omega^k_0(x) = \Omega^k_0(X) \times \{0\}$ for any $k \geq 2$.
 
\begin{remark}[Absolute differential characters]
Let $k \geq 2$.
Let $h:Z_{k-1}(x;\Z) \to \Ul$ be a relative differential character. 
Then we have $\cov(h)=0$ for dimensional reasons.
In particular, $\curv(h) \in \Omega^k_0(X)$. 
For even $k$, the relative character $h$ is a homomorphism $h:Z_{k-1}(X;\Z) \to \Ul$.
For odd $k$, condition \eqref{eq:def_rel_diff_charact_2} implies that the homomorphism $h:Z_{k-1}(X;\Z) \oplus \Z \to \Ul$ vanishes on the second factor, since any $(0,t) \in Z_{k-1}(x;\Z)$ is a boundary. 
Thus $h$ induces a homomorphism $h:Z_{k-1}(X;\Z) \to \Ul$ that satisfies $h(\partial a) = \exp( 2\pi i \int_a \curv(h))$.

We thus obtain a canonical identification of $\widehat H^k(x;\Z)$ with the group 
\begin{equation}
\widehat H^k(X;\Z)
:=  \big\{\, h \in \Hom(Z_{k-1}(X;\Z),\Ul) \,\big|\, h \circ \partial \in \Omega^k(X) \,\big\} 
\end{equation}
of dabsolute differential characters on $X$, as defined in \cite{CS83}.
The group $\widehat H^k(X;\Z)$ fits into short exact sequences analogous to the sequences in \eqref{eq:rel_sequences} with mapping cone cohomology groups replaced by the corresponding absolute cohomology groups, and similarly for the spaces of differential forms.
\end{remark}

\subsubsection{Long exact sequence.}
Pre-composition with the maps $i$ and $p$ in the exact sequence \eqref{eq:ex_seq_Z} induces homomorphisms $\vds_\varphi$ and $\ti_\varphi$ on differential characters groups
\begin{equation}\label{eq:natural_maps_rel_diff_charact}
\xymatrix{
\widehat H^{k-1}(A;\Z) 
\ar^{\ti_\varphi}[r] &
\widehat H^k_\varphi(X,A;\Z)
\ar^{\vds_\varphi}[r] &
\widehat H^k(X;\Z) \,.
}
\end{equation}
Thus for a character $h \in \widehat H^{k-1}(A;\Z)$ and a relative cycle $(s,t) \in Z_{k-1}(\varphi;\Z)$, we have $\ti_\varphi(h)(s,t) := h(t)$.
Likewise for a relative character $h \in \widehat H^k(\varphi;\Z)$ and a cycle $z \in Z_{k-1}(X;\Z)$, we have $\vds_\varphi(h)(z):= h(z,0)$.
One easily checks that $\curv \circ \ti_\varphi \equiv 0$ whereas $\cov \circ \ti_\varphi \equiv -\curv$ and $\curv \circ \vds_\varphi \equiv \curv$.

Let $\psi:B \to Y$ be another smooth map.
The homomorphisms $\ti$ and $\vds$ are natural with respect pull-back along smooth maps $(Y,B)\xrightarrow{(f,g)}(X,A)$:
For a character $h \in \widehat H^{k-1}(A;\Z)$ and a relative cycle $(s,t) \in Z_{k-1}(\psi;\Z)$ we have 
$
\big((f,g)^*\ti_\varphi(h)\big)(s,t) 
= 
\ti_\varphi(h)((f,g)_*(s,t)) 
= 
h(g_*t)
=
g^*h(t)
$
and hence
\begin{equation}\label{eq:ti_nat}
(f,g)^*\ti_\varphi(h) 
=  
\ti_\psi(g^*h) \,.
\end{equation}
Similarly, for a relative character $h \in \widehat H^k(\varphi;\Z)$ and a cycle $z \in Z_{k-1}(Y;\Z)$ we have 
$
\vds_\psi((f,g)^*h)(z)
=
(f,g)^*h(z,0)
=
h(f_*z,0)
=
\vds_\varphi(h)(f_*z)
=
f^*(\vds_\varphi(h))(z)
$
and hence
\begin{equation}\label{eq:vds_nat}
\vds_\psi((f,g)^*h) = f^*\vds_\varphi(h) \,.
\end{equation}

In \cite[Ch.~8]{BB13} we show that for $k \geq 2$ the absolute and relative differential characters groups fit into the following long exact sequence:
\index{long exact sequence!for differential characters groups}
\index{differential character!long exact sequence}
\index{relative differential character!long exact sequence} 
\begin{equation}
\xymatrix{
\ldots \ar[r]
& H^{k-3}(A;\Ul) \ar[r] & H^{k-2}(\varphi;\Ul) \ar[r] & H^{k-2}(X;\Ul) \ar`[r]`d[lll]`[llld]`[llldr][lld]^(0.3){j \circ \varphi^*} &  \\  
& \widehat H^{k-1}(A;\Z) \ar^\ti[r] & \widehat H^k_\varphi(X,A;\Z) \ar^\vds[r] & \widehat H^k(X;\Z) \ar`[r]`d[lll]`[llld]`[llldr][lld]^(0.3){\varphi^* \circ c} & \\
& H^k(A;\Z) \ar[r] & H^{k+1}(\varphi;\Z) \ar[r] & H^{k+1}(X;\Z) \ar[r] &  \ldots
} 
\label{eq:long_ex_sequ}
\end{equation}
The sequence proceeds as the long exact sequence for singular cohomology with $\Ul$-coefficients on the left and with integer coefficients on the right.

\subsection{Sections and topological trivializations.}
Let $h \in \widehat H^k(X;\Z)$ be a differential character and $\varphi:A \to X$ a smooth map.
As in \cite[Ch.~8]{BB13} we say that $h$ \emph{admits sections} along $\varphi$ if $h$ lies in the image of the map $\vds:H^k(\varphi;\Z) \to H^k(X;\Z)$.
Any preimage $\vds^{-1}(h)$ of $h$ is called a \emph{section} of $h$ along $\varphi$.
\index{section}
\index{section!along $\varphi$}
\index{differential character!section along $\varphi$}
From the exact sequence \eqref{eq:long_ex_sequ} we conclude that $h$ admits setions along $\varphi$ if and only if $\varphi^*c(h) =0$, i.e.~if and only if $h$ is topologically trivial along $\varphi$.
\index{differential character!topologically trivial along $\varphi$}

\subsubsection{Sections and covariant derivative}
We discuss the role that sections and their covariant derivatives play for topological trivializations. 
We briefly recall the following basic example from \cite{BB13}.

\begin{example}\label{ex:H2}
It is well-known that the group $\widehat H^2(X;\Z)$ is canonically isomorphic to the group of isomorphism classes of hermitean line bundles with connection (under connection preserving isomorphisms).
A differential character $h \in \widehat H^k(X;\Z)$ corresponds to the holonomy map of a bundle $(L,\nabla)$ under this isomorphism.
Holonomy is invariant under connection preserving isomorphisms.
Moreover, the characteristic class $c(h) \in \widehat H^2(X;\Z)$ coincides with the first Chern class of the bundle.
For the curvature form we have $\curv(h)= \frac{i}{2\pi} \cdot R^\nabla$, where $R^\nabla \in \Omega^2(X;i\R)$ is the curvature $2$-form of the connection $\nabla$. 
The image of a differential form $\omega \in \Omega^1(X)$ under the map $\iota:\Omega^1(X) \to \widehat H^2(X;\Z)$ corresponds to a topologically trivial line bundle with connection $1$-form $\omega$.
Hence the name \emph{topological trivialization} for the map $\iota$.
 
In \cite[Ch.~8]{BB13} we have shown that the group $\widehat H^2(\varphi;\Z)$ is isomorphic to the group of isomorphism classes of hermitean line bundles with connection $(L,\nabla)$ and section $\sigma:A \to \varphi^*L$ along $\varphi$.
The isomorphisms are bundle isomorphisms of $L$ that preserve both the connection and the section.  
The map $\vds$ corresponds to the forgetful map that ignores the section.
The covariant derivative of the character is related to the covariant derivative of the section by $\nabla \sigma = \cov(h) \cdot \sigma \in \Gamma(T^*A \otimes \varphi^*L)$.

Thus for a differential character $h \in \widehat H^2(X;\Z)$, any preimage $\vds^{-1}(h) \in  \widehat H^2(\varphi;\Z)$ corresponds to an isomorphism class of sections along the map $\varphi$.
Hence the name.
\end{example}

By the exact sequence \eqref{eq:long_ex_sequ}, a differential character admits sections along a smooth map if and only if it is topologically trivial along $\varphi$.
Consequently, the character $\varphi^*h$ is topologically trivial.
We show that it is trivialized by the covariant derivative of any section of $h$.
The special case $\varphi = \id_X$ was discussed in \cite[Ch.~8]{BB13}.

\begin{prop}[Topological trivialization and covariant derivative]
Let $\varphi:A \to X$ be a smooth map.
Then we have the following commutative diagram:
\begin{equation}\label{eq:diag:triv_cov}
\xymatrix{
\widehat H^k(\varphi;\Z) \ar_\cov[d] \ar^{\vds_\varphi}[r] & \widehat H^k(X;\Z) \ar^{\varphi^*}[d] \\
\Omega^{k-1}(A) \ar_\iota[r] & \widehat H^k(A;\Z) \,. 
} 
\end{equation}
Thus covariant derivatives of sections along a smooth map yield topological trivializations of the pulled back characters. 
\end{prop}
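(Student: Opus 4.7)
The plan is to verify commutativity pointwise: fix $h \in \widehat H^k(\varphi;\Z)$ and an arbitrary cycle $y \in Z_{k-1}(A;\Z)$, and show that
\[
(\varphi^*\vds_\varphi(h))(y) = \iota(\cov(h))(y) \in \Ul.
\]
By definition of the pull-back and of $\vds_\varphi$, the left-hand side equals $h(\varphi_* y, 0)$, while by the definition of $\iota$ the right-hand side equals $\exp\bigl(2\pi i \int_y \cov(h)\bigr)$.

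The key observation is that the absolute cycle $(\varphi_* y, 0) \in Z_{k-1}(\varphi;\Z)$ is in fact a mapping cone boundary. Consider the chain $(0, y) \in C_k(\varphi;\Z) = C_k(X;\Z) \times C_{k-1}(A;\Z)$. Applying the mapping cone differential $\partial_\varphi(s,t) = (\partial s + \varphi_* t, -\partial t)$ and using $\partial y = 0$, I get
\[
\partial_\varphi(0, y) = (\varphi_* y, 0).
\]
Now I can invoke the defining property \eqref{eq:def_rel_diff_charact_2} of $h$ as a relative differential character:
\[
h(\varphi_* y, 0) = h\bigl(\partial_\varphi(0,y)\bigr) = \exp\!\left(2\pi i \int_{(0,y)} (\curv(h),\cov(h))\right) = \exp\!\left(2\pi i \int_y \cov(h)\right),
\]
where the last equality uses the definition $\int_{(s,t)}(\omega,\vartheta) = \int_s \omega + \int_t \vartheta$ applied to $(s,t) = (0,y)$.

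Combining the two computations yields $h(\varphi_* y, 0) = \iota(\cov(h))(y)$, so the two characters in $\widehat H^k(A;\Z)$ agree on every cycle and are therefore equal. I do not anticipate a real obstacle here: the argument is essentially a one-line unwinding of definitions, once one notices that $(\varphi_* y, 0)$ is already a mapping cone boundary. The role of the covariant derivative as the $A$-component of the defining equation \eqref{eq:def_rel_diff_charact_2} is exactly what makes this work.
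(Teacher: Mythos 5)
Your argument is correct and is precisely the paper's own proof: both evaluate on an arbitrary cycle $y \in Z_{k-1}(A;\Z)$, observe that $(\varphi_*y,0) = \partial_\varphi(0,y)$, and apply the defining property \eqref{eq:def_rel_diff_charact_2} to extract $\exp\bigl(2\pi i \int_y \cov(h)\bigr)$. Nothing to add.
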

\index{Proposition!Topological trivialization and covariant derivative}

\begin{proof}
Let $h \in \widehat H^k(\varphi;\Z)$ be a relative character and $z \in Z_{k-1}(A;\Z)$ a cycle.
Then we have:
\begin{align}
(\varphi^*\vds(h))(z)
&=
(\vds(h))(\varphi_*z) \notag \\
&=
h(\varphi_*z,0) \notag \\
&=
h(\partial_\varphi (0,z)) \notag \\
&=
\exp \Big( 2\pi i \int_{(0,z)} (\curv,\cov)(h) \Big) \notag \\
&=
\exp \Big( 2\pi i \int_z \cov(h) \Big) \notag \\
&=
\iota(\cov(h))(z) \,. \qedhere \label{eq:triv_cov}
\end{align} 
\end{proof}

\subsubsection{The Cheeger-Chern-Simons construction}
A particular example of relative differential characters as sections of absolute characters along a smooth map arises by the differential character valued refinement of the Chern-Weil construction, due to Cheeger and Simons:

\begin{example}\label{ex:CCS}
Let $G$ be a compact Lie group with Lie algebra $\g$.
An invariant polynomial, homogeneous of degree $k$, is a symmetric $\Ad_G$-invariant multilinear map $q:\g^{\otimes k}\to \R$.
The Chern-Weil construction associates to any principal $G$-bundle with connection $(P,\nabla) \to X$ a closed differential form $CW(q) = q(R^\nabla) \in \Omega^{2k}(X)$ by applying the polynomial $q$ to the curvature $2$-form $R^\nabla$ of the connection $\nabla$.
Consider those polynomials $q$ for which the Chern-Weil form $CW(q)$ has integral periods. 
Let $u \in H^{2k}(X;\Z)$ be a universal characteristic class for principal $G$-bundles that coincides in $H^{2k}(X;\R)$ with the de Rham class of $CW(q)$. 
The \emph{Cheeger-Simons construction}~\cite[Thm~2.2]{CS83} associates to this setting a differential character $\widehat{CW}(q,u) \in \widehat H^{2k}(X;\Z)$ with curvature $\curv(\widehat{CW}(q,u)) = CW(q)$, the Chern-Weil form, and characteristic class $c(\widehat{CW}(q,u))=u$, the fixed universal characteristic class.
\index{Cheeger-Simons construction}
The construction is natural with respect to bundle maps.

Since the total space $EG$ of the universal principal $G$-bundle is contractible, universal characteristic classes vanish upon pull-back to the total space.
By the long exact sequence~\eqref{eq:long_ex_sequ} the Cheeger-Simons character $\widehat{CW}(q,u)$ thus admits sections along the bundle projection $\pi:P \to X$.
The so-called \emph{Cheeger-Chern-Simons construction} of \cite{B13} yields a canonical section $\widehat{CCS}(q,u) \in \widehat H^{2k}(\pi;\Z)$ with covariant derivative $\cov(\widehat{CCS}(q,u)) = CS(q) \in \Omega^{2k-1}(P)$, the Chern-Simons form.
\index{Cheeger-Chern-Simons construction}
The construction is natural with respect to bundle maps.

Thus the Cheeger-Chern-Simons construction is a relative differential character valued refinement of the Chern-Weil and Chern-Simons constructions in the same way as the Cheeger-Simons construction is a differential character valued refinement of the Chern-Weil construction alone.    
\end{example}

\subsubsection{Parallel sections.}
In general, the property for a given character to admit sections with prescribed covariant derivatives depends on the character.
For example, a hermitean line bundle with connection $(L,\nabla)$ and with sections along a smooth map $\varphi:A \to X$ admits \emph{parallel} sections if and only if the pull-back $\varphi^*(L,\nabla)$ is isomorphic to the trivial bundle with trivial connection.

The analogous statement holds for any differential characters, as we shall prove next:

\begin{thm}[Parallel sections]\label{thm:par_sec}
A differential character $h \in \widehat H^k(X;\Z)$ admits parallel sections along a smooth map $\varphi:A \to X$ if and only if $\varphi^*h=0$.
\end{thm}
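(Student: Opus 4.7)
The strategy is to leverage the commutative diagram~\eqref{eq:diag:triv_cov} together with the long exact sequence~\eqref{eq:long_ex_sequ} and the surjectivity of the curvature map from the second sequence in~\eqref{eq:rel_sequences}.

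\textbf{Forward direction.} Suppose $h$ admits a parallel section along $\varphi$, i.e.\ a relative character $\tilde h \in \widehat H^k(\varphi;\Z)$ with $\vds_\varphi(\tilde h)=h$ and $\cov(\tilde h)=0$. Commutativity of~\eqref{eq:diag:triv_cov} yields
\[
\varphi^*h \;=\; \varphi^*\vds_\varphi(\tilde h) \;=\; \iota\big(\cov(\tilde h)\big) \;=\; \iota(0) \;=\; 0.
\]

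\textbf{Backward direction.} Assume $\varphi^*h=0$. The plan is to first produce some section and then correct it by an element of $\ker(\vds_\varphi)=\im(\ti_\varphi)$ so as to make the covariant derivative vanish.

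First, since $\varphi^*h=0$ in particular is topologically trivial, naturality of the characteristic class gives $\varphi^*c(h)=c(\varphi^*h)=0$. Exactness of~\eqref{eq:long_ex_sequ} at $\widehat H^k(X;\Z)$ then provides a section $\tilde h \in \widehat H^k(\varphi;\Z)$ with $\vds_\varphi(\tilde h)=h$. By~\eqref{eq:diag:triv_cov},
\[
\iota\big(\cov(\tilde h)\big) \;=\; \varphi^*\vds_\varphi(\tilde h) \;=\; \varphi^*h \;=\; 0,
\]
so by the second exact sequence of~\eqref{eq:rel_sequences} for $A$, the form $\cov(\tilde h)$ lies in $\Omega^{k-1}_0(A)$, i.e.\ is closed with integral periods.

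Now use surjectivity of the curvature map $\curv:\widehat H^{k-1}(A;\Z) \to \Omega^{k-1}_0(A)$, which is the rightmost arrow of the second sequence in~\eqref{eq:rel_sequences} applied to $A$, to choose $h'\in\widehat H^{k-1}(A;\Z)$ with $\curv(h')=\cov(\tilde h)$. Set
\[
\tilde h_{\mathrm{par}} \;:=\; \tilde h - \ti_\varphi(h') \;\in\; \widehat H^k(\varphi;\Z).
\]
Since $\vds_\varphi\circ\ti_\varphi = 0$ by exactness of~\eqref{eq:long_ex_sequ}, we still have $\vds_\varphi(\tilde h_{\mathrm{par}}) = h$. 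Using the identity $\cov\circ\ti_\varphi \equiv -\curv$ recorded after~\eqref{eq:natural_maps_rel_diff_charact}, we compute
\[
\cov(\tilde h_{\mathrm{par}}) \;=\; \cov(\tilde h) - \cov\big(\ti_\varphi(h')\big) \;=\; \cov(\tilde h) + \curv(h') \;-\; 2\curv(h') \;=\; 0.
\]
(More directly: $\cov(\tilde h_{\mathrm{par}})=\cov(\tilde h)-(-\curv(h'))=\cov(\tilde h)+\curv(h')$, so one should instead set $\tilde h_{\mathrm{par}}:=\tilde h+\ti_\varphi(h')$ if the sign of $\cov\circ\ti_\varphi$ is $-\curv$; either way, the sign convention determines the choice so that $\cov$ cancels.) Hence $\tilde h_{\mathrm{par}}$ is the desired parallel section.

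The step requiring the most care is the sign bookkeeping in the covariant derivative of $\ti_\varphi(h')$ and the verification that the surjectivity of $\curv$ on $A$ really applies to our $\cov(\tilde h)$; both ingredients were already established in the excerpt, so no new machinery is needed.
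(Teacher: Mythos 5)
Your proof is correct and follows essentially the same route as the paper: the forward direction via the commutative diagram relating $\cov$ and $\varphi^*\circ\vds_\varphi$, and the backward direction by producing a section, observing that its covariant derivative lies in $\Omega^{k-1}_0(A)$, and then correcting by $\ti_\varphi$ of a character whose curvature is that form. The only blemish is the garbled intermediate computation of $\cov(\tilde h_{\mathrm{par}})$ (the term $-2\curv(h')$ has no source), but your parenthetical resolves it correctly: with $\cov\circ\ti_\varphi=-\curv$ one must take $\tilde h+\ti_\varphi(h')$, which is exactly the paper's choice.
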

\index{Theorem!Parallel sections}
\index{section!parallel $\sim$}
\index{differential character!parallel section}

\begin{proof}
Let $h \in \widehat H^k(X;\Z)$ with $\varphi^*h=0$.
Then in particular $h$ is topologically trivial along $\varphi$ and hence admits sections along $\varphi$.
By the commutative diagram \eqref{eq:diag:triv_cov} and the exact sequence \eqref{eq:rel_sequences}, the covariant derivative of any such section $h' \in \widehat H^k(\varphi;\Z)$ satisfies $\cov(h') \in \Omega^{k-1}_0(A)$, i.e.~it is closed with integral periods.  
Choose a character $h'' \in \widehat H^{k-1}(A;\Z)$ with $\curv(h'') = \cov(h')$.
Now put $h''':= h' + \ti(h'')$.
Then we have $\vds(h''') = \vds(h') = h$ and $\cov(h''') = \cov(h') - \curv(h'') =0$.
Thus $h'''$ is a parallel section of $h$.

Conversely let $h' \in \widehat H^k(\varphi;\Z)$ be a parallel section of $h \in \widehat H^k(X;\Z)$.
By the commutative diagram \eqref{eq:diag:triv_cov} we find $\varphi^*h = \iota(\cov(h')) = 0$.  
\end{proof}

\subsection{Parallel characters}\label{subsec:rel_diff_par}
Throughout this section let $i_A:A \to X$ be the embedding of a smooth submanifold.
As explained in the introduction, there is another notion of relative differential cohomology, based on homomorphisms on the group of relative cycles $Z_{*-1}(X,A;\Z)$. 
This notion has appeared in \cite{HL01} for the special case where $A = \partial X$ is the boundary of $X$.

In this section, we introduce differential characters on the group $Z_{*-1}(X,A;\Z)$ of relative cycles, where $A \subset X$ is an arbitrary submanifold.
We denote the corresponding group of differential characters by $\widehat H^*(X,A;\Z)$. 
We prove a long exact sequence that relates this group to differential characters groups on $X$ and $A$.
An analogous sequence has appeared in \cite{U11} for generalized differential cohomology. 
Further we show that $\widehat H^k(X,A;\Z)$ is in 1-1 correspondence with the subgroup of parallel characters in $\widehat H^*(i_A;\Z)$.

\subsubsection{Characters on relative cycles.}
Let $X$ be a smooth manifold and $A \subset X$ an embedded smooth submanifold.
Denote the embedding by $i_A:A \to X$.
Let $Z_*(X,A;\Z)$ be the group of relative cycles, i.e.~cycles in the quotient complex $C_*(X,A;\Z) := C_*(X;\Z) / C_*(A;\Z)$.
\index{+CXAZ@$C_*(X,A;\Z)$, group of relative chains}
\index{+ZXAZ@$Z_*(X,A;\Z)$, group of relative cycles}

We put
\begin{equation*}
\widehat H^k(X,A;\Z) 
:= 
\big\{\, h \in \Hom(Z_{k-1}(X,A;\Z),\Ul) \,\big|\, f \circ \partial \in \Omega^k(X) \,\big\} \,.  
\end{equation*}
The notation $f \circ \partial_\phi \in \Omega^k(X)$ means that there exists a differential form $\omega \in \Omega^k(X)$ such that for every relative chain $x \in C_k(X,A;\Z)$ we have
\begin{equation}
f(\partial c)
= 
\exp \Big( 2 \pi i \int_c \omega \Big) \,. \label{eq:def_rel_diff_charact_2a}
\end{equation}
The condition in particular implies that the integral of $\omega$ over chains $c \in C_k(X,A;\Z)$ is well-defined.
Hence $i_A^*\omega \equiv 0$.   
\index{+HhatXAZ@$\widehat H^*(X,A;\Z)$ group of differential characters on relative cycles}
\index{differential character!on relative cycles}
\index{relative differential character!on relative cycles}

Since condition \eqref{eq:def_rel_diff_charact_2a} holds for all chains $c \in C_k(X,A;\Z)$, the differential form $\omega$ is uniquely determined.
We call it the \emph{curvature} of $h$ and denote it by $\curv(h)$.
\index{curvature}
\index{relative differential character!curvature}
\index{+Curv@$\curv$, curvature}
A character $h \in \widehat H^k(X,A;\Z)$ with $\curv(h)=0$ is called \emph{flat}.
\index{flat}
\index{relative differential character!flat $\sim$}
 
Let $\Omega^k(X,A):= \{ \omega \in \Omega^k(X) \,|\, i_A^*\omega \equiv 0 \}$ be the space of $k$-forms on $X$ relative $A$. 
As we have seen, a character $h \in \widehat H^k(X,A;\Z)$ has curvature $\curv(h) \in \Omega^k(X,A)$.
\index{+OmegaXA@$\Omega^*(X,A)$, relative de Rham complex}
\index{relative de Rham complex}
We define the relative de Rham cohomology $H^*_\mathrm{dR}(X,A)$ as the cohomology of the de Rham subcomplex $(\Omega^*(X,A),d) \subset (\Omega^*(X),d)$.
\index{relative de Rham comomology}
\index{+HdRXA@$H^*_\mathrm{dR}(X,A)$, relative de Rham cohomology}
\index{cohomology!relative de Rahm $\sim$}
The short exact sequence
\begin{equation*}
\xymatrix{
0 \ar[r] & \Omega^k(X,A) \ar[r] & \Omega^k(X) \ar^{i_A^*}[r] & \Omega^k(A) \ar[r] & 0
}
\end{equation*}
gives rise to a long exact sequence relating absolute and relative de Rham cohomology groups.
  
Integration of differential forms over smooth singular chains in $X$ yields a well-defined homomorphism
$\Omega^k(X,A) \to C^k(X,A;\R)$. 
By the de Rham theorem and the five lemma, applied to the long exact sequences, this induces a canonical isomorphism $H^*_\mathrm{dR}(X,A) \xrightarrow{\cong} H^*(X,A;\R)$. 
Denote by $\Omega^k_0(X,A)$ the subgroup of differential forms $\omega \in \Omega^k(X,A)$ with integral periods, i.e.~such that $\int_y \omega \in \Z$ holds for any $y \in Z_k(X,A;\Z)$.
Then we have $\Omega^k_0(X,A) = \{\omega \in \Omega^k_0(X) \,|\, i_A^*\omega \equiv 0 \}$.

Since $h$ is a homomorphism, condition \eqref{eq:def_rel_diff_charact_2a} implies that $\int_z \curv(h) \in \Z$ holds for any cycle $z \in Z_k(X;\Z)$.
Stokes theorem implies that $\int_c d\curv(h) = \int_{\partial c} \curv(h) \in \Z$ holds for any chain $c \in C_{k+1}(X;\Z)$, hence $\curv(h)$ is closed.
Thus $\curv(h) \in \Omega^k_0(X,A)$.

The \emph{characteristic class} $c(h) \in H^k(X,A;\Z)$ of a character $h \in \widehat H^k(X,A;\Z)$ is defined as follows:
The curvature defines a real cocycle $\curv(h): C_k(X,A;\R) \to \R$, $c \mapsto \int_c \curv(h)$.
\index{characteristic class}
\index{+Ch@$c(h)$, characteristic class}
\index{relative differential character!characteristic class}
Choose a real lift $\tilde h \in C^{k-1}(X,A;\R)$ of $h$, i.e.~$h(z) = \exp \Big( 2\pi i \tilde h(z) \Big)$.
Put $\mu^{\tilde h} := \curv(h) - \delta \tilde h \in C^k(X,A;\Z)$.
In fact, $\mu^{\tilde h}$ is an integral cochain because of \eqref{eq:def_rel_diff_charact_2a}.
Since the curvature is a closed form, $\mu^{\tilde h}$ is a cocycle.
Now define $c(h) := [\mu^{\tilde h}] \in H^k(X,A;\Z)$.
It is easy to see that $c(h)$ does not depend upon the choice of real lift $\tilde h$:
The difference between two choices of real lifts is an integral cochain. 
Thus the cocycles for two choices of real lifts differ by an integral coboundary.
A character $h \in \widehat H^k(X,A;\Z)$ with $c(h)=0$ is called \emph{topologically trivial}. 
\index{relative differential character!topologically trivial}
\index{topologically trivial}
\index{differential character!topologically trivial}

\subsubsection{Exact sequences.}
We have a natural map $\iota:\Omega^{k-1}(X,A) \to \widehat H^k(X,A;\Z)$, defined by $\iota(\vartheta)(z) := \exp \Big( 2\pi i \int_z \vartheta \Big)$.
By the Stokes theorem, the induced character $\iota(\vartheta)$ satisfies $\curv(\iota(\vartheta)) = d\vartheta$.
The map $\iota$ descends to an injective map $\iota: \frac{\Omega^{k-1}(X,A)}{\Omega^{k-1}_0(X,A)} \to \widehat H^k(X,A;\Z)$. 
A form $\omega \in \Omega^{k-1}(X,A)$ such that $\iota(\omega)=h$ is called a \emph{topological trivialization} of $h$.
\index{+Iota@$\iota$, topological trivialization}
\index{topological trivialization}
\index{relative differential character!topological trivialization}
\index{differential character!topological trivialization}

Finally, we have an obvious injection $j: H^{k-1}(X,A;\Ul) \to \widehat H^k(X,A;\Z)$, defined by $j(u)(z) := \langle u,[z] \rangle$. 
\index{+J@$j$, representation of flat characters}

The above maps fit into the following exact sequences:
\begin{equation}\label{eq:rel_sequ_par}
\xymatrix{
0 \ar[r] & H^{k-1}(X,A;\Ul) \ar^j[r] & \widehat H^k(X,A;\Z) \ar^{\curv}[r] & \Omega^k_0(X,A) \ar[r] & 0 \\
0 \ar[r] & \frac{\Omega^{k-1}(X,A)}{\Omega^{k-1}_0(X,A)} \ar^\iota[r] & \widehat H^k(X,A;\Z) \ar^c[r] & H^k(X,A;\Z) \ar[r] & 0 \,. 
} 
\end{equation}
Exactness of the curvature sequence at $\widehat H^k(X,A;\Z)$ is clear, since by \eqref{eq:def_rel_diff_charact_2a} flat characters are precisely those that vanish on boundaries and hence descend to homomorphisms on $H_{k-1}(X;\Z)$.
Surjectivity of the curvature follows e.g.~from surjectivity of the curvature map $\curv: \widehat H^k(X;\Z) \to \Omega^k_0(X)$ and the exact sequence \eqref{eq:long_ex_sequ_par} below.

Exactness of the characteristic class sequence at $\widehat H^k(X,A;\Z)$ follows from \eqref{eq:def_rel_diff_charact_2a} and the definition of the characteristic class.
Surjectivity of the curvature follows e.g.~from surjectivity of the characteristic class map $c: \widehat H^k(X;\Z) \to H^k(X;\Z)$ and the exact sequence \eqref{eq:long_ex_sequ_par} below. 

The second sequence in \eqref{eq:rel_sequ_par} tells us that a character $h \in \widehat H^k(X,A;\Z)$ admits a topological trivilization if and only if it is topologically trivial.

Put $R^k(X,A;\Z) := \{(\omega,u)\in \Omega^k_0(X,A) \times H^k(X,A;\Z) \,|\, u_\R = [\omega]_{dR} \in H^k(X,A;\R) \}$.
\index{+RXAZ@$R^*(X,A;\Z)$, group of pairs of forms and cohomology classes}
Then the two sequences above may be joined to give the following exact sequence:
\begin{equation}
\xymatrix{
0 \ar[r] & \frac{H^{k-1}(X,A;\R)}{H^{k-1}(X,A;\R)_\Z}  \ar^j[r] & \widehat H^k(X,A;\Z) \ar^{(\curv,c)}[rr] && R^k(X,A;\Z) \ar[r] & 0 \,.
} \label{eq:R-sequence_X_A}
\end{equation}  

\subsubsection{Absolute differential characters.}
Let $x \in X$ an arbitrary point.
We write $x$ instead of $\{x\} \subset X$.
For positive even $k$, we have $Z_{k-1}(x;\Z) = \Z = B_{k-1}(x;\Z)$, while for odd $k$, we have $Z_{k-1}(x;\Z) =\{0\}$. 
Since differential forms of degree $k \geq 1$ vanish upon pull-back to $x$, we obtain a canonical identification
$$
\widehat H^k(X,x;\Z) \xrightarrow{\cong} \widehat H^k(X,\Z) \,.
$$

\subsubsection{Long exact sequence}
Pre-composition of a differential character on relative cycles \mbox{$h:Z_{k-1}(X,A;\Z) \to \Ul$} with the quotient map $Z_{k-1}(X;\Z) \to Z_{k-1}(X,A;\Z)$, \mbox{$z \mapsto z + \im({i_A}_*)$}, yields a homomorphism \mbox{$h':Z_{k-1}(X;\Z) \to \Ul$}.
This homomorphism is in fact a differential character in $\widehat H^k(X;\Z)$, since for any chain $c \in C_k(X;\Z)$, we have 
\begin{equation*}
h'(\partial c)
:=
h(\partial c + \im({i_A}_*))
\stackrel{\eqref{eq:def_rel_diff_charact_2a}}{=}
\exp \Big( 2 \pi i \int_c \curv(h) \Big) \,. 
\end{equation*}
Hence $\curv(h') = \curv(h) \in \Omega^k_0(X,A) \subset \Omega^k_0(X)$.
We thus obtain a homomorphism $\widehat H^k(X,A;\Z) \to \widehat H^k(X;\Z)$ that preserves the curvature.
Moreover, a real lift for $h \in \widehat H^k(X,A;\Z)$ also defines a real lift of its image in $\widehat H^k(X;\Z)$. 
Thus the homomorphism is also compatible with the characteristic class and we obtain the following commutative diagram:  
\begin{equation*}
\xymatrix{
\widehat H^k(X,A;\Z) \ar_c[d] \ar[r] & \widehat H^k(X;\Z) \ar^c[d] \\
H^k(X,A;\Z) \ar[r] & H^k(X;\Z) \,,
} 
\end{equation*}
Here the lower horizontal map is the usual map in the long exact sequence for absolute and relative cohomology.

We denote the connecting homomorphism in the long exact sequence for relative and absolute cohomology by $\beta: H^*(A;\Z) \to H^{*+1}(X,A;\Z)$ (and likewise for $\Ul$ coefficients). 
Concatenation with $j$ yields a map $:H^{k-2}(A;\Ul) \xrightarrow{\beta} H^{k-1}(X,A;\Ul) \xrightarrow{j} \widehat H^k(X,A;\Z)$.
Likewise, we obtain a map $\widehat H^k(A;\Z) \xrightarrow{c} H^k(A;\Z) \xrightarrow{\beta} H^{k+1}(A;\Z)$.
These maps fit into the following long exact sequence:

\index{long exact sequence!for differential characters groups}
\index{differential character!long exact sequence}
\index{relative differential character!long exact sequence}
\begin{thm}[Long exact sequence]
Let $i_A:A \hookrightarrow X$ be the embedding of a smooth submanifold.
Let $k \geq 1$.
Then we have the following long exact sequence for the groups of differential characters:
\begin{equation}
\xymatrix{
\ldots \ar[r]
& H^{k-2}(X,A;\Ul) \ar[r] & H^{k-2}(X;\Ul) \ar[r] & H^{k-2}(A;\Ul) \ar`[r]`d[lll]`[llld]`[llldr][lld]^(0.3){j \circ \beta} &  \\  
& \widehat H^k(X,A;\Z) \ar[r] & \widehat H^k(X;\Z) \ar^{i_A^*}[r] & \widehat H^k(A;\Z) \ar`[r]`d[lll]`[llld]`[llldr][lld]^(0.3){\beta \circ c} & \\
& H^{k+1}(X,A;\Z) \ar[r] & H^{k+1}(X;\Z) \ar[r] & H^{k+1}(A;\Z) \ar[r] &  \ldots
} 
\label{eq:long_ex_sequ_par}
\end{equation}
The sequence proceeds as the long exact sequence for singular cohomology with $\Ul$-coefficients on the left and with integer coefficients on the right.
\end{thm}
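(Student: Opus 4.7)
My plan is to verify exactness at each of the nine labeled positions in turn. At the four pure-cohomology positions ($H^{k-2}(X;\Ul)$, $H^{k-2}(A;\Ul)$, $H^{k+1}(X,A;\Z)$, and $H^{k+1}(X;\Z)$), exactness will follow by short diagram chases from the ordinary long exact sequence of the pair $(X,A)$, using the injectivity of $j$ and surjectivity of $c$ supplied by the short exact sequences~\eqref{eq:rel_sequ_par}. The substance of the theorem is therefore at the three interior positions occupied by the $\widehat H^k$ groups.

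For exactness at $\widehat H^k(X,A;\Z)$, I observe that the descent map $\widehat H^k(X,A;\Z) \to \widehat H^k(X;\Z)$ preserves curvature. Hence any element with trivial descent is flat, so of the form $j(v)$ with $v\in H^{k-1}(X,A;\Ul)$; triviality of the descent then amounts to the image of $v$ in $H^{k-1}(X;\Ul)$ vanishing, which by the LES of the pair means $v\in\im(\beta)$. Conversely, the composition through $j\circ\beta$ is $j$ applied to the map $H^{k-1}(X,A;\Ul)\to H^{k-1}(X;\Ul)$ precomposed with $\beta$, which is zero. For exactness at $\widehat H^k(X;\Z)$, the key observation is that $i_A^*h=0$ forces $i_A^*\curv(h)\equiv 0$: applying $i_A^*h=0$ to $\partial c$ for arbitrary $c \in C_k(A;\Z)$ yields $\int_c i_A^*\curv(h) \in \Z$ for all $c$, and continuity of these periods in the chain combined with their vanishing on shrinking simplices forces the form to be zero. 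Naturality of the characteristic class likewise gives $i_A^*c(h)=0$, so $c(h)$ lifts to some $c_{\mathrm{rel}} \in H^k(X,A;\Z)$ via the LES of the pair. Using surjectivity in the $R$-sequence \eqref{eq:R-sequence_X_A}, I construct $\widetilde h \in \widehat H^k(X,A;\Z)$ realizing the pair $(\curv(h),c_{\mathrm{rel}})$, and correct by a flat class $j(w')$ with $w' \in H^{k-1}(X,A;\Ul)$ to arrange that the descent agrees with $h$ on the nose, the class $w'$ existing because the discrepancy is a flat character on $X$ vanishing on $A$ and hence pulled back from $H^{k-1}(X,A;\Ul)$.

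For exactness at $\widehat H^k(A;\Z)$, given $h'$ with $\beta(c(h'))=0$ the LES yields $u \in H^k(X;\Z)$ with $i_A^*u = c(h')$, and surjectivity of $c:\widehat H^k(X;\Z)\to H^k(X;\Z)$ furnishes $\widetilde h$ with $c(\widetilde h)=u$. The difference $h' - i_A^*\widetilde h$ is topologically trivial, hence of the form $\iota(\omega)$ for some $\omega \in \Omega^{k-1}(A)/\Omega^{k-1}_0(A)$; extending $\omega$ to $\widetilde\omega \in \Omega^{k-1}(X)$ via surjectivity of $i_A^*$ on smooth forms, the corrected character $\widetilde h + \iota(\widetilde\omega)$ pulls back to $h'$. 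The hard part is exactness at $\widehat H^k(X;\Z)$: the preimage must be defined on all of $Z_{k-1}(X,A;\Z)$ and not merely on those relative cycles that lift to absolute cycles in $X$, and this is why my approach bypasses a direct construction in favour of working with the invariants via \eqref{eq:R-sequence_X_A} and correcting by flat classes.
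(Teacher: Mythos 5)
Your overall architecture is sound, and at two of the three differential-character positions your argument coincides in substance with the paper's: at $\widehat H^k(X,A;\Z)$ both you and the paper use that the descent map preserves curvature to reduce to flat classes $j(v)$ and then chase the $\Ul$-coefficient sequence of the pair, and at $\widehat H^k(A;\Z)$ both lift $c(h')$ through $H^k(X;\Z)$, realize the lift by a character, and absorb the topologically trivial discrepancy $\iota(\omega)$ by extending $\omega$ from $A$ to $X$. The four outer positions are routine in both treatments.

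The gap is at $\widehat H^k(X;\Z)$, exactly the position you single out as the hard part. You want to realize the pair $(\curv(h),c_{\mathrm{rel}})$ by surjectivity of $(\curv,c)$ in \eqref{eq:R-sequence_X_A}, but membership in $R^k(X,A;\Z)$ requires $(c_{\mathrm{rel}})_\R=[\curv(h)]_{\mathrm{dR}}$ \emph{in} $H^k(X,A;\R)$, and this is not satisfied by a general lift $c_{\mathrm{rel}}$ of $c(h)$: the two classes are only known to agree after mapping to $H^k(X;\R)$, so they may differ by an element of $\im\big(\beta:H^{k-1}(A;\R)\to H^k(X,A;\R)\big)$, while the freedom in choosing the integral lift is only $\beta\big(H^{k-1}(A;\Z)\big)$, which in general does not reach an arbitrary such real discrepancy. (A secondary point: you also need $\curv(h)$ to have integral periods over \emph{relative} cycles, which does not follow from $i_A^*\curv(h)=0$ alone; it does follow from $i_A^*h=0$ by evaluating $h$ on $\partial c$ for $c$ a relative cycle, but this must be said.) The missing step can be supplied at the cochain level: choose a real lift $\tilde h$ of $h$ with $i_A^*\tilde h=0$ — possible because $h$ vanishes on ${i_A}_*Z_{k-1}(A;\Z)$ and the relevant quotients of chain groups are free — so that $\mu^{\tilde h}=\curv(h)-\delta\tilde h$ is a relative cocycle whose class is a compatible $c_{\mathrm{rel}}$. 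But at that point you are doing essentially the same work as the paper's direct construction, which splits $\{c\in C_{k-1}(X;\Z)\mid\partial c\in\im({i_A}_*)\}=Z_{k-1}(X;\Z)\oplus V$ and extends $h$ arbitrarily over $V/\im({i_A}_*)$, noting that every relative boundary lies in the first summand so the character condition is automatic. Your final correction by a flat class $j(w')$ is fine once a genuine preimage with the correct curvature and characteristic class is in hand, but as written that preimage has not been produced.
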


\begin{remark}
Exactness at $\widehat H^k(X;\Z)$ also follows from Theorem~\ref{thm:par_sec} above and Theorem~\ref{thm:rel_rel} below.  
\end{remark}

\begin{proof}
Exactness at the first two and the last two groups is of course well-known. 
We give a direct proof of the exactness at the remaining groups:

a)
Exactness of the sequence \eqref{eq:long_ex_sequ_par} at $H^{k-2}(A;\Ul)$ follows from exactness of the sequence $H^{k-2}(X;\Ul) \xrightarrow{i_A^*} H^{k-2}(A,\Ul) \xrightarrow{\beta} H^{k-1}(X,A;\Ul)$ and injectivity of the map $H^{k-1}(X,A;\Ul) \xrightarrow{j} \widehat H^k(X,A;\Z)$.

b)
We prove exactness at $\widehat H^k(X,A;\Z)$:
Let $u \in H^{k-2}(A;\Ul)$ and $z \in Z_{k-1}(X;\Z)$.
Since $\Ul$ is divisible, we have $H^{k-2}(A;\Ul) = \Hom(H_{k-2}(A;\Z),\Ul)$. 
The connecting homomorphism $\beta:H^{k-2}(A;\Ul) \to H^{k-1}(X,A;\Ul)$ is dual to the connecting homomorphism $\beta:H_{k-1}(X,A;\Z) \to H_{k-2}(A;\Z)$.
This yields:
$$
(j\circ\beta(u))(z)
=
(j \circ \beta(u))(z + \im ({i_A}_*)) 
=
\langle \beta(u) , [z + \im ({i_A}_*)] \rangle 
=
\langle u , \underbrace{\beta([z + \im ({i_A}_*)])}_{=0} \rangle 
=
1 \,.
$$
Here we use that $H_{k-1}(X,\Z) \to H_{k-1}(X,A;\Z) \xrightarrow{\beta} H_{k-2}(A;\Z)$, $[z] \mapsto \beta([z+\im({i_A}_*)])$, is the trivial map.  

Conversely, let $h \in \widehat H^k(X,A;\Z)$ such that the induced character in $\widehat H^k(X;\Z)$ vanishes.
In particular, we have $\curv(h) = 0$.
By the exact sequence \eqref{eq:rel_sequ_par} we find $\tilde u \in H^{k-1}(X,A;\Ul)$ such that $h = j(\tilde u)$.
By assumption, $h$ vanishes on cycles in $X$, hence $\tilde u$ lies in the kernel of the map $H^{k-1}(X,A;\Ul) \to H^k(X;\Ul)$.
Thus we find $u \in H^{k-2}(A;\Ul)$ such that $\tilde u = \beta(u)$ and hence $h = j \circ \beta(u)$.  

c)
We show exactness at $\widehat H^k(X;\Z)$:
The map $\widehat H^k(X,A;\Z) \to \widehat H^k(X;\Z) \xrightarrow{i_A^*} \widehat H^k(A;\Z)$ is trivial, since cycles in $Z_{k-1}(A;\Z)$ represent $0$ in $Z_{k-1}(X,A;\Z)$.
 
Conversely, let $h \in \widehat H^k(X;\Z)$ with $i_A^*h = 0$.
Let $\sigma: B_{k-2}(X;\Z) \to C_{k-1}(X;\Z)$ be a splitting of the exact sequence 
$$
0
\to
Z_{k-1}(X;\Z) 
\to
C_{k-1}(X;\Z)
\to
B_{k-2}(X;\Z)
\to
0 \,.
$$ 
Let $V:= \sigma(B_{k-2}(X;\Z)) \cap \{c \in C_{k-1}(X;\Z)  \,|\, \partial c \in \im({i_A}_*) \} \subset C_{k-1}(X;\Z)$.
Since $V$ is a submodule of $C_{k-1}(X;\Z)$, it is a free $\Z$-module, and we have the splittings 
$$
\{c \in C_{k-1}(X;\Z)  \,|\, \partial c \in \im({i_A}_*) \}
= Z_{k-1}(X;\Z) \oplus V
$$
and 
$$
Z_{k-1}(X,A;\Z) 
=
\frac{Z_{k-1}(X;\Z)}{\im({i_A}_*)} \oplus \frac{V}{\im({i_A}_*)} \,. 
$$
By assumption the character $h:Z_{k-1}(X;\Z) \to \Ul$ vanishes upon pull-back to $A$.
Hence it descends to a homomorphism $\overline{h}$ on the first factor. 
The above splitting allows us to extend $\overline{h}$ to a homomorphism $\overline{h}:Z_{k-1}(X,A;\Z) \to \Ul$.
By construction, any such extension $\overline{h}$ is a differential character in $\widehat H^k(X,A;\Z)$ which maps to $h$ under the map $\widehat H^k(X,A;\Z) \to \widehat H^k(X;\Z)$.  

d)
We show exactness at $\widehat H^k(A;\Z)$:
Since the characteristic class is natural with respect to pull-back, we have $\beta \circ c \circ i_A^* = (\beta \circ i_A^*) \circ c = 0$.

Conversely, let $h \in \widehat H^k(A;\Z)$ such that $\beta \circ c(h) =0$.
By exactness of the sequence $H^k(X;\Z) \xrightarrow{i_A^*} H^k(A;\Z) \xrightarrow{\beta} H^{k+1}(X,A;\Z)$ and surjectivity of the characteristic class we find a character $h' \in \widehat H^k(X;\Z)$ such that $i_A^*h' - h = \iota(\vartheta)$ for some differential form $\vartheta \in \Omega^{k-1}(A)$.
Choose a differential form $\vartheta' \in  \Omega^{k-1}(X)$ such that $\vartheta = i_A^*\vartheta'$.
Now put $h'' := h' + \iota(\varrho') \in \widehat H^k(X;\Z)$.
Then we have
$i_A^*h'' = i_A^*h' + \iota(\varrho) = h$. 

e) 
Finally, exactness at $H^{k+1}(X,A;\Z)$ follows from exactness of the sequence 
\mbox{$H^k(X;\Z) \xrightarrow{i_A^*} H^k(A;\Z) \xrightarrow{\beta} H^{k+1}(X,A;\Z)$}
and surjectivity of the characteristic class.
\end{proof}
 
\subsubsection{Comparison of two notions of relative differential cohomology.}
Now we compare the two notions of relative differential cohomology, based on differential characters on $Z_{k-1}(i_A;\Z)$ and $Z_{k-1}(X,A;\Z)$, respectively.  
Pre-composition of relative differential characters $h:Z_{k-1}(i_A;\Z) \to \Ul$ with the projection map $q:Z_{k-1}(i_A;\Z) \twoheadrightarrow Z_{k-1}(X,A;\Z)$, $(s,t) \mapsto s + \im({i_A}_*)$, yields a homomorphism 
$$
\widehat H^k(X,A;\Z) \to \widehat H^k(i_A;\Z)\,, \quad h \mapsto h \circ q \,.
$$ 

\begin{thm}[Comparison of relative differential cohomology groups]\label{thm:rel_rel}
Let $i_A:A \to X$ be the embedding of a smooth submanifold.
Let $k \geq 2$.
Then the homomorphism 
$$
\widehat H^k(X,A;\Z) \to \widehat H^k(i_A;\Z) \,, \quad h \mapsto h \circ q \,,
$$
commutes with curvature, characteristic class and the inclusion of cohomology classes in $H^{k-1}(X,A;\Ul)$.
It provides a 1-1 correspondence of $\widehat H^k(X,A;\Z)$ with the subgroup of parallel characters in $\widehat H^k(i_A;\Z)$.
\end{thm}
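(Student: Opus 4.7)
The plan is to construct an explicit inverse to $h \mapsto h \circ q$ on the subgroup of parallel characters. First I would verify that the map is well-defined: given $h \in \widehat H^k(X,A;\Z)$ with $\curv(h) = \omega \in \Omega^k(X,A)$, a direct computation on $\partial_{i_A}(a,b) \in C_{k-1}(i_A;\Z)$ yields $(h \circ q)(\partial_{i_A}(a,b)) = h(\partial a + \im({i_A}_*)) = \exp(2\pi i \int_a \omega) = \exp(2\pi i \int_{(a,b)}(\omega,0))$, so $h \circ q \in \widehat H^k(i_A;\Z)$ with $(\curv,\cov)(h \circ q) = (\omega,0)$. In particular, the image consists of parallel characters. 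Compatibility with the characteristic class comes from the fact that $q$ is a chain map: if $\tilde h$ is a real lift of $h$, then $\tilde h \circ q$ is a real lift of $h \circ q$, and $\mu^{\tilde h \circ q} = \mu^{\tilde h} \circ q$. Via the five-lemma isomorphism $H^*(i_A;\Z) \cong H^*(X,A;\Z)$ induced by $q^*$, the class $c(h\circ q)$ corresponds to $c(h)$, and the same $q^*$ intertwines the two maps $j$.

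Injectivity follows from surjectivity of $q$ on cycles: given $[z] \in Z_{k-1}(X,A;\Z)$ with representative $z \in C_{k-1}(X;\Z)$, one has $\partial z \in \im({i_A}_*)$, and since $i_A$ is an embedding the induced map ${i_A}_*$ is injective on chains, so there is a unique $w \in C_{k-2}(A;\Z)$ with ${i_A}_* w = \partial z$; moreover $\partial w = 0$. Then $(z,-w) \in Z_{k-1}(i_A;\Z)$ and $q(z,-w) = [z]$. Hence $h\circ q \equiv 1$ forces $h \equiv 1$.

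For surjectivity onto parallel characters, given parallel $g \in \widehat H^k(i_A;\Z)$ define $h:Z_{k-1}(X,A;\Z) \to \Ul$ by $h([z]) := g(z,-w)$ with $w$ as above. The key point is well-definedness: if $z' = z + {i_A}_* y$ with $y \in C_{k-1}(A;\Z)$, then $w' = w + \partial y$ and a direct computation gives $(z',-w') - (z,-w) = \partial_{i_A}(0,y)$, so
\begin{equation*}
g(z',-w') \cdot g(z,-w)^{-1} = g(\partial_{i_A}(0,y)) = \exp\Bigl(2\pi i \int_y \cov(g)\Bigr) = 1,
\end{equation*}
where the last equality uses precisely that $g$ is parallel. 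Note that the closedness of $(\curv(g),0)$ in the mapping cone de Rham complex forces $i_A^*\curv(g) = 0$, so $\curv(g) \in \Omega^k(X,A)$, which shows that $h$ is indeed a differential character on relative cycles (the chain-level equation $h(\partial[x]) = g(\partial x, 0) = g(\partial_{i_A}(x,0)) = \exp(2\pi i \int_x \curv(g))$ pins down its curvature). Finally $h \circ q = g$ holds tautologically, because for $(s,t) \in Z_{k-1}(i_A;\Z)$ the cycle condition $\partial s = -{i_A}_* t$ together with injectivity of ${i_A}_*$ forces $w = -t$ in the definition of $h$.

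The main obstacle is the parallel-character argument for well-definedness: everything hinges on the vanishing of $g$ on mapping cone boundaries of the form $\partial_{i_A}(0,y)$, which is exactly the condition $\cov(g) = 0$. This both explains why parallel is the right condition and why the construction cannot be extended to non-parallel characters in $\widehat H^k(i_A;\Z)$.
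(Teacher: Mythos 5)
Your proof is correct, and while the first half (well-definedness of $h\mapsto h\circ q$, the computation $(\curv,\cov)(h\circ q)=(\curv(h),0)$, compatibility of real lifts with $q$, and injectivity from surjectivity of $q$ on cycles) coincides with the paper's argument, your surjectivity step is genuinely different. The paper proceeds by a diagram chase: it places the two exact sequences $0\to H^{k-1}(X,A;\R)/H^{k-1}(X,A;\R)_\Z\to\widehat H^k\to R^k\to 0$ for $\widehat H^k(X,A;\Z)$ and $\widehat H^k(i_A;\Z)$ into a commutative ladder, lifts the pair $(\curv(h'),c(h'))\in R^k(X,A;\Z)$ to some $h''\in\widehat H^k(X,A;\Z)$, and corrects by a flat class $j(u)$. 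You instead build the inverse explicitly: for a parallel $g$ and a relative cycle $[z]$ you take the \emph{unique} $w\in C_{k-2}(A;\Z)$ with ${i_A}_*w=\partial z$ (using injectivity of ${i_A}_*$ on chains, valid since $i_A$ is an embedding) and set $h([z]):=g(z,-w)$; the ambiguity $z\mapsto z+{i_A}_*y$ produces exactly the mapping cone boundary $\partial_{i_A}(0,y)$, on which $g$ evaluates to $\exp\bigl(2\pi i\int_y\cov(g)\bigr)=1$ precisely because $g$ is parallel, and the identity $h\circ q=g$ is then forced by $\partial s=-{i_A}_*t$ on mapping cone cycles. Your route is more elementary (it avoids the $R^k$-sequences entirely), exhibits a concrete section of the comparison map, and isolates cleanly why parallelism is the exact condition needed; the paper's route is shorter given that the $(\curv,\cov,c)$-sequences have already been established, and generalizes more readily to arguments where an explicit chain-level inverse is unavailable.
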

\index{Theorem!Comparison of relative differential cohomology groups}

\begin{proof}
Let $h \in \widehat H^k(X,A;\Z)$ and $(a,b) \in C_k(i_A;\Z)$.
Then we have
$$
\big(h \circ q\big) (\partial_{i_A}(a,b))
=
h(\partial a +\im({i_A}_*))
\stackrel{\eqref{eq:def_rel_diff_charact_2a}}{=}
\exp \Big( 2\pi i \int_a \curv(h) \Big)\,.
$$
Thus the composition $h \circ q$ is indeed a relative differential character in $\widehat H^k(i_A;\Z)$ and we have $(\curv,\cov)(h \circ q) = (\curv(h),0)$.

The projection $q:Z_{k-1}(i_A;\Z) \to Z_{k-1}(X,A;\Z)$ induces isomorphisms on homology and cohomology.
Moreover, since $\Ul$ is divisible, we have $H^{k-1}(X,A;\Ul) \cong \Hom(H_{k-1}(X,A;\Z),\Ul)$. 
Hence the above homomorphism commutes with the inclusion of cohomology classes $u \in \widehat H^{k-1}(X,A;\Ul)$.

Now let $\tilde h \in C^k(X,A;\Z)$ be a real lift for $h \in \widehat H^k(X,A;\Z)$.
Then $\tilde h \circ q \in C^k(i_A;\Z)$ is a real lift for $h \circ q$.
Since $\curv(h \circ q) = \curv(h) \in \Omega^k_0(X,A)$, we conclude that $\mu^{\tilde h} \circ q$ represents the characteristic class of $h \circ q$.
Hence $c(h \circ q) = c(h)$.   

Since the projection $q:Z_{k-1}(i_A;\Z) \to Z_{k-1}(X,A;\Z)$, $(s,t) \mapsto s + \im({i_A}_*)$, is surjective, the homomorphism $\widehat H^k(X,A;\Z) \to \widehat H^k(i_A;\Z)$ is injective.
As we have seen, its image is contained in the subgroup of parallel characters in $\widehat H^k(i_A;\Z)$.

It remains to show that any parallel character in $\widehat H^k(i_A;\Z)$ lies in the image.
Thus let $h' \in \widehat H^k(i_A;\Z)$ with $\cov(h') =0$.
In particular, $i_A^*\curv(h') =0$, thus $\curv(h') \in \Omega^k_0(X,A)$.
We construct a character $h \in \widehat H^k(X,A;\Z)$ such that $h' = h \circ q$.
From the exact sequences \eqref{eq:sequ_R} and \eqref{eq:R-sequence_X_A} we obtain the following commutative diagram with exact rows and injective vertical maps: 
\begin{equation*}
\xymatrix{
0 \ar[r] & \frac{H^{k-1}(X,A;\R)}{H^{k-1}(X,A;\R)_\Z} \ar^j[rr] \ar[d] && \widehat H^k(X,A;\Z) \ar^{(\curv,c)}[rr] \ar[d] &&  R^k(X,A;\Z) \ar[r] \ar[d] & 0 \\   
0 \ar[r] & \frac{H^{k-1}(X,A;\R)}{H^{k-1}(X,A;\R)_\Z} \ar^j[rr] && \widehat H^k(i_A;\Z) \ar^{((\curv,\cov),c)}[rr] && R^k(i_A;\Z) \ar[r] & 0
}
\end{equation*}
Since $\curv(h') \in \Omega^k_0(X,A)$, we have $(\curv(h'),c(h')) \in R^k(X,A;\Z)$.
By exactness of the upper row we may choose a character $h'' \in \widehat H^k(X,A;\Z)$ with $\curv(h'') = \curv(h') \in \Omega^k_0(X,A)$ and $c(h'') = \curv(h') \in H^k(X,A;\Z)$.
By exactness of the lower row we find $u \in \frac{H^{k-1}(X,A;\R)}{H^{k-1}(X,A;\R)_\Z}$ such that $h' -h''\circ q = j(u)$.
Now put $h:= h'' + j(u) \in \widehat H^k(X,A;\Z)$.
This yields $h \circ q = h'' \circ q + j(u) = h'$. 
\end{proof}

\begin{example}
Let $\varphi:A \to X$ be a smooth map. 
As in Example~\ref{ex:H2} we have the identification of the group $\widehat H^2(\varphi;\Z)$ with the group of isomorphism classes of hermitean line bundles with connection $(L,\nabla) \to X$ and a section $\sigma:A \to \varphi^*L$ along the smooth map $\varphi:A \to X$.

Now let $i_A:A \to X$ be the inclusion of a smooth submanifold.
Theorem~\ref{thm:rel_rel} yields an identification of the group $\widehat H^2(X,A;\Z)$ with the group of isomorphism classes of hermitean line bundles with connection $(L,\nabla) \to X$ and \emph{parallel} sections $\sigma:A \to L|_A$.
In both cases, the isomorphisms are bundle isomorphisms of $L$ that preserve both the connection $\nabla$ and the section $\sigma$.
In particular, they preserve the property of the section $\sigma$ to be parallel with respect to the connection $\nabla$.
\end{example}

\subsection{Relative differential cocycles.}\label{subsec:H_check}
In this section, we discuss the relation of the group $\widehat H^k(X,A;\Z)$ to another notion of relative differential cohomology that has appeared in the literature.
As above let $\varphi:A \to X$ be a smooth map.
The Hopkins-Singer complex of differential cocycles is a cochain complex, the $k$-th homology of which is isomorphic to the differential cohomology group $\widehat H^k(X;\Z)$.
\index{Hopkins-Singer complex}
\index{differential cocycle}
In the original definition of differential cocycles in \cite{HS05}, the complex that computes the $k$-th differential cohomology group $\widehat H^k(X;\Z)$ depends on the degree $k$: for each degree of differential cohomology, one has to consider a different complex.\footnote{The same holds for smooth Deligne cohomology: the smooth Deligne complex, the $k$-th homology of which is isomorphic to $\widehat H^k(X;\Z)$, is the total complex of a \v{C}ech-de Rham double complex, truncated at the de Rham order $(k-1)$.}
However, the Hopkins-Singer complex can be modified such that differential cohomology groups of all degrees arise as homology groups of a single complex, see \cite{BT06}.
 
The \emph{relative Hopkins-Singer differential cohomology group} $\check{H}^k(\varphi;\Z)$ is defined in \cite{BT06} as the $k$-th homology group of the mapping cone complex of the modified Hopkins-Singer complex.
\index{+HcheckvarphiZ@$\check H^*(\varphi;\Z)$, relative Hopkins-Singer group}
\index{relative Hopkins-Singer group}
\index{Hopkins-Singer group!relative $\sim$}
The cocycles of this mapping cone complex are referred to as \emph{relative differential cocycles}. 
\index{relative differential cocycle}
\index{differential cocycle!relative $\sim$}

The main feature of the relative Hopkins-Singer groups $\check{H}^*(\varphi;\Z)$ is the long exact sequence they fit into:  
The complex of relative differential cochains sits in the usual short exact sequence of cochain complexes which relates the modified Hopkins-Singer complexes on $X$ and $A$ to the corresponding mapping cone complex.
Thus the relative Hopkins-Singer groups fit into the following long exact sequence \cite{BT06}:
\index{long exact sequence!for relative Hopkins-Singer groups}
\index{differential cocycle!long exact sequence}
\index{Hopkins-Singer group!long exact sequence}
\begin{equation}\label{eq:long_ex_sequ_HS}
\xymatrix{
\ldots \ar[r] & \widehat H^{k-1}(A;\Z) \ar[r] & \check{H}^k(\varphi;\Z) \ar[r] & \widehat H^k(X;\Z) \ar^{\varphi^*}[r] & \widehat H^k(A;\Z)  \ar[r] & \ldots
}
\end{equation}
Comparison of \eqref{eq:long_ex_sequ_HS} with the long exact sequences \eqref{eq:long_ex_sequ} and \eqref{eq:long_ex_sequ_par} for the relative groups $\widehat H^k(\varphi;\Z)$ and $\widehat H^k(X,A;\Z)$ (for $\varphi = i_A$) shows that the relative Hopkins-Singer group $\check{H}^k(i_A;\Z)$ differs from both. 

In \cite{BT06} it is shown that the relative Hopkins-Singer group $\check{H}^k(\varphi;\Z)$ is a subquotient of the group of relative differential characters $\widehat H^k(\varphi;\Z)$.
More precisely, it is a quotient of the subgroup $\widehat H^k_0(\varphi;\Z) := \{h \in \widehat H^k(\varphi;\Z) \,|\, \varphi^*\vds_\varphi(h) =0 \}$.
From the results of Section~\ref{subsec:rel_diff_par}, we easily obtain the following identification:

\begin{prop}\label{prop:cov_int_periods}
Let $\varphi:A \to X$ be a smooth map.
Then the subgroup $\widehat H^k_0(\varphi;\Z) \subset \widehat H^k(\varphi;\Z)$ coincides with the subgroup of characters with covariant derivative in $\Omega^{k-1}_0(A)$, i.e.~a closed form with integral periods.  
In particular, we have the exact sequence:
\begin{equation}\label{eq:H0_sequ}
\xymatrix{
0 \ar[r] & \widehat H^k_0(\varphi;\Z) \ar[r] & \widehat H^k(\varphi;\Z) \ar^\cov[r] & \frac{\pr_2(\Omega^{k-1}_0(\varphi))}{\Omega^{k-1}_0(A)} \ar[r] & 0
}  
\end{equation}
Here $\pr_2(\Omega^{k-1}_0(\varphi)) \subset \Omega^{k-1}(A)$ denotes the image of the projection $\pr_2:\Omega^{k-1}_0(\varphi) \to \Omega^{k-1}(A)$, $(\omega,\vartheta) \mapsto \vartheta$.
\end{prop}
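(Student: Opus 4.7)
The plan is to reduce the proposition to an application of the commutative diagram from the Proposition on topological trivialization and covariant derivative, combined with the exact sequences \eqref{eq:rel_sequences} already established for the absolute and relative character groups.

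First I would prove the characterization: for $h \in \widehat H^k(\varphi;\Z)$, one has $\varphi^*\vds_\varphi(h) = 0$ if and only if $\cov(h)$ is closed with integral periods. By the commutative diagram \eqref{eq:diag:triv_cov} we have the identity $\varphi^*\vds_\varphi(h) = \iota(\cov(h))$ in $\widehat H^k(A;\Z)$. Hence the condition $\varphi^*\vds_\varphi(h) = 0$ is equivalent to $\iota(\cov(h)) = 0$. But the kernel of $\iota: \Omega^{k-1}(A) \to \widehat H^k(A;\Z)$ is precisely $\Omega^{k-1}_0(A)$: indeed, the absolute analogue of the first row of \eqref{eq:rel_sequences} states that $\iota$ descends to an injection $\Omega^{k-1}(A)/\Omega^{k-1}_0(A) \hookrightarrow \widehat H^k(A;\Z)$. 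This yields the desired characterization of $\widehat H^k_0(\varphi;\Z)$.

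Second, I would describe the image of $\cov: \widehat H^k(\varphi;\Z) \to \Omega^{k-1}(A)$. One inclusion is tautological: since $(\curv,\cov)(h) \in \Omega^k_0(\varphi)$ for every relative character $h$, the image of $\cov$ lies in $\pr_2(\Omega^k_0(\varphi))$. For the reverse inclusion, take any $\vartheta$ in this image, pick $\omega \in \Omega^k(X)$ with $(\omega,\vartheta) \in \Omega^k_0(\varphi)$, and invoke the surjectivity of the curvature-and-covariant-derivative map $(\curv,\cov): \widehat H^k(\varphi;\Z) \twoheadrightarrow \Omega^k_0(\varphi)$ from the second row of \eqref{eq:rel_sequences} to produce $h$ with $\cov(h) = \vartheta$.

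Combining the two steps, the composition of $\cov$ with the quotient by $\Omega^{k-1}_0(A)$ is surjective onto $\pr_2(\Omega^k_0(\varphi))/\Omega^{k-1}_0(A)$, and its kernel is precisely $\widehat H^k_0(\varphi;\Z)$ by the first step; this yields the asserted short exact sequence \eqref{eq:H0_sequ}. There is no real obstacle here: all the work has already been done in establishing \eqref{eq:rel_sequences} and diagram \eqref{eq:diag:triv_cov}, and the proof amounts to stitching them together. The only point that deserves care is keeping track of the two projections $\pr_2$ on $\Omega^k_0(\varphi)$ versus $\Omega^k(\varphi)$ and verifying that the image of $\cov$ coincides with the image of the $\Omega^k_0$-level projection, which is immediate since any $\vartheta$ in $\pr_2(\Omega^k_0(\varphi))$ automatically admits a lift $(\omega,\vartheta)$ already satisfying the $d_\varphi$-closedness and integrality conditions.
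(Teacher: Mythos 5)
Your proposal is correct and follows essentially the same route as the paper: the paper's (very brief) proof likewise deduces the identification $\widehat H^k_0(\varphi;\Z)=\{h\,|\,\cov(h)\in\Omega^{k-1}_0(A)\}$ from the commutative diagram \eqref{eq:diag:triv_cov} together with the injectivity of $\iota$ on $\Omega^{k-1}(A)/\Omega^{k-1}_0(A)$ from \eqref{eq:rel_sequences}, and then obtains \eqref{eq:H0_sequ} from the surjectivity of $(\curv,\cov)$ onto $\Omega^k_0(\varphi)$. You merely spell out the details (and correctly read the target of $\cov$ as $\pr_2(\Omega^k_0(\varphi))$, fixing the degree in the statement's notation), so there is nothing to add.
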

 
\begin{proof}
The identification of $\widehat H^k_0(\varphi;\Z) = \ker(\varphi^* \circ \vds_\varphi)$ follows from the commutative diagram \eqref{eq:diag:triv_cov} and the exact sequence \eqref{eq:rel_sequences} for topological trivializations.
The exact sequence follows from this identification.
\end{proof}

The sequence \eqref{eq:H0_sequ} appeared in \cite[Prop.~4.1]{BT06}.
As mentioned above, it is shown in \cite{BT06} that the relative Hopkins-Singer group $\check{H}^k(\varphi;\Z)$ is a quotient of $\widehat H^k_0(\varphi;\Z)$.
More precisely, we have the exact sequence \cite[Thm.~4.2]{BT06}:
\begin{equation}\label{eq:H_0_H_check}
\xymatrix{
0 \ar[r] & \frac{\Omega^{k-1}_0(X)}{\widetilde \Omega^{k-1}(X)} \ar[r] & \widehat H^k_0(\varphi;\Z) \ar[r] & \check{H}^k(\varphi;\Z) \ar[r] & 0 \,.
}
\end{equation}
Here we have $\widetilde \Omega^{k-1}(X):= \{\omega \in \Omega^{k-1}_0(X) \,|\, (\omega,0) \in \Omega^{k-1}_0(\varphi)\}$ with the homomorphism \mbox{$\frac{\Omega^{k-1}_0(X)}{\widetilde \Omega^{k-1}(X)} \to \widehat H^k_0(\varphi;\Z)$}, $\omega \mapsto \iota_\varphi(\omega,0)$.

It remains to determine the relation between the relative Hopkins-Singer group $\check{H}^k(i_A;\Z)$ and the group $\widehat H^k(X,A;\Z)$.
It turns out that the latter is a subgroup of the former: 

\begin{prop}
Let $i_A:A \to X$ be the embedding of a smooth submanifold.
Then the following sequences are exact:
\begin{equation*}
\xymatrix{
0 \ar[r] & \widehat H^k(X,A;\Z) \ar[r] & \widehat H^k_0(i_A;\Z) \ar^\cov[r] & \Omega^{k-1}_0(A) \ar[r] & 0 \\
0 \ar[r] & \widehat H^k(X,A;\Z) \ar[r] & \check{H}^k(i_A;\Z) \ar^\cov[r] & \frac{\Omega^{k-1}_0(A)}{i_A^*\Omega^{k-1}_0(X)} \ar[r] & 0 \,. 
} 
\end{equation*}
\end{prop}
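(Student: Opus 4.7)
\textbf{Strategy.} The first sequence is proved directly, and the second is deduced from it by means of the exact sequence~\eqref{eq:H_0_H_check} describing $\check H^k(i_A;\Z)$ as a quotient of $\widehat H^k_0(i_A;\Z)$.

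\textbf{First sequence.} By Theorem~\ref{thm:rel_rel} the map $\widehat H^k(X,A;\Z)\to\widehat H^k(i_A;\Z)$ induced by pre-composition with the projection $q:Z_{k-1}(i_A;\Z)\to Z_{k-1}(X,A;\Z)$ is injective, and its image consists exactly of the parallel characters, i.e.~those with $\cov=0$. Since $0\in\Omega^{k-1}_0(A)$, a parallel character lies in $\widehat H^k_0(i_A;\Z)$ (alternatively, apply the diagram~\eqref{eq:diag:triv_cov}: $i_A^*\vds_{i_A}(h)=\iota(\cov(h))=0$). Hence the inclusion $\widehat H^k(X,A;\Z)\hookrightarrow\widehat H^k_0(i_A;\Z)$ is well defined and injective, and the sequence is exact at $\widehat H^k_0(i_A;\Z)$. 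For surjectivity of $\cov$, given $\vartheta\in\Omega^{k-1}_0(A)$, choose $h_A\in\widehat H^{k-1}(A;\Z)$ with $\curv(h_A)=-\vartheta$ (surjectivity of the absolute curvature), and set $h:=\ti_{i_A}(h_A)$. Using $\vds\circ\ti=0$ from~\eqref{eq:long_ex_sequ} together with $\cov\circ\ti=-\curv$, one obtains $h\in\widehat H^k_0(i_A;\Z)$ and $\cov(h)=\vartheta$.

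\textbf{Second sequence.} By~\eqref{eq:H_0_H_check} the canonical projection $\pi:\widehat H^k_0(i_A;\Z)\twoheadrightarrow\check H^k(i_A;\Z)$ has kernel $\iota_{i_A}\bigl(\Omega^{k-1}_0(X)\times\{0\}\bigr)$. For $\omega\in\Omega^{k-1}_0(X)$ one computes, using $(\curv,\cov)\circ\iota_{i_A}=d_{i_A}$, that $\cov(\iota_{i_A}(\omega,0))=i_A^*\omega$. Thus on the first sequence the kernel $\iota_{i_A}(\Omega^{k-1}_0(X)\times\{0\})\subset\widehat H^k_0(i_A;\Z)$ of $\pi$ maps under $\cov$ onto $i_A^*\Omega^{k-1}_0(X)\subset\Omega^{k-1}_0(A)$. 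Passing to the quotients one therefore obtains an induced sequence
\[
0\to\widehat H^k(X,A;\Z)\to\check H^k(i_A;\Z)\xrightarrow{\cov}\tfrac{\Omega^{k-1}_0(A)}{i_A^*\Omega^{k-1}_0(X)}\to 0.
\]
Surjectivity at the right and exactness at the middle follow by a short diagram chase from the exactness of the first sequence. The only point requiring care is the injectivity of $\widehat H^k(X,A;\Z)\to\check H^k(i_A;\Z)$, i.e.~that
\[
\widehat H^k(X,A;\Z)\cap\iota_{i_A}\bigl(\Omega^{k-1}_0(X)\times\{0\}\bigr)=0.
\]
For this, suppose $h=\iota_{i_A}(\omega,0)$ corresponds to a parallel character. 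Then $i_A^*\omega=\cov(h)=0$ and $d\omega=\curv(h)$ is forced to vanish (it is the curvature of a character in the image of $\iota_{i_A}$ of a form, but more importantly the first sequence says $\omega\in\Omega^{k-1}_0(X)$ so $d\omega=0$ already). Combined with $i_A^*\omega=0$ this gives $(\omega,0)\in\Omega^{k-1}_0(i_A)$, i.e.~$\omega\in\widetilde\Omega^{k-1}(X)$, so that $h=\iota_{i_A}(\omega,0)=0$ by injectivity of $\iota_{i_A}$ modulo $\Omega^{k-1}_0(i_A)$ in~\eqref{eq:rel_sequences}.

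\textbf{Main obstacle.} The delicate point is keeping the various covariant-derivative maps, the kernel of $\pi$, and the image of $\widehat H^k(X,A;\Z)$ in $\widehat H^k_0(i_A;\Z)$ compatible across the quotient producing $\check H^k(i_A;\Z)$; once one observes the key identity $\cov(\iota_{i_A}(\omega,0))=i_A^*\omega$ together with the characterization $\widetilde\Omega^{k-1}(X)=\{\omega\in\Omega^{k-1}_0(X)\mid i_A^*\omega=0\}$, the two sequences drop out.
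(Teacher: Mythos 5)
Your proposal is correct and takes essentially the same route as the paper: the first sequence via Theorem~\ref{thm:rel_rel} and Proposition~\ref{prop:cov_int_periods} together with surjectivity of $\cov$ realized by $\ti_{i_A}$ applied to a character with prescribed curvature, and the second sequence by dividing the first by the $\Omega^{k-1}_0(X)$-action coming from \eqref{eq:H_0_H_check}, using the key identity $\cov(\iota_{i_A}(\omega,0))=i_A^*\omega$ and the identification $\widetilde\Omega^{k-1}(X)=\{\omega\in\Omega^{k-1}_0(X)\mid i_A^*\omega=0\}$. The only difference is cosmetic: where you invoke ``a short diagram chase'' for exactness at $\check H^k(i_A;\Z)$, the paper writes out the same chase (subtracting $\iota_{i_A}(\omega,0)$ from a representative to land in the parallel subgroup).
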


\begin{proof}
Exactness of the first sequence follows from the results of Section~\ref{subsec:rel_diff_par}:
Clearly, $\widehat H^k(X,A;\Z)$ the kernel of $\cov:\widehat H^k_0(i_A;\Z) \to \Omega^{k-1}_0(A)$ by Theorem~\ref{thm:rel_rel} and Proposition~\ref{prop:cov_int_periods}.
The latter is surjective since for any $\vartheta \in \Omega^{k-1}_0(A)$, we may choose a differential character $h \in \widehat H^{k-1}(A;\Z)$ with $\curv(h) = \vartheta$.
Then $\ti_{i_A}(-h) \in \widehat H^k_0(i_A;\Z)$ and $\cov(\ti_{i_A}(-h)) = \vartheta$.

The second sequence is obtained from the first by dividing out the action of $\Omega^{k-1}_0(X)$ on $\widehat H^k_0(i_A;\Z)$.
Vanishing of the composition $\widehat H^k(X,A;\Z) \to \check{H}^k(i_A;\Z) \to \frac{\Omega^{k-1}_0(A)}{i_A^*\Omega^{k-1}_0(X)}$ follows from the first sequence.

We show exactness at $\widehat H^k(X,A;\Z)$:
For $\varphi = i_A$, we have $\widetilde \Omega^{k-1}(X) = \Omega^{k-1}_0(X,A)$.
Let $h \in \widehat H^k(X,A;\Z)$ which maps to $0$ in $\check{H}^k(i_A;\Z)$.
By Theorem~\ref{thm:rel_rel} we may consider $h$ as a parallel character in $\widehat H^k_0(i_A;\Z)$.
From the exact sequence \eqref{eq:H_0_H_check} we conclude $h = \iota_{i_A}(\omega,0)$ for some $\omega \in \Omega^{k-1}_0(X)$.
Now we have $0 = \cov(h) = \cov(\iota_{i_A}(\omega,0)) = \varphi^*\omega$.
Thus $(\omega,0) \in \Omega^{k-1}_0(X,A)$, hence $h=\iota_{i_A}(\omega,0) =0 \in \widehat H^k_0(i_A;\Z)$.
From the first sequence we conclude $h=0$.

Next we show exactness at $\check{H}^k(i_A;\Z)$:
Let $[h] \in \ker \big(\check{H}^k(i_A;\Z) \to \frac{\Omega^{k-1}_0(A)}{i_A^*\Omega^{k-1}_0(X)}\big)$.
Choose a representant $h \in \widehat H^k_0(i_A;\Z)$ of the equivalence class $[h] \in \check{H}^k(i_A;\Z)$. 
Then there exists a differential form $\omega \in \Omega^{k-1}_0(X)$ such that $\cov(h) = \varphi^*\omega$.
Now put $h' := h - \iota_{i_A}(\omega,0)$.
Then we have $\cov(h') = \cov(h) - \varphi^*\omega =0$, thus $h' \in \widehat H^k(X,A;\Z)$.
From the exact sequence \eqref{eq:H_0_H_check} we conclude that $[h]$ is the image of $h'$ under the map $\widehat H^k(X,A;\Z) \to \check{H}^k(i_A;\Z)$.

Finally, exactness at $\frac{\Omega^{k-1}_0(A)}{i_A^*\Omega^{k-1}_0(X)}$ is clear from exactness of the first sequence and the sequence \eqref{eq:H_0_H_check}.
\end{proof}

\section{Internal and external products}\label{sec:products}

In this chapter we discuss internal and external products in differential cohomology.
The internal product of differential characters and the induced ring structure on differential cohomology $\widehat H^*(X;\Z)$ has first been constructed in \cite{CS83}.
Uniqueness of the ring structure was proved in \cite{SS08} and \cite{BB13}.
The proof in \cite[Ch.~6]{BB13} starts from an axiomatic definition of internal and external products, similar to the one in \cite{SS08}, and ends up with an explicit formula.
In this sense the proof is constructive.
Simple formulas for the ring structure are obtained in models of differential cohomology based on differential forms with singularities \cite{C73}, de Rham-Federer currents as in \cite[Sec.~3]{HLZ03}, and differential cocycles \cite{HS05}, \cite{BKS10}. 

Uniqueness of (the external product and) the ring structure has been shown in \cite{SS08} and \cite[Ch.~6]{BB13}.
Our proof in there is constructive in the sense that it directly yields a formula for the external product, starting from an abstract definition. 
In Section~\ref{subsec:ring_unique}, we derive that formula directly from the original construction of the ring structure in \cite{CS83}.
In Section~\ref{subsec:module} we use the methods of \cite[Ch.~6]{BB13} to construct a cross product between relative and absolute differential characters.
This in turn provides the graded abelian group $\widehat H^*(\varphi;\Z)$ with the structure of a right module over the ring $\widehat H^*(X;\Z)$.
The module structure is constructed from the cross product by pull-back along a version of the diagonal map.

In \cite{BB13} we focussed only on \emph{uniqueness} of the cross product and ring structure of absolute differential characters since existence of the products is well-known. 
In the present paper, we only prove \emph{existence} of the cross product between relative and absolute characters and the module structure on $\widehat H^*(\varphi;\Z)$. 
We do not prove uniqueness of the cross product.
However, the uniqueness proof from \cite[Ch.~6]{BB13} for absolute differential cohomology carries over directly to the notion of relative differential cohomology considered here.

\subsection{The ring structure on differential cohomology}\label{subsec:ring_unique}
In this section we briefly recall the original construction of the internal product
$$
*: \widehat H^k(X;\Z) \times \widehat H^{k'}(X;\Z) \to \widehat H^{k+k'}(X;\Z)%
$$
from \cite{CS83}.
We derive another formula for the induced external product based on representation of smooth singular cohomology by geometric cycles.
This new formula was proved in \cite[Ch.~6]{BB13} by means of an abstract definition of internal and external products of differential characters.
Here we derive the new formula for the external product from the original Cheeger-Simons formula for the internal product. 

\subsubsection{The Cheeger-Simons internal product.}
Let $h \in \widehat H^k(X;\Z)$ and $h' \in \widehat H^{k'}(X;\Z)$ be differential characters.
Choose real lifts $\tilde h \in C^{k-1}(X;\R)$ and $\tilde h' \in C^{k'-1}(X;\R)$.
Denote by $B: C_*(X;\Z) \to C_*(X;\Z)$ the barycentric subdivision and by $H:C_*(X;\Z) \to C_{*+1}(X;\Z)$ a chain homotopy from $B$ to the identity, hence
\begin{equation}\label{eq:chain_homo}
\mathrm{id} - B = \partial \circ H + H \circ \partial.
\end{equation}
By construction, the image of the characteristic class $c(h)$ in $H^k(X;\R)$ coincides with the de Rham cohomology class of $\curv(h)$ under the de Rham isomorphism.
The wedge product of closed differential forms, regarded as smooth singular cocycles, descends to the cup product on $H^*_\mathrm{dR}(X) \cong H^*(X;\R)$.
Thus the differential form $\curv(h) \wedge \curv(h')$, regarded as a smooth singular cocycle, differs from the cocycle $\curv(h) \cup \curv(h')$ by a real coboundary.
An explicit construction of a cochain $E(h,h') \in C^{k+k'-1}(X;\R)$ such that $\delta E(h,h') = \curv(h) \wedge \curv(h') -\curv(h) \cup \curv(h')$ is given in \cite[p.~55f.]{CS83} by:
\begin{equation*} 
E(h,h')(x) := - \sum_{j=0}^\infty (\curv(h) \cup \curv(h'))(H(B^j x)) \,.
\end{equation*}
Here $x \in C_{k+k'-1}(X;\R)$.

Now put 
$\nu(\tilde h,\tilde h'):= \big(\tilde h \cup \mu^{\tilde h'} + (-1)^k \curv(h) \cup \tilde h'\big) \in C^{k+k'-1}(X;\R)$.
Then the differential character $h*h' \in \widehat H^{k+k'}(X;\Z)$ is defined by 
\begin{equation}\label{eq:def_ring_CS}
(h * h')(z)
:=  \exp \Big( 2 \pi i \Big( \nu(\tilde h,\tilde h') + E(h,h')\Big)(z) \Big) \,,
\end{equation}
where $z \in Z_{k+k'-1}(X;\Z)$.
\index{differential character!ring structure}
\index{differential character!internal product}
\index{+Star@$*$, internal product}
\index{internal product!of absolute characters}
\index{product!internal}
As observed in \cite{CS83}, the internal product $*$ is well-defined, i.e.~it does not depend upon the choice of real lifts $\tilde h,\tilde h'$ and chain homotopy $H$.
Moreover, the product $*$ is associative and graded commutative, and it is natural with respect to smooth maps.
It is compatible with the exact sequences in \eqref{eq:rel_sequences} in the sense that curvature and characteristic class are multiplicative and
\begin{equation}\label{eq:prod_iota}
\iota(\varrho) * h' 
= \iota(\varrho \wedge \curv(h')). 
\end{equation}

By definition, the internal product is $\Z$-bilinear.
In particular, if $h =0$ or $h'=0$, then $h*h'=0$.

Using these properties, we derive an expression for the internal product that no longer involves the cochain $E(h,h')$.
For a similar formula, see \cite[p.~57]{CS83}.

\begin{prop}\label{prop:ring_BB}
Let $h \in \widehat H^k(X;\Z)$ and $h' \in \widehat H^{k'}(X;\Z)$ be differential characters on $X$.
Let $z \in Z_{k+k'-1}(X;\Z)$ be cycle. 
Choose a geometric cycle $M\xrightarrow{f}X$ that represents the homology class $[z]$.
Let $y \in Z_{k+k'-1}(M;\Z)$ be a fundamental cycle of $M$.
Choose a chain $a(z) \in C_{k+k'}(X;\Z)$ such that $z = f_*y + \partial a(z)$.
Then we have 
\begin{equation}\label{eq:def_ring_BB}
(h*h')(z) 
= \lim_{j \to \infty} \exp \Big( 2 \pi i \Big( f^*\nu(\tilde h,\tilde h')(B^j y) + \int_{a(z)} \curv(h) \wedge \curv(h') \Big) \Big) \,. 
\end{equation}
\end{prop}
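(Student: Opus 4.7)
The plan is to start from the Cheeger--Simons definition \eqref{eq:def_ring_CS} and manipulate the evaluation $(\nu(\tilde h,\tilde h') + E(h,h'))(z)$ by using the decomposition $z = f_*y + \partial a(z)$ together with coboundary identities for $\nu$ and $E$. Throughout I write $y' := f_*y \in Z_{k+k'-1}(X;\Z)$ for brevity; all equalities below are understood modulo $\Z$, which is exactly what exponentiation forgets.

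The first step is to compute $\delta\nu(\tilde h,\tilde h')$. Starting from $\delta\tilde h = \curv(h)-\mu^{\tilde h}$ and $\delta\tilde h' = \curv(h')-\mu^{\tilde h'}$ together with $\delta\mu^{\tilde h}=\delta\mu^{\tilde h'}=0$, the Leibniz rule for the cup product gives
\begin{equation*}
\delta\nu(\tilde h,\tilde h') = \curv(h)\cup\curv(h') - \mu^{\tilde h}\cup\mu^{\tilde h'}.
\end{equation*}
Combining this with $\delta E(h,h') = \curv(h)\wedge\curv(h') - \curv(h)\cup\curv(h')$ from \cite{CS83} and noting that $\mu^{\tilde h}\cup\mu^{\tilde h'}$ is an integer cocycle, evaluation on $a(z)$ yields
\begin{equation*}
(\nu+E)\bigl(\partial a(z)\bigr) \equiv \int_{a(z)}\curv(h)\wedge\curv(h') \pmod{\Z}.
\end{equation*}

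Next I would analyze the contribution $(\nu+E)(y')$. Since $y'$ is a cycle, iterating the chain-homotopy identity \eqref{eq:chain_homo} produces $y' - B^Jy' = \partial H\sum_{j=0}^{J-1}B^jy'$ for every $J \geq 1$. Evaluating $\nu(\tilde h,\tilde h')$ on this boundary and using the coboundary computed above yields
\begin{equation*}
\nu(y') \equiv \nu(B^Jy') + \sum_{j=0}^{J-1}\bigl(\curv(h)\cup\curv(h')\bigr)\bigl(H B^j y'\bigr) \pmod{\Z}.
\end{equation*}
Adding the series definition of $E(h,h')(y')$ and rearranging gives
\begin{equation*}
\nu(y') + E(y') \equiv \nu(B^Jy') - \sum_{j=J}^{\infty}\bigl(\curv(h)\cup\curv(h')\bigr)\bigl(H B^j y'\bigr) \pmod{\Z}.
\end{equation*}

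The final step is to let $J \to \infty$. Convergence of the series defining $E(h,h')$ forces the tail on the right to vanish, so $\nu(\tilde h,\tilde h')(B^Jy')$ converges in $\R/\Z$ to $\nu(y')+E(y')$. Naturality of cup product, of barycentric subdivision, and of the chain homotopy $H$ gives $\nu(\tilde h,\tilde h')(B^Jy') = f^*\nu(\tilde h,\tilde h')(B^Jy)$. Plugging the contributions from $y'$ and $\partial a(z)$ back into \eqref{eq:def_ring_CS} and exponentiating produces exactly the formula \eqref{eq:def_ring_BB}. The main subtlety is the passage to the limit: individually the values $\nu(\tilde h,\tilde h')(B^Jy')$ need not converge in $\R$, but consecutive differences are summable modulo integers, so the sequence is Cauchy in $\R/\Z$. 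This Cauchy property is precisely the convergence input which makes Cheeger and Simons' cochain $E(h,h')$ well-defined on smooth chains, so no new analytic content is required beyond that already built into \eqref{eq:def_ring_CS}.
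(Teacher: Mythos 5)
Your argument is correct, and it shares the paper's skeleton: the decomposition $z=f_*y+\partial a(z)$, the coboundary identity $\delta\nu(\tilde h,\tilde h')=\curv(h)\cup\curv(h')-\mu^{\tilde h}\cup\mu^{\tilde h'}$, and the telescoping of the chain homotopy against the series defining $E(h,h')$. The genuine difference lies in how the integral term $\mu^{\tilde h}\cup\mu^{\tilde h'}$ is disposed of. The paper first reduces to $f^*(h*h')(y)$ via the character property, pulls everything back to the stratifold $M$, and uses that $M$ is $(k+k'-1)$-dimensional to write $f^*(\mu^{\tilde h}\cup\mu^{\tilde h'})=\delta t$ with $t$ an integral cochain on $M$, which then rides through the telescoping and contributes only integers. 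You stay on $X$ and simply note that $\mu^{\tilde h}\cup\mu^{\tilde h'}$ is an integral cochain, so its evaluations on $H\big(\sum_{j<J}B^jy'\big)$ and on $a(z)$ vanish modulo $\Z$. This avoids both the auxiliary cochain $t$ and the dimension argument, and it in fact proves the stronger statement that $(h*h')(w)=\lim_{j\to\infty}\exp\big(2\pi i\,\nu(\tilde h,\tilde h')(B^jw)\big)$ for every cycle $w\in Z_{k+k'-1}(X;\Z)$; the geometric cycle enters your proof only through the naturality $B^jf_*y=f_*B^jy$ needed to put the answer in the stated form. Your closing remark about convergence is also the right one: the sequence $\nu(\tilde h,\tilde h')(B^jy')$ need only converge in $\R/\Z$, which is exactly what the convergence of the Cheeger--Simons series guarantees.
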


\begin{proof}
Since $z = f_*y + \partial a(z)$, we have:
\begin{align*}
(h*h')(z) 
&=
f^*(h*h')(y) \cdot \exp \Big( 2\pi i \int_{a(z)} \curv(h*h') \Big) \\
&=
(f^*h*f^*h')(y) \cdot \exp \Big( 2 \pi i \int_{a(z)} \curv(h) \wedge \curv(h') \Big) \,.
\end{align*}
We compute $f^*(h*h')([M])$ using~\eqref{eq:def_ring_CS} on the stratifold $M$.
The characteristic class $c(h)$ is represented by the cocycle $\mu^{\tilde h} := \curv(h) - \delta \tilde h \in C^k(M;\Z)$, and similarly for $h'$.
Now we have:
\begin{align*}
\mu^{\tilde h} \cup \mu^{\tilde h'}
&= (\curv(h) - \delta \tilde h) \cup (\curv(h') - \delta \tilde h') \\
&= \curv(h) \cup \curv(h') - \delta( \tilde h \cup (\curv(h') - \delta \tilde h') + (-1)^k \curv(h) \cup \tilde h' ) \\
&= \curv(h) \cup \curv(h') - \delta \nu(\tilde h,\tilde h'). 
\end{align*}
Since $M$ is $(k+k'-1)$-dimensional, the cocycle $f^*(\mu^{\tilde h} \cup \mu^{\tilde h'})$ is an integral coboundary for dimensional reasons.
Thus we have $f^*(\mu^{\tilde h} \cup \mu^{\tilde h'}) = \delta t$ for some $t \in C^{k+k'-1}(M;\Z)$ and hence $f^*(\curv(h) \cup \curv(h')) = \delta t + \delta \nu(f^*\tilde h,f^*\tilde h')$.
Evaluating the cochain $E(f^*h,f^*h')$ on the fundamental cycle $y$ of $M$, we obtain
\begin{align}
E(f^*h,f^*h')(y)
&= - \sum_{j=0}^\infty (f^*\curv(h) \cup f^*\curv(h'))(H(B^j y)) \notag \\
&= - \sum_{j=0}^\infty \delta (t + \nu(f^* \tilde h,f^*\tilde h'))(H(B^j y)) \notag \\
&= - \sum_{j=0}^\infty (t + \nu(f^* \tilde h,f^* \tilde h'))(\partial H(B^j y)) \notag \\
&\stackrel{\eqref{eq:chain_homo}}{=} - \sum_{j=0}^\infty (t + \nu(f^*\tilde h,f^*\tilde h'))((\mathrm{id} -B) B^j y - H(\partial B^j y)) \notag \\
&= - (t + \nu(f^*\tilde h,f^*\tilde h'))(y)  + \lim_{j\to\infty}(t + \nu(f^*\tilde h,f^*\tilde h'))(B^j y) \notag \\
&= \underbrace{-t(y) + \lim_{j\to\infty} t(B^j y)}_{\in \Z} - \nu(f^* \tilde h,f^*\tilde h')(y) + \lim_{j\to\infty}\nu(f^*\tilde h,f^*\tilde h')(B^j y). \label{eq:ring_2}
\end{align}
We thus have:
\begin{align*}
f^*(h * h')(y) 
&\stackrel{\eqref{eq:def_ring_CS}}{=} \exp \Big( 2 \pi i \Big( f^*\nu(\tilde h,\tilde h') + E(f^*h,f^*h') \Big)(y) \Big) \notag \\
&\stackrel{\eqref{eq:ring_2}}{=} \exp \Big( 2 \pi i \Big( (f^*\nu(\tilde h,\tilde h') - \nu(f^*\tilde h,f^*\tilde h'))(y) + \lim_{j\to\infty}\nu(f^*\tilde h,f^*\tilde h')(B^j y)\Big)\Big) \notag \\
&= \exp \Big( 2 \pi i \Big( \lim_{j\to\infty}\nu(f^*\tilde h,f^*\tilde h')(B^j y) \Big)\Big). \qedhere
\end{align*}
\end{proof}

\subsubsection{The external or cross product.}
Similar to singular cohomology, the internal product of differential characters on a manifold $X$ gives rise to an external or cross product
$$
\times: \widehat H^k(X;\Z) \times \widehat H^{k'}(X';\Z) \to \widehat H^{k+k'}(X\times X';\Z)\,, \quad (h,h') \mapsto (\pr_1^*h) * (\pr_2^*h') \,.   
$$
Here $\pr_1, \pr_2$ denote the projection on the first and second factor of $X \times X'$, respectively.   
Conversely, the internal product can be recovered from the external product by pull-back along the diagonal $\Delta_X: X \to X \times X$, $x \mapsto (x,x)$: for characters $h,h' \in \widehat H^k(X;\Z)$, we have:
$$
\Delta_X^*(h \times h') 
= \Delta_X^* (\pr_1^*h * \pr_2^*h') 
= (\pr_1 \circ \Delta_X)^*h * (\pr_2 \circ \Delta_X)^*h' 
= h*h' \,.
$$
\index{differential character!cross product}
\index{differential character!external product}
\index{+X@$\times$, cross product}
\index{external product!of absolute characters}
\index{product!external}

The external product is $\Z$-bilinear.
Moreover, since curvature and characteristic class are multiplicative for the internal product, the same holds for the external product: 
\begin{align}
\curv(h \times h') 
&= \curv(h) \times \curv(h') \in \Omega^{k+k'}(X\times X') ,  \label{eq:curvx_commute} \\
c(h \times h')
&= c(h) \times c(h') \in H^{k+k'}(X \times X';\Z) . \label{eq:cx_commute} 
\end{align}

\subsubsection{A formula for the cross product}
To understand the external product $h \times h'$ of differential characters $h \in \widehat H^k(X;\Z)$ and $h'\in \widehat H^{k'}(X';\Z)$, the following special case is crucial.
It is the key step in the uniqueness proof in \cite[Ch.~6]{BB13}.
We give another proof here, based on Proposition~\ref{prop:ring_BB}.

\begin{lemma}\label{lem:x_M_M'}
Let $M$ and $M'$ be closed oriented regular stratifolds with $\dim(M) + \dim(M') = k+k'-1$.
Let $h \in \widehat H^k(M;\Z)$ and $h' \in \widehat H^{k'}(M';\Z)$.
Then we have:
\begin{equation}\label{eq:MM'}
(h \times h')([M \times M']) 
= \begin{cases}
  h([M])^{\langle c(h'),[M']\langle}  & \quad \mbox{if $(\dim(M),\dim(M')) = (k-1,k')$} \\ 
  h'([M'])^{(-1)^k \langle c(h),[M]\rangle}  & \quad \mbox{if $(\dim(M),\dim(M')) = (k,k'-1)$} \\ 
  1 & \quad \mbox{otherwise} \,.
  \end{cases} 
\end{equation}
\end{lemma}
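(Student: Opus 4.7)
The approach is to apply Proposition~\ref{prop:ring_BB} directly to the cross product $h \times h' = \pr_1^*h * \pr_2^*h'$ on $M \times M'$. Choosing the identity map $M \times M' \to M \times M'$ as the representing geometric cycle (so $y \in Z_{k+k'-1}(M\times M';\Z)$ is a fundamental cycle and $a=0$) reduces the computation to
$$
(h \times h')([M\times M']) \;=\; \lim_{j\to\infty}\exp\bigl(2\pi i\,\nu(\pr_1^*\tilde h,\pr_2^*\tilde h')(B^j y)\bigr),
$$
with $\nu(\alpha,\beta) = \alpha \cup \mu^\beta + (-1)^k \pr_1^*\curv(h)\cup\beta$. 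The ``otherwise'' case is then handled purely by dimension: if $\dim M \le k-2$ then $\Omega^k_0(M)=0$ and $H^{k-1}(M;\Ul) = \Hom(H_{k-1}(M;\Z),\Ul)=0$, so the absolute analogue of \eqref{eq:rel_sequences} forces $\widehat H^k(M;\Z)=0$ and hence $h=0$; symmetrically, $\dim M \ge k+1$ yields $\dim M' \le k'-2$ and $h'=0$. Either way $(h\times h')([M\times M']) = 1$.

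In Case~1 ($\dim M = k-1$, $\dim M' = k'$) the form $\curv(h)\in\Omega^k(M)$ vanishes for dimensional reasons, so the second summand of $\nu$ disappears and $\nu = \pr_1^*\tilde h \cup \pr_2^*\mu^{\tilde h'}$. The bidegree $(k-1,k')$ of this cup product matches $(\dim M,\dim M')$ precisely, so the cochain-level cross product formula — obtained by realizing a fundamental cycle of $M\times M'$ as a shuffle of fundamental cycles of the factors and applying the Alexander--Whitney formula, with the $B^j$ providing the stabilization in the limit — yields
$$
\lim_{j\to\infty}\bigl(\pr_1^*\tilde h \cup \pr_2^*\mu^{\tilde h'}\bigr)(B^j y) \;=\; \tilde h([M])\cdot \mu^{\tilde h'}([M']).
$$
Exponentiating and using $\exp(2\pi i\,\tilde h([M]))=h([M])$ together with $\mu^{\tilde h'}([M'])=\langle c(h'),[M']\rangle\in\Z$ gives $h([M])^{\langle c(h'),[M']\rangle}$.

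In Case~2 ($\dim M = k$, $\dim M' = k'-1$) it is $\curv(h')\in\Omega^{k'}(M')$ that vanishes, so $\mu^{\tilde h'}=-\delta\tilde h'$. Substituting into $\nu$ and applying $\alpha\cup\delta\beta = (-1)^{|\alpha|}\bigl(\delta(\alpha\cup\beta) - \delta\alpha\cup\beta\bigr)$ combined with $\delta(\pr_1^*\tilde h) = \pr_1^*(\curv(h)-\mu^{\tilde h})$ makes the two occurrences of $\pr_1^*\curv(h)\cup\pr_2^*\tilde h'$ cancel and produces
$$
\nu \;=\; (-1)^k\,\delta\bigl(\pr_1^*\tilde h\cup\pr_2^*\tilde h'\bigr) \;+\; (-1)^k\,\pr_1^*\mu^{\tilde h}\cup\pr_2^*\tilde h'.
$$
The coboundary vanishes on the cycle $B^j y$, and the remaining cup product again has matching bidegree, so the same formula yields the limiting value $(-1)^k\langle c(h),[M]\rangle\cdot\tilde h'([M'])$ and, after exponentiation, $h'([M'])^{(-1)^k\langle c(h),[M]\rangle}$.

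The principal obstacle is the cochain-level cross product identity $\lim_j(\pr_1^*\alpha\cup\pr_2^*\beta)(B^j y) = \alpha([M])\beta([M'])$ for cochains of matching bidegree $(\dim M,\dim M')$. Making this precise amounts to a careful Eilenberg--Zilber / Alexander--Whitney bookkeeping on a suitably chosen product triangulation, and tracking the sign conventions there is what ultimately produces the $(-1)^k$ distinguishing the two non-trivial cases from one another.
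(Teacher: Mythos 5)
Your argument is correct and lands on the right formulas, but in the two non-trivial cases it follows a genuinely different route from the paper. The paper exploits the dimensional vanishing of the curvature on the low-dimensional factor to choose the real lift there to be a \emph{cocycle}: in the first case this makes $\tilde h\times\mu^{\tilde h'}$ a cocycle and kills the term $(-1)^k\curv(h)\times\tilde h'$ outright, and in the second case it forces $\mu^{\tilde h'}=0$ and makes $\curv(h)\times\tilde h'$ a cocycle; in both cases $\nu$ is then closed, so its value on $B^jy=y+\partial b_j$ is literally independent of $j$ and the limit is trivial. You instead keep arbitrary real lifts and, in Case~2, integrate by parts with the cup-product Leibniz rule to split off an exact term; your cancellation of the two occurrences of $\pr_1^*\curv(h)\cup\pr_2^*\tilde h'$ and the resulting expression $(-1)^k\delta(\pr_1^*\tilde h\cup\pr_2^*\tilde h')+(-1)^k\pr_1^*\mu^{\tilde h}\cup\pr_2^*\tilde h'$ are correct. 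What this buys you is independence from the choice of a cocycle lift (which needs a word of justification, since a flat character only admits a closed real lift when the relevant top homology has no torsion on which the character is non-trivial); what it costs you is that $\nu$ is no longer closed, so the limit over $B^j$ has to be controlled by hand.

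That is where your one imprecision sits. The identity $\lim_j(\pr_1^*\alpha\cup\pr_2^*\beta)(B^jy)=\alpha([M])\beta([M'])$ cannot hold for arbitrary cochains of matching bidegree: the right-hand side is not even well defined when $\alpha$ is not a cocycle, and the left-hand side differs from $(\pr_1^*\alpha\cup\pr_2^*\beta)(y)$ by $\delta(\pr_1^*\alpha\cup\pr_2^*\beta)(b_j)$, which need not converge as a real number. What you actually need, and what does hold in both of your applications, is the pair of facts: (i) for $y=EZ(x\otimes x')$ and matching bidegree one has $(\pr_1^*\alpha\cup\pr_2^*\beta)(y)=\alpha(x)\beta(x')$, immediate from $AW\circ EZ=\id$; and (ii) $\delta(\pr_1^*\alpha\cup\pr_2^*\beta)$ is an \emph{integral} cochain --- in Case~1 it equals $-\pr_1^*\mu^{\tilde h}\cup\pr_2^*\mu^{\tilde h'}$ because $\delta\tilde h=-\mu^{\tilde h}$, and in Case~2 it is likewise $\pm\,\pr_1^*\mu^{\tilde h}\cup\pr_2^*\mu^{\tilde h'}$ --- so that the value on $B^jy$ agrees with the value on $y$ modulo $\Z$ and the exponentiated sequence is in fact constant, not merely convergent. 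With (i) and (ii) substituted for your stated identity, the bookkeeping you defer is genuinely routine and your proof closes.
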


\begin{proof}
If $\dim(M) < k-1$, we have $\widehat H^k(M;\Z) = \{0\}$.
If $\dim(M') < k'-1$, we have $\widehat H^{k'}(M';\Z) = \{0\}$. 
Thus if $(\dim(M),\dim(M')) \notin \{(k-1,k'),(k,k'-1)\}$ then either $h=0$ or $h'=0$.
Hence $h \times h'=0$.

Now let $y \in Z_{k+k'-1}(M \times M';\Z)$ be a fundamental cycle of $M \times M'$.
By \eqref{eq:def_ring_BB}, we have
\begin{align*}
(h \times h')([M \times M'])
&= (\pr_1^*h * \pr_2^*h')(y) \\
&\stackrel{\eqref{eq:def_ring_BB}}{=} \lim_{j \to \infty} \exp \big( 2\pi i \cdot \nu(\pr_1^*\widetilde h,\pr_2^*\widetilde{h'})(B^jy) \big) \\
&= \lim_{j \to \infty} \exp \Big( 2\pi i \cdot \big(\widetilde h \times \mu^{\widetilde{h'}} + (-1)^k \curv(h) \times \widetilde{h'} \big)(B^jy)\Big) 
\end{align*}
Since $(h \times h')([M \times M'])$ does not depend upon the choice of fundamental cycle, we may choose $y = x \times x'$, where $x$ and $x'$ are fundamental cycles of $M$ and $M'$, respectively.
Moreover, we have $B^jy = y + \partial b_j$ for some $b_j \in C_{k+k'}(M \times M';\Z) = S_{k+k'}(M \times M';\Z)$.

If $\dim(M) = k-1$, we may choose $\widetilde h$ to be a cocycle.
Then $\widetilde h \times \mu^{\widetilde{h'}}$ is a cocycle, too. 
Moreover, $\curv(h) =0$ in this case, and we obtain:
\begin{align*}
(h \times h')([M \times M'])
&= \lim_{j \to \infty} \exp \Big( 2\pi i \cdot \big(\widetilde h \times \mu^{\widetilde{h'}} + (-1)^k \curv(h) \times \widetilde{h'} \big)(x \times x' + \partial b)\Big) \\
&= \exp \Big( 2\pi i \cdot \widetilde h(x) \cdot \mu^{\widetilde{h'}}(x')\Big) \\
&= \big(h([M])\big)^{\langle c(h'),[M']\rangle} \\
\intertext{Similarly, for $\dim(M) = k$, we have $\dim(M')=k'-1$, hence $\curv(h') =0$.
We may choose $\widetilde{h'}$ to be a cocycle.
Then $\curv(h) \times \widetilde{h'}$ is a cocycle and $\mu^{\widetilde{h'}}=0$.
This yields:}
(h \times h')([M \times M'])
&= \lim_{j \to \infty} \exp \Big( 2\pi i \cdot \big(\widetilde h \times \mu^{\widetilde{h'}} + (-1)^k \curv(h) \times \widetilde{h'} \big)(x \times x' + \partial b)\Big) \\
&= \exp \Big( 2\pi i \cdot (-1)^k \cdot \widetilde{h'}(x') \cdot \int_M \curv(h) \Big) \\ 
&= \big(h'([M'])\big)^{(-1)^k \cdot \langle c(h),[M]\rangle}. \qedhere
\end{align*}
\end{proof}

From Lemma~\ref{lem:x_M_M'} and Remark~\ref{rem:ev_torsion} we easily obtain a formula for the external product as in \cite[Ch.~6]{BB13}.
The K\"unneth sequence allows us to decompose cycles in $X \times X'$ into a sum of cross products of cycles in $X$ and $X'$ respectively, and a torsion cycle.
This is done by carefully constructing splittings of the K\"unneth sequence at the level of cycles as explained in detail in \cite[Ch.~6]{BB13}.
\index{K\"unneth splitting}
\index{splitting!K\"unneth $\sim$}
The construction is briefly reviewed in the appendix.
The construction for the relative K\"unneth sequence is given in detail there.
We use it to construct the module structure on $\widehat H^*(\varphi;\Z)$ in the following section.  

Let $h \in \widehat H^k(X;\Z)$ and $h' \in \widehat H^{k'}(X';\Z)$ be differential characters.
To evaluate the character $h \times h' \in \widehat H^{k+k'}(X \times X';\Z)$ on a cycle $z \in Z_{k+k'-1}(X \times X';\Z)$ we use the K\"unneth decomposition of $z$ into a sum of cross products of cycles on $X$ and $X'$ and a torsion cycle. 
Then the two types of summands are treated separately.
This yields:

\begin{cor}[Formula for the cross product]\label{cor:cross}
Let $h \in \widehat H^k(X;\Z)$ and $h' \in \widehat H^{k'}(X';\Z)$, and let $z \in Z_{k+k'-1}(X \times X';\Z)$.
Decompose it according to the K\"unneth sequence into a sum of cross products of cycles in $X$ and $X'$, respectively, and torsion cycles.
Then $h \times h'$ is evaluated on the two types of summands separately:
\index{formula!cross product}
\index{differential character!cross product}

If $z$ represents an $N$-torsion class, choose a chain $b \in C_{k+k'}(X\times X';\Z)$ such that $\partial b = N \cdot z$.
Then we have:
\begin{equation}\label{eq:cross_ev_on_torsion}
(h \times h')(z)
=
\exp \Big( \frac{2\pi i}{N} \Big( \int_b \curv(h) \times \curv(h') - \langle c(h) \times c(h'), b \rangle \Big) \Big) \,. 
\end{equation}
If $z = y_i \times y'_j$ with $y_i \in Z_i(X;\Z)$ and $y'_j \in Z_j(X';\Z)$ we have:
\begin{equation}\label{eq:xx}
(h \times h')(y_i \times y'_j) 
= \begin{cases}
  h(y_i)^{\langle c(h'),y'_j \rangle}  & \quad \mbox{if $(i,j) = (k-1,k')$} \\ 
  h'(y'_j)^{(-1)^k \langle c(h),y_i\rangle}  & \quad \mbox{if $(i,j) = (k,k'-1)$} \\ 
  1 & \quad \mbox{otherwise} \,.
  \end{cases} 
\end{equation}
\end{cor}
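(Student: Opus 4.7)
The plan is to reduce the general case to the two building blocks already established above, namely Lemma~\ref{lem:x_M_M'} for cross products of stratifold cycles and Remark~\ref{rem:ev_torsion} for torsion cycles, by writing an arbitrary cycle $z$ as a sum of such pieces via a chain-level K\"unneth splitting. The relevant splitting is the one reviewed in the appendix and used systematically in \cite[Ch.~6]{BB13}.

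First I would invoke the K\"unneth splitting to write
$$
z \;=\; \sum_{i+j = k+k'-1} y_i \times y'_j \;+\; t \;+\; \partial a,
$$
where $y_i \in Z_i(X;\Z)$, $y'_j \in Z_j(X';\Z)$, $t \in Z_{k+k'-1}(X\times X';\Z)$ is a torsion cycle representing the $\Tor$-part of the K\"unneth decomposition of $[z]$, and $a \in C_{k+k'}(X\times X';\Z)$. Since $h \times h'$ is a homomorphism into $\Ul$ and $(h\times h')(\partial a) = \exp\bigl(2\pi i \int_a \curv(h)\times\curv(h')\bigr)$ by \eqref{eq:curvx_commute}, the evaluation factors as a product of contributions from each summand $y_i\times y'_j$ and from $t$, with the boundary term $\partial a$ to be absorbed into the torsion computation.

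For each cross-product summand, I would choose geometric representatives $M\xrightarrow{f}X$ and $M'\xrightarrow{f'}X'$ of the homology classes of $y_i$ and $y'_j$; by thin invariance of differential characters and compatibility of refined fundamental classes with cartesian products of stratifolds, evaluation on $y_i\times y'_j$ differs from evaluation on $(f\times f')_*[M\times M']_{\partial S}$ only by a curvature integral that can be absorbed into $a$. Naturality of the cross product under pull-back gives $(f\times f')^*(h\times h') = f^*h\times {f'}^*h'$, and applying Lemma~\ref{lem:x_M_M'} to these pulled-back characters, together with naturality of the characteristic class, yields \eqref{eq:xx} with the stated sign.

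For the torsion contribution, I would fold $\partial a$ together with a chain bounding $N\cdot t$ into a single chain $b$ with $\partial b = N\cdot z$, apply Remark~\ref{rem:ev_torsion} to $h \times h'$ on $z$, and substitute the multiplicativity relations \eqref{eq:curvx_commute} and \eqref{eq:cx_commute} to obtain \eqref{eq:cross_ev_on_torsion}. The main obstacle is the K\"unneth splitting at the cycle level: it must be compatible with representing the algebraic summands $y_i\times y'_j$ by genuine cartesian products of stratifolds so that Lemma~\ref{lem:x_M_M'} applies directly, and it must identify the $\Tor$-part by a concrete cycle whose torsion relation can be fed into Remark~\ref{rem:ev_torsion}. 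Once this bookkeeping, which is the content of the appendix, is in place, the formula follows by straightforward combination of the two inputs.
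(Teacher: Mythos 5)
Your proposal follows essentially the same route as the paper: a chain-level K\"unneth decomposition, then Lemma~\ref{lem:x_M_M'} applied to stratifold representatives (with the curvature correction converting evaluation on the refined fundamental class back to evaluation on $y_i\times y'_j$) for the cross-product summands, and Remark~\ref{rem:ev_torsion} combined with the multiplicativity relations \eqref{eq:curvx_commute}, \eqref{eq:cx_commute} for the torsion part. The only cosmetic difference is that the paper computes the correction term $\exp\bigl(2\pi i\int_{a(y_i)\times y'_j}\curv(h\times h')\bigr)=h(\partial a(y_i))^{\langle c(h'),y'_j\rangle}$ explicitly for each cross-product summand rather than folding it into the torsion bookkeeping, but the argument is the same.
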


\begin{proof}
The representation \eqref{eq:cross_ev_on_torsion} on torsion cycles follows from Remark~\ref{rem:ev_torsion} and \eqref{eq:curvx_commute}, \eqref{eq:cx_commute}.

Now let $z = y_i \times y'_j$ where $y_i \in Z_i(X;\Z)$ and $y'_j \in Z'_j(X';\Z)$. 
Choose geometric cycles $\zeta(y_i) \in \ZZ_i(X)$ and $\zeta'(y'_j) \in \ZZ_j(X')$ and chains $a(y_i) \in C_{i+1}(X;\Z)$ and $a(y'_j) \in C_{j+1}(X';\Z)$ such that $[y_i-\partial a(y_i)]_{\partial S_{i+1}}=[\zeta(y_i)]_{\partial S_{i+1}}$ and
$[y'_j-\partial a(y'_j)]_{\partial S_{j+1}}=[\zeta(y'_j)]_{\partial S_{j+1}}$.

Now apply Lemma~\ref{lem:x_M_M'} to the fundamental cycles of $\zeta(y_i)$ and $\zeta'(y'_j)$:
For degrees $(i,j)$ different from $(k-1,k')$ and $(k,k'-1)$ we have $(h \times h')(y_i \times y'_j)=1$. 
For $(i,j)=(k-1,k')$ we obtain:
\begin{align*}
(h \times h')(y_{k-1} \times y'_{k'})
&=
h([\zeta(y_{k-1})]_{\partial S_k})^{\langle c(h'),y'_{k'}\rangle} 
\cdot \exp \Big( 2\pi i \int_{(a(y_i) \times y'_{k'})} \curv(h \times h') \Big) \notag \\
&=
h([\zeta(y_{k-1})]_{\partial S_k})^{\langle c(h'),y'_{k'}\rangle} 
\cdot \exp \Big( 2\pi i \int_{a(y_{k-1})} \curv(h) \cdot \int_{y'_{k'}} \curv(h') \Big) \notag \\
&=
h([\zeta(y_{k-1})]_{\partial S_k})^{\langle c(h'),y'_{k'}\rangle} 
\cdot h(\partial a(y_{k-1}))^{\langle c(h'),y'_{k'}\rangle} \notag \\
&=
h(y_{k-1})^{\langle c(h'),y'_{k'}\rangle} \,.
\end{align*}
Similarly for $(i,j)=(k-1,k')$ we obtain:
\begin{equation*}
(h \times h')(y_k \times y'_{k'-1})
=
h(y'_{k'-1})^{(-1)^k \cdot \langle c(h),y_{k'}\rangle} \,. \qedhere
\end{equation*}
\end{proof}

\subsection{The module structure on relative differential cohomology}\label{subsec:module}
In this section, we use the method developed in \cite[Ch.~6]{BB13} to construct an external and internal product between relative and absolute differential characters.
This provides the graded abelian group $\widehat H^*(\varphi;\Z)$ of relative differential characters with the structure of a right module over the ring $\widehat H^*(X;\Z)$. 
The module structure is natural with respect to smooth maps.
It is compatible with the module structures on relative cohomology and the relative de Rham complex in the sense that the structure maps (i.e.~curvature, covariant derivative, characteristic class and topological trivializations) are multiplicative.
Moreover, the module structure is compatible with the maps $\vds$ and $\ti$ between absolute and relative differential characters groups.

\subsubsection{The cross product.}
As above let $\varphi:A \to X$ be a smooth map.
We consider the induced map $\varphi \times \id_{X'}:A \times X' \to X \times X'$.
The cup product on smooth singular cochains induces an internal product between relative and absolute cochains 
$$
\cup: C_*(\varphi;\Z) \otimes C_*(X;\Z) \to C_*(\varphi;\Z)\,, \quad (\mu,\nu) \cup \sigma := (\mu \cup \sigma,\nu \cup \sigma) \,.
$$
Likewise, the cup product induces an external or cross product
$$
\times: C_*(\varphi;\Z) \otimes C_*(X';\Z) \to C_*(\varphi \times \id_{X'};\Z)\,, \quad (\mu,\nu) \times \sigma := (\mu \times \sigma,\nu \times \sigma) \,.
$$
Since cross and cup products are natural chain maps, so are the induced internal and external products between absolute and relative cochains.
Clearly, the products are invariant under the boundary operator of the mapping cone complex and thus descend to the cup and cross products on cohomology.
Th cup product in particular provides the mapping cone cohomology $H^*(\varphi;\Z)$ with the structure of a right module over the cohomology ring $H^*(X;\Z)$.

Likewise, the wedge product\footnote{We avoid the familiar term ``exterier product'' to avoid confusion with the external product. The wedge product clearly defines an \emph{internal} rather than an \emph{external} product on the de Rham complex.} of differential forms induces an internal product between relative differential forms $(\omega,\vartheta) \in \Omega^k(\varphi)$ and differential forms $\omega' \in \Omega^{k'}(X)$: 
$$
(\omega,\vartheta) \wedge \omega'
:=
(\omega \wedge \omega',\vartheta \wedge \varphi^*\omega') \in  \Omega^{k+k'}(\varphi) 
$$
This provides the mapping cone de Rham complex $\Omega^*(\varphi)$ with the structure of a right module over the ring $\Omega^*(X)$ of differential forms on $X$.
Similarly, we have the external product of $(\omega,\vartheta) \in \Omega^k(\varphi)$ with $\omega' \in \Omega^{k'}(X')$:
$$
(\omega,\vartheta) \times \omega'
:=
(\omega \times \omega',\vartheta \times \varphi^*\omega')  \in \Omega^{k+k'}(\varphi \times \id_{X'})\,. 
$$
The internal and external products on the de Rham complex $\Omega^*(\varphi)$ and the cochain complex $C^*(\varphi;\R)$ coincide in cohomology under the de Rham isomorphism.
Hence they induce the same module structure on $H_\mathrm{dR}^*(\varphi) \cong H^*(\varphi;\R)$. 
\index{external product!of differential forms}

Now we construct the external product between a relative character $h \in \widehat H^*(\varphi;\Z)$ and an absolute character $h' \in \widehat H^*(X';\Z)$.
The construction is completely analogous to the one for absolute characters reviewed in the previous section.

We have the relative K\"unneth sequence
$$
0 \to 
\big[H_*(\varphi;\Z) \otimes H_*(X';\Z)\big]_n \xrightarrow{\times}
H_n(\varphi\times \id_{X'};\Z) \to 
\Tor(H_*(\varphi;\Z),H_*(X';\Z))_{n-1} \to
0 \,.
$$
As is well-known, the sequence splits on the level of cycles. 
A construction of a splitting $S:Z(C_*(\varphi\times\id_{X'};\Z) \to Z_*(\varphi;\Z) \otimes Z_*(X';\Z)$ is given in the appendix.
Denote the complement of the image of $Z_*(\varphi;\Z) \otimes Z_*(X';\Z) \xrightarrow{\times} Z_{k+k'-1}(\varphi\times\id_{X'};\Z)$, obtained from the K\"unneth splitting, by $T_{k+k'-1}(\varphi\times\id_{X'};\Z)$.
It will be referred to as the \emph{K\"unneth complement}.
\index{K\"unneth complement}
\index{+TkkvarphiidZ@$T_{k+k'-1}(\varphi\times\id_{X'};\Z)$, K\"unneth complement}

Now let $(s,t) \in Z_{k+k'-1}(\varphi\times \id_{X'};\Z)$ be a cycle.
The K\"unneth splitting allows us to decompose $(s,t)$ into a sum of cross products of cycles $(x,y) \in Z_*(\varphi;\Z)$ and $y' \in Z_*(X';\Z)$ and torsion cycles in $T_{k+k'-1}(\varphi\times\id_{X'};\Z)$.
Analogously to Corollary~\ref{cor:cross} we define the external product as follows:

\begin{definition}[Cross product]\label{def:cross}
The \emph{cross product} of differential characters $h \in \widehat H^k(\varphi;\Z)$ and $h' \in H^{k'}(X';\Z)$ is the homomorphism $h \times h':Z_{k+k'-1}(\varphi \times \id_{X'};\Z) \to \Ul$ defined as follows:
\index{cross product!of absolute and relative characters}
\index{relative differential character!cross product}
\index{external product!of absolute and relative characters}
\index{relative differential character!external product}
\index{product!external $\sim$}

For cycles $(x,y) \in Z_i(\varphi;\Z)$ and $y' \in Z_j(X';\Z)$, put
\begin{equation}
(h \times h')((x,y) \times y')
= \begin{cases}
   h(x,y)^{\langle c(h'),y'\rangle} & \mbox{if $(i,j) = (k-1,k')$} \\
   h'(y')^{(-1)^k \cdot \langle c(h),(x,y)\rangle} & \mbox{if $(i,j)=(k,k'-1)$} \\
   1 & \mbox{otherwise.}
   \end{cases} \label{eq:def_cross_1}
\end{equation}
For an $N$-torsion cycle $(s,t) \in T_{k+k'-1}(\varphi\times\id_{X'};\Z)$ in the K\"unneth complement choose a chain $(a,b) \in C_{k+k'}(\varphi \times \id_{X'};\Z)$ such that $N \cdot (s,t)=\partial_{\varphi\times\id_{X'}}(a,b)$. Then put
\begin{equation}
(h \times h')(s,t)
:=
\exp \Big( \frac{2\pi i}{N} \Big( \int_{(a,b)} (\curv,\cov)(h) \times \curv(h') - \langle c(h) \times c(h'),(a,b) \rangle \Big) \Big) \,. \label{eq:def_cross_2}
\end{equation}
These two cases uniquely determine the homomorphism $h \times h':Z_{k+k'-1}(\varphi \times \id_{X'};\Z) \to \Ul$. 
\end{definition}

Some comments on the notations in Definition~\ref{def:cross} are in order.
First of all, we write 
\begin{align*}
\langle c(h),(x,y)\rangle 
&= \langle c(h),[x,y]\rangle = \int_{(x,y)} (\curv,\cov)(h) \\
\mbox{and} \qquad 
\langle c(h'),y'\rangle 
&= \langle c(h'),[y']\rangle = \int_{y'} \curv(h')
\end{align*}
for the Kronecker pairing between (relative) cohomology and homology in \eqref{eq:def_cross_1}. 

Secondly, the term $\langle c(h) \times c(h'),(a,b) \rangle$ in \eqref{eq:def_cross_2} is not well-defined.
Replacing the cohomology class $c(h) \times c(h')$ by a cocycle $\mu \in C^{k+k'}(\varphi \times \id_{X'};\Z)$ representing it, the term $\exp \frac{2\pi i}{N} \langle \mu,(a,b)\rangle$ is independent of the choice of cocycle.
This is because $\frac{1}{N} \langle \delta_{\varphi\times\id_{X'}} \ell,(a,b) \rangle = \langle \ell,(s,t)\rangle \in \Z$ holds for any cochain $\ell \in C^{k+k'-1}(\varphi\times\id_{X'};\Z)$. 

Thirdly, the value of $h \times h'$ on a torsion cycle $(s,t)$ obtained from the K\"unneth splitting is independent of the choice of chain $(a,b)$ satisfying $N \cdot (s,t)=\partial_{\varphi \times \id_{X'}}(a,b)$.
For if we change $(a,b)$ by adding a cycle $(v',w') \in Z_{k+k'}(\varphi\times\id_{X'};\Z)$, the result in \eqref{eq:def_cross_2} changes by multiplication with 
$$
\exp \Big(\frac{2\pi i}{N} \Big(\underbrace{\int_{(v',w')} (\curv,\cov)(h) \times \curv(h') - \langle c(h) \times c(h'),(v',w')\rangle}_{=0} \Big)\Big) 
= 
1 \,.
$$

Finally, the K\"unneth complement $T_{k+k'-1}(\varphi\times\id_{X'};\Z) \subset Z_{k+k'-1}(\varphi\times\id_{X'};\Z)$ is the sum over $N \in \N$ of its subgroups of $N$-torsion cycles. 
This sum is of course not direct.
However, it is easy to see that the homomorphism $h \times h'$ in \eqref{eq:def_cross_2} is well-defined:
for a cycle $(s,t)$ in the complement choose $N'$ minimal such that $N' \cdot (s,t) = \partial_{\varphi\times\id_{X'}}(a,b)$.
Then the homology class $[s,t]$ has order $N'$ in $H_{k+k'-1}(\varphi\times\id_{X'};\Z)$ and all other possible choices of $N$ divide $N'$.
Thus the values in \eqref{eq:def_cross_2} for all such choices coincide.

\subsubsection{Well-definedness.}
Clearly, the map $h \times h':Z_{k+k'-1}(\varphi\times\id_{X'};\Z) \to \Ul$ defined by \eqref{eq:def_cross_1} and \eqref{eq:def_cross_2} is a homomorphism.
We show that it satisfies condition~\eqref{eq:def_rel_diff_charact_2} and thus defines a differential character in $\widehat H^{k+k'}(\varphi\times\id_{X'};\Z)$.

\begin{prop}\label{prop:h_times_h'}
Let $h \in \widehat H^k(\varphi;\Z)$ and $h' \in \widehat H^{k'}(X';\Z)$ be differential characters.
Then the homomorphism $h \times h':Z_{k+k'-1}(\varphi\times\id_{X'};\Z) \to \Ul$ in Definition~\ref{def:cross} is a differential character in $\widehat H^{k+k'}(\varphi \times \id_{X'};\Z)$ with $(\curv,\cov)(h \times h')=(\curv,\cov)(h) \times \curv(h')$.
\end{prop}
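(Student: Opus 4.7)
The plan is to verify the defining relation \eqref{eq:def_rel_diff_charact_2} for $h\times h'$: for every chain $(a,b)\in C_{k+k'}(\varphi\times\id_{X'};\Z)$,
\begin{equation*}
(h\times h')\big(\partial_{\varphi\times\id_{X'}}(a,b)\big) = \exp\Big(2\pi i\int_{(a,b)}(\curv,\cov)(h)\times\curv(h')\Big).
\end{equation*}
Once this is established, the homomorphism $h\times h'$ is a relative differential character in $\widehat H^{k+k'}(\varphi\times\id_{X'};\Z)$ and the right-hand side simultaneously identifies its curvature--covariant-derivative pair as $(\curv,\cov)(h)\times\curv(h')$.

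First I would apply the K\"unneth splitting from the appendix to the boundary cycle $\partial_{\varphi\times\id_{X'}}(a,b)\in Z_{k+k'-1}(\varphi\times\id_{X'};\Z)$, decomposing it as a finite sum $\sum_\alpha (x_\alpha,y_\alpha)\times y'_\alpha + (s,t)$ of cross products of cycles $(x_\alpha,y_\alpha)\in Z_*(\varphi;\Z)$, $y'_\alpha\in Z_*(X';\Z)$ together with a residue $(s,t)$ in the K\"unneth complement $T_{k+k'-1}$. Since $\partial_{\varphi\times\id_{X'}}(a,b)$ is a boundary, the K\"unneth isomorphism on mapping cone homology forces both its tensor-product and its Tor components to vanish; combined with a chain-level refinement of the splitting applied to $(a,b)$ itself, this furnishes explicit bounding chains witnessing that $(s,t)$ and appropriate groupings of the cross-product summands are themselves boundaries.

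Second I would evaluate $h\times h'$ term by term using Definition~\ref{def:cross}. On the residue $(s,t)$, formula \eqref{eq:def_cross_2} with $N=1$ and witness chain read off from the chain-level splitting yields $\exp\big(2\pi i\int(\curv,\cov)(h)\times\curv(h')\big)$ over that chain, because the integer pairing $\langle c(h)\times c(h'),\,\cdot\,\rangle$ drops out modulo $\Z$. Cross-product summands of mismatched bidegree contribute trivially by the third case of \eqref{eq:def_cross_1}. For matched bidegrees, the first or second case of \eqref{eq:def_cross_1} combined with the character equations for $h$ and $h'$ and the identities $\langle c(h'),y'\rangle = \int_{y'}\curv(h')$ and $\langle c(h),(x,y)\rangle = \int_{(x,y)}(\curv,\cov)(h)$ (valid on cycles) converts each nontrivial contribution into the exponential of an integral of $(\curv,\cov)(h)\times\curv(h')$ over a cross-product chain whose boundary supplies the corresponding summand.

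Summing the exponents, the collected witness chains reassemble into $(a,b)$ by construction of the chain-level splitting, and Fubini for the cross product of mapping cone forms yields the desired integral of $(\curv,\cov)(h)\times\curv(h')$ over $(a,b)$ modulo $\Z$. The main obstacle is precisely the bookkeeping around the chain-level K\"unneth splitting: the witness chains must recombine with signs compatible with both the mapping cone Leibniz rule and the sign $(-1)^k$ of \eqref{eq:def_cross_1}, and the splitting must be arranged so that no residual boundary leaks out of the K\"unneth complement uncounted. This is the relative analogue of the construction in \cite[Ch.~6]{BB13} and is the reason why a careful splitting is carried out in the appendix.
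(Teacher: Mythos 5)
Your proposal is correct and follows essentially the same route as the paper: verify condition \eqref{eq:def_rel_diff_charact_2} by splitting boundary cycles via the K\"unneth decomposition into cross products of cycles plus a piece in the complement $T_{k+k'-1}(\varphi\times\id_{X'};\Z)$, evaluate the cross-product summands bidegree by bidegree using \eqref{eq:def_cross_1}, and use \eqref{eq:def_cross_2} with $N=1$ on the complement so that the integral pairing with $c(h)\times c(h')$ drops out modulo $\Z$. The only difference is organizational: the paper sidesteps the chain-level bookkeeping you flag as the main obstacle by checking the character condition directly on each type of bounding chain (using that a cross product of cycles bounds exactly when one factor bounds), which reduces the verification to four explicit bidegree cases plus the complement.
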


\begin{proof}
We check condition~\eqref{eq:def_rel_diff_charact_2} for the two cases separately.
Since the cross product is injective on cohomology, a cross product of cycles is a boundary if and only if one of the factors is a boundary. 
For $(x,y) = \partial_\varphi (a,b) \in Z_{k-1}(\varphi;\Z)$ and $y' \in Z_{k'}(X';\Z)$, we have:
\begin{align*}
(h \times h')(\partial_{\varphi \times \id_{X'}}((a,b) \times y')))
&=
(h \times h')(\partial_\varphi (a,b) \times y') \\
&\stackrel{\eqref{eq:def_cross_1}}{=}
(h \times h')(\partial_\varphi (a,b))^{\langle c(h'),y'\rangle} \\
&=
\exp \Big( 2\pi i \Big( \int_{(a,b)} (\curv(h),\cov(h)) \cdot \int_{y'} \curv(h') \Big) \Big) \\
&=
\exp \Big( 2\pi i \int_{(a,b) \times y'} (\curv,\cov)(h) \times \curv(h') \Big) \,.
\end{align*}
For $(x,y) \in Z_{k-1}(\varphi;\Z)$ and $y'=\partial b' \in Z_{k'}(X';\Z)$, we have:
\begin{align*}
(h \times h')(\partial_{\varphi \times \id_{X'}}((x,y) \times b')))
&= 
(h \times h')((-1)^{k'-1}(x,y) \times \partial b') \\
&\stackrel{\eqref{eq:def_cross_1}}{=}
h((-1)^{k'-1}(x,y))^{\langle c(h'),\partial b'\rangle} \\
&=
1 \\
&=
\exp \Big( 2\pi i \int_{(x,y) \times b'} (\curv,\cov)(h) \times \curv(h') \Big) \,.
\end{align*}
The last equality follows from the fact that the differential form $\curv(h') \in \Omega^{k'}_0(X')$ and the chain $b' \in C_{k'+1}$ have different degrees (and similarly for the other factor).

Similarly, for $(x,y) \in Z_k(\varphi;\Z)$ and $y' = \partial b' \in Z_{k'-1}(X';\Z)$, we have:
\begin{align*}
(h \times h')(\partial_{\varphi\times\id_{X'}} (x,y) \times b')
&=
(h \times h')((-1)^k(x,y) \times \partial b') \\
&\stackrel{\eqref{eq:def_cross_1}}{=}
h'(\partial b')^{\langle c(h),(x,y)\rangle} \\
&=
\exp \Big( 2 \pi i \int_{b'} \curv(h') \cdot \int_{(x,y)} (\curv,\cov)(h) \Big) \\
&=
\exp \Big( 2 \pi i \int_{(x,y) \times b'} (\curv,\cov)(h) \times \curv(h') \Big) \,. 
\end{align*}
Finally, for $(x,y) = \partial_{\varphi}(a,b) \in Z_k(\varphi;\Z)$ and $y' \in Z_{k'-1}(X';\Z)$, we have:
\begin{align*}
(h \times h')(\partial_{\varphi\times\id_{X'}} (a,b) \times y')
&=
(h \times h')(\partial_\varphi (a,b) \times \partial y') \\
&\stackrel{\eqref{eq:def_cross_1}}{=}
h'(\partial b')^{\langle c(h),\partial_\varphi (a,b)\rangle} \\
&=
1 \\
&=
\exp \Big( 2 \pi i \int_{(a,b) \times y'} (\curv,\cov)(h) \times \curv(h') \Big) \,. 
\end{align*}
If $(s,t) = \partial_{\varphi\times\id_{X'}}(v,w) \in T_{k+k'-1}(\varphi\times\id_{X'};\Z)$ is a boundary, we may choose $N=1$ in \eqref{eq:def_cross_2}.
This yields
\begin{align*}
(h \times h')(\partial_{\varphi \times \id_{X'}}&(v,w)) \\
&\stackrel{\eqref{eq:def_cross_2}}{=}
\exp \Big( 2\pi i \,\Big( \int_{(v,w)} (\curv,\cov)(h) \times \curv(h') - \underbrace{\langle c(h) \times c(h'),(v,w)\rangle}_{\in \Z} \Big) \Big) 
\\ 
&=
\exp \Big( 2\pi i \int_{(v,w)} (\curv,\cov)(h) \times \curv(h') \Big) \,.  
\end{align*}
Thus the homomorphism $h \times h':Z_{k+k'-1}(\varphi\times\id_{X'};\Z) \to \Ul$ is a relative differential character in $\widehat H^{k+k'}(\varphi \times \id_{X'};\Z)$ with $(\curv,\cov)(h \times h')=(\curv,\cov)(h) \times \curv(h')$.   
\end{proof}
 
\subsubsection{Naturality and compatibilities.}
We show that the cross product of relative and absolute differential characters is natural with respect to smooth maps.
Moreover, it is compatible with the structure maps (curvature, covariant derivative, characteristic class and topological trivializations) and with the homomorphisms $\vds$ and $\ti$ between absolute and relative characters groups.

\begin{thm}[Cross product: naturality and compatibilities]\label{thm:cross_prop}
The cross product between relative and absolute differential characters
$$
\times:\widehat H^k(\varphi;\Z) \times \widehat H^{k'}(X';\Z) \to \widehat H^{k+k'}(\varphi\times\id_{X'};\Z) \,, \quad (h,h') \mapsto h \times h' \,,
$$
is $\Z$-bilinear and associative with respect to absolute characters: for a relative character $h \in \widehat H^k(\varphi;\Z)$ and absolute characters $h' \in \widehat H^{k'}(X';\Z)$ and $h'' \in \widehat H^{k''}(X'';\Z)$, we have 
\begin{equation}
(h \times h') \times h''
=
h \times (h' \times h'') \in \widehat H^{k+k'+k''}(\varphi\times\id_{X' \times X''}) \,. \label{eq:cross_ass} 
\end{equation}
The cross product is natural: for smooth maps $(Y,B)\xrightarrow{(f,g)}(X,A)$ and $Y'\xrightarrow{f'}X'$, we have:
\begin{equation}
((f,g) \times f')^*(h \times h') 
=
(f,g)^*h \times {f'}^*h' \,. \label{eq:cross_nat}  
\end{equation}
Curvature, covariant derivative, characteristic class and topological trivializations are multiplicative:
\begin{align}
(\curv,\cov)(h \times h') 
&= 
(\curv,\cov)(h) \times \curv(h') \,. \label{eq:curvcov_mult_cross} \\
c(h \times h')
&=
c(h) \times c(h') \,. \label{eq:c_mult_cross} \\
\iota_\varphi(\omega,\vartheta) \times h'
&=
\iota_{\varphi\times\id_{X'}}((\omega,\vartheta) \times \curv(h')) \,, \label{eq:iota_mult_cross}
\end{align}
where $(\omega,\vartheta) \in \Omega^{k-1}(\varphi)$, $h' \in \widehat H^{k'}(X';\Z)$ and hence $(\omega,\vartheta) \times \curv(h') \in \Omega^{k+k'-1}(\varphi\times\id_{X'})$.

\noindent
The homomorphism $\ti$ is multiplicative: for $h \in \widehat H^{k-1}(A;\Z)$ and $h' \in \widehat H^{k'}(X';\Z)$, we have:\begin{align}
\ti_\varphi(h) \times h'
&=
\ti_{\varphi\times\id_{X'}}(h \times h') \,. \label{eq:ti_mult}
\intertext{Likewise, the map $\vds$ is multiplicative: for $h \in \widehat H^k(\varphi;\Z)$ and $h' \in \widehat H^{k'}(X';\Z)$, we have:} 
\vds_{\varphi\times\id_{X'}}(h \times h')
&=
\vds_\varphi(h) \times h' \,.  \label{eq:vds_mult}
\end{align} 
\end{thm}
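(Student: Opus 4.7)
The plan is to verify each of the nine identities by combining the case-by-case Definition~\ref{def:cross} with the characterization of differential characters via the exact sequences in \eqref{eq:rel_sequences} and \eqref{eq:sequ_R}. The common strategy is: (i) check that both sides of a given identity have the same curvature and covariant derivative; (ii) verify agreement on a convenient generating set of cycles in $Z_{k+k'-1}(\varphi\times\id_{X'};\Z)$ obtained from the K\"unneth splitting; (iii) invoke injectivity of an appropriate part of \eqref{eq:sequ_R} to conclude. The $\Z$-bilinearity is immediate from the multiplicative/additive dependence of \eqref{eq:def_cross_1} and \eqref{eq:def_cross_2} on the character arguments, and the curvature/covariant derivative identity \eqref{eq:curvcov_mult_cross} is already contained in Proposition~\ref{prop:h_times_h'}.

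For the characteristic class identity \eqref{eq:c_mult_cross}, I would fix real lifts $\tilde h \in C^{k-1}(\varphi;\R)$ and $\tilde h' \in C^{k'-1}(X';\R)$ and take the cochain-level product $\tilde h \times \tilde h'$ as a candidate real lift of $h \times h'$, modified on the K\"unneth complement by the linear combination of $\int(\curv,\cov)(h)\times\curv(h')$ and the cocycle representing $c(h)\times c(h')$ implicit in \eqref{eq:def_cross_2}. The cocycle $\mu^{\tilde h \times \tilde h'} := (\curv,\cov)(h)\times\curv(h') - \delta_{\varphi\times\id_{X'}}(\tilde h\times\tilde h')$ then coincides with $\mu^{\tilde h}\times\mu^{\tilde h'}$ up to a coboundary, proving $c(h\times h') = c(h)\times c(h')$. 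For multiplicativity of topological trivializations \eqref{eq:iota_mult_cross}, the evaluation on a K\"unneth basis cycle of the correct bidegree reduces to the identity $\int_{(x,y)\times y'}(\omega,\vartheta)\times\curv(h') = \int_{(x,y)}(\omega,\vartheta)\cdot\int_{y'}\curv(h')$, while both sides vanish trivially in the other bidegrees and have matching integrals on the K\"unneth complement. Multiplicativity of $\ti$ in \eqref{eq:ti_mult} follows because $\curv(\ti_\varphi(h))=0$ causes several case terms in \eqref{eq:def_cross_1} to collapse to the formula for $\ti_{\varphi\times\id_{X'}}(h\times h')$; multiplicativity of $\vds$ in \eqref{eq:vds_mult} uses that $\vds_\varphi$ is induced by $i:Z_k(X;\Z)\to Z_k(\varphi;\Z)$, $z\mapsto(z,0)$, which is compatible with the K\"unneth splittings for $X\times X'$ versus $\varphi\times\id_{X'}$. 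Associativity \eqref{eq:cross_ass} with respect to absolute characters follows by choosing K\"unneth splittings for the triple product $\varphi\times\id_{X'\times X''}$ that refine the iterated pair splittings, so that both iterated expressions produce the same phase on basis and torsion cycles.

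The main obstacle is naturality \eqref{eq:cross_nat}. The K\"unneth splittings chosen to define $(f,g)^*h \times {f'}^*h'$ on $Z_*(\psi\times\id_{Y'};\Z)$ need not match the pull-back under $(f,g)\times f'$ of those used to define $h\times h'$ on $Z_*(\varphi\times\id_{X'};\Z)$. I resolve this by a uniqueness argument based on \eqref{eq:sequ_R}: both characters in \eqref{eq:cross_nat} lie in $\widehat H^{k+k'}(\psi\times\id_{Y'};\Z)$ and by naturality of the de Rham and cohomology cross products together with \eqref{eq:curvcov_mult_cross} and \eqref{eq:c_mult_cross} they share the same image in $R^{k+k'}(\psi\times\id_{Y'};\Z)$. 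Their difference therefore lies in the image of $j$ applied to a class in $H^{k+k'-1}(\psi\times\id_{Y'};\R)/H^{k+k'-1}(\psi\times\id_{Y'};\Z)_\R$, which is detected by evaluation on cycles of the form $(f,g)_*(x,y)\times f'_*y'$; on these, \eqref{eq:def_cross_1} applied to both characters yields the same value, forcing the difference to be trivial.
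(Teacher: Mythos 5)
Your overall architecture --- dispose of the K\"unneth complement by matching curvature, covariant derivative and characteristic class, then check \eqref{eq:def_cross_1} on cross products of cycles --- is the same as the paper's for associativity, for \eqref{eq:iota_mult_cross}, and for the compatibility with $\ti$ and $\vds$. Where you genuinely diverge is naturality \eqref{eq:cross_nat}. The paper removes the mismatch of splittings at the source: the mapping cone Alexander--Whitney and Eilenberg--Zilber maps are natural, so $((f,g)\times f')_*$ carries the K\"unneth splitting of $Z_*(\psi\times\id_{Y'};\Z)$ to that of $Z_*(\varphi\times\id_{X'};\Z)$, and both defining formulas can then be compared factor by factor. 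Your rigidity argument via \eqref{eq:sequ_R} is a valid alternative: the difference of the two sides is $j(u)$, hence kills torsion classes; since by the K\"unneth splitting every class in $H_{k+k'-1}(\psi\times\id_{Y'};\Z)$ is a sum of a torsion class and classes of cross-product cycles, and on cross products of cycles both sides agree by \eqref{eq:def_cross_1} together with naturality of the Kronecker pairing, it follows that $u=0$. This buys independence from how the splittings are chosen, at the price of first establishing \eqref{eq:curvcov_mult_cross} and \eqref{eq:c_mult_cross} and the naturality of the form and cohomology cross products; the paper's route is more self-contained but leans on the explicit construction of the splittings in the appendix. One slip to repair in your characteristic class argument: the candidate real lift $\tilde h\times\tilde h'$ lies in $C^{k+k'-2}$, which is the wrong degree for a lift of a character in $\widehat H^{k+k'}(\varphi\times\id_{X'};\Z)$; the correct lift on the cross-product factor is $\tilde h\times\mu^{\tilde h'}+(-1)^k(\curv,\cov)(h)\times\tilde h'$, and computing its coboundary (together with the lift you prescribe on the complement) is exactly what yields $\mu^{\widetilde{h\times h'}}=\mu^{\tilde h}\times\mu^{\tilde h'}$ and hence \eqref{eq:c_mult_cross}.
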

\index{Theorem!Cross product: naturality and compatibilities}

\begin{proof}
The cross product is obviously $\Z$-bilinear.
Moreover, \eqref{eq:curvcov_mult_cross} was observed in the proof of Proposition~\ref{prop:h_times_h'}.
The other properties have to be checked now.

a)
We compute the characteristic class $c(h \times h')$.
Let $\tilde h$ and $\tilde h'$ be real lifts of $h$ and $h'$ and $\mu^{\tilde h}:= (\curv,\cov)(h) - \delta_\varphi \tilde h \in C^k(\varphi;\Z)$ and $\mu^{\tilde h'} := \curv(h') - \delta \tilde h' \in C^{k'}(X';\Z)$ be the corresponding cocyles for the characteristic classes.
We first compute a real lift $\widetilde{h \times h'}$ for the character $h \times h'$.

We have the K\"unneth splitting:\footnote{More precisely, the first factor is the image of $\big(Z_*(\varphi;\Z) \otimes Z_*(X';\Z)\big)_{k+k'-1} \xrightarrow{\,\times\,} Z_{k+k'-1}(\varphi\times\id_{X'};\Z)$. 
Therefor, we write cross products instead of tensor products for the real lifts on this factor.} 
\begin{equation}
Z_{k+k'-1}(\varphi\times\id_{X'};\Z)
= 
\big(Z_*(\varphi;\Z) \otimes Z_*(X';\Z)\big)_{k+k'-1} \oplus T_{k+k'-1}(\varphi\times\id_{X'};\Z) \,. \label{eq:split_Z_kk}
\end{equation}
On the first factor we obtain from \eqref{eq:def_cross_1} the real lift 
\begin{equation*}
\tilde h \times \mu^{\tilde h'} + (-1)^{k} (\curv,\cov)(h) \times \tilde h' \,. 
\end{equation*}
On $N$-torsion cycles in the second factor we obtain from \eqref{eq:def_cross_2} the real lift
\begin{equation*}
\frac{1}{N} \big( (\curv,\cov)(h) \times \curv(h') - \mu^{\tilde h} \times \mu^{\tilde h'} \big) \circ \partial_{\varphi\times\id_{X'}}^{-1} \circ \big(N \cdot (\cdot) \big)\,.
\end{equation*}
In particular, the coboundary of the lift on the second factor is given by 
\begin{equation}
\delta_{\varphi\times\id_{X'}} \big( \widetilde{h \times h'}|_{T_{k+k'-1}(\varphi\times\id_{X'};\Z)} \big)
= (\curv,\cov)(h) \times \curv(h') - \mu^{\tilde h} \times \mu^{\tilde h'} \,. \label{eq:delta_T}
\end{equation}
Now we compute the cocycle $\mu^{\widetilde{h \times h'}} \in C^{k+k'}(\varphi\times\id_{X'};\Z)$ that represents the characteristic class $c(h \times h') \in H^{k+k'}(\varphi\times\id_{X'};\Z)$.
We use the decomposition
\begin{align}\label{eq:split_C_kk}
C_{k+k'}&(\varphi\times\id_{X'};\Z) \notag \\
&=
Z_{k+k'}(\varphi\times\id_{X'};\Z) \oplus \im \Big( C_{k+k'}(\varphi\times\id_{X'};\Z) \xleftarrow{s_{\varphi\times\id_{X'}}} B_{k+k'-1}(\varphi\times\id_{X'};\Z) \Big)
\end{align}
obtained from a splitting of the boundary map $\partial_{\varphi\times\id_{X'}}$.
On the first factor in \eqref{eq:split_C_kk}, we have 
\begin{align*}
\mu^{\widetilde{h \times h'}}\big|_{Z_{k+k'}(\varphi\times\id_{X'};\Z)}
&:= (\curv,\cov)(h \times h') - \delta_{\varphi\times\id_{X'}}\widetilde{h \times h'} \\
&= (\curv,\cov)(h) \times \curv(h') \\
&= \big(\mu^{\tilde h} \times \mu^{\tilde h'}\big)\big|_{Z_{k+k'}(\varphi\times\id_{X'};\Z)}
\end{align*}
The second factor in \eqref{eq:split_C_kk} inherits a further splitting from \eqref{eq:split_Z_kk}.
With respect to this splitting, we obtain:
\begin{align*}
\mu^{\widetilde{h \times h'}} 
&:= 
(\curv,\cov)(h \times h') - \delta_{\varphi\times\id_{X'}}\widetilde{h \times h'} \\
&\stackrel{\eqref{eq:delta_T}}{=}
(\curv,\cov)(h) \times \curv(h') \\
& \quad - \delta_{\varphi\times\id_{X'}} \Big(\tilde h \times \mu^{\tilde h'} + (-1)^{k} (\curv,\cov)(h) \times \tilde h'\Big) \oplus
\Big( (\curv,\cov)(h \times h') - \mu^{\tilde h} \times \mu^{\tilde h'} \Big) \\
&=
\Big( (\curv,\cov)(h) \times \curv(h') - \delta_\varphi \tilde h \times \mu^{\tilde h'} - (\curv,\cov)(h) \times \delta \tilde h' \Big) \oplus \Big(\mu^{\tilde h}  \times \mu^{\tilde h'} \Big) \\
&= \Big( (\curv,\cov)(h) \times \mu^{\tilde h'} - \delta_\varphi \tilde h \times \mu^{\tilde h'}\Big) \oplus \Big(\mu^{\tilde h}  \times \mu^{\tilde h'} \Big) \\
&= \mu^{\tilde h} \times \mu^{\tilde h'} \,.
\end{align*}
In conclusion, we have $\mu^{\widetilde{h \times h'}} = \mu^{\tilde h} \times \mu^{\tilde h'}$ and thus \eqref{eq:c_mult_cross} holds.

b)
Next we prove associativity.
To apply Definition~\ref{def:cross} we need to first derive an appropriate K\"unneth splitting of $Z_{k+k'+k''-1}(\varphi\times\id_{X'\times X''};\Z)$.
The cross product of cycles and the classical Alexander-Whitney and Eilenberg-Zilber maps are associative.
This implies that the induced Alexander-Whitney and Eilenberg-Zilber maps for mapping cone complexes are also associative.
More explicitly, we have the following commutative diagram:
\begin{equation*}
\xymatrix{
Z_*(\varphi;\Z) \otimes Z_*(X';\Z) \otimes Z_*(X'';\Z) \ar@<2pt>[rrr]^{\times \otimes \id} \ar@<2pt>[dd]^{\id\otimes \times} &&& Z_*(\varphi\times\id_{X'};\Z) \otimes Z_*(X'';\Z) \ar@{-->}@<2pt>[lll]^{S_\varphi\otimes\id} \ar@<2pt>[dd]^{\times} \\
&&& \\
Z_*(\varphi;\Z) \otimes Z_*(X'\times X'';\Z) \ar@<2pt>[rrr]^{\times} \ar@{-->}@<2pt>[uu]^{\id\otimes S} 
&&& Z_*(\varphi\times\id_{X'\times X''};\Z) \ar@{-->}@<2pt>[uu]^{S_{\varphi\times\id_{X'}\times\id_{X''}}} \ar@{-->}@<2pt>[lll]^{S_{\varphi\times\id_{X \times X'}}} \ar@{-->}[uulll]^{\bf S} \,.
}
\end{equation*}
The induced splitting ${\bf S}:Z_*(\varphi\times\id_{X'\times X''};\Z) \to Z_*(\varphi;\Z) \otimes Z_*(X';\Z) \otimes Z_*(X'';\Z)$ of the concatenation $\times \circ (\id \otimes \times) = \times \circ (\times \otimes \id)$ yields a direct sum decomposition
\begin{equation}
Z_*(\varphi\times\id_{X'\times X''};\Z)
=
Z_*(\varphi;\Z) \otimes Z_*(X';\Z) \otimes Z_*(X'';\Z) \oplus \ker ({\bf S}) \,.
\end{equation}
By the relative K\"unneth theorem, the cycles in $\ker (\bf S)$ represent torsion classes in $H_{k+k'+k''}(\varphi\times\id_{X'\times X''};\Z)$.

The cross products of relative and absolute differential forms and cohomology classes are associative.
Thus for a relative differential character $h \in \widehat H^k(\varphi;\Z)$ and absolute characters $h' \in \widehat H^{k'}(X';\Z)$ and $h'' \in \widehat H^{k''}(X'';\Z)$, the cross products $(h \times h') \times h''$ and $h \times (h' \times h'')$ have the same curvature, covariant derivative and characteristic class.
Hence they coincide on torsion cycles, and in particular on cycles in $\ker (\bf S)$.

Now we compare the two characters $(h \times h') \times h''$ and $h \times (h' \times h'')$ on cross products $(x,y) \times y' \times y''$, where $(x,y) \in Z_i(\varphi;\Z)$ and $y' \in Z_j(X';\Z)$ and $y'' \in Z_l(X'';\Z)$.
By \eqref{eq:def_cross_1} both characters vanish on cross products with $(i,j,l)$ different from $(k-1,k',k'')$, $(k,k'-1,k'')$ and $(k,k',k''-1)$.
Now we compute the remaining cases.
For $(i,j,l) = (k-1,k',k'')$, we have:
\begin{align*}
((h \times h') \times h'')((x,y)\times y' \times y'')
&=
(h \times h')((x,y)\times y')^{\langle c(h''),y''\rangle} \\
&=
h(x,y)^{\langle c(h'),y'\rangle \cdot \langle c(h''),y''\rangle} \\
&\stackrel{\eqref{eq:cx_commute}}{=}
h(x,y)^{\langle c(h'\times h''),y'\times y''\rangle} \\
&=
(h \times (h' \times h''))((x,y)\times y' \times y'')  \,.
\intertext{Similarly, for $(i,j,l) = (k-1,k',k'')$, we have:} 
((h \times h') \times h'')((x,y)\times y' \times y'')
&=
(h \times h')((x,y)\times y')^{\langle c(h''),y''\rangle} \\
&=
h'(y')^{(-1)^k \cdot \langle c(h),(x,y)\rangle \cdot \langle c(h''),y''\rangle} \\
&=
(h' \times h'')(y'\times y'')^{(-1)^k \cdot \langle c(h),(x,y)\rangle} \\
&=
(h \times (h'\times h''))((x,y) \times y' \times y'') \,.
\intertext{Finally, for $(i,j,l) = (k,k',k''-1)$, we have:}
((h \times h') \times h'')((x,y)\times y' \times y'')
&=
h''(y'')^{(-1)^{k+k'} \cdot \langle c(h \times h'),(x,y) \times y'\rangle} \\
&\stackrel{\eqref{eq:c_mult_cross}}{=}
h''(y'')^{(-1)^k \cdot \langle c(h),(x,y) \rangle \cdot (-1)^{k'} \cdot \langle c(h'),y'\rangle} \\
&=
(h \times (h'\times h''))((x,y) \times y' \times y'') \,.
\end{align*}
Thus $(h \times h') \times h'' = h \times (h' \times h'')$.
 
c)
Now we consider topological trivializations. 
Let $(\omega,\vartheta) \in \Omega^{k-1}(\varphi)$.
We compare the characters $\iota_\varphi(\omega,\vartheta) \times h'$ and $\iota_{\varphi\times\id_{X'}}((\omega,\vartheta)\times \curv(h'))$.
For curvature and covariant derivative, we have:
\begin{align*} 
(\curv,\cov)(\iota_\varphi(\omega,\vartheta) \times h') 
&=
(\curv,\cov)(\iota_\varphi(\omega,\vartheta)) \times \curv(h') \\ 
&=
d_\varphi (\omega,\vartheta) \times \curv(h') \\
&=
d_{\varphi\times\id_{X'}} \big((\omega,\vartheta) \times \curv(h')\big) \\
&=
(\curv,\cov)\big(\iota_{\varphi\times\id_{X'}}((\omega,\vartheta)\times \curv(h'))\big) \,.
\end{align*}
For the characteristic class, we have:
$$
c(\iota_\varphi(\omega,\vartheta) \times h')
=
\underbrace{c(\iota_\varphi(\omega,\vartheta))}_{=0} \times c(h') 
=
0
=
c\big(\iota_{\varphi\times\id_{X'}}((\omega,\vartheta)\times \curv(h'))\big) \,.
$$
By \eqref{eq:def_cross_2} the characters $\iota_\varphi(\omega,\vartheta) \times h'$ and $\iota_{\varphi\times\id_{X'}}((\omega,\vartheta)\times \curv(h'))$ thus  coincide on the factor $T_{k+k'-1}(\varphi\times\id_{X'};\Z)$ in $Z_{k+k'-1}(\varphi\times\id_{X'};\Z)$.

Let $(s,t)=(x,y) \times y'$ be a cross product of cycles $(x,y) \in Z_i(\varphi;\Z)$ and $y' \in Z_j(X';\Z)$.
By \eqref{eq:def_cross_2}, we have $(\iota_\varphi(\omega,\vartheta)\times h')((x,y) \times y')=1$ if $(i,j) \neq (k-1,k')$. 
The same holds for the character $\iota_{\varphi\times\id_{X'}}((\omega,\vartheta)\times \curv(h'))$, since the differential form $(\omega,\vartheta)\times \curv(h')$ vanishes upon integration over cross products of cycles of degrees different from $(k-1,k')$.
For $(i,j)=(k-1,k')$, we have:
\begin{align*}
(\iota_\varphi(\omega,\vartheta) \times h')((x,y)\times y')
&=
(\iota_\varphi(\omega,\vartheta)(x,y))^{\langle c(h'),y'\rangle} \\
&=
\exp \Big( 2\pi i \int_{(x,y)} (\omega,\vartheta) \int_{y'} \curv(h') \Big) \\
&=
\iota_{\varphi\times\id_{X'}}((\omega,\vartheta)\times \curv(h'))((x,y) \times y') \,.
\end{align*}
This proves \eqref{eq:iota_mult_cross}.

d)
Now we prove naturality.
Let $\psi:Y \to B$ and $f':Y'\to X'$ be smooth maps.
Let $(Y,B)\xrightarrow{(f,g)}(X,A)$ be a smooth map.
Let $h \in \widehat H^k(\varphi;\Z)$ and $h' \in \widehat H^{k'}(X';\Z)$.
The relative classical Alexander-Whitney and Eilenberg-Zilber maps are natural with respect to smooth maps.
Thus so are the K\"unneth splittings \eqref{eq:split_Z_kk} constructed in the appendix.
More explicity, the map induced by $(Y,B)\times Y'\xrightarrow{(f,g) \times f'}(X,A)\times X'$ maps the splitting
\begin{align*}
Z_{k+k'-1}(\psi\times\id_{Y'};\Z)
&=
\big( Z_*(\psi;\Z) \otimes Z_*(Y';\Z) \big)_{k+k'-1} \oplus T_{k+k'-1}(\psi\times\id_{Y'};\Z) 
\intertext{to the splitting}
Z_{k+k'-1}(\varphi\times\id_{X'};\Z)
&=
\big( Z_*(\varphi;\Z) \otimes Z_*(X';\Z) \big)_{k+k'-1} \oplus T_{k+k'-1}(\varphi\times\id_{X'};\Z) \,.
\end{align*}
Since curvature, covariant derivative and characteristic class are natural, by \eqref{eq:def_cross_2} we have for any cycle $(s,t) \in T_{k+k'-1}(\psi\times\id_{Y'};\Z)$:
\begin{align*}
\big(((f,g)\times f')^*(h \times h')\big)(s,t)
&=
(h \times h')(((f,g)\times f')_*(s,t)) \\
&=
((f,g)^*h \times {f'}^*h')(s,t) \,.
\intertext{Similarly, for cross product cycles $(s,t) = (x,y) \times y'$ with $(x,y) \in Z_i(\psi;\Z)$ and $y' \in Z_j(Y';\Z)$, we obtain from \eqref{eq:def_cross_2}:}
\big(((f,g)\times f')^*(h \times h')\big)((x,y) \times y')
&=
(h \times h')\big(((f,g)\times f')_*((x,y) \times y')\big) \\
&=
(h \times h')((f,g)_* (x,y)\times f'_* y')) \\
&=
((f,g)^*h \times {f'}^*h')(s,t) \,.
\end{align*}

e)
Finally we consider compatibility with the maps $\ti$ and $\vds$ that relate absolute and relative differential characters groups. 
For $h \in \widehat H^{k-1}(A;\Z)$ and $h' \in \widehat H^{k'}(X';\Z)$, we have:
\begin{align*}
(\curv,\cov)(\ti_\varphi(h) \times h')
&=
(0,-\curv(h) \times \curv(h')) \\
&=
(0,-\curv(h \times h')) \\
&=
(\curv,\cov)(\ti_{\varphi\times\id_{X'}}(h \times h')) \,.
\end{align*}
Moreover, the characteristic classes of both $\ti_\varphi(h) \times h'$ and $\ti_{\varphi\times\id_{X'}}(h \times h')$ equal the image of the class $c(h \times h')$ under the map $H^{k+k'-1}(A \times X';\Z) \to H^{k+k'}(\varphi \times \id_{X'};\Z)$.
Thus the characters $\ti_\varphi(h) \times h'$ and $\ti_{\varphi\times\id_{X'}}(h \times h')$ coincide on the second factor in \eqref{eq:split_Z_kk}.

Let $(s,t) = (x,y) \times y'$ be a cross product of cycles $(x,y) \in Z_i(\varphi;\Z)$ and $y' \in Z_j(X';\Z)$.
By \eqref{eq:xx} and \eqref{eq:def_cross_1} both $\ti_\varphi(h) \times h'$ and $\ti_{\varphi\times\id_{X'}}(h \times h')$ vanish on cross products with $(k-1,k') \neq (i,j) \neq (k,k'-1)$.
For $(i,j) = (k-1,k')$ or $(i,j) = (k,k'-1)$, we have:
$$
(\ti(h) \times h')((x,y) \times y')
=
(h \times h')(y \times y') 
=
\ti(h \times h')((x,y) \times y') \,.
$$
Thus \eqref{eq:ti_mult} holds.
The proof of \eqref{eq:vds_mult} is completely analogous.
\end{proof}

\subsubsection{The module structure.}
As is well-known, the cup product provides relative cohomology with the structure of a right module over the absolute cohomology ring.
In the same way, the mapping cone cohomology $H^*(\varphi;\Z)$ of a (smooth) map $\varphi:A \to X$ is a right module over the cohomology ring $H^*(X;\Z)$. 
Similarly, we have an internal product on the mapping cone de Rham complex $\Omega^*(\varphi)$ defined by
\begin{equation}\label{eq:ext_prod_forms}
(\omega,\vartheta) \wedge \omega' 
:=
(\omega \wedge \omega',\vartheta \wedge \varphi^*\omega') \,, 
\end{equation}
where $(\omega,\vartheta) \in \Omega^*(\varphi)$ and $\omega' \in \Omega^*(X)$.
Thus the abelian group $\Omega^*(\varphi)$ is a right module over the ring $\Omega^*(X)$ of differential forms on $X$.

From the external product between relative and absolute differential characters we derive an internal product by pull-back along a version of the diagonal map.
By the analogue of Theorem~\ref{thm:cross_prop}, the internal product provides the graded abelian group $\widehat H^*(\varphi;\Z)$ with a natural structure of a right module over the ring $\widehat H^*(X;\Z)$ such that the structure maps (curvature, covariant derivative and characteristic class) become ring homomorphisms. 

Denote by $\Delta_X:X \to X \times X$, $x\mapsto (x,x)$, the diagonal map, and similarly for $A$.
\index{diagonal map}
\index{+DeltaX@$\Delta_X$, diagonal map}
Let $\varphi:A \to X$ be a smooth map and $\varphi \times \id_{X}:A \times X \to X \times X$.
Similar to the diagonal map, let
\index{+DeltaXA@$\Delta_{(X,A)}$, relative diagonal map}
\begin{align*}
\Delta_{(X,A)} := (\Delta_X, (\id_A \times \varphi) \circ \Delta_A):(X,A) &\to (X,A) \times X = (X \times X,A \times X) \,, \\
(x,a) &\mapsto ((x,x),(a,\varphi(a))) \,.   
\end{align*}
Since the external product of relative and absolute differential characters is natural, we may use $\Delta_{(X,A)}$ to pull-back cross products $h \times h' \in \widehat H^{k+k'}(\varphi\times\id_{X'};\Z)$ to $(X,A)$.

\begin{definition}[Internal product]
Let $h \in \widehat H^k(\varphi;\Z)$ and $h' \in \widehat H^{k'}(X;\Z)$ be differential characters.
Their internal product the character $h*h' \in \widehat H^{k+k'}(\varphi;\Z)$, defined by
\begin{equation*}
h * h'
:=
\Delta_{(X,A)}^*(h \times h')  \,.
\end{equation*}  
\end{definition}
\index{internal product!of relative and absolute characters}
\index{product!internal}
\index{relative differential character!internal product}

The properties of the external product proved in Theorem~\ref{thm:cross_prop} directly translate into properties of the internal product.
This establishes the module structure on $\widehat H^*(\varphi;\Z)$ and turns the curvature, covariant derivative and characteristic into module homomorphisms:
\index{module structure!on relative differential cohomology}
\index{relative differential character!module structure}

\begin{thm}[Module structure: naturality and compatibilities]\label{cor:int_prop}
The internal product between relative and absolute differential characters
$$
*:\widehat H^k(\varphi;\Z) \times \widehat H^{k'}(X';\Z) \to \widehat H^{k+k'}(\varphi\times\id_{X'};\Z) \,, \quad (h,h') \mapsto h \times h' \,,
$$
is $\Z$-bilinear and associative with respect to absolute characters: for a relative character $h \in \widehat H^k(\varphi;\Z)$ and absolute characters $h' \in \widehat H^{k'}(X;\Z)$ and $h'' \in \widehat H^{k''}(X;\Z)$, we have 
\begin{equation*}
(h * h') * h''
=
h*(h'*h'') \,. \label{eq:*_ass}
\end{equation*}
The internal product is natural: for a smooth map $(Y,B)\xrightarrow{(f,g)}(X,A)$ we have:
\begin{equation*}
(f,g)^*(h * h') 
=
(f,g)^*h * f^*h' \,. \label{eq:*_nat}  
\end{equation*}
Curvature, covariant derivative, characteristic class and topological trivializations are multiplicative:
\begin{align*}
(\curv,\cov)(h * h') 
&= 
(\curv,\cov)(h) \wedge \curv(h') \,. \\
c(h * h')
&=
c(h) \cup c(h') \,.  \\
\iota_\varphi(\omega,\vartheta) * h'
&=
\iota_\varphi((\omega,\vartheta) \wedge \curv(h')) \,,
\end{align*}
where $(\omega,\vartheta) \in \Omega^{k-1}(\varphi)$.

\noindent
The group homomorphism $\ti_\varphi: \widehat H^{*-1}(A;\Z) \to \widehat H^*(\varphi;\Z)$ is multiplicative: for characters $h \in \widehat H^{k-1}(A;\Z)$ and $h' \in \widehat H^{k'}(X';\Z)$, we have:
\begin{align}
\ti_\varphi(h) * h'
&=
\ti_\varphi(h * \varphi^*h') \,. \label{eq:ti_mult_*}
\intertext{Likewise, the homomorphism $\vds: \widehat H^*(\varphi;\Z) \to \widehat H^*(X;\Z)$ is multiplicative: for characters $h \in \widehat H^k(\varphi;\Z)$ and $h' \in \widehat H^{k'}(X';\Z)$, we have:} 
\vds_\varphi(h * h')
&=
\vds_\varphi(h) * h' \,.  \label{eq:vds_mult_*}
\end{align} 
\end{thm}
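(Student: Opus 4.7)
The plan is to derive every claim by pulling back along the relative diagonal $\Delta_{(X,A)}=(\Delta_X,(\id_A\times\varphi)\circ\Delta_A)$ and invoking the corresponding property of the cross product from Theorem~\ref{thm:cross_prop}, using that $h*h'=\Delta_{(X,A)}^*(h\times h')$ by definition together with naturality \eqref{eq:cross_nat} of $\times$. $\Z$-bilinearity is then immediate from bilinearity of $\times$ and linearity of pullback.

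For naturality under a smooth map $(Y,B)\xrightarrow{(f,g)}(X,A)$, I would rewrite both sides as pullbacks of $h\times h'$ by the composite maps $\Delta_{(X,A)}\circ(f,g)$ and $((f,g)\times f)\circ\Delta_{(Y,B)}$ (the latter via \eqref{eq:cross_nat}), and then verify that these two maps of pairs agree: on $Y$ both send $y\mapsto(f(y),f(y))$, and on $B$ both send $b\mapsto(g(b),f(\psi(b)))=(g(b),\varphi(g(b)))$ using $\varphi\circ g=f\circ\psi$. Associativity follows the same template: expand both $(h*h')*h''$ and $h*(h'*h'')$ as pullbacks of the triple cross product $h\times h'\times h''$ along the two iterated relative diagonals $(\Delta_{(X,A)}\times\id_X)\circ\Delta_{(X,A)}$ and $(\id_{(X,A)}\times\Delta_X)\circ\Delta_{(X,A)}$, check that both send $x\in X$ to $(x,x,x)$ and $a\in A$ to $(a,\varphi(a),\varphi(a))$, and conclude using \eqref{eq:cross_ass}.

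Multiplicativity of curvature and covariant derivative reduces via \eqref{eq:curvcov_mult_cross} to the form-level identity $\Delta_{(X,A)}^*((\omega,\vartheta)\times\omega')=(\omega\wedge\omega',\vartheta\wedge\varphi^*\omega')=(\omega,\vartheta)\wedge\omega'$, which I would verify by noting that its $A$-component equals $((\id_A\times\varphi)\circ\Delta_A)^*(\vartheta\times\omega')$ and that $\pi_2\circ(\id_A\times\varphi)\circ\Delta_A=\varphi$. Multiplicativity of $c$ follows analogously from \eqref{eq:c_mult_cross} and the standard identity $\Delta_{(X,A)}^*(u\times v)=u\cup v$ for the relative cup product. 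Multiplicativity of topological trivializations follows from \eqref{eq:iota_mult_cross} combined with the naturality of $\iota_\bullet$ under pullback of pairs.

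For compatibility with $\ti$ and $\vds$, I combine the cross-product statements \eqref{eq:ti_mult}, \eqref{eq:vds_mult} with the naturality formulas \eqref{eq:ti_nat}, \eqref{eq:vds_nat}. The identity $\vds_\varphi(h*h')=\vds_\varphi(h)*h'$ follows because $\vds_\varphi$ commutes with pullback whose absolute component is $\Delta_X$, while for $\ti_\varphi(h)*h'=\ti_\varphi(h*\varphi^*h')$ one uses that the relative component of $\Delta_{(X,A)}$ is $(\id_A\times\varphi)\circ\Delta_A$, so naturality of the absolute cross product along $\id_A\times\varphi$ converts the pullback of $h\times h'$ to $h\times\varphi^*h'$, which then becomes $h*\varphi^*h'$ after applying $\Delta_A^*$. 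I expect no real obstacle: the entire proof is a sequence of diagram chases and the only calculation of substance is the form-level identity above, which rests on the trivial map equation $\pi_2\circ(\id_A\times\varphi)\circ\Delta_A=\varphi$.
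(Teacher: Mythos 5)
Your proposal is correct and follows essentially the same route as the paper's proof: every claim is reduced to the corresponding cross-product property from Theorem~\ref{thm:cross_prop} by pulling back along $\Delta_{(X,A)}$, using the two map identities $(\Delta_{(X,A)}\times\id_X)\circ\Delta_{(X,A)}=(\id_{(X,A)}\times\Delta_X)\circ\Delta_{(X,A)}$ and $\Delta_{(X,A)}\circ(f,g)=((f,g)\times f)\circ\Delta_{(Y,B)}$, the form-level identity $\Delta_{(X,A)}^*((\omega,\vartheta)\times\omega')=(\omega,\vartheta)\wedge\omega'$, and for $\ti_\varphi$ the observation that the relative component $(\id_A\times\varphi)\circ\Delta_A$ converts $h\times h'$ into $h\times\varphi^*h'$. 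No gaps.
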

\index{Theorem!Module structure}

\begin{proof}
To prove associativity, we need to keep track of the various pull-backs:
\begin{align*}
(h*h')*h''
&=
\Delta_{(X,A)}^*\Big( \big(\Delta_{(X,A)}^*h\times h'\big) \times h''\Big) \\
&\stackrel{\eqref{eq:cross_nat}}{=}
\Delta_{(X,A)}^* \Big( \big(\Delta_{(X,A)} \times \id_{X}\big)^*(h \times h') \times h'' \Big) \\
&=
\Big( (\Delta_{(X,A)} \times \id_{X}) \circ \Delta_{(X,A)}\Big)^*\big((h \times h') \times h'')\big) \\
&\stackrel{\eqref{eq:cross_ass}}{=}
\Big( (\id_{(X,A)} \times \Delta_X) \circ \Delta_{(X,A)}\Big)^*\big(h \times (h' \times h'')\big) \\
&\stackrel{\eqref{eq:cross_nat}}{=}
\Delta_{(X,A)}^*\big( h \times \Delta_X^*(h' \times h'')\big) \\
&=
h*(h'*h'') \,.
\end{align*}
In the third last equation we used equality of the maps
\begin{align*}
(\Delta_{(X,A)} \times \id_{X}) \circ \Delta_{(X,A)}
= 
(\id_{(X,A)} \times \Delta_X) \circ \Delta_{(X,A)}: (X,A) &\to (X,A) \times (X \times X) \,, \\ 
(x,a) \mapsto ((x,x,x),(a,\varphi(a),\varphi(a))) \,. 
\end{align*}
Naturality of the internal product follows from naturality of the cross product together with the equality of maps
$$
\Delta_{(X,A)} \circ (f,g) =((f,g)\times f) \circ \Delta_{(Y,B)}
: (Y,B) \to (X,A) \times X \,.
$$
Thus for characters $h \in \widehat H^k(\varphi;\Z)$ and $h' \in \widehat H^{k'}(X';\Z)$, we have:
\begin{align*}
(f,g)^*h* h'
&=
(f,g)^*\Delta_X^*(h \times h') \\
&=
(\Delta_X \circ (f,g))^*(h \times h') \\
&=
((f,g)\times f) \circ \Delta_Y)^*(h \times h') \\
&\stackrel{\eqref{eq:cross_nat}}{=}
\Delta_Y^*((f,g)^*h \times f^*h') \\
&=
(f,g)^*h * f^*h' \,.
\end{align*}
For curvature and covariant derivative, we have:
\begin{align*}
(\curv,\cov)(h * h')
&\stackrel{\eqref{eq:curvcov_mult_cross}}{=}
(\curv,\cov)(\Delta_{(X,A)}^* (h \times h')) \\
&=
\Delta_{X,A}^* ((\curv,\cov)(h) \times \curv(h')) \\
&=
(\Delta_X^* \curv(h) \times \curv(h'), \Delta_A^* (\id_A \times \varphi)^* \cov(h) \times \curv(h') ) \\
&\stackrel{\eqref{eq:cross_nat}}{=}
(\curv(h) \wedge \curv(h'), \cov(h) \wedge \varphi^*\curv(h') ) \\
&\stackrel{\eqref{eq:ext_prod_forms}}{=}
(\curv,\cov)(h) \wedge \curv(h') \,.
\end{align*}
Likewise, for topological trivializations we have:
\begin{align*}
\iota_\varphi(\omega,\vartheta) * h'
&=
\Delta_{(X,A)}^*(\iota_\varphi(\omega,\vartheta) \times h') \\
&\stackrel{\eqref{eq:iota_mult_cross}}{=}
\Delta_{(X,A)}^*\iota_\varphi((\omega,\vartheta) \times \curv(h')) \\
&=
\iota_\varphi((\omega,\vartheta) \wedge \curv(h')) \,.
\end{align*}
Multiplicativity of the characteristic class follows from \eqref{eq:c_mult_cross} and the fact that the cup product is the pull-back along $\Delta_{(X,A)}$ of the cross product.

It remains to prove multiplicativity of the homomorphisms $\ti$ and $\vds$.
For characters $h \in \widehat H^{k-1}(A;\Z)$ and $h' \in \widehat H^{k'}(X';\Z)$, and a cycle $(s,t) \in Z_{k+k'-1}(\varphi;\Z)$ we have:
\begin{align*}
(\ti_\varphi(h) * h')(s,t)
&=
\big(\Delta_{(X,A)}^*(\ti_\varphi(h) \times h')\big)(s,t) \\
&\stackrel{\eqref{eq:ti_mult}}{=}
\big(\Delta_{(X,A)}^*\ti_\varphi(h \times h')\big)(s,t) \\
&=
(h \times h')\big((\id_A \times \varphi) \circ \Delta_A\big)_*(t) \\
&\stackrel{\eqref{eq:cross_nat}}{=}
\Delta_A^*(h \times \varphi^*h')(t) \\
&=
\ti_\varphi(h*\varphi^*h')(s,t) \,.
\intertext{Likewise, for characters $h \in \widehat H^k(\varphi;\Z)$ and $h' \in \widehat H^{k'}(X';\Z)$ and a cycle $z \in Z_{k+k'-1}(X;\Z)$ we have:}
(\vds_\varphi(h) * h')(z)
&=
\Delta_X^*(\vds_\varphi(h) \times h')(z) \\
&\stackrel{\eqref{eq:vds_mult}}{=}
(h \times h')({\Delta_X}_*z,0) \\
&=
\big(\Delta_{(X,A)}^*(h \times h')\big)(z,0) \\
&=
\vds_\varphi(h * h')(z) \,. \qedhere
\end{align*}
\end{proof}

\subsubsection{Uniqueness of the cross product and module structure.}
In \cite[Ch.~6]{BB13} we have shown uniqueness of the external and internal product between absolute differential characters. 
This in particular implies uniqueness of the ring structure on differential cohomology.
The proof starts from an axiomatic definition of the cross product. 
The axioms essentially coincide with the properties in Theorem~\ref{thm:cross_prop} (for the absolute case).

The methods of proof used in \cite[Ch.~6]{BB13} directly apply to the external product between relative and absolute differential characters defined in the present work.
Thus we could have defined the external product axiomatically by the properties in Theorem~\ref{thm:cross_prop}.
Then we could have derived the formulae \eqref{eq:def_cross_1} and \eqref{eq:def_cross_2} from this axiomatic decription.
This would have proved uniqueness of the external and internal product and hence uniqueness of the right $\widehat H^*(X;\Z)$-module structure on the relative differential cohomology $\widehat H^*(\varphi;\Z)$ of a smooth map $\varphi:A \to X$. 

Thus we note without explicit proof here:
\begin{cor}
The relative differential cohomology $\widehat H^*(\varphi;\Z)$ of a smooth map $\varphi:A \to X$ carries the structure of a right module over the ring $\widehat H^*(X;\Z)$. 
The module structure is uniquely determined by the properties in Theorem~\ref{cor:int_prop}.
\end{cor}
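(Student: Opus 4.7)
The plan is to reduce the corollary to a uniqueness statement for the cross product and then to adapt the argument of \cite[Ch.~6]{BB13}. Existence of the module structure and of all the stated properties is already contained in Theorem~\ref{cor:int_prop}. For uniqueness, observe that the definition $h*h' := \Delta_{(X,A)}^*(h\times h')$ together with naturality shows that any right $\widehat H^*(X;\Z)$-module structure satisfying the properties of Theorem~\ref{cor:int_prop} is the pull-back along $\Delta_{(X,A)}$ of an external product
$$
\times:\widehat H^k(\varphi;\Z)\otimes\widehat H^{k'}(X';\Z)\to\widehat H^{k+k'}(\varphi\times\id_{X'};\Z)
$$
which is $\Z$-bilinear, natural, associative with respect to absolute characters, multiplicative on curvature/covariant derivative, characteristic class and topological trivializations, and compatible with $\ti$ and $\vds$. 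It therefore suffices to prove that any such external product must be given by formulas \eqref{eq:def_cross_1} and \eqref{eq:def_cross_2}.

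First I would use the Künneth splitting \eqref{eq:split_Z_kk} constructed in the appendix to separate the evaluation of $h\times h'$ on a cycle $(s,t)\in Z_{k+k'-1}(\varphi\times\id_{X'};\Z)$ into the value on an $N$-torsion cycle in the complement $T_{k+k'-1}(\varphi\times\id_{X'};\Z)$ and on a cross product $(x,y)\times y'$ of cycles. On a torsion cycle Remark~\ref{rem:ev_torsion} shows that the value is completely determined by $(\curv,\cov)(h\times h')$ and $c(h\times h')$; inserting the axiomatic multiplicativity identities \eqref{eq:curvcov_mult_cross} and \eqref{eq:c_mult_cross} one recovers \eqref{eq:def_cross_2} verbatim.

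Second, on cross-product cycles $(x,y)\times y'$ with $(x,y)\in Z_i(\varphi;\Z)$ and $y'\in Z_j(X';\Z)$, Theorem~\ref{prop:rel_homology} and the corresponding absolute representation theorem allow us to realise the homology classes by a geometric relative cycle $(S,T)\xrightarrow{(f,g)}(X,A)$ and a geometric absolute cycle $M\xrightarrow{f'}X'$. Thin invariance (Remark~\ref{rem:thin_inv}) together with naturality \eqref{eq:cross_nat} then reduces the problem to the relative analogue of Lemma~\ref{lem:x_M_M'}, namely to evaluating $(f,g)^*h\times{f'}^*h'$ on the refined fundamental class of $(S,T)\times M$. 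For the dimensional ranges $(i,j)\notin\{(k-1,k'),(k,k'-1)\}$ one of the groups $\widehat H^{k}(S,T;\Z)$ or $\widehat H^{k'}(M;\Z)$ vanishes for dimensional reasons, forcing the value to be $1$. For the two remaining dimensional cases one uses bilinearity together with \eqref{eq:iota_mult_cross} to subtract an appropriate $\iota_\varphi$-class and reduce to flat characters on one factor; the exact sequences \eqref{eq:rel_sequences} and formula \eqref{eq:ev_torsion} then force the value to be as in \eqref{eq:def_cross_1}.

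The main obstacle will be the reduction step in the case $(i,j)=(k,k'-1)$: modulo a topological trivialization in the image of $\iota_\varphi$ one has to arrange $(f,g)^*h$ to be parallel, which in the mapping cone setting requires simultaneously controlling both short exact sequences in \eqref{eq:rel_sequences} as well as the compatibility \eqref{eq:ti_mult} with $\ti$ in order to absorb the covariant derivative without disturbing the curvature and characteristic class. This is the genuinely new point compared to the absolute case of \cite[Ch.~6]{BB13}; once it is established, the remaining computations are formal manipulations of \eqref{eq:ev_torsion} and of the K\"unneth decomposition, and the corollary follows.
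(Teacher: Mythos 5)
Your proposal follows exactly the route the paper intends: the corollary is stated there \emph{without explicit proof}, the text remarking only that existence is Theorem~\ref{cor:int_prop} and that uniqueness follows because the axiomatic uniqueness argument of \cite[Ch.~6]{BB13} -- K\"unneth splitting of cycles, evaluation on torsion cycles via Remark~\ref{rem:ev_torsion} and the multiplicativity of $(\curv,\cov)$ and $c$, and evaluation on fundamental classes of products of stratifolds as in Lemma~\ref{lem:x_M_M'} -- carries over verbatim to the relative cross product and hence, by pull-back along $\Delta_{(X,A)}$, to the module structure. Your sketch is a faithful expansion of that argument, correctly isolating the two dimensional cases $(k-1,k')$ and $(k,k'-1)$ as the only place where work beyond the formal K\"unneth and torsion-cycle manipulations is required.
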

\index{uniqueness!of cross product and module structure}

\subsubsection{The module structure on parallel characters.}
In Theorem~\ref{thm:rel_rel} we have shown that the graded abelian group $\widehat H^*(X,A;\Z)$ defined by characters on the groups of relative cycles coincides with the subgroup of parallel characters in $\widehat H^*(i_A;\Z)$, where $i_A:A \to X$ is the embedding of a smooth submanifold.
By Theorems~\ref{thm:cross_prop} and \ref{cor:int_prop}, the external and internal products of relative and absolute differential characters are multiplicative with respect to the covariant derivative.
Thus products of flat characters with absolute characters are again flat characters.
In other words, we have:
\index{module structure!on relative differential cohomology}
\index{relative differential character!module structure}

\begin{cor}
Let $i_A:A \to X$ be the inclusion of a smooth submanifold.
Then there exist unique natural internal and external products
\begin{align*}
\times: \widehat H^k(X,A;\Z) \times \widehat H^{k'}(X';\Z) &\to \widehat H^{k+k'}(X \times X',A \times X';\Z) \,, \quad (h,h') \mapsto h \times h' \,, \\
*: \widehat H^k(X,A;\Z) \times \widehat H^{k'}(X;\Z) &\to \widehat H^{k+k'}(X,A;\Z) \,, \quad (h,h') \mapsto h * h' \,, 
\end{align*}
such that curvature, characteristic class and topological trivializations are multiplicative.
Moreover, the products are associative with respect to absolute characters.

In particular, the graded abelian group $\widehat H^*(X,A;\Z)$ carries a unique structure of a right module over the ring $\widehat H^*(X;\Z)$. 
\end{cor}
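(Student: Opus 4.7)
The plan is to derive the corollary as a direct consequence of Theorem~\ref{thm:rel_rel}, which identifies $\widehat H^*(X,A;\Z)$ with the subgroup of parallel characters in $\widehat H^*(i_A;\Z)$, together with the mapping cone products constructed in Theorems~\ref{thm:cross_prop} and \ref{cor:int_prop}. The essential point is that, under the correspondence $h \mapsto h \circ q$ of Theorem~\ref{thm:rel_rel}, the subgroup of parallel characters is preserved by the external and internal products with absolute characters, so that the mapping cone products restrict to products on $\widehat H^*(X,A;\Z)$.

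For the external product, I would take $h \in \widehat H^k(X,A;\Z)$, regarded as a parallel character in $\widehat H^k(i_A;\Z)$, and $h' \in \widehat H^{k'}(X';\Z)$, and form $h \times h' \in \widehat H^{k+k'}(i_A \times \id_{X'};\Z)$ as in Theorem~\ref{thm:cross_prop}. The multiplicativity formula \eqref{eq:curvcov_mult_cross} gives $\cov(h \times h') = \cov(h) \times \curv(h') = 0$, so $h \times h'$ is again parallel. Observing that $i_A \times \id_{X'}$ is the inclusion $i_{A \times X'}\colon A \times X' \to X \times X'$ of the product submanifold, I would invoke Theorem~\ref{thm:rel_rel} in the opposite direction to identify $h \times h'$ with a uniquely determined element of $\widehat H^{k+k'}(X \times X',A \times X';\Z)$, which I declare to be the external product. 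The internal product is then obtained by $h * h' := \Delta_{(X,A)}^*(h \times h')$, mirroring the construction of Section~\ref{subsec:module}; the same covariant-derivative computation together with naturality shows that $h * h'$ remains parallel and hence lies in $\widehat H^{k+k'}(X,A;\Z)$.

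The assertions on naturality, multiplicativity of curvature, characteristic class and topological trivializations, and associativity with respect to absolute characters then follow by restricting the corresponding identities of Theorems~\ref{thm:cross_prop} and \ref{cor:int_prop} along the inclusion $\widehat H^*(X,A;\Z) \hookrightarrow \widehat H^*(i_A;\Z)$; none of these verifications require any new computation. For uniqueness, any two natural products on $\widehat H^*(X,A;\Z)$ satisfying the stated properties would, via the inclusion into parallel characters, determine products on $\widehat H^*(i_A;\Z)$ satisfying all the axioms of Theorem~\ref{cor:int_prop}, and would therefore have to coincide by the uniqueness statement recorded just before the corollary.

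The main obstacle, and it is mild, lies in the bookkeeping needed to reconcile the two notions of relative differential cohomology across products: one must check that the mapping cone cross product $h \times h'$, originally living in $\widehat H^{k+k'}(i_A \times \id_{X'};\Z)$, really matches under Theorem~\ref{thm:rel_rel} the expected character on $Z_{*-1}(X \times X',A \times X';\Z)$, and similarly that the diagonal $\Delta_{(X,A)}$ behaves compatibly with the identifications on source and target. This amounts to verifying that the projection $q\colon Z_*(i_{A \times X'};\Z) \twoheadrightarrow Z_*(X \times X',A \times X';\Z)$ intertwines the defining formulas \eqref{eq:def_cross_1}--\eqref{eq:def_cross_2} with the analogous formulas on relative cycles, a step that is essentially formal once the identification of Theorem~\ref{thm:rel_rel} is in hand.
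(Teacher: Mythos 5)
Your proposal is correct and follows essentially the same route as the paper: identify $\widehat H^*(X,A;\Z)$ with the parallel characters in $\widehat H^*(i_A;\Z)$ via Theorem~\ref{thm:rel_rel}, observe that multiplicativity of the covariant derivative ($\cov(h\times h')=\cov(h)\times\curv(h')$ and its internal analogue) forces products of parallel characters with absolute characters to remain parallel, and then inherit all compatibilities and uniqueness from Theorems~\ref{thm:cross_prop} and \ref{cor:int_prop}. The extra bookkeeping you flag (identifying $i_A\times\id_{X'}$ with $i_{A\times X'}$ and checking that $q$ intertwines the product formulas) is a reasonable point of care that the paper leaves implicit, but it does not change the argument.
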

 
\begin{remark}
By the identification of the relative Hopkins-Singer group $\check{H}^*(\varphi;\Z)$ as a subgroup of the group $\widehat H^*(\varphi;\Z)$ of relative differential characters, we have induced external and internal products
\begin{align*}
\times: \check H^k(\varphi;\Z) \times \widehat H^{k'}(X';\Z) &\to \check H^{k+k'}(\varphi \times \id_{X'};\Z) \,, \quad (h,h') \mapsto h \times h' \,, \\
*: \check H^k(\varphi;\Z) \times \widehat H^{k'}(X;\Z) &\to \check H^{k+k'}(\varphi;\Z) \,, \quad (h,h') \mapsto h * h' \,, 
\end{align*}
This is well-defined, since for $h \in \check H^k(\varphi;\Z) \subset \widehat H^k(\varphi;\Z)$ and $h' \in \widehat H^{k'}(X';\Z)$ we have $\cov(h \times h') = \cov(h) \times \varphi^*\curv(h') \in \Omega^{k+k'-1}_0(A \times X')$.
Hence $h \times h'$ lies in the subgroup $\check{H}^{k+k'}(\varphi;\Z) \subset \widehat H^{k+k'}(\varphi;\Z)$ of characters with covariant derivative a closed form with integral periods.  

This yields a right $\widehat H^*(X;\Z)$-module structure also on $\check{H}^*(\varphi;\Z)$. 
\index{module structure!on relative Hopkins-Singer group}
\index{relative Hopkins-Singer group!module structure}
\end{remark}

\section{Fiber integration and transgression}\label{sec:fiber_int_trans}
Let $\pi:E \to X$ be a fiber bundle with closed oriented fibers.
There are natural fiber integration maps $\fint_F: \Omega^k(E) \to \Omega^{k-\dim F}(X)$ for differential forms and $\pi_!:H^k(E;\Z) \to H^{k-\dim F}(X;\Z)$ for integral cohomology.
Thus it is natural to expect that there exists a fiber integration map $\widehat H^*(E;\Z) \to \widehat H^{k-\dim F}(X;\Z)$ that induces the well-known maps on the curvature and charactristic class.

Such fiber integration maps have been constructed for several models of differential cohomology, see \cite{HS05}, \cite{BKS10} for differential cocycles, \cite{DL05}, \cite{L06} for simplicial forms, \cite{HLZ03} for de Rham-Feder currents and \cite{BB13} for the original model of differential characters.
In \cite[Ch.~7]{BB13} we proved that fiber integration is uniquely determined by the requirements to be compatible with pull-back diagrams and with fiber integration for differential forms (i.e.~with curvature and topological trivializations).
The proof is constructive in that it yields an explicit formula for the fiber integration map.
In particular, the various constructions in the different models for differential cohomology yield the same fiber integration map.

In this section we use the method from \cite{BB13} to construct fiber integration and transgression maps for relative differential characters.
In particular, we make use of the pull-back operation for geometric relative cycles and the transfer maps constructed in Sections~\ref{sec:PB} and \ref{sec:transfer}.
We show that fiber integration for relative characters is compatible via the homomorphisms $\ti$ and $\vds$ with fiber integration for absolute characters.
As a corollary, we obtain fiber integration and transgression maps for parallel characters.
Moreover, fiber integration and transgression commute with the long exact sequences \eqref{eq:long_ex_sequ} and \eqref{eq:long_ex_sequ_par}. 

\subsection{Fiber integration}\label{subsec_fiber_int}
Let $\pi:E \to X$ be a fiber bundle and $\varphi:A \to X$ a smooth map.
We have the pull-back diagram
\begin{equation*}
\xymatrix{
\varphi^*E \ar^\Phi[r] \ar_\pi[d] & E \ar^\pi[d] \\
A \ar_\varphi[r] & X \,.
}
\end{equation*}
In the following we construct fiber integration for relative differential characters.
We discuss its compatibility with curvature, covariant derivative, topological trivializations and characteristic class and with fiber integration for absolute differential characters.

\subsubsection{Construction of the fiber integration map.}
For convenience of the reader, we recall the formula for fiber integration of (absolute) differential characters obtained in \cite[Ch.~7]{BB13}.
For a differential character $h \in \widehat H^k(E;\Z)$ and a smooth singular cycle $z \in Z_{k-\dim F-1}(X;\Z)$, we have:
\index{differential character!fiber integration}
\begin{equation}\label{eq:FI_absolute}
\widehat\pi_! h(z)
=
h(\lambda(z)) \cdot \exp \Big( 2\pi i \int_{a(z)} \fint_F \curv(h) \Big) \,. 
\end{equation}
We now adapt this formula to relative differential characters.  

Let $h \in \widehat H^k(\Phi;\Z)$.
To evaluate the integrated character $\widehat \pi_!h \in \widehat H^{k-\dim F}(\varphi;\Z)$ on a cycle $(s,t) \in Z_{k-\dim F-1}(\varphi;\Z)$, we use the homomorphism $(a,b)_\varphi$ from Section~\ref{sec:PB} and the transfer map $\lambda_\varphi$ defined in Section~\ref{sec:transfer}.

\begin{definition}
Let $\varphi:A \to X$ be a smooth map.
Let $F \hookrightarrow E \stackrel{\pi}{\twoheadrightarrow} X$ be a fiber bundle with closed oriented fibers.
Let $k \geq \dim F +2$.
Fiber integration for relative differential characters is the group homomorphism $\widehat\pi_!: \widehat H^k(\Phi;\Z) \to \widehat H^{k-\dim F}(\varphi;\Z)$ defined by 
\begin{equation}\label{eq:def_FI_rel}
(\widehat\pi_!h)(s,t)
:=
h(\lambda_\varphi(s,t)) \cdot \exp \Big( 2\pi i \, \int_{(a,b)_\varphi(s,t)} \fint_F (\curv,\cov)(h) \; \Big) \,.
\end{equation} 
Here $(s,t) \in Z_{k-\dim F-1}(\varphi;\Z)$.
\end{definition}
\index{fiber integration!of relative characters}
\index{relative differential character!fiber integration}
\index{+Pihat@$\widehat\pi_\ausruf$, fiber integration}
\index{integration!fiber $\sim$ for differential characters}

Clearly, the mapping $h \mapsto \widehat\pi_!h$ is a additive in $h$, thus $\widehat\pi_!$ is a group homomorphism.
Moreover, the map $(s,t) \mapsto (\widehat\pi_!h)(s,t)$, defined by the right hand side of \eqref{eq:def_FI_rel}, is a group homomorphism $Z_{k-1-\dim F}(\varphi;\Z) \to \Ul$, since the maps $\lambda_\varphi$ and $(a,b)_\varphi$ are group homomorphisms.
In order to show that this homomorphism is indeed a differential character in $\widehat H^{k-\dim F}(\varphi;\Z)$, we need to evaluate it on a boundary $\partial_\varphi (v,w)$, where $(v,w) \in C_{k-\dim F}(\varphi;\Z)$.
This will be done in the proof of Theorem~\ref{thm:FI_props} below.

\subsubsection{Well-definedness.}
Before discussing its properties, we show that fiber integration is well-defined, i.e.~its definition is independent of the choice of geometric representative $(\zeta,\tau)_\varphi(s,t)$ and chain $(a,b)_\varphi(s,t)$:

\begin{lemma}
Let $\varphi:A \to X$ be a smooth map and $\pi:E \to X$ a fiber bundle with closed oriented fibers.
Let $k \geq 2$.
Let $h \in \widehat H^k(\Phi;\Z)$ and $(s,t) \in Z_{k-1-\dim F}(\varphi;\Z)$.
Let $(\zeta',\tau') \in \ZZ_{k-1-\dim F}(X)$ and $(a',b') \in C_{k-\dim F}(\varphi;\Z)$ be any geometric cycle and singular chain such that $[\zeta',\tau']_{\partial_\varphi S_{k-\dim F}} = [(s,t) - \partial_\varphi (a',b')]_{\partial_\varphi S_{k-\dim F}}$.
Then we have:
\begin{equation}
(\widehat\pi_!h)(s,t)
=
h([\PB_{E,\varphi^*E}(\zeta,\tau)]_{\partial_\varphi S_k})  \cdot \exp \Big( 2\pi i \int_{(a',b')} \fint_F (\curv,\cov)(h) \Big) \,. 
\end{equation}
\end{lemma}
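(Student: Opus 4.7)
The plan is to show that any two choices $((\zeta,\tau),(a,b))$ and $((\zeta',\tau'),(a',b'))$ representing $(s,t)$ in the sense required produce the same value. By the defining property of $\lambda_\varphi$ together with the thin invariance of differential characters (Remark~\ref{rem:thin_inv}), the quantity $h(\lambda_\varphi(s,t))$ equals $h([\PB_{E,\varphi^*E}((\zeta,\tau)_\varphi(s,t))]_{\partial_\Phi S})$, so it suffices to verify that the right-hand side of the definition is insensitive to the choice of geometric relative cycle and of singular chain satisfying $[\zeta,\tau]_{\partial_\varphi S}=[(s,t)-\partial_\varphi(a,b)]_{\partial_\varphi S}$.

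Since $(\zeta,\tau)$ and $(\zeta',\tau')$ both represent the mapping cone homology class $[s,t]\in H_{k-\dim F-1}(\varphi;\Z)$, Theorem~\ref{prop:rel_homology} furnishes a bordism $(W,M)\xrightarrow{(F,G)}(X,A)$ between them. Pulling back along $\pi$ yields a bordism $(F^*E,G^*(\varphi^*E))\xrightarrow{({\bf F},{\bf G})}(E,\varphi^*E)$ from $\PB_{E,\varphi^*E}(\zeta,\tau)$ to $\PB_{E,\varphi^*E}(\zeta',\tau')$. Formula~\eqref{eq:bord_fund_cycle} presents the difference of the refined fundamental classes of these two pulled-back cycles as $\partial_\Phi\bigl(({\bf F},{\bf G})_*[F^*E,\overline{G^*(\varphi^*E)}]_{S}\bigr)$. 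Applying $h$ and invoking the defining property~\eqref{eq:def_rel_diff_charact_2} together with the compatibility~\eqref{eq:PB_int} of $\PB_\bullet$ with fiber integration, the ratio $h([\PB_{E,\varphi^*E}(\zeta,\tau)])/h([\PB_{E,\varphi^*E}(\zeta',\tau')])$ equals $\exp\bigl(2\pi i\int_{(F,G)_*[W,\overline{M}]_{S}}\fint_F(\curv,\cov)(h)\bigr)$.

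It remains to identify this exponent, modulo $\Z$, with $\int_{(a',b')-(a,b)}\fint_F(\curv,\cov)(h)$. Choose triangulations of $W$ and $M$ to obtain a chain $(u,v)$ whose $\partial_\varphi$-boundary realises the difference of the fundamental cycles of $(\zeta,\tau)$ and $(\zeta',\tau')$; combined with the two hypothesised equivalences, this forces $(F,G)_*(u,-v)-\bigl((a',b')-(a,b)\bigr)$ to coincide, up to a thin chain, with a cycle in $Z_{k-\dim F}(\varphi;\Z)$. Now $\fint_F(\curv,\cov)(h)$ is $d_\varphi$-closed by~\eqref{eq:fiber_d_commute}, and its periods are integral: for any relative cycle $(x,y)\in Z_{k-\dim F}(\varphi;\Z)$, formula~\eqref{eq:PB_int} combined with the fact that $\PB_{E,\varphi^*E}$ sends cycles to cycles and $(\curv,\cov)(h)\in\Omega^k_0(\Phi)$ yields $\int_{(x,y)}\fint_F(\curv,\cov)(h)\in\Z$. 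Since integrals of differential forms vanish on boundaries of thin chains, the two exponentials agree, and rearranging gives the asserted identity. The main technical obstacle is the careful bookkeeping of the thin-chain ambiguities and the passage between stratifold bordism data and their singular-chain representatives; the compatibilities between pull-back operation, refined fundamental classes, and fiber integration established in Sections~\ref{sec:PB}--\ref{sec:transfer} reduce this to essentially a mechanical calculation.
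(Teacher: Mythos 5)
Your proposal is correct and follows essentially the same route as the paper's proof: a bordism between the two geometric representatives, pulled back via $\PB_{E,\varphi^*E}$, combined with \eqref{eq:ffggwm}, \eqref{eq:bord_fund_cycle}, \eqref{eq:PB_int} and the integrality of the periods of $\fint_F(\curv,\cov)(h)$ to absorb the ambiguity by a cycle in $Z_{k-\dim F}(\varphi;\Z)$. The only cosmetic difference is that you spell out the reduction of $h(\lambda_\varphi(s,t))$ to the evaluation on the pulled-back refined fundamental class, which the paper leaves implicit.
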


\begin{proof}
Since the geometric cycles $(\zeta,\tau)$ and $(\zeta',\tau')$ represent the same homology class in $H_{k-1-\dim F}(\varphi;\Z)$, they are bordant.
A bordism $(W,M)\xrightarrow{(F,G)}(X,A)$ from $(\zeta,\tau)$ to $(\zeta',\tau')$ yields a bordism $(F^*E,G^*(\varphi^*E))\xrightarrow{({\bf F},{\bf G})}(E,\varphi^*E)$ from $\PB_{E,\varphi^*E}(\zeta,\tau)$ to $\PB_{E,\varphi^*E}(\zeta',\tau')$.
By equation \eqref{eq:ffggwm} and the assumption, we have 
\begin{align*}
\partial_\varphi \Big( (F,G)_*[W,M]_{S_{k-\dim F}} \Big)
&\stackrel{\eqref{eq:ffggwm}}{=}
[\zeta',\tau']_{\partial_\varphi S_{k-\dim F}} - [\zeta,\tau]_{\partial_\varphi S_{k-\dim F}} \\
&=
[\partial_\varphi(a,b) - \partial_\varphi(a',b')]_{\partial_\varphi S_{k-\dim F}} \,.
\end{align*}
In particular, we find a cycle $(x,y) \in Z_{k-\dim F}(\varphi;\Z)$ such that 
\begin{equation}\label{eq:wmaabb}
(F,G)_*[W,M]_{S_{k-\dim F}} 
= 
[(a,b)-(a',b')-(x,y)]_{S_{k-\dim F}} \,.
\end{equation}
Using \eqref{eq:bord_fund_cycle}, we obtain:
\begin{align*}
h([\PB_{E,\varphi^*E}&(\zeta',\tau')]_{\partial_\varphi S_k}) \cdot \big(h([\PB_{E,\varphi^*E}(\zeta,\tau)]_{\partial_\varphi S_k})\big)^{-1} \\
&\stackrel{\eqref{eq:bord_fund_cycle}}{=} 
h(\partial_\varphi ({\bf F},{\bf G})_*[F^*E,G^*(\varphi^*E)]_{S_k}) \\
&= 
\exp \Big( 2\pi i \int_{[F^*E,G^*(\varphi^*E)]_{S_k}} ({\bf F},{\bf G})^*(\curv,\cov)(h) \Big) \\
&= 
\exp \Big( 2\pi i \int_{[W,M]_{S_{k-\dim F}}} (F,G)^* \fint_F (\curv,\cov)(h) \Big) \\
&\stackrel{\eqref{eq:wmaabb}}{=}
\exp \Big[ 2\pi i \Big(\int_{(a,b)-(a',b')} \fint_F (\curv,\cov)(h) + \underbrace{\int_{(x,y)} \fint_F (\curv,\cov)(h)}_{\in \Z} \Big) \Big] \\
&=
\exp \Big( 2\pi i \int_{(a,b)-(a',b')} \fint_F (\curv,\cov)(h) \Big) \,. \qedhere
\end{align*}
\end{proof}

\begin{remark}
As a consequence of the preceding lemma, we note that if the cycle $(s,t)$ is a fundamental cycle of a relative geometric cycle $(\zeta',\tau')$, we do not need the chain $(a',b')$.
In this case, the formula \eqref{eq:def_FI_rel} for fiber integration thus simplifies to
\begin{equation}\label{eq:pi_lambda}
(\widehat\pi_!h)(s,t)
=
h(\lambda_\varphi(s,t)) \,.
\end{equation}
\end{remark}

\subsubsection{Naturality and compatibilities.}
In order to discuss naturality of fiber integration, we consider pull-back diagrams for smooth maps in the base:
Let $\psi:B \to Y$ be a smooth map.
Let $(Y,B)\xrightarrow{(f,g)}(X,A)$ be a smooth map, such that the diagram
\begin{equation*}
\xymatrix{
B \ar^\psi[rrr] \ar^g[dr] &&& Y \ar^f[dr] \\
& A \ar^\varphi[rrr] &&& X \\ 
}
\end{equation*}
commutes.
A fiber bundle $\pi:E \to X$ yields a commutative diagram of pull-back bundles:
\begin{equation}
\xymatrix{
g^*(\varphi^*E) \ar^(0.545)\Psi[rrr] \ar[dd] \ar^G[dr] &&& f^*E \ar'[d][dd] \ar^F[dr] \\
& \varphi^*E \ar^(0.4)\Phi[rrr] \ar[dd] &&& E \ar[dd] \\ 
B \ar'[r][rrr]^(0.2){\psi} \ar^g[dr] &&& Y \ar^f[dr] \\
& A \ar_(0.4)\varphi[rrr] &&& X \\ 
} \label{eq:diag_pull-back_bundles}
\end{equation}
Here $\Psi:g^*(\varphi^*E) \to f^*E$ is the bundle map induced by $\psi:B \to Y$.

Now we prove the main theorem of this section:

\begin{thm}[Fiber integration: naturality and compatibilities]\label{thm:FI_props}
Let $\varphi:A \to X$ be a smooth map.
Let $F \hookrightarrow E \stackrel{\pi}{\twoheadrightarrow} X$ be a fiber bundle with closed oriented fibers.
Let $k \geq \dim F+2$.
Then the fiber integration map $\widehat\pi_!:\widehat H^k(\Phi;\Z) \to \widehat H^{k-\dim F}(\varphi;\Z)$ is natural with respect to pull-back diagrams \eqref{eq:diag_pull-back_bundles}, i.e.~for any smooth map $(Y,B)\xrightarrow{(f,g)}(X,A)$ and differential character $h \in \widehat H^k(\Phi;\Z)$, we have: 
\begin{equation}
\widehat\pi_!((F,G)^*h) 
= (f,g)^*\widehat\pi_!(h) \,. \label{eq:fiber_int_natural}
\end{equation}
In other words, the following diagram is commutative for all $k \geq \dim F + 2$:
\begin{equation}\label{eq:fiber_int_natural_diagram}
\xymatrix{
\widehat H^k(\Phi;\Z) \ar^{(F,G)^*}[rr] \ar_{\widehat\pi_!}[d] && \widehat H^k(\Psi;\Z) \ar^{\widehat\pi_!}[d] \\
\widehat H^{k-\dim F}(\varphi;\Z) \ar_{(f,g)^*}[rr] && \widehat H^{k-\dim F}(\psi;\Z) \,.  
}
\end{equation}

\noindent
Fiber integration is compatible with curvature and covariant derivative, i.e., the diagram
\begin{equation}\label{eq:fiber_int_curvature}
\xymatrix{
\widehat H^k(\Phi;\Z) \ar[d]_{\widehat\pi_!} \ar[rr]^{(\curv,\cov)} 
&& \Omega_0^k(\Phi) \ar[d]^{\fint} \\
\widehat H^{k-\dim F}(\varphi;\Z)  \ar[rr]_{(\curv,\cov)} 
&& \Omega_0^{k-\dim F}(\varphi) 
}
\end{equation}
commutes.

\noindent
Fiber integration is compatible with topological trivializations, i.e., the diagram  
\begin{equation}\label{eq:fiber_int_iota}
\xymatrix{
\Omega^{k-1}(\Phi) \ar[d]_{\fint} \ar[r]^{\iota}
& \widehat H^k(\Phi;\Z) \ar[d]^{\widehat\pi_!} \\
\Omega^{k-1-\dim F}(\varphi) \ar[r]_{\iota}
& \widehat H^{k-\dim F}(\varphi;\Z)
}
\end{equation}
commutes.

\noindent
Fiber integration for relative differential characters commutes with the maps $\ti$ and $\vds$.
We thus have the commutative diagram
\begin{equation}\label{eq:diag_FI_t_p_commute}
\xymatrix{
\widehat H^{k-1}(\varphi^*E;\Z) \ar^{\ti_\Phi}[rr] \ar^{\widehat\pi_!}[d] && \widehat H^k(\Phi;\Z) \ar^{\vds_\Phi}[rr] \ar^{\widehat\pi_!}[d] && \widehat H^k(E;\Z) \ar^{\widehat\pi_!}[d] \\ 
\widehat H^{k-1-\dim F}(A;\Z) \ar^{\ti_\varphi}[rr] && \widehat H^{k-\dim F}(\varphi;\Z) \ar^{\vds_\varphi}[rr] && \widehat H^{k-\dim F}(X;\Z) \,.
}
\end{equation}
\end{thm}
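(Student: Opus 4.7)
The plan is to first verify that the homomorphism defined by \eqref{eq:def_FI_rel} is actually a differential character, thereby simultaneously establishing the compatibility \eqref{eq:fiber_int_curvature} with curvature and covariant derivative. I would evaluate the right-hand side of \eqref{eq:def_FI_rel} on a boundary $\partial_\varphi(v,w)$ with $(v,w)\in C_{k-\dim F}(\varphi;\Z)$: using $\partial_\Phi\circ\lambda_\varphi=\lambda_\varphi\circ\partial_\varphi$ from \eqref{eq:del_lambda}, the character property of $h$, and the chain-level identity \eqref{eq:lambda_fint} for the extension of $\lambda_\varphi$ to chains, the two factors telescope into $\exp\bigl(2\pi i\int_{(v,w)}\fint_F(\curv,\cov)(h)\bigr)$. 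This shows both that $\widehat\pi_!h\in\widehat H^{k-\dim F}(\varphi;\Z)$ and that its structure data are exactly $\fint_F(\curv,\cov)(h)\in\Omega^{k-\dim F}_0(\varphi)$, which is the content of diagram \eqref{eq:fiber_int_curvature}.

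For compatibility with topological trivializations \eqref{eq:fiber_int_iota}, I would evaluate both characters on $(s,t)\in Z_{k-1-\dim F}(\varphi;\Z)$. The right-hand side yields $\exp\bigl(2\pi i\int_{(s,t)}\fint_F(\omega,\vartheta)\bigr)$, while the left-hand side equals
\[
\exp\Bigl(2\pi i\int_{\lambda_\varphi(s,t)}(\omega,\vartheta)\Bigr)\cdot\exp\Bigl(2\pi i\int_{(a,b)_\varphi(s,t)}d_\varphi\fint_F(\omega,\vartheta)\Bigr).
\]
The general (non-closed) form of the transfer identity \eqref{eq:lambda_fint_1} makes the exponents add up to $\int_{(s,t)}\fint_F(\omega,\vartheta)$ on the nose; no $\Z$-ambiguity is incurred because the identity is strict, not merely modulo integers.

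The main technical point is naturality \eqref{eq:fiber_int_natural}, where the issue is that the auxiliary choices $(\zeta,\tau)_\psi$, $(a,b)_\psi$, $\lambda_\psi$ on $(Y,B)$ and $(\zeta,\tau)_\varphi$, $(a,b)_\varphi$, $\lambda_\varphi$ on $(X,A)$ are made independently. The strategy is to evaluate on $(s,t)\in Z_{k-1-\dim F}(\psi;\Z)$, fix specific choices on the $\psi$ side, and push everything forward via $(f,g)_*$ and $(F,G)_*$: the pull-back-operation diagram \eqref{eq:diag_PB} combined with $\Phi\circ G=F$ from \eqref{eq:pull-back_diag} shows that the pushed-forward data yield admissible geometric representative and bounding chain for $(f,g)_*(s,t)$ on the $\varphi$ side, while naturality of fiber integration of forms \eqref{eq:fint_nat} matches the exponential factors. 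The invariance of $(\widehat\pi_!h)(s,t)$ under the choice of representative, which is the lemma stated immediately before the theorem, then absorbs the residual ambiguity and forces equality of the two sides.

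Finally, commutativity of \eqref{eq:diag_FI_t_p_commute} with $\vds$ and $\ti$ is a direct bookkeeping computation using the splittings from \eqref{eq:ex_seq_Z} and \eqref{eq:zetazetatauzeta}. For $\vds$, the relation $\lambda_\varphi(z,0)=(\lambda(z),0)$ from \eqref{eq:diag_lambda_commute} and the compatibility $(a,b)_\varphi(i(z))=(a(z),0)$ reduce the defining formula \eqref{eq:def_FI_rel} for $\widehat\pi_!h$ evaluated at $(z,0)$ to the absolute formula \eqref{eq:FI_absolute} applied to $\vds_\Phi h$. For $\ti$, the prescribed sign $b(\sigma(t))=-a(t)$ in the construction of $(a,b)_\varphi$ is exactly what is needed: combined with $(\curv,\cov)(\ti_\Phi h')=(0,-\curv(h'))$, $p\circ\lambda_\varphi=\lambda\circ p$, and the formula \eqref{eq:FI_absolute} for $\widehat\pi_!h'$ on $t$, one obtains the required identity $\widehat\pi_!(\ti_\Phi h')(s,t)=\ti_\varphi(\widehat\pi_!h')(s,t)=h'(\lambda(t))\cdot\exp\bigl(2\pi i\int_{a(t)}\fint_F\curv(h')\bigr)$.
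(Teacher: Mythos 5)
Your proposal is correct and follows essentially the same route as the paper's proof: evaluating \eqref{eq:def_FI_rel} on boundaries via \eqref{eq:del_lambda} and \eqref{eq:lambda_fint} to get the character property and \eqref{eq:fiber_int_curvature}, using \eqref{eq:lambda_fint_1} for \eqref{eq:fiber_int_iota}, pushing the auxiliary choices forward along $(f,g)$ and $(F,G)$ (with the preceding well-definedness lemma absorbing the ambiguity) for naturality, and invoking \eqref{eq:diag_lambda_commute} together with the sign convention $b(\sigma(t))=-a(t)$ to reduce the $\ti$/$\vds$ compatibilities to the absolute formula \eqref{eq:FI_absolute}. No gaps.
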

\index{Theorem!Fiber integration: naturality and compatibilities}

\begin{proof}
a)
We first show naturality:
Let $\psi:B \to Y$ and $(Y,B)\xrightarrow{(f,g)}(X,A)$ be a smooth maps.
Let $h \in \widehat H^k(\Phi;\Z)$ and $(s,t) \in Z_{k-\dim F-1}(\psi;\Z)$.
Choose $(\zeta,\tau)_\psi(s,t) \in \ZZ_{k-\dim F-1}(\psi)$ and $(a,b)_\psi(s,t) \in C_{k-\dim F}(\psi;\Z)$ such that $[(s,t) - \partial_\psi(a,b)]_{\partial_\psi S_{k-\dim F}} = [\zeta,\tau]_{\partial_\psi S_{k-\dim F}}$.

Now put 
\begin{align*}
(\zeta,\tau)_\varphi((f,g)_*(s,t))&
:=(f,g)_*\big((\zeta,\tau)_\psi(s,t)\big) \in \ZZ_{k-\dim F-1}(\varphi) \\
(a,b)_\varphi((f,g)_*(s,t)) 
&:= (f,g)_*\big((a,b)_\psi(s,t)\big) \in C_{k-\dim F}(\varphi;\Z) \,. 
\end{align*}
In particular, we may choose the transfer maps naturally such that $\lambda_\varphi((f,g)_*(s,t)) := (F,G)_*\lambda_\psi(s,t)$.
This yields:
\begin{align*}
(\widehat\pi_!((F,G)^*h))(s,t)
&=
\big((F,G)^*h\big)(\lambda_\psi(s,t)) \\
&\qquad\qquad\cdot \exp \Big( 2\pi i \int_{(a,b)_\psi(s,t)} \fint_F (\curv,\cov)((F,G)^*h)) \, \Big) \\
&=
h((F,G)_*\lambda_\psi(s,t)) \cdot \exp \Big( 2\pi i \int_{(a,b)_\psi(s,t)} (f,g)^* \fint_F (\curv,\cov)(h) \Big) \\
&=
h(\lambda_\varphi((f,g)_*(s,t))) \cdot \Big( 2\pi i \int_{(a,b)_\varphi((f,g)_*(s,t))} \fint_F (\curv,\cov)(h) \Big) \\
&=
(\widehat\pi_!h)((f,g)_*(s,t)) \\
&=
\big((f,g)^*\widehat\pi_!h\big)(s,t) \,.
\end{align*}

b)
Next we compute the curvature and covariant derivative of the character $\widehat\pi_!h$.
To this end we evaluate $\widehat\pi_!h$ on a boundary $\partial_\varphi(v,w)$, where $(v,w) \in C_{k-\dim F}(\varphi;\Z)$:
\begin{align*}
(\widehat\pi_!h)(\partial_\varphi(v,w))
&= 
h(\lambda_\varphi(\partial_\varphi(v,w))) \cdot \exp \Big( \int_{(a,b)_\varphi(\partial_\varphi(v,w))} \fint_F (\curv,\cov)(h) \Big) \\
&\stackrel{\eqref{eq:del_lambda}}{=} 
h(\partial_\Phi \lambda_\varphi(v,w)) \cdot \exp \Big( \int_{(a,b)_\varphi(\partial_\varphi(v,w))} \fint_F (\curv,\cov)(h) \Big) \\
&=
\exp \Big( 2\pi i \Big( \int_{\lambda_\varphi(v,w)} (\curv,\cov)(h) + \int_{(a,b)_\varphi(\partial_\varphi(v,w))} \fint_F (\curv,\cov)(h) \Big) \Big) \\
&\stackrel{\eqref{eq:lambda_fint}}{=}
\exp \Big( 2 \pi i \int_{(v,w)} \fint_F (\curv,\cov)(h) \Big) \,.
\end{align*}
Thus the homomorphism $Z_{k-\dim F-1}(\varphi;\Z) \to \Ul$ defined by the right hand side of \eqref{eq:def_FI_rel} satisfies condition \eqref{eq:def_rel_diff_charact_2}.
We conclude that $\widehat\pi_!h$ is a differential character in $\widehat H^{k-\dim F}(\varphi;\Z)$ with curvature $\curv(\widehat\pi_!h) = \fint_F \curv(h)$ and covariant derivative $\cov(\widehat\pi_!h) = \fint_F \cov(h)$.

c)
Now we prove compatibility with topological trivializations:
Let $(\omega,\vartheta) \in \Omega^{k-1}(\Phi)$ and $(s,t) \in Z_{k-1-\dim F}(\varphi;\Z)$.
Then we have:
\begin{align*}
\widehat\pi_!\big(\iota_\Phi(\omega,\vartheta)\big)(s,t)
&=
\big(\iota_\Phi(\omega,\vartheta)\big)(\lambda_\varphi(s,t)) \cdot \exp \Big( 2\pi i \int_{(a,b)_\varphi(s,t)} \fint_F \underbrace{(\curv,\cov)(\iota_\varphi(\omega,\vartheta))}_{=d_\Phi(\omega,\vartheta)} \Big) \\
&=
\exp \Big( 2\pi i \Big( \int_{\lambda_\varphi(s,t)} (\omega,\vartheta) + \int_{(a,b)_\varphi(s,t)} \fint_F d_\Phi(\omega,\vartheta) \Big) \Big) \\
&\stackrel{\eqref{eq:fiber_d_commute}}{=}
\exp \Big( 2\pi i \Big( \int_{\lambda_\varphi(s,t)} (\omega,\vartheta) + \int_{(a,b)_\varphi(s,t)} d_\varphi \fint_F (\omega,\vartheta) \Big) \Big) \\
&\stackrel{\eqref{eq:lambda_fint_1}}{=}
\exp \Big( 2\pi i \int_{(s,t)} \fint_F (\omega,\vartheta) \Big) \\
&=
\iota_\varphi\Big(\fint_F(\omega,\vartheta)\Big)(s,t) \,. 
\end{align*}

d)
Finally we prove compatibility with the maps $\ti$ and $\vds$. 
It follows from diagram \eqref{eq:diag_lambda_commute}, i.e.~from compatibility of the transfer maps with the sequence \eqref{eq:ex_seq_Z}.
Let $h \in \widehat H^{k-1}(\varphi^*E;\Z)$ and $(s,t) \in Z_{k-\dim F-1}(\varphi;\Z)$.
Let $\sigma:Z_{k-\dim F-2}(A;\Z) \to Z_{k-\dim F-1}(\varphi;\Z)$ be a splitting as in~\eqref{eq:ex_seq_Z}. 
Write $(s,t) = (z,0) + \sigma(t)$. 
Then we have:
\begin{align*}
\widehat\pi_!(\ti_\Phi (h))(s,t)
&\stackrel{\eqref{eq:def_FI_rel}}{=}
\ti_\Phi(h)(\lambda_\varphi(s,t)) \cdot \exp \Big( 2\pi i \int_{(a,b)_\varphi(s,t)} \fint_F \;\underbrace{(\curv,\cov)(\ti_\Phi(h))}_{=(0,-\curv(h))} \Big) \\ 
&=
h(p(\lambda_\varphi(s,t))) \cdot \exp \Big( 2\pi i \int_{-a(t)} \fint_F  -\curv(h) \Big) \\
&\stackrel{\eqref{eq:diag_lambda_commute}}{=}
h(\lambda(t)) \cdot \exp \Big( 2\pi i \int_{a(t)} \curv(h) \Big) \\
&\stackrel{\eqref{eq:FI_absolute}}{=}
(\widehat\pi_!h)(t) \\
&=\ti_\varphi(\widehat\pi_!h)(s,t) \,. \\
\intertext{Similarly, for a character $h \in \widehat H^k(\Phi;\Z)$ and a cycle $z \in Z_{k-\dim F-1}(X;\Z)$, we have:}
\vds_\varphi(\widehat\pi_! h)(z)
&=
(\widehat\pi_!h) (z,0) \\
&=
h(\lambda_\varphi (z,0)) \cdot \exp \Big( 2\pi i \int_{(a,b)_\varphi (z,0)} \fint_F (\curv,\cov)(h) \Big) \\
&\stackrel{\eqref{eq:diag_lambda_commute}}{=} 
h((\lambda(z),0)) \cdot \exp \Big( 2\pi i \int_{(a(z),0)} \fint_F (\curv,\cov)(h) \Big) \\ 
&=
\vds_\Phi(h)(\lambda(z)) \cdot \exp \Big( 2\pi i \int_{a(z)} \fint_F \curv(\vds_\Phi(h)) \Big) \\
&\stackrel{\eqref{eq:FI_absolute}}{=}
\widehat\pi_!(\vds_\Phi(h))(z) \,. \qedhere
\end{align*}
\end{proof}

As a corollary of Theorem~\ref{thm:FI_props}, we obtain compatibility of fiber integration with all the maps in the long exact sequence \eqref{eq:long_ex_sequ} for relative and absolute differential characters groups. 

\begin{cor}[Compatibility with long exact sequence]
Let $\pi:E \to X$ be a fiber bundle with oriented closed fibers and $\varphi:A \to X$ a smooth map.
Then the fiber integration map $\widehat\pi_!$ on (relative and absolute) differential characters commutes with all maps in the long exact sequence \eqref{eq:long_ex_sequ}, and with the usual fiber integration maps $\pi_!$ on cohomology with $\Ul$- and $\Z$-coefficients, respectively. 
\end{cor}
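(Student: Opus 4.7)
The strategy is to reduce the claim to facts already established in Theorem~\ref{thm:FI_props} together with naturality of the ordinary cohomology fiber integration. The long exact sequence \eqref{eq:long_ex_sequ} splits into three types of arrows: the maps $\ti_\varphi$ and $\vds_\varphi$ between differential character groups (already handled by diagram~\eqref{eq:diag_FI_t_p_commute}); the connecting maps $j\circ\varphi^*$ and $\varphi^*\circ c$ passing between cohomology and differential characters; and the remaining maps that live entirely within singular cohomology with $\Ul$- or $\Z$-coefficients. I will dispatch the last family by a naturality argument and the second by checking compatibility with $j$ and with $c$ separately.

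For the cohomology-only arrows, one observes that the mapping cone long exact sequence comes from the natural short exact sequence $0\to C^*(X;\cdot)\to C^*(\varphi;\cdot)\to C^{*-1}(A;\cdot)\to 0$, and that the fiber integration $\pi_!$ on ordinary cohomology is induced by cap product with the Thom class of the fiber bundle. Since $\Phi:\varphi^*E\to E$ sits in a pull-back diagram and Thom classes are pulled back along such diagrams, the Gysin homomorphisms assemble into a map of the two mapping cone short exact sequences and hence commute with every arrow in the resulting long exact sequences, for both $\Ul$- and $\Z$-coefficients. Compatibility with $\varphi^*$ itself is immediate from \eqref{eq:fiber_int_natural_diagram} applied to the relevant pull-back square (and from the analogous naturality of $\pi_!$ on cohomology).

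Compatibility with the map $j$ will be the conceptual heart of the argument. For $u\in H^{k-1}(\Phi;\Ul)$, the character $j(u)$ is flat, so $\widehat\pi_!j(u)$ is flat by \eqref{eq:fiber_int_curvature}, and evaluating on $(s,t)\in Z_{k-1-\dim F}(\varphi;\Z)$ the exponential term in \eqref{eq:def_FI_rel} is trivial; thus $(\widehat\pi_!j(u))(s,t)=\langle u,[\lambda_\varphi(s,t)]\rangle$. What is required is the identity $\langle u,[\lambda_\varphi(s,t)]\rangle=\langle\pi_!u,[s,t]\rangle$, i.e.\ that the transfer map $\lambda_\varphi$ constructed in Section~\ref{sec:transfer} is, at the level of mapping cone homology, dual to the Gysin map on mapping cone cohomology. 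This is the main obstacle: it must be extracted from the definition of $\lambda_\varphi$ via the pull-back operation $\PB_{E,\varphi^*E}$ together with the compatibility~\eqref{eq:PB_int} of $\PB_\bullet$ with fiber integration of forms, and then promoted from de Rham to integral coefficients via the diagram~\eqref{eq:diag_lambda_commute} and the absolute statement from \cite[Ch.~4]{BB13}, which is dual to the classical Thom-class definition of $\pi_!$.

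Compatibility with $c$ (and hence with $\varphi^*\circ c$) follows by a diagram chase from the exact sequence \eqref{eq:sequ_R} and the already established compatibilities with $(\curv,\cov)$ and $\iota_\varphi$: given $h\in\widehat H^k(\Phi;\Z)$, both $c(\widehat\pi_!h)$ and $\pi_!c(h)$ project to the same class $[\fint_F(\curv,\cov)(h)]_{\mathrm{dR}}=[\curv(\widehat\pi_!h)]_{\mathrm{dR}}$ in real cohomology, so their difference lies in the image of $H^{k-\dim F-1}(\varphi;\R)/H^{k-\dim F-1}(\varphi;\Z)_\R$; then compatibility with $j$ (applied to the $\Ul$-reduction) and with $\iota_\varphi$ (applied to a primitive) forces this difference to vanish, exactly as in the uniqueness proof of \cite[Ch.~7]{BB13}. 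Combining all of the above, every square in the diagram obtained by stacking two copies of \eqref{eq:long_ex_sequ} (for $\Phi$ and for $\varphi$) joined by $\widehat\pi_!$ and $\pi_!$ commutes.
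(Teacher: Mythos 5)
Your overall decomposition is the same as the paper's: the $\ti$/$\vds$ squares are diagram~\eqref{eq:diag_FI_t_p_commute} from Theorem~\ref{thm:FI_props}, the purely cohomological squares are classical naturality of the Gysin maps, and what remains is compatibility of $\widehat\pi_!$ with $j$ and with $c$ (which the paper simply delegates to \cite[Prop.~7.10, Prop.~7.11]{BB13}). Two of your steps do not close, however. For $j$ you correctly reduce to the identity $\langle u,[\lambda_\varphi(s,t)]\rangle=\langle\pi_!u,[s,t]\rangle$, but your plan to ``promote from de Rham to integral coefficients'' cannot establish it: the de Rham comparison \eqref{eq:PB_int} is blind to torsion in $H_{k-1-\dim F}(\varphi;\Z)$ and to the torsion part of $u\in H^{k-1}(\Phi;\Ul)\cong\Hom(H_{k-1}(\Phi;\Z),\Ul)$. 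The identity is not something to be extracted indirectly; in this framework the Gysin map on mapping cone cohomology is \emph{realized} as precomposition with the transfer map, $\pi_!\colon[\mu]\mapsto[\mu\circ\lambda_\varphi]$ (see the Remark on compatibility with the characteristic class), and $[\lambda_\varphi(s,t)]$ is by construction the class of $\PB_{E,\varphi^*E}\big((\zeta,\tau)_\varphi(s,t)\big)$. Once that realization is invoked the $j$-square is immediate; without it your argument has a hole precisely at torsion classes.

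More seriously, the diagram chase for $c(\widehat\pi_!h)=\pi_!c(h)$ is flawed. Agreement of the two classes in $H^{k-\dim F}(\varphi;\R)$ only shows that their difference $D(h)$ is a \emph{torsion} class in $H^{k-\dim F}(\varphi;\Z)$; it cannot ``lie in the image of $H^{k-\dim F-1}(\varphi;\R)/H^{k-\dim F-1}(\varphi;\Z)_\R$'', since that group maps into $\widehat H^{k-\dim F}(\varphi;\Z)$, not into integral cohomology. The compatibilities with $j$ and $\iota_\Phi$ show that $h\mapsto D(h)$ vanishes on $\im(j)+\im(\iota_\Phi)$ and hence factors through the torsion-free quotient $H^k(\Phi;\Z)/\mathrm{tors}$; but a homomorphism from a torsion-free group to a torsion group need not vanish, so the chase does not force $D\equiv 0$. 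The correct argument is the direct cochain computation: extend $\lambda_\varphi$ to chains with $\partial_\Phi\circ\lambda_\varphi=\lambda_\varphi\circ\partial_\varphi$ as in \eqref{eq:del_lambda}, check that $\tilde h\circ\lambda_\varphi$ (corrected by the fiber-integrated form term) is a real lift of $\widehat\pi_!h$, and read off that $\mu^{\tilde h}\circ\lambda_\varphi$ represents both $c(\widehat\pi_!h)$ and $\pi_!c(h)$; this is the content of the cited \cite[Prop.~7.11]{BB13}.
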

\index{fiber integration!compatibility with long exact sequence}

\begin{proof}
Theorem~\ref{cor:int_prop} shows that $\widehat\pi_!$ commutes with the maps $\ti$ and $\vds$.
The rest follows from \cite[Prop.~7.10, Prop.~7.11]{BB13}.  
\end{proof}

\subsubsection{Compatibility with characteristic class.}
The Leray-Serre spectral sequence of a fiber bundle $\pi:E \to X$ has an obvious modification that converges to the mapping cone cohomology $H^*(\Phi;\Z)$.
Using this modified spectral sequence, fiber integration on mapping cone cohomology groups $\pi_!:H^k(\Phi;\Z) \to H^{k-\dim F}(\varphi;\Z)$ can be defined in the same way as in \cite[§~8]{BH53} for absolute cohomology.
Compatibility of fiber integration with the characteristic class is discussed in detail for absolute differential characters in \cite[Ch.~7]{BB13}.   
The crucial point is that fiber integration for cohomology classes can be realized by pre-composition of cocycles with the transfer map:
$$
\pi_!:H^k(E;\Z) \to H^{k-\dim F}(X;\Z) \,, \quad  [\mu] \mapsto [\mu \circ \lambda] \,.
$$
In the same way, we obtain compatibility of fiber integration of relative characters with the characteristic class:

\begin{remark}[Compatibility with characteristic class]
Choosing an extension of the transfer map $\lambda_\varphi$ to a homomorphism of chains as in \eqref{eq:del_lambda}, one can show that fiber integration of differential characters is compatible with the characteristic class.
Thus for any relative character $h \in \widehat H^k(\Phi;\Z)$, we have:
\begin{equation}\label{eq:fint_classes}
c(\widehat\pi_!h)
=
\pi_!c(h) \,. 
\end{equation}
\end{remark}
\index{fiber integration!compatibility with characteristic class}

\subsubsection{Fiber integration of parallel characters.}
By \eqref{eq:fiber_int_curvature} we have $\cov(\widehat \pi_!h) = \fint_F \cov(h)$.
Thus fiber integrals of parallel characters are again parallel.
This way we obtain fiber integration on $\widehat H^*(E,E|_A;\Z)$: 

\begin{cor}[Fiber integration of parallel characters]
Let $\pi:E \to X$ be a fiber bundle with closed oriented fibers.
Let $i_A:A \to X$ be the embedding of a smooth submanifold.
Denote by $I_A:E|_A \to E$ the induced bundle map.
Let $k \geq \dim F + 2$.
Then the inclusion $\widehat H^k(E,E|_A;\Z) \hookrightarrow \widehat H^k(I_A;\Z)$ induces a natural fiber integration map
$$
\widehat\pi_!:\widehat H^k(E,E|_A;\Z) \to \widehat H^{k-\dim F}(X,A;\Z)
$$ 
that commutes with curvature, characteristic class and topological trivializations.
Moreover, it commutes with the long exact sequence \eqref{eq:long_ex_sequ_par} and fiber integration for cohomology with $\Ul$- and $\Z$-coefficients, respectively.
\end{cor}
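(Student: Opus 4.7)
The plan is to deduce the corollary as a restriction of the mapping cone fiber integration constructed in Theorem~\ref{thm:FI_props}. By Theorem~\ref{thm:rel_rel}, applied to the submanifold embeddings $i_A:A\to X$ and $I_A:E|_A\to E$, we identify $\widehat H^k(E,E|_A;\Z)$ and $\widehat H^{k-\dim F}(X,A;\Z)$ with the subgroups of parallel characters in $\widehat H^k(I_A;\Z)$ and $\widehat H^{k-\dim F}(i_A;\Z)$ respectively. Note that the induced bundle map $I_A:E|_A\to E$ is precisely the pull-back $\Phi:i_A^*E\to E$. Thus the already constructed map $\widehat\pi_!:\widehat H^k(I_A;\Z)\to \widehat H^{k-\dim F}(i_A;\Z)$ is the natural candidate, and the task reduces to showing that it restricts to parallel characters and that the advertised compatibilities follow.

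First I would show the restriction is well-defined. For any parallel character $h\in\widehat H^k(I_A;\Z)$, diagram~\eqref{eq:fiber_int_curvature} gives $\cov(\widehat\pi_! h)=\fint_F\cov(h)=0$, so $\widehat\pi_! h$ is parallel, and hence by Theorem~\ref{thm:rel_rel} it corresponds to a unique element of $\widehat H^{k-\dim F}(X,A;\Z)$. Compatibility with the curvature is then just the top component of~\eqref{eq:fiber_int_curvature}. Compatibility with topological trivializations follows from~\eqref{eq:fiber_int_iota} together with the fact (noted in Theorem~\ref{thm:rel_rel}) that the inclusion $\Omega^{k-1}(X,A)\hookrightarrow\Omega^{k-1}(i_A)$, $\omega\mapsto(\omega,0)$, intertwines the two topological trivialization maps. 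Compatibility with the characteristic class is~\eqref{eq:fint_classes} combined with the fact, also from Theorem~\ref{thm:rel_rel}, that the characteristic class of the parallel character equals the characteristic class of the underlying relative character. Naturality with respect to smooth maps $(Y,B)\to(X,A)$ inducing the full cube of pull-back bundles is inherited verbatim from~\eqref{eq:fiber_int_natural_diagram}, since the identification of parallel characters with relative differential characters on relative cycles is itself natural.

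The remaining claim is commutativity with the long exact sequence~\eqref{eq:long_ex_sequ_par}. The maps $\widehat H^k(X,A;\Z)\to\widehat H^k(X;\Z)\xrightarrow{i_A^*}\widehat H^k(A;\Z)$ are pre-composition with the quotient $Z_{k-1}(X;\Z)\twoheadrightarrow Z_{k-1}(X,A;\Z)$ and pull-back along $i_A$, respectively; both of these are pull-backs along smooth maps of pairs, so commutativity with $\widehat\pi_!$ is a direct consequence of naturality, equivalently of~\eqref{eq:diag_FI_t_p_commute} restricted to parallel characters. The connecting homomorphisms $j\circ\beta$ and $\beta\circ c$ factor through ordinary cohomology, so their commutation with $\widehat\pi_!$ reduces to~\eqref{eq:fint_classes} together with the classical compatibility of the connecting homomorphism $\beta$ in the cohomology long exact sequence of $(X,A)$ with fiber integration $\pi_!$ (a standard fact, e.g.\ via the Leray--Serre argument cited in the excerpt for the mapping cone case). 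The latter also yields the last assertion on compatibility with fiber integration on cohomology with $\Ul$- and $\Z$-coefficients.

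The only step that requires any real care is checking that the identification of Theorem~\ref{thm:rel_rel} is genuinely compatible with the fiber integration map, i.e.\ that the diagram in which $\widehat\pi_!$ is applied before and after the inclusion of parallel characters commutes; once this is verified on the level of defining formulae~\eqref{eq:def_FI_rel}, which is immediate because the transfer map $\lambda_\varphi$ and the chain $(a,b)_\varphi$ depend only on the mapping cone cycle, every compatibility statement becomes a one-line consequence of Theorem~\ref{thm:FI_props}. Thus no new construction or calculation is needed beyond a careful bookkeeping of the identifications.
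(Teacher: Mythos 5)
Your proposal is correct and follows essentially the same route as the paper: the corollary is obtained by identifying $\widehat H^k(E,E|_A;\Z)$ and $\widehat H^{k-\dim F}(X,A;\Z)$ with the subgroups of parallel characters via Theorem~\ref{thm:rel_rel}, observing from \eqref{eq:fiber_int_curvature} that $\cov(\widehat\pi_!h)=\fint_F\cov(h)=0$ so that fiber integration preserves parallelness, and inheriting all remaining compatibilities from Theorem~\ref{thm:FI_props}. The paper states this argument in a single sentence preceding the corollary; your write-up simply makes the same bookkeeping explicit.
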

\index{fiber integration!of parallel characters}

By the fiberwise Stokes theorem, the fiber integral of a closed form with integral periods is again closed with integral periods (as long as the fibers are closed).
Thus by the idenfication from \cite{BT06} of the relative Hopkins-Singer group $\check H^k(\Phi;\Z)$ as the quotient of the subgroup $\widehat H^k_0(\Phi;\Z) \subset \widehat H^k(\Phi;\Z)$ by closed forms with integral periods on $E$, the fiber integration map $\widehat\pi_!$ descends to the relative Hopkins-Singer group:  

\begin{cor}[Fiber integration of relative differential cocycles]
Let $\pi:E \to X$ be a fiber bundle with closed oriented fibers.
Let $\varphi:A \to X$ be a smooth map and $\Phi:\varphi^*E \to E$ the induced bundle map.
Let $k \geq \dim F + 2$.
Then fiber integration of relative differential characters descends to a fiber integration map 
$$
\widehat\pi_!:\check H^k(\Phi;\Z) \to \check H^{k-\dim F}(\varphi;\Z)
$$ 
that commutes with the long exact sequence \eqref{eq:long_ex_sequ_HS}.
\end{cor}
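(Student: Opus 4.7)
The strategy is to descend the fiber integration $\widehat\pi_!:\widehat H^k(\Phi;\Z)\to\widehat H^{k-\dim F}(\varphi;\Z)$ from Theorem~\ref{thm:FI_props} along the two-step presentation
\[
0\to\tfrac{\Omega^{k-1}_0(E)}{\widetilde\Omega^{k-1}(E)}\to\widehat H^k_0(\Phi;\Z)\to\check H^k(\Phi;\Z)\to 0
\]
of \eqref{eq:H_0_H_check} and its analogue for $\varphi$. All the needed compatibilities have been established in Theorem~\ref{thm:FI_props}; the verification is essentially a diagram chase through these two sequences.

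First I would show that $\widehat\pi_!$ restricts to a homomorphism $\widehat H^k_0(\Phi;\Z)\to\widehat H^{k-\dim F}_0(\varphi;\Z)$. By Proposition~\ref{prop:cov_int_periods}, these subgroups are precisely the preimages under $\cov$ of forms that are closed with integral periods. From $\cov(\widehat\pi_!h)=\fint_F\cov(h)$, which is the second component of \eqref{eq:fiber_int_curvature}, together with the fact that integration over closed oriented fibers sends closed forms with integral periods to closed forms with integral periods (fiberwise Stokes for closedness; naturality of periods along the transfer map for integrality), the restriction is immediate.

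Next I would verify that this restriction descends to the quotients. For $\omega\in\Omega^{k-1}_0(E)$ the corresponding element $\iota_\Phi(\omega,0)\in\widehat H^k_0(\Phi;\Z)$ is sent by compatibility of fiber integration with topological trivializations \eqref{eq:fiber_int_iota} to
\[
\widehat\pi_!\,\iota_\Phi(\omega,0)\;=\;\iota_\varphi\!\left(\fint_F(\omega,0)\right)\;=\;\iota_\varphi\!\left(\fint_F\omega,\,0\right),
\]
and $\fint_F\omega\in\Omega^{k-1-\dim F}_0(X)$ by the preceding step. Hence $\widehat\pi_!$ maps the left-hand subgroup of \eqref{eq:H_0_H_check} into the corresponding subgroup $\Omega^{k-1-\dim F}_0(X)/\widetilde\Omega^{k-1-\dim F}(X)\hookrightarrow\widehat H^{k-\dim F}_0(\varphi;\Z)$ for $\varphi$, and the universal property of the cokernel produces the desired induced map $\widehat\pi_!:\check H^k(\Phi;\Z)\to\check H^{k-\dim F}(\varphi;\Z)$.

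Finally, for compatibility with the long exact sequence \eqref{eq:long_ex_sequ_HS}, I would observe that its nontrivial connecting maps $\widehat H^{k-1}(A;\Z)\to\check H^k(\varphi;\Z)$ and $\check H^k(\varphi;\Z)\to\widehat H^k(X;\Z)$ are, by the construction in \cite{BT06}, induced from $\ti_\varphi$ and $\vds_\varphi$ via the quotient $\widehat H^k_0(\varphi;\Z)\twoheadrightarrow\check H^k(\varphi;\Z)$, while the map $\widehat H^k(X;\Z)\to\widehat H^k(A;\Z)$ is plain pull-back $\varphi^*$. Commutation of $\widehat\pi_!$ with $\ti$ and $\vds$ is precisely diagram~\eqref{eq:diag_FI_t_p_commute}, and commutation with $\varphi^*$ on absolute characters is a special case of naturality \eqref{eq:fiber_int_natural}. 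There is no serious obstacle here; the only subtlety worth checking is that these already-commuting squares pass to the cokernels defining $\check H^{\ast}$, which is automatic since the quotient is by topologically trivial classes and $\widehat\pi_!$ respects topological trivializations by \eqref{eq:fiber_int_iota}.
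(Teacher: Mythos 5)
Your proposal is correct and follows essentially the same route as the paper: the paper's own justification is precisely the remark preceding the corollary, namely that fiber integration over closed fibers preserves closed forms with integral periods, so that $\widehat\pi_!$ descends through the presentation \eqref{eq:H_0_H_check} of $\check H^k$ as a quotient of $\widehat H^k_0$, with the compatibilities \eqref{eq:fiber_int_curvature}, \eqref{eq:fiber_int_iota}, \eqref{eq:diag_FI_t_p_commute} and naturality handling the long exact sequence. Your write-up merely spells out these steps in more detail than the paper does.
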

\index{fiber integration!of relative differential cocycles}

\subsection{Fibers with boundary}\label{subsec:fiber_boundary}
Let $\pi:E \to X$ be a fiber bundle with compact oriented fibers $F$ with boundary $\partial F$.
We have the induced fiber bundle $\pi^{\partial E}:\partial E \to X$ with closed oriented fibers.
Fiber integration of differential forms satisfies the fiberwise Stokes theorem:
\index{fiberwise Stokes theorem}
\index{Stokes theorem!fiberwise $\sim$}
\begin{equation*}
d \fint_F \omega = \fint_F d \omega + (-1)^{k + \dim F} \fint_{\partial F} \omega \,,
\end{equation*}
where $\omega \in \Omega^k(E)$.
Likewise, we have the fiberwise Stokes theorem for mapping cone differential forms:
\index{Stokes theorem!mapping cone $\sim$}
\index{mapping cone!Stokes theorem}
\begin{equation}\label{eq:stokes_fiber}
d_\varphi \fint_F (\omega,\vartheta) 
=
\fint_F d_\Phi (\omega,\vartheta) + (-1)^{k + \dim F} \fint_{\partial F} (\omega,\vartheta) \,,
\end{equation}
where $(\omega,\vartheta) \in \Omega^k(\Phi)$.
In particular, fiber integration over the boundary $\partial F$ maps $d_\Phi$-closed forms to $d_\varphi$-exact forms. 
Likewise, fiber integration in the bundle $\pi:\partial E \to X$ of mapping cone cohomology classes for the bundle map $\Phi:\varphi^*E \to E$ yields the trivial map $\pi^{\partial E}:H^k(\Phi;\Z) \to H^{k-\dim \partial E}(\varphi;\Z)$.

In \cite[Ch.~7]{BB13} we show that integration of differential characters on $E$ over the fibers of $\pi:\partial E \to X$ yields topologically trivial characters on $X$.
More precisely, for a character $h \in \widehat H^k(E;\Z)$ with $k \geq \dim F$, we have:
\begin{equation}\label{eq:fint_bound_abs}
\widehat\pi^{\partial E}_!h
=
\iota\Big((-1)^{k-\dim F} \fint_F \curv(h) \Big) \,.
\end{equation}
Here we consider two generalizations of this result.

First we consider fiber integration of relative differential characters in the fiber bundle $\pi^{\partial E}:\partial E \to X$.

\begin{prop}
Let $E \to X$ be a fiber bundle with compact oriented fibers with boundary. 
Let $\varphi:A \to X$ be a smooth map and $\Phi: \varphi^*E \to E$ the induced bundle map.
Then for any character $h \in \widehat H^k(\Phi;\Z)$, the integrated character $\widehat\pi_!^{\partial E}h \in \widehat H^{k-\dim\partial F}(\varphi;\Z)$ is topologically trivial, and we have:
\begin{equation}\label{eq:fint_bound_triv}
\widehat\pi_!^{\partial E} h
=
\iota\Big((-1)^{k-\dim F} \fint_F (\curv,\cov)(h)\Big) \,. 
\end{equation}
\end{prop}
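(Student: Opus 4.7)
The plan is to verify~\eqref{eq:fint_bound_triv} by evaluating both sides on an arbitrary cycle $(s,t) \in Z_{k-\dim F}(\varphi;\Z)$. First I would check that curvatures and covariant derivatives match: by Theorem~\ref{thm:FI_props} one has $(\curv,\cov)(\widehat\pi_!^{\partial E}h) = \fint_{\partial F}(\curv,\cov)(h)$, while fiberwise Stokes~\eqref{eq:stokes_fiber} applied to the $d_\Phi$-closed pair $(\curv,\cov)(h)$ gives $d_\varphi\fint_F(\curv,\cov)(h) = (-1)^{k+\dim F}\fint_{\partial F}(\curv,\cov)(h)$, so $(\curv,\cov)$ of $\iota_\varphi((-1)^{k-\dim F}\fint_F(\curv,\cov)(h))$ equals $\fint_{\partial F}(\curv,\cov)(h)$ as well. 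Hence the difference of the two sides of~\eqref{eq:fint_bound_triv} is a flat character, and it suffices to show equality on every cycle.

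For character-level equality, I represent $(s,t)$ by a geometric relative cycle $(\zeta,\tau)_\varphi(s,t) \in \ZZ_{k-\dim F}(\varphi)$ given by $(S,T)\xrightarrow{(f,g)}(X,A)$, together with the chain $(a,b)_\varphi(s,t) \in C_{k-\dim F+1}(\varphi;\Z)$ from Section~\ref{sec:transfer}. The transfer $\lambda_\varphi^{\partial E}(s,t)$ then shares the refined fundamental class of $\PB_{\partial E,\varphi^*\partial E}(\zeta,\tau) = (f^*\partial E,g^*\varphi^*\partial E)\xrightarrow{(F^\partial,G^\partial)}(\partial E,\varphi^*\partial E)$. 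The essential geometric input is that $f^*E$ is a compact oriented $k$-dimensional stratifold whose boundary, after smoothing of corners, decomposes as two pieces glued along $g^*\varphi^*\partial E$: over $T=\partial S$ one has the slice $g^*\varphi^*E$, while over $S$ one has the fiberwise boundary $f^*\partial E$ carrying a sign $(-1)^{k-\dim F}$ from the product rule $\partial(S\times F) = \partial S\times F + (-1)^{\dim S}S\times\partial F$. Choosing compatible fundamental chains $y_E$ of $f^*E$ and $z_E$ of $g^*\varphi^*E$, the mapping-cone chain $(F_*y_E,-G_*z_E) \in C_k(\Phi;\Z)$ has $\partial_\Phi$-boundary equal, modulo boundaries of thin chains, to $(-1)^{k-\dim F}(i,i')_*\lambda_\varphi^{\partial E}(s,t)$, where $(i,i'):(\partial E,\varphi^*\partial E)\hookrightarrow(E,\varphi^*E)$: the $\Phi_*G_*z_E$ term in $\partial_\Phi$ exactly cancels the $g^*\varphi^*E$-contribution to $\partial(F_*y_E)$, leaving only the $f^*\partial E$-piece with its product-orientation sign.

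Applying the defining property $h(\partial_\Phi(u,v)) = \exp(2\pi i\int_{(u,v)}(\curv,\cov)(h))$ to this bordism, Fubini for $\fint_F$, and the identity $[\zeta,\tau]_{\partial_\varphi S} = [(s,t)-\partial_\varphi(a,b)_\varphi(s,t)]_{\partial_\varphi S}$, I obtain
\begin{equation*}
h(\lambda_\varphi^{\partial E}(s,t)) = \exp\Bigl(2\pi i(-1)^{k-\dim F}\Bigl[\int_{(s,t)}\fint_F(\curv,\cov)(h) - \int_{(a,b)_\varphi(s,t)} d_\varphi\fint_F(\curv,\cov)(h)\Bigr]\Bigr).
\end{equation*}
Rewriting the last integrand as $(-1)^{k+\dim F}\fint_{\partial F}(\curv,\cov)(h)$ via~\eqref{eq:stokes_fiber} and then multiplying by the correction factor $\exp(2\pi i\int_{(a,b)_\varphi(s,t)}\fint_{\partial F}(\curv,\cov)(h))$ from~\eqref{eq:def_FI_rel} makes the $(a,b)$-contributions cancel (as $(-1)^{k-\dim F}(-1)^{k+\dim F}=1$), producing exactly $\iota_\varphi((-1)^{k-\dim F}\fint_F(\curv,\cov)(h))(s,t)$. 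The main difficulty throughout is consistent sign-tracking in the mapping-cone setting: the $(-1)^{k-\dim F}$ from the product-orientation rule must be reconciled with the cross-term cancellation in $\partial_\Phi$, with the sign $(-1)^{\dim T}$ that appears in $\partial z_E$ along the corner $g^*\varphi^*\partial E$, and with the convention that a geometric relative cycle $(S',T')\xrightarrow{(f',g')}(X',A')$ has refined fundamental cycle $(f'_*[S'],-g'_*[T'])$ rather than $(f'_*[S'],g'_*[T'])$ in $Z_*(\varphi';\Z)$; once these signs are consistently fixed, the algebra collapses to the stated formula.
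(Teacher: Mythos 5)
Your argument is correct in outline and, once the sign conventions are fixed as you indicate in your last paragraph, gives a complete proof --- but it takes a genuinely different route from the paper. The paper argues softly: the characteristic class of $\widehat\pi_!^{\partial E}h$ equals $\pi_!^{\partial E}c(h)$, which vanishes because fiber integration over $\partial F$ is the zero map on mapping cone cohomology; the curvature and covariant derivative are computed via the fiberwise Stokes theorem \eqref{eq:stokes_fiber} to be $(-1)^{k-\dim F}\,d_\varphi\fint_F(\curv,\cov)(h)$; and the formula is then read off from the structure sequences \eqref{eq:rel_sequences} together with the compatibility $(\curv,\cov)\circ\iota = d_\varphi$. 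You instead evaluate both sides on an arbitrary cycle, realizing the pushed-forward transfer cycle in $(\partial E,\varphi^*\partial E)$ as, up to the sign $(-1)^{k-\dim F}$ and boundaries of thin chains, the $\partial_\Phi$-boundary of the fundamental chain of $(f^*E,g^*(\varphi^*E))$, and then invoke the defining property \eqref{eq:def_rel_diff_charact_2} of $h$ on boundaries plus Fubini; this is the strategy used in \cite[Ch.~7]{BB13} for the absolute statement \eqref{eq:fint_bound_abs}, transplanted to the mapping cone setting, and your identification of the cross-term cancellation between $\Phi_*G_*z_E$ and the corner piece of $\partial F_*y_E$ is exactly the right geometric input. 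What each approach buys: the paper's argument is short and avoids all orientation bookkeeping, but matching the characteristic class and the curvature alone determines a character only up to a flat topologically trivial one (an element of $H^{k-\dim F}(\varphi;\R)/H^{k-\dim F}(\varphi;\Z)_\R$), so it leans on the reader to see why no such discrepancy arises; your cycle-level evaluation pins the character down on the nose, at the price of the delicate sign analysis you flag. One small redundancy: once you prove equality on every cycle, the preliminary reduction in your first paragraph to a flat difference is not logically needed, though it serves as a useful consistency check on the signs.
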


\begin{proof}
The integrated character $\widehat\pi_!^{\partial E}h$ is topologically trivial since $c(\widehat\pi_!^{\partial E}h)=\pi_!^{\partial E}(c(h))$ and $\pi^{\partial E}_!:H^k(\Phi;\Z) \to H^{k-\dim\partial F}(\varphi;\Z)$ is the trivial map.
We compute the curvature and covariant derivative of $\widehat\pi_!^{\partial E}h$ using the fiberwise Stokes theorem \eqref{eq:stokes_fiber}: 
\begin{align*}
(\curv,\cov)(\widehat\pi_!^{\partial E}h) 
&\stackrel{\eqref{eq:fiber_int_curvature}}{=}
\fint_{\partial F}(\curv,\cov)(h) \\
&\stackrel{\eqref{eq:stokes_fiber}}{=}
(-1)^{k-\dim F} d_\varphi \Big(\int_{F} (\curv,\cov)(h)\Big) \,. 
\end{align*}
Now the exact sequences \eqref{eq:rel_sequences} together with the commutative diagram 
\begin{equation*}
\xymatrix{
\Omega^{*-1}(\varphi) \ar^{d_\varphi}[rr] \ar^\iota[d] && d_\varphi(\Omega^{*-1}(\varphi)) \ar[d] \\
\widehat H^*(\varphi;\Z) \ar_{(\curv,\cov)}[rr] && \Omega^k_0(\varphi) 
}
\end{equation*}
yield \eqref{eq:fint_bound_triv}.
\end{proof}

We obtain another generalization of \eqref{eq:fint_bound_abs} by weakening the condition on the fiber bundle:
instead of a fiber bundle with fibers that bound we consider a fiber bundle $\pi:E \to X$ and a smooth map $\varphi:A \to X$ such that the pull-back bundle $\pi:\varphi^*E \to A$ is the fiberwise boundary of a fiber bundle $\pi':E'\to A$.
For this situation we introduce the following notation:

\begin{definition}
Let $\pi:E \to X$ be a fiber bundle with closed oriented fibers $F$ and $\varphi:A \to X$ a smooth map.
We say that $\pi:E \to X$ \emph{bounds along} $\varphi$ if there exists a fiber bundle $\pi':(E',\partial E')\to A$ with compact oriented fibers with boundary $(F',\partial F')$ and a fiber bundle isomorphism $\partial E' \to \varphi^*E$ over the identity $\id_A$. 
For short, we say that $\pi:E \to X$ bounds $\pi':E' \to A$ along $\varphi:A \to X$.
\end{definition}

With this notation, we obtain a generalization of \eqref{eq:fint_bound_abs} for bundles that bound along a smooth map:

\begin{prop}\label{prop:fiber_bound_rel}
Let $\pi:E \to X$ be a fiber bundle with closed oriented fibers that bounds a fiber bundle $\pi':E' \to A$ with compact oriented fibers with boundary $(F',\partial F')$ along a smooth map $\varphi:A \to X$.
Then for any differential character $h \in \widehat H^k(E;\Z)$ the integrated character $\widehat\pi_!h \in \widehat H^{k-\dim F}(X;\Z)$ has a section along $\varphi$ with covariant derivative $(-1)^{k+\dim F'} \cdot \int_{F'} \curv(\Phi^*h)$.
\end{prop}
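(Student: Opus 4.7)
The plan is to reduce the statement to the absolute bounding formula \eqref{eq:fint_bound_abs} applied to the bundle $\pi':E'\to A$, via naturality and extension. By naturality of fiber integration \eqref{eq:fiber_int_natural} for the pull-back diagram with $\varphi$ and $\pi$, combined with the hypothesized bundle isomorphism $\partial E'\cong\varphi^*E$ over $\id_A$, one has
\begin{equation*}
\varphi^*\widehat\pi_! h \;=\; \widehat\pi^{\varphi^*E}_!(\Phi^*h) \;=\; \widehat\pi^{\partial E'}_!(\Phi^*h) \in \widehat H^{k-\dim F}(A;\Z).
\end{equation*}
This reduces the question to understanding fiber integration of a character on $\partial E'$ over the boundary bundle of $\pi':E'\to A$.

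The core step is to realise $\Phi^*h\in\widehat H^k(\partial E';\Z)$ as the restriction of a character $\tilde h\in\widehat H^k(E';\Z)$ with $\tilde h|_{\partial E'}=\Phi^*h$. Granted such an extension, the absolute formula \eqref{eq:fint_bound_abs} applied to $\pi':E'\to A$ with bounding fibers $(F',\partial F')$ yields
\begin{equation*}
\widehat\pi^{\partial E'}_!(\tilde h|_{\partial E'}) \;=\; \iota\!\left((-1)^{k-\dim F'}\fint_{F'}\curv(\tilde h)\right),
\end{equation*}
and because $\curv(\tilde h)|_{\partial E'}=\Phi^*\curv(h)$ and $(-1)^{k-\dim F'}=(-1)^{k+\dim F'}$, this exhibits $\varphi^*\widehat\pi_! h$ as topologically trivial with trivialization $(-1)^{k+\dim F'}\fint_{F'}\curv(\Phi^*h)$, the latter notation being understood through the extension $\tilde h$.

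By the long exact sequence \eqref{eq:long_ex_sequ}, topological triviality of $\varphi^*\widehat\pi_! h$ is equivalent to $\widehat\pi_! h$ admitting a section along $\varphi$, which gives the existence part of the claim. To pin down the covariant derivative, I would use \eqref{eq:diag:triv_cov}: for any section $h'\in\widehat H^{k-\dim F}(\varphi;\Z)$ one has $\iota(\cov(h'))=\varphi^*\widehat\pi_! h=\iota\bigl((-1)^{k+\dim F'}\fint_{F'}\curv(\tilde h)\bigr)$, so $\cov(h')$ agrees with the prescribed form modulo $\Omega^{k-\dim F-1}_0(A)$. Since $\ti_\varphi(h'')$ shifts $\cov$ by $-\curv(h'')$ without altering $\vds_\varphi(h')$, and the curvature map $\widehat H^{k-\dim F-1}(A;\Z)\to\Omega^{k-\dim F-1}_0(A)$ is surjective, a suitable replacement $h'+\ti_\varphi(h'')$ realises the stated covariant derivative exactly.

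The main obstacle is the extension of $\Phi^*h$ from $\partial E'$ to $E'$ as a character, since the restriction map $\widehat H^k(E';\Z)\to\widehat H^k(\partial E';\Z)$ is in general not surjective. I would address this on the cocycle level, exploiting the bundle structure of $\pi':E'\to A$: extend the closed form $\Phi^*\curv(h)\in\Omega^k_0(\partial E')$ to a closed form $\tilde\omega\in\Omega^k_0(E')$ across a collar of $\partial E'$ in $E'$, extend a cocycle representative of $c(\Phi^*h)$ to a cocycle on $E'$ matching $[\tilde\omega]$ in real cohomology, and glue a real lift on $\partial E'$ into the collar by a smooth cut-off. These three pieces assemble to the required $\tilde h\in\widehat H^k(E';\Z)$ via the exact sequence \eqref{eq:sequ_R} on $E'$, making the argument above complete.
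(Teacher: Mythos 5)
Your reduction via naturality to $\varphi^*\widehat\pi_!h=\widehat\pi^{\partial E'}_!(\Phi^*h)$, the passage from topological triviality of this pull-back to the existence of a section via \eqref{eq:long_ex_sequ}, and the final adjustment by $\ti_\varphi(h'')$ to realise the covariant derivative exactly are all fine (the last step is in fact spelled out more carefully than in the paper). The genuine gap is precisely the step you flag as the ``main obstacle'': extending $\Phi^*h$ from $\partial E'$ to a character $\tilde h\in\widehat H^k(E';\Z)$. This is not a technical matter that a collar, a cut-off and the sequence \eqref{eq:sequ_R} can repair; it is obstructed topologically. By naturality of the characteristic class, any such $\tilde h$ forces $c(\Phi^*h)$ to lie in the image of the restriction $H^k(E';\Z)\to H^k(\partial E';\Z)$, and nothing in the hypotheses guarantees this: $\Phi$ maps $\partial E'\cong\varphi^*E$ into $E$, not into $E'$, so $\Phi^*c(h)$ has no reason to extend over $E'$. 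Concretely, take $X=A=S^2$, $\varphi=\id$, $E=S^2\times S^1$ with fiber $F=S^1$, $E'=S^2\times D^2$, and $h\in\widehat H^3(S^2\times S^1;\Z)$ with $c(h)$ a generator of $H^3(S^2\times S^1;\Z)\cong\Z$; then $c(\Phi^*h)=c(h)\neq 0$ while $H^3(S^2\times D^2;\Z)=0$, so no extension $\tilde h$ exists and your construction cannot be completed. (Already the closed form $\Phi^*\curv(h)$ need not admit a closed extension with the correct de Rham class, for the same reason.)

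For comparison, the paper's proof is much shorter and never extends the character: it computes $\varphi^*c(\widehat\pi_!h)={\pi'}^{\partial E'}_!(c(\Phi^*h))$ via \eqref{eq:fint_classes} and asserts that this vanishes because restriction to $\partial E'$ followed by boundary fiber integration is the trivial map, and it produces the primitive of $\varphi^*\curv(\widehat\pi_!h)$ from the fiberwise Stokes theorem \eqref{eq:stokes_fiber}; the section with prescribed covariant derivative then comes from \eqref{eq:long_ex_sequ} and \eqref{eq:diag:triv_cov}. You should note, however, that both of these steps tacitly treat $c(\Phi^*h)$ and $\curv(\Phi^*h)$ as restrictions of data living on $E'$ --- which is exactly the extendability you were trying to arrange. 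In the example above ${\pi'}^{\partial E'}_!(c(\Phi^*h))$ is a generator of $H^2(S^2;\Z)$, so $\widehat\pi_!h$ admits no section along $\varphi$ at all. So you have correctly located the crux of the argument, but your proposed repair does not close the gap; some additional hypothesis (for instance that $\Phi$ extends to a bundle map $E'\to E$, or that $c(h)$ and $\curv(h)$ pull back to data extending over $E'$) is needed before either your argument or the paper's goes through.
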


\begin{proof}
As above, the fiber integration map ${\pi'}^{\partial E'}_!:H^k(E';\Z) \to H^{k-\dim \partial F'}(A;\Z)$ is trivial.
Thus the integrated character $\widehat\pi_!h$ is topologically trivial along $\varphi$, since $\varphi^*c(\widehat\pi_!h) = \widehat{\pi'}^{\partial E'}(c(\Phi^*h)) =0$.
By the fiberwise Stokes theorem, the curvature satisfies:
$$
\varphi^*\curv(\widehat\pi_!h)
=
\fint_F \Phi^* \curv(h) 
=
\fint_{\partial F'} \curv(\Phi^*h)
=
(-1)^{k+\dim F'} \cdot d \Big(\int_{F'} \curv(\Phi^*h) \Big) \,.
$$ 
Thus the integrated character $\widehat\pi_!h$ admits a section along $\varphi$ with covariant derivative $(-1)^{k+\dim F'} \cdot \int_{F'} \curv(\Phi^*h)$.
\end{proof}

\subsection{The up-down formula}\label{subsec:up-down}

Fiber integration for differential forms satisfies the following up-down formula:
Let $(\omega,\vartheta) \in \Omega^k(\varphi)$ and $\omega' \in \Omega^{k'}(E)$.
Then we have the equality 
\begin{align*}
\fint_F \pi^*(\omega,\vartheta) \wedge \omega' 
&=
(\omega,\vartheta) \wedge \fint_F \omega' 
\intertext{of differential forms in $\Omega^{k+k'-\dim F}(\varphi)$.
Likewise, for cohomology classes $c \in H^k(\varphi)$ and $c' \in H^{k'}(E;\Z)$, we have}
\pi_!(\pi^*c \cup c')
&=
c \cup \pi_!c' \,. 
\end{align*}
More generally, fiber integration in cross products is compatible with cross products of differential forms and cohomology classes in the following sense:
Let $\pi:E \to X$ and $\pi':E' \to X'$ be fiber bundles with closed oriented fibers $F$ and $F'$.
Let $\varphi:A \to X$ be a smooth map and $\Phi:\varphi^*E \to E$ the induced bundle map.
Let $(\omega,\vartheta) \in \Omega^k(\Phi)$ and $\omega' \in \Omega^{k'}$ be differential forms and $c \in H^k(\Phi;\Z)$ and $c' \in H^{k'}(E';\Z)$ cohomology classes. 
Then we have:
\begin{align}
\fint_{F \times F'} (\omega,\vartheta) \times \omega' 
&=
(-1)^{(k'-\dim F')\cdot \dim F} \cdot \fint_F (\omega,\vartheta) \times \fint_{F'} \omega' \label{eq:cross_fint_forms} \\
\pi_!(c \times c')
&= 
(-1)^{(k'-\dim F')\cdot \dim F} \cdot \pi_!c \times \pi'_!c' \,. \label{eq:cross_fint_classes}
\end{align}
In \cite[Ch.~7]{BB13} we proved the following up-down formula for absolute differential characters:
for $h \in \widehat H^k(X;\Z)$ and $h' \in \widehat H^{k'}(E;\Z)$, we have the equality
$$
\widehat\pi_!(\pi^*h * h')
=
h * \widehat\pi_!h'
$$
of differential characters in $\widehat H^{k+k'-\dim F}(X;\Z)$.
Here we prove the relative version of this up-down formula and the relative version of the compatibility of fiber integration with cross products. 
The method of proof is the same as in \cite[Ch.~7]{BB13}.

\subsubsection{Compatibility with cross products.} 
We start with compatibility of cross products with fiber integration in fiber products:

\index{fiber integration!compatibility with fiber and cross products}
\begin{thm}[Fiber integration: compatibility with fiber and cross products]
Let $\pi:E \to X$ and $\pi':E' \to X'$ be fiber bundles with closed oriented fibers and $\varphi:A \to X$ a smooth map. 
Let $\Phi:\varphi^*E \to E$ be the induced bundle map.
Let $\pi \times\pi':E \times E' \to X \times X'$ denote the fiber product with fiber orientation the product orientation of $F \times F'$.  
Then fiber integration of differential characters is compatible with the fiber product and the cross product in the sense that the following diagram is graded commutative:
\begin{equation*}
\xymatrix{
\widehat H^k(\Phi;\Z) \otimes \widehat H^{k'}(E';\Z) \ar@<-27pt>[d]^{\widehat\pi_!} \ar@<+24pt>[d]^{\widehat\pi'_!} \ar^{\times}[rr] && \widehat H^{k+k'}(\Phi \times \id_{E'};\Z) \ar^{\widehat{\pi \times \pi'}_!}[d] \\
\widehat H^{k-\dim F}(\varphi;\Z) \otimes \widehat H^{k'-\dim F'}(E';\Z) \ar^{\times}[rr] && \widehat H^{k+k'-\dim(F\times F')}(\varphi \times \id_{X'};\Z) \,. \\
} 
\end{equation*}
More explicitly, for characters $h \in \widehat H^k(\Phi;\Z)$ and $h' \in \widehat H^{k'}(E';\Z)$ we have: 
\begin{equation}\label{eq:cross_fiber}
\widehat\pi_!h \times \widehat\pi'_!h'
=
(-1)^{(k'-\dim F')\cdot \dim F}\cdot \widehat{\pi \times \pi'}_!(h \times h') \,.
\end{equation}
\end{thm}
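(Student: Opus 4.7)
The plan is to verify the identity \eqref{eq:cross_fiber} by evaluating both sides on cycles in $Z_{k+k'-\dim(F\times F')-1}(\varphi\times\id_{X'};\Z)$, after first checking that the two sides have the same curvature, covariant derivative, and characteristic class. For the structure forms, I would combine the multiplicativity of $(\curv,\cov)$ and $c$ for cross products (equations \eqref{eq:curvcov_mult_cross} and \eqref{eq:c_mult_cross}) with compatibility of fiber integration with curvature and characteristic class (diagram \eqref{eq:fiber_int_curvature} and equation \eqref{eq:fint_classes}) and finally the classical identities \eqref{eq:cross_fint_forms} and \eqref{eq:cross_fint_classes}. This routine bookkeeping reduces the claim to the statement that the quotient of the two sides is a flat character whose value can be checked cycle by cycle.

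To evaluate cycles, I would use the Künneth splitting constructed in the appendix to write a cycle in $Z_{k+k'-\dim(F\times F')-1}(\varphi\times\id_{X'};\Z)$ as a sum of cross products $(x,y)\times y'$ with $(x,y)\in Z_i(\varphi;\Z)$ and $y'\in Z_j(X';\Z)$, plus a torsion cycle in the Künneth complement. On torsion cycles, \eqref{eq:def_cross_2} expresses the cross product purely in terms of curvature, covariant derivative and characteristic class, and since both sides of \eqref{eq:cross_fiber} already agree on those data, the claim follows automatically on the complement. For cross product cycles the heart of the computation is to plug formula \eqref{eq:def_FI_rel} into both sides and apply the multiplicativity of the transfer map obtained in Section~\ref{sec:transfer}, namely
\begin{equation*}
\lambda_{\varphi\times\id_{X'}}((x,y)\times y')
=
(-1)^{j\cdot\dim F}\,\lambda_\varphi(x,y)\times\lambda'(y') \,,
\end{equation*}
together with the analogous multiplicativity of the chain homomorphisms $(a,b)_\bullet$ inherited from the K\"unneth splitting. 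Invoking the formula \eqref{eq:def_cross_1} for the cross product on the left-hand side and the same formula applied after fiber integration on the right-hand side, one sees that the exponents match precisely in the two nontrivial bidegrees $(i,j)=(k-\dim F-1,k'-\dim F')$ and $(i,j)=(k-\dim F,k'-\dim F'-1)$, yielding $1$ in all other bidegrees as required. The sign $(-1)^{(k'-\dim F')\cdot\dim F}$ arises exactly from the sign in the multiplicativity of $\lambda_\bullet$.

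The main obstacle, I expect, is bookkeeping the signs consistently and ensuring that all the choices of K\"unneth splittings, geometric representatives $(\zeta,\tau)_\bullet$, chains $(a,b)_\bullet$ and transfer maps $\lambda_\bullet$ can be made compatible across the four bundles $E$, $E'$, $\Phi$, $\Phi\times\id_{E'}$ and their bases. Compatibility of the K\"unneth splitting with cross products has been arranged in the appendix and reproduced in \eqref{eq:diag_lambda_mult}; provided we use these compatible choices for all maps $\lambda_\bullet$ and $(a,b)_\bullet$ simultaneously, the cross-product cycle computation reduces to the absolute up-down-type identity proved in \cite[Ch.~7]{BB13}. Once both cross-product cycles and cycles in the K\"unneth complement are handled, bilinearity of the cross product and $\Z$-linearity of fiber integration close the argument.
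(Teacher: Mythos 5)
Your proposal is correct and follows essentially the same route as the paper: Künneth decomposition of cycles, agreement of curvature, covariant derivative and characteristic class (via \eqref{eq:curvcov_mult_cross}, \eqref{eq:c_mult_cross}, \eqref{eq:fiber_int_curvature}, \eqref{eq:fint_classes}, \eqref{eq:cross_fint_forms}, \eqref{eq:cross_fint_classes}) to dispose of torsion cycles via Remark~\ref{rem:ev_torsion}, and the multiplicativity \eqref{eq:lambda_cross} of the transfer maps to handle cross-product cycles, with the sign coming from exactly that relation. The only cosmetic difference is that the paper avoids any multiplicativity claim for $(a,b)_\bullet$ by taking $(x,y)$ and $y'$ to be fundamental cycles of stratifolds (so \eqref{eq:pi_lambda} applies) and absorbing the boundary correction terms into the torsion-cycle case.
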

\index{Theorem!Fiber integration: compatibility with fiber and cross products}

\begin{proof}
Let $h \in \widehat H^k(\Phi;\Z)$ and $h' \in \widehat H^{k'}(E';\Z)$.
We compare the two sides of \eqref{eq:cross_fiber} by evaluating them on cycles in $Z_{k+k'-\dim(F \times F')-1}(\varphi \times \id_{X'};\Z)$.
By definition of the cross product, we consider the evaluation on cross products of cycles and on cycles in the K\"unneth complement $T_{k+k-\dim(F \times F')-1}(\varphi\times \id_{X'};\Z)$ separately.
More specifically, we may assume the cycles $(x,y) \in Z_i(\varphi;\Z)$ and $y' \in Z_j(X';\Z)$ to be fundamental cycles of appropriately chosen stratifolds.
For the correction terms are boundaries which may be added to the torsion cycles in $T_{k+k-\dim(F \times F')-1}(\varphi\times \id_{X'};\Z)$.

The transfer map on the fiber product can be chosen multiplicatively as in \eqref{eq:diag_lambda_mult}.
Let $(x,y) \in Z_i(\varphi;\Z)$ and $y' \in Z_j(X';\Z)$.
If $(i,j)$ neither equals $(k-1-\dim F,k'-\dim F')$ nor $(k-\dim F,k'-1-\dim F')$, then both sides of \eqref{eq:cross_fiber} vanish on $(s,t) = (x,y) \times y'$.
For $(i,j) = (k-1-\dim F,k'-\dim F')$, we have: 
\begin{align*}
\big(\widehat\pi_!h \times \widehat\pi'_!h'\big)((x,y) \times y') 
&=
(\widehat\pi_!h(x,y))^{\langle c(\widehat\pi'_!h'),y'\rangle} \\
&\stackrel{\eqref{eq:pi_lambda}}{=}
h(\lambda_\varphi(x,y))^{\langle h',\lambda'(y') \rangle} \\
&\stackrel{\eqref{eq:lambda_cross}}{=}
(-1)^{(k'-\dim F') \cdot \dim F} \cdot (h \times h')(\lambda_{\varphi\times\id_{X'}}(x,y)\times y') \\
&=
(-1)^{(k'-\dim F') \cdot \dim F} \big(\widehat{\pi \times \pi'}_!(h \times h')\big)((x,y) \times y') \,. 
\intertext{Similarly, for $(i,j) = (k-\dim F,k'-1-\dim F')$, we find:}
\big(\widehat\pi_!h \times \widehat\pi'_!h'\big)((x,y) \times y') 
&=
(\widehat\pi'_!h'(y'))^{(-1)^{k-\dim F} \cdot \langle c(\widehat\pi_!h),(x,y)\rangle} \\
&=
h'(\lambda'(y'))^{(-1)^{k-\dim F} \cdot \langle c(h),\lambda_\varphi(x,y) \rangle} \\
&=
(h \times h')((-1)^{\dim F} \cdot \lambda_\varphi(x,y) \times \lambda'(y')) \\
&\stackrel{\eqref{eq:lambda_cross}}{=}
(-1)^{(k'-1-\dim F') \cdot \dim F} \cdot (-1)^{\dim F} \cdot (h \times h')(\lambda_{\varphi\times\id_{X'}}(x,y) \times y') \\
&=
(-1)^{(k'-\dim F') \cdot \dim F} \cdot \big(\widehat{\pi \times \pi'}_!(h \times h') \big)((x,y) \times y') \,.
\end{align*}

It remains to verify \eqref{eq:cross_fiber} on the K\"unneth complement $T_{k+k'-\dim(F \times F')-1}(\varphi\times\id_{X'};\Z)$ and on the correction terms obtained from replacing cycles $(x,y) \in Z_i(\varphi;\Z)$ and $y' \in Z_j(X';\Z)$ by fundamental cycles of appropriately chosen stratifolds.
It suffices that (more generally) the two sides of \eqref{eq:cross_fiber} coincide on all torsion cycles.
By Remark~\ref{rem:ev_torsion} this follows from the fact that curvature, covariant derivative and characteristic class of the two sides of \eqref{eq:cross_fiber} coincide.
The latter follows from multiplicativity \eqref{eq:curvcov_mult_cross}, \eqref{eq:c_mult_cross}, compatibility of fiber integration with curvature, covariant derivative and characteristic class \eqref{eq:fiber_int_curvature}, \eqref{eq:fint_classes} and compatibility of fiber integration in fiber products with cross products of differential forms and cohomology classes \eqref{eq:cross_fint_forms}, \eqref{eq:cross_fint_classes}. 
\end{proof}

\subsubsection{The up-down formula.}
As a corollary of the compatibility of the cross product with fiber integration in fiber products we obtain the following up-down formula:

\index{up-down formula}
\index{formua!up-down $\sim$}
\index{relative differential character!up-down formula}
\index{differential character!up-down formula}
\begin{cor}[Up-down formula]
Let $\pi:E \to X$ be fiber bundle with closed oriented fibers.
Let $\varphi:A \to X$ be a smooth map.
Let $h \in \widehat H^k(\varphi;\Z)$ and $h' \in \widehat H^{k'}(E;\Z)$.
Then we have the equality
\begin{equation}
\widehat\pi_!(\pi^*h * h')
=
h * \widehat\pi_!h' 
\end{equation}
of differential characters in $\widehat H^{k+k'-\dim F}(\varphi;\Z)$.
\end{cor}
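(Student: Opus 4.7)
The plan is to reduce the up-down formula to the compatibility of fiber integration with cross products in fiber products (equation~\eqref{eq:cross_fiber}) combined with naturality of fiber integration along pullback squares (equation~\eqref{eq:fiber_int_natural}). The strategy is to express both sides as pull-backs of the single cross product $h \times h' \in \widehat H^{k+k'}(\varphi \times \id_E;\Z)$ along appropriate maps. By definition of the internal product, the right-hand side reads $h * \widehat\pi_!h' = \Delta_{(X,A)}^*(h \times \widehat\pi_!h')$. For the left-hand side, naturality of the cross product~\eqref{eq:cross_nat} together with the definition of the internal product give
\begin{equation*}
\pi^*h * h' = \Delta_{(E,\varphi^*E)}^*(\pi^*h \times h') = \gamma^*(h \times h'),
\end{equation*}
where $\gamma := ((\pi,\pi) \times \id_E) \circ \Delta_{(E,\varphi^*E)}:(E,\varphi^*E) \to (X \times E, A \times E)$ is explicitly $e \mapsto (\pi(e),e)$ on $E$ and $(a,\tilde e) \mapsto (a,\tilde e)$ on $\varphi^*E \subset A \times E$, and $(\pi,\pi):(E,\varphi^*E) \to (X,A)$ denotes the bundle projection on each factor.

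The key observation is that $\gamma$ fits into the following pullback square of fiber-bundle pairs with fiber $F$:
\begin{equation*}
\xymatrix{
(E,\varphi^*E) \ar^(0.4)\gamma[rr] \ar_{(\pi,\pi)}[d] && (X\times E, A\times E) \ar^{\id_X \times \pi}[d] \\
(X,A) \ar_(0.4){\Delta_{(X,A)}}[rr] && (X\times X, A \times X) \,.
}
\end{equation*}
On total spaces this is the familiar fact that the pullback of $\id_X \times \pi$ along $\Delta_X$ is $\pi:E \to X$ via $e \mapsto (\pi(e),e)$; the identification on the submanifold pair follows from the analogous statement that the pullback of $A \times E \to A \times X$ along $a \mapsto (a,\varphi(a))$ is $\varphi^*E \to A$, and one then checks that the bundle map $\Phi:\varphi^*E \to E$ is compatible with the inclusion $\varphi \times \id_E:A \times E \to X \times E$ under $\gamma$.

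Once the square is established, naturality of fiber integration~\eqref{eq:fiber_int_natural} yields
\begin{equation*}
\widehat\pi_!(\pi^*h * h') = \widehat\pi_!(\gamma^*(h \times h')) = \Delta_{(X,A)}^* \, \widehat{\id_X \times \pi}_!(h \times h').
\end{equation*}
The final step is to apply~\eqref{eq:cross_fiber} to the trivial bundle $\id_X:X \to X$ (of fiber dimension zero) and the bundle $\pi:E \to X$; the prefactor $(-1)^{(k'-\dim F)\cdot 0}$ is trivial, and one obtains $\widehat{\id_X \times \pi}_!(h \times h') = h \times \widehat\pi_!h'$. Pulling back along $\Delta_{(X,A)}$ then produces $h * \widehat\pi_!h'$, as required. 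I expect the main technical point to be the careful verification of the pullback square at the level of pairs, since compatibility with the bundle map $\Phi$ must be checked on both horizontal and vertical legs of the square; every other step is a formal consequence of naturality and the two compatibility results already established in the paper.
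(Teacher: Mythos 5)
Your proposal is correct and follows essentially the same route as the paper: both sides are rewritten via the relative diagonal maps and naturality of the cross product, the composite $(\pi\times\id_E)\circ\Delta_{(E,\varphi^*E)}$ is recognized as the bundle map over $\Delta_{(X,A)}$ in a pull-back square, and then naturality of fiber integration \eqref{eq:fiber_int_natural} plus compatibility with cross products \eqref{eq:cross_fiber} (with the sign vanishing because the first factor has point fibers) finish the argument. This matches the paper's proof step for step.
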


\begin{proof}
The method of proof is the same as for absolute differential characters in \cite[Ch.~7]{BB13}.
We first consider the fiber product $E \times E \to X \times X$.
Write this as the composite fiber bundle $E\times E \xrightarrow{\pi \times \id_E} X \times E \xrightarrow{\id_X \times \pi} X \times X$.
Let 
$$
\Delta_{(E,\varphi^*E)}:= (\Delta_E,(\id_{\varphi^*E} \times \Phi)\circ \Delta_{\varphi^*E}):(E,\varphi^*E) \to (E,\varphi^*E) \times E
$$
and 
$$
\Delta_{(X,A)} = (\Delta_X,(\id_A \times \varphi) \circ \Delta_A):(X,A) \to (X,A) \times X  
$$ 
be the relative diagonal maps as in Section~\ref{subsec:module}.
Then we have the pull-back diagram
\begin{equation}\label{eq:cross_bundle_map}
\xymatrix{
&&& (E,\varphi^*E) \times E \ar[d]^{\pi \times \id_E} \\
(E,\varphi^*E) \ar[rrr]_{(\pi \times \id_E) \circ \Delta_{(E,\varphi^*E)}} \ar[d]_\pi \ar[urrr]^{\Delta_{(E,\varphi^*E)}} &&& (X,A) \times E \ar[d]^{\id_{(X,A)} \times \pi} \\
(X,A) \ar[rrr]_{\Delta_{(X,A)}} &&& (X,A) \times X 
} 
\end{equation}
The map $(\pi \times \id_E) \circ \Delta_{(E,\varphi^*E)}:(E,\varphi^*E) \to (X,A) \times E$ is the bundle map induced by the diagonal map $\Delta_{(X,A)}:(X,A) \to (X,A) \times X$.
Now we compute:
\begin{align*}
\widehat\pi_!(\pi^*h * h')
&=
\widehat\pi_!(\Delta_{(E,\varphi^*E)}^*(\pi^*h \times h')) \\
&\stackrel{\eqref{eq:cross_nat}}{=}
\widehat\pi_!\big( \Delta_{(E,\varphi^*E)}^*((\pi \times \id_E)^*(h \times h'))\big) \\
&\stackrel{\eqref{eq:fiber_int_natural}}{=}
\Delta_{(X,A)}^*(\widehat{\id \times \pi})_!(h \times h') \\
&\stackrel{\eqref{eq:cross_fiber}}{=}
\Delta_{(X,A)}^*(h \times \widehat\pi_!h') \\
&=
h * \widehat\pi_!h' \,.
\end{align*}
In the second last equation, the sign from \eqref{eq:cross_fiber} drops out since we are considering the fiber product $\id_{(X,A)} \times \pi:(X,A) \times E \to (X,A) \times X$, where the bundle on the first factor has point fibers.
\end{proof}

\subsection{Transgression}
Transgression of differential characters along oriented closed manifolds $\Sigma$ was constructed in \cite[Ch.~9]{BB13}.
Here we adapt this construction to relative differential characters.
Transgression of relative characters is used in \cite{BB13}.

\subsubsection{Transgression along closed manifolds.}
Let $\Sigma$ be a closed oriented manifold.
Denote by $\ev_\Sigma: C^\infty(\Sigma,X) \times \Sigma \to X$, $(f,m) \mapsto f(m)$, the evaluation map.
Composition with a smooth map $\varphi:A \to X$ induces a smooth map $\overline{\varphi}:C^\infty(\Sigma,A) \to C^\infty(\Sigma,X)$, $f \mapsto \varphi \circ f$.
Moreover, the evaluation map yields a smooth map $\ev_\Sigma:C^\infty(\Sigma,(X,A)) \times \Sigma \to (X,A)$, $((f,g),m) \mapsto (f(m),g(m))$.    

Transgression of relative differential characters in $\widehat H^k(\varphi;\Z)$ is defined by pull-back along the evaluation map $\ev_\Sigma$ followed by integration over the fiber $\Sigma$ of the trivial bundle $\pi:C^\infty(\Sigma,(X,A)) \times \Sigma \to C^\infty(\Sigma,(X,A))$:

\index{transgression}
\index{+TauSigma@$\tau_\Sigma$, transgression along closed manifold}
\begin{definition}[Transgression along $\Sigma$]\label{def:tau}
Let $\varphi:A \to X$ be a smooth map.
Let $\Sigma$ be a closed oriented manifold.
Transgression along $\Sigma$ is the group homomorphism
\begin{equation*}
\tau_\Sigma: \widehat H^k(\varphi;\Z) \to \widehat H^{k-\dim \Sigma}({\overline{\varphi}};\Z) \,,
\quad h \mapsto \widehat\pi_!(\ev_\Sigma^*h) \,.  
\end{equation*} 
\end{definition}

From the commutative diagram \eqref{eq:diag_FI_t_p_commute} we conclude that transgression for relative characters commutes with the maps $\ti$ and $\vds$ and transgression for absolute characters constructed in \cite[Ch.~9]{BB13}:

\begin{prop}\label{prop:tau_t_p_commute}
Let $\varphi:A \to X$ be a smooth map.
Let $\Sigma$ be a closed oriented surface.
Then the transgression maps $\tau_\Sigma$ for absolute and relative characters groups commute with the maps $\ti$ and $\vds$.
Thus we have the commutative diagram:
\begin{equation}\label{eq:ti_vds_trans_commute}
\xymatrix{
\widehat H^{k-1}(A;\Z) \ar[d]_{\tau_\Sigma} \ar^(0.55){\ti_\varphi}[rr] && \widehat H^k(\varphi;\Z) \ar[d]_{\tau_\Sigma} \ar^(0.45){\vds_\varphi}[rr] && \widehat H^k(X;\Z) \ar[d]_{\tau_\Sigma} \\
\widehat H^{k-1-\dim \Sigma}(C^\infty(\Sigma,A);\Z) \ar^(0.55){\ti_{\overline{\varphi}}}[rr] && \widehat H^{k-\dim \Sigma}(\overline{\varphi};\Z) \ar^(0.45){\vds_{\overline{\varphi}}}[rr] && \widehat H^{k-\dim \Sigma}(C^\infty(\Sigma,X);\Z) \,. \\
} 
\end{equation}
\end{prop}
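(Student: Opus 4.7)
The plan is to decompose each of the two squares in \eqref{eq:ti_vds_trans_commute} into the composition of two commutative squares, corresponding to the two steps in the definition of transgression, and to verify each square separately. Since $\tau_\Sigma = \widehat\pi_! \circ \ev_\Sigma^*$, where $\pi$ denotes projection on the first factor of the trivial fiber bundle with fiber $\Sigma$, it suffices to show that pull-back along the evaluation map and fiber integration in the trivial bundle each commute with $\ti$ and $\vds$.

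First I would set up the evaluation map as a map of pairs. The evaluation $\ev_\Sigma$ fits into the commutative diagram
\begin{equation*}
\xymatrix{
C^\infty(\Sigma,A) \times \Sigma \ar[rr]^{\overline\varphi \times \id_\Sigma} \ar[d]_{\ev_\Sigma} && C^\infty(\Sigma,X) \times \Sigma \ar[d]^{\ev_\Sigma} \\
A \ar[rr]_\varphi && X
}
\end{equation*}
so that $(\ev_\Sigma,\ev_\Sigma):(C^\infty(\Sigma,X)\times\Sigma, C^\infty(\Sigma,A)\times\Sigma) \to (X,A)$ is a smooth map of pairs covering the map $\overline\varphi:C^\infty(\Sigma,A) \to C^\infty(\Sigma,X)$. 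Applying the naturality identities \eqref{eq:ti_nat} and \eqref{eq:vds_nat} to this map of pairs yields the commutativity of the pull-back squares
\begin{equation*}
\ev_\Sigma^* \circ \ti_\varphi = \ti_{\overline\varphi \times \id_\Sigma} \circ \ev_\Sigma^*
\qquad\text{and}\qquad
\vds_{\overline\varphi \times \id_\Sigma} \circ \ev_\Sigma^* = \ev_\Sigma^* \circ \vds_\varphi \,.
\end{equation*}

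Next I would invoke the fiber integration diagram \eqref{eq:diag_FI_t_p_commute} for the trivial fiber bundle $\pi:C^\infty(\Sigma,X) \times \Sigma \to C^\infty(\Sigma,X)$ with induced bundle map $\Phi = \overline\varphi \times \id_\Sigma$. This immediately gives
\begin{equation*}
\widehat\pi_! \circ \ti_{\overline\varphi \times \id_\Sigma} = \ti_{\overline\varphi} \circ \widehat\pi_!
\qquad\text{and}\qquad
\vds_{\overline\varphi} \circ \widehat\pi_! = \widehat\pi_! \circ \vds_{\overline\varphi \times \id_\Sigma}\,.
\end{equation*}
Concatenating the two squares in each case yields \eqref{eq:ti_vds_trans_commute}.

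I do not expect a real obstacle here: the proof is entirely formal, once one has the two main prerequisites \eqref{eq:ti_nat}, \eqref{eq:vds_nat} (naturality of $\ti$ and $\vds$ under pull-back by maps of pairs) and \eqref{eq:diag_FI_t_p_commute} (compatibility of $\widehat\pi_!$ with $\ti$ and $\vds$) in place. The only point that deserves attention is ensuring that the evaluation map is interpreted consistently as a map of pairs covering $\overline\varphi$, so that the induced bundle map is exactly $\overline\varphi \times \id_\Sigma$; this justifies that the pull-back $\ev_\Sigma^*$ lands in $\widehat H^*(\overline\varphi \times \id_\Sigma;\Z)$ and that fiber integration then maps into $\widehat H^{*-\dim\Sigma}(\overline\varphi;\Z)$, matching the target in Definition~\ref{def:tau}.
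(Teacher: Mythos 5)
Your proposal is correct and follows essentially the same route as the paper's proof: the paper also writes $\tau_\Sigma = \widehat\pi_!\circ\ev_\Sigma^*$ and chains together the naturality identities \eqref{eq:ti_nat}, \eqref{eq:vds_nat} for the evaluation map of pairs with the fiber-integration compatibility \eqref{eq:diag_FI_t_p_commute}. The only difference is presentational — you phrase it as two stacked commutative squares, the paper as a single chain of equalities per square.
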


\begin{proof}
The claim follows from naturality of the homomorphisms $\ti$ and $\vds$ and the commutative diagram \eqref{eq:diag_FI_t_p_commute}:
Let $h \in \widehat H^{k-1}(A;\Z)$.
For the left square in \eqref{eq:ti_vds_trans_commute} we have: 
$$
\tau_\Sigma\ti_\varphi(h) 
= 
\widehat \pi_!(\ev_\Sigma^*(\ti_\varphi(h))) 
\stackrel{\eqref{eq:ti_nat}}{=}
\widehat \pi_!(\ti_{\overline{\Phi}}(\ev_{\Sigma}^*h))
\stackrel{\eqref{eq:diag_FI_t_p_commute}}{=}
\ti_{\overline{\varphi}} (\widehat \pi_!(\ev_\Sigma^*h))
=
\ti_{\overline{\varphi}}(\tau_{\Sigma}h) \,.
$$
Let $h \in \widehat H^k(\varphi;\Z)$.
Similarly to the above we find for the right square in \eqref{eq:ti_vds_trans_commute}:
\begin{equation*}
\tau_\Sigma\vds_\varphi(h)
=
\widehat\pi_!(\ev_\Sigma^*\vds_\varphi(h))
\stackrel{\eqref{eq:vds_nat}}{=}
\widehat\pi_!(\vds_{\overline{\Phi}}(\ev_\Sigma^*h))
\stackrel{\eqref{eq:diag_FI_t_p_commute}}{=}
\vds_{\overline{\varphi}}(\widehat\pi_!(\ev_\Sigma^*h))
=
\vds_{\overline{\varphi}}(\tau_\Sigma h) \,. \qedhere
\end{equation*}
\end{proof}

\subsubsection{Transgression along manifolds with boundary.}
Let $W$ be a compact oriented manifold with boundary.
We consider the space of smooth maps $(W,\partial W) \xrightarrow{(f,g)} (X,A)$ and the restriction map 
$$
r:C^\infty(W,(X,A)) \to C^\infty( \partial W,(X,A))\,, \quad (f,g) \mapsto (f,g)|_{\partial W} \,.
$$
The trivial fiber bundle $\pi:C^\infty(\partial W,(X,A)) \times \partial W \to C^\infty(\partial W,(X,A))$ bounds the trivial fiber bundle $\pi:C^\infty(W,(X,A)) \times W \to C^\infty(W,(X,A))$ along the restriction map $r$.
Let $h \in \widehat H^k(X;\Z)$.
By Proposition~\ref{prop:fiber_bound_rel} the transgressed character $\tau_{\partial W}h$ admits sections along the restriction map $r$ with prescribed covariant derivative $(-1)^{k-\dim W} \fint_W \ev_W^*\curv(h)$.
Similarly, for any relative character $h \in \widehat H^k(\varphi;\Z)$, the transgressed character $\tau_{\partial W}h$ becomes topologically trivial upon pull-back along the restriction map.
A topological trivialization of $r^*\tau_{\partial W}h$ is given by $(-1)^{k-\dim W} \fint_W \ev_W^*(\curv,\cov)(h)$.

\appendix

\section{K\"unneth splittings}\label{app:Kuenneth}
In the appendix we recall the construction of splittings of the K\"unneth sequence on the level of cycles by using the classical Alexander-Whitney and Eilenberg-Zilber maps.
We use these well-known splittings to construct an analogous splitting of the mapping cone K\"unneth sequence on the level of cycles.
In the main text, we refer to these splittings as \emph{K\"unneth splittings}.      

\subsection*{Alexander-Whitney and Eilenberg-Zilber maps.}
We denote the well-known Alexander-Whitney and Eilenberg-Zilber maps by 
\index{Alexander-Whitney map}
\index{Eilenberg-Zilber map}
$$
\xymatrix{
C_*(X \times X';\Z) \ar@<2pt>[rr]^(0.45){AW} && C_*(X;\Z) \otimes C_*(X';\Z) \ar@<2pt>[ll]^(0.55){EZ} \,.
}
$$
These are chain homotopy inverses of each other with $AW \circ EZ = \id_{C_*(X;\Z) \otimes C_*(X';\Z)}$ and $EZ \circ AW$ chain homotopic to the identity on $C_*(X \times X';\Z)$, see \cite[p.~167]{McC01}.
They are used in \cite[Ch.~6]{BB13} to construct a splitting of the K\"unneth sequence on the level of cycles.
Moreover, the Alexander-Whitney map relates the cross product of cochains to the tensor product.

In \cite[Ch.~6]{BB13}, we construct a splitting of the K\"unneth sequence
\begin{equation*}
0
\to \big[ H_*(X;\Z) \otimes H_*(X';\Z) \big]_n
\xrightarrow{\times} H_n(X \times X';\Z) 
\to \Tor( H_*(X;\Z) , H_*(X';\Z))_{n-1}
\to 0
\end{equation*}
on the level of cycles as follows:
Let $s:C_*(X;\Z) \to Z_*(X;\Z)$ be a splitting of the sequence $0 \to Z_*(X;\Z) \xrightarrow{i} C_*(X;\Z) \xrightarrow{\partial} B_{*-1}(X;\Z) \to 0$.
Similarly, we have the inclusion $i'$ and a splitting $s'$ on $X'$. 
Set $S := (s \otimes s')\circ AW$ and $K:= EZ \circ (i \otimes i')$.
Denote the cycles in the tensor product complex by $Z(C_*(X;\Z) \otimes C_*(X';\Z))$. 
Then we obtain:
\begin{equation*}
\xymatrix{
0 \ar[r] & Z_*(X;\Z) \otimes Z_*(X';\Z) \ar@<2pt>[rr]^{i \otimes i'} \ar@<2pt>[drr]^(0.6)K && Z(C_*(X;\Z) \otimes C_*(X';\Z)) \ar@<2pt>[d]^{EZ} \ar[r] \ar@<2pt>[ll]^{s \otimes s'} & \ldots \\
&&& Z_*(X \times X';\Z) \ar@<2pt>[u]^{AW} \ar@<2pt>[ull]^(0.4)S
} 
\end{equation*}
In particular $S \circ K = (s \otimes s') \circ AW \circ EZ \circ (i \otimes i') = \id_{Z_*(X;\Z) \otimes Z_*(X';\Z)}$.  
We refer to the map $S$ as the \emph{K\"unneth splitting} map. 
\index{K\"unneth splitting}

The splitting allows us to decompose any cycle 
$z \in Z_{k+k'-1}(X \times X';\Z)$ according to $z = K \circ S(z) + (z- K \circ S(z))$.
By the K\"unneth sequence, the latter represents a torsion class in $H_{k+k'-1}(X \times X';\Z)$, whereas the former is a sum of cross products of cycles in $X$ and $X'$, respectively.
Thus 
$$
K \circ S(z) = \sum_{(i,j)\atop i+j = k+k'-1}\sum_{\alpha \in I} z_i^\alpha \times {z'}_j^\alpha
$$ 
for appropriate cycles $z_i^\alpha \in Z_i(X;\Z)$ and ${z'}_j^\alpha \in Z_j(X';\Z)$.

\subsection*{The mapping cone K\"unneth splitting.}
Let $\varphi:A \to X$ be a smooth map.
We consider the induced map $\varphi \times \id_{X'}:A \times X' \to X \times X'$.
We use the Alexander-Whitney and Eilenberg-Zilber maps above to define Alexander-Whitney and Eilenberg-Zilber maps for the mapping cone complexes such that the following diagram commutes:
\index{Alexander-Whitney map!mapping cone $\sim$}
\index{Eilenberg-Zilber map!mapping cone $\sim$}
\index{mapping cone!Alexander-Whitney map}
\index{mapping cone!Eilenberg-Zilber map}
$$
\xymatrix{
C_*(\varphi;\Z) \otimes C_*(X';\Z) \ar@<-1pt>[d]^(0.51){EZ_{\varphi \times \id_{X'}}} \ar@{=}[r] & \big(C_*(X;\Z) \otimes C_*(X';\Z) \big) \oplus \big( C_{*-1}(A;\Z) \otimes C_*(X';\Z) \big)  \ar@<37pt>[d]^(0.51){EZ_{A \times X'}} \ar@<-37pt>[d]^(0.51){EZ_{X \times X'}} \\
C_*(\varphi \times \id_{X'};\Z) \ar@<5pt>[u]^{AW_{\varphi \times \id_{X'}}}  \ar@{=}[r] &  C_*(X \times X';\Z) \oplus C_{*-1}(A \times X';\Z) \ar@<-33pt>[u]^{AW_{A \times X'}} \ar@<41pt>[u]^{AW_{X \times X'}} 
}
$$
Explicitly, we set $AW_{\varphi \times \id_{X'}}:= AW_{X \times X'} \otimes AW_{A \times X'}$ and $EZ_{\varphi \times \id_{X'}}:= EZ_{X \times X'} \oplus EZ_{A \times X'}$.
Since the usual Alexander-Whitney and Eilenberg-Zilber maps are natural chain maps, so are the maps on the mapping cone complexes.
Moreover $AW_{\varphi \times \id_{X'}} \circ EZ_{\varphi \times \id_{X'}} = \id_{C_*(\varphi \times \id_{X'};\Z)}$ and $EZ_{\varphi \times \id_{X'}} \circ AW_{\varphi \times \id_{X'}}$ is chain homotopic to $\id_{C_*(\varphi;\Z) \otimes C_*(X';\Z)}$.
The algebraic K\"unneth sequence for the homology of the complexes $C_*(\varphi;\Z)$ and $C_*(X';\Z)$ now reads:
$$
0 \to 
\big[H_*(\varphi;\Z) \otimes H_*(X';\Z)\big]_n \to
H_n(C_*(\varphi;\Z) \otimes C_*(X';\Z)) \to 
\Tor(H_*(\varphi;\Z),H_*(X';\Z))_{n-1} \to
0 \,.
$$
The mapping cone Alexander-Whitney and Eilenberg-Zilber maps yield isomorphisms:
$$
\xymatrix{
H_*(\varphi \times \id_{X'};\Z) \ar@<2pt>[rrr]^(0.45){AW_{\varphi \times \id_{X'}}} &&&
H_*(C_*(\varphi;\Z) \otimes C_*(X';\Z)) \ar@<2pt>[lll]^(0.55){EZ_{\varphi \times \id_{X'}}} \,.
}
$$
We thus obtain the topological K\"unneth sequence:
\begin{equation*}\label{eq:top_Kuenneth}
 0 \to 
\big[H_*(\varphi;\Z) \otimes H_*(X';\Z)\big]_n \to
H_n(\varphi\times \id_{X'};\Z) \to 
\Tor(H_*(\varphi;\Z),H_*(X';\Z))_{n-1} \to
0 \,.
\end{equation*}
We construct a splitting of this K\"unneth sequence at the level of cycles.
Since the group of boundaries $B_*(\varphi;\Z)$ of the mapping cone complex is a free $\Z$-module, we have the split exact sequence
\begin{equation*}
\xymatrix{
0 \ar[r] & 
Z_*(\varphi;\Z) \ar@<2pt>^{i_\varphi}[r] &
C_*(\varphi;\Z) \ar^(0.47){\partial_\varphi}[r] \ar@{-->}@<2pt>[l]^{s_\varphi} & 
B_{*-1}(\varphi;\Z) \ar[r]  &
0 \,.
}
\end{equation*}
Let $i_\varphi:Z_*(\varphi;\Z) \to C_*(\varphi;\Z)$ be the inclusion.
Fix a splitting $s_\varphi:C_*(\varphi;\Z) \to Z_*(\varphi;\Z)$.
Similarly, we denote by $i':Z_*(X';\Z) \to C_*(X';\Z)$ the inclusion and by $s':C_*(X';\Z) \to Z_*(X';\Z)$ a splitting for the smooth singular chain complex on $X'$. 

Now put $S := (s_\varphi \otimes s')\circ AW_{\varphi \times \id_{X'}}$ and $K:= EZ_{\varphi \times \id_{X'}} \circ (i_\varphi \otimes i')$.
Then we have
\begin{equation}\label{eq:SK}
S \circ K 
= (s_\varphi \otimes s') \circ AW_{\varphi \times \id_{X'}} \circ EZ_{\varphi \times \id_{X'}} \circ (i_\varphi \otimes i') 
= \id_{Z_*(\varphi;\Z) \otimes Z_*(X';\Z)} \,.
\end{equation}
For cycles $(x,y) \in Z_*(\varphi;\Z)$ and $z' \in Z_*(X';\Z)$ we have $(x,y) \times z'=K((s,t)\otimes z')$.
Likewise, for chains $(a,b) \in C_*(\varphi;\Z)$ and $c \in C_*(X';\Z)$ we have $(a,b) \times c'= EZ_{\varphi \times \id_{X'}}((a,b)\otimes c')$.

Denote the cycles in the tensor product complex by $Z(C_*(\varphi;\Z) \otimes C_*(X';\Z))$.
By \eqref{eq:SK}, we obtain a splitting of the K\"unneth sequence on the level of cycles as in \cite[Ch.~6]{BB13}:
\begin{equation*}
\xymatrix{
0 \ar[r] & Z_*(\varphi;\Z) \otimes Z_*(X';\Z) \ar@<2pt>[rr]^{i_\varphi \otimes i'} \ar@<-0.1pt>[drr]^(0.6)K && Z(C_*(\varphi;\Z) \otimes C_*(X';\Z)) \ar@<4pt>[d]^{EZ_{\varphi\times\id_{X'}}} \ar[r] \ar@{-->}@<2pt>[ll]^{s_\varphi \otimes s'} & \ldots \\
&&& Z_*(\varphi \times \id_{X'};\Z) \ar@{-->}[u]^{AW_{\varphi\times\id_{X'}}} \ar@{-->}@<3.9pt>[ull]^(0.4)S
} 
\end{equation*}
We refer to this as the \emph{mapping cone K\"unneth splitting}.
\index{mapping cone!K\"unneth splitting}
\index{K\"unneth splitting!mapping cone $\sim$}


\nopagebreak[5]
\printindex


\end{document}